\title{Model theoretic dynamics in Galois fashion}
\author[D. M. HOFFMANN]{Daniel Max Hoffmann$^{\dagger}$}
\thanks{2010 \textit{Mathematics Subject Classification}. Primary 03C95; Secondary 03C50, 03C52}
\thanks{\textit{Key words and phrases}. model companions, automorphisms group, group actions.}
\thanks{$^{\dagger}$SDG. Part of the work on this paper was conducted during the author's internship at the Warsaw Center of Mathematics and Computer Science.
Supported by NCN grants 2016/20/T/ST1/00482 and 2016/21/N/ST1/01465,
and partially supported by NCN grant 2015/19/B/ST1/01150.}
\address{$^{\dagger}$Instytut Matematyki\\
Uniwersytet Warszawski\\
Warszawa\\
Poland}
\email{daniel.max.hoffmann@gmail.com}
\urladdr{https://www.researchgate.net/profile/Daniel\_Hoffmann8}
\DeclareMathOperator{\acl}{acl} \DeclareMathOperator{\dcl}{dcl} 
 \DeclareMathOperator{\aut}{Aut} \DeclareMathOperator{\id}{id}
\DeclareMathOperator{\cl}{cl}
\DeclareMathOperator{\im}{im}  
\DeclareMathOperator{\td}{trdeg} 
 \DeclareMathOperator{\theo}{Th}\DeclareMathOperator{\alg}{alg}
 \DeclareMathOperator{\eq}{eq}
\DeclareMathOperator{\tp}{tp}
\DeclareMathOperator{\ev}{ev}
\DeclareMathOperator{\ddf}{DF}\DeclareMathOperator{\dcf}{DCF}\DeclareMathOperator{\scf}{SCF}
\DeclareMathOperator{\acf}{ACF}
\DeclareMathOperator{\tcf}{TCF}
\DeclareMathOperator{\perf}{perf}
\DeclareMathOperator{\Dgat}{Diag^{\text{at}}}
\DeclareMathOperator{\mc}{mc}
\DeclareMathOperator{\Cn}{Cn}
\DeclareMathOperator{\qftp}{qftp}
\DeclareMathOperator{\cb}{Cb}
\DeclareMathOperator{\mlt}{mlt}
\DeclareMathOperator{\fo}{fo}
\newtheorem{theorem}{Theorem}[section]
\newtheorem{prop}[theorem]{Proposition}
\newtheorem{lemma}[theorem]{Lemma}
\newtheorem{cor}[theorem]{Corollary}
\newtheorem{fact}[theorem]{Fact}
\theoremstyle{definition}
\newtheorem{definition}[theorem]{Definition}
\newtheorem{example}[theorem]{Example}
\newtheorem{remark}[theorem]{Remark}
\newtheorem{question}[theorem]{Question}
\newtheorem{conj}[theorem]{Conjecture}
\theoremstyle{remark}
\newtheorem*{theorem*}{Theorem}
\newtheorem*{cor*}{Corollary}
\theoremstyle{definition}
\theoremstyle{definition}
\theoremstyle{definition}
\theoremstyle{remark}
\providecommand*{\cupdot}{%
  \mathbin{%
    \mathpalette\@cupdot{}%
  }%
}
\newcommand*{\@cupdot}[2]{%
  \ooalign{%
    $\m@th#1\cup$\cr
    \sbox0{$#1\cup$}%
    \dimen@=\ht0 %
    \sbox0{$\m@th#1\cdot$}%
    \advance\dimen@ by -\ht0 %
    \dimen@=.5\dimen@
    \hidewidth\raise\dimen@\box0\hidewidth
  }%
}
\providecommand*{\bigcupdot}{%
  \mathop{%
    \vphantom{\bigcup}%
    \mathpalette\@bigcupdot{}%
  }%
}
\newcommand*{\@bigcupdot}[2]{%
  \ooalign{%
    $\m@th#1\bigcup$\cr
    \sbox0{$#1\bigcup$}%
    \dimen@=\ht0 %
    \advance\dimen@ by -\dp0 %
    \sbox0{\scalebox{2}{$\m@th#1\cdot$}}%
    \advance\dimen@ by -\ht0 %
    \dimen@=.5\dimen@
    \hidewidth\raise\dimen@\box0\hidewidth
  }%
}
\def\Ind#1#2{#1\setbox0=\hbox{$#1x$}\kern\wd0\hbox to 0pt{\hss$#1\mid$\hss}
\lower.9\ht0\hbox to 0pt{\hss$#1\smile$\hss}\kern\wd0}
\def\ind{\mathop{\mathpalette\Ind{}}}
\def\notind#1#2{#1\setbox0=\hbox{$#1x$}\kern\wd0
\hbox to 0pt{\mathchardef\nn=12854\hss$#1\nn$\kern1.4\wd0\hss}
\hbox to 0pt{\hss$#1\mid$\hss}\lower.9\ht0 \hbox to 0pt{\hss$#1\smile$\hss}\kern\wd0}
\def\nind{\mathop{\mathpalette\notind{}}}
\begin{document}

\newcommand{\twoc}[3]{ {#1} \choose {{#2}|{#3}}}
\newcommand{\thrc}[4]{ {#1} \choose {{#2}|{#3}|{#4}}}
\newcommand{\Rr}{{\mathds{R}}}
\newcommand{\Kk}{{\mathds{K}}}

\newcommand{\dlog}{\mathrm{ld}}
\newcommand{\ga}{\mathbb{G}_{\rm{a}}}
\newcommand{\gm}{\mathbb{G}_{\rm{m}}}
\newcommand{\gaf}{\widehat{\mathbb{G}}_{\rm{a}}}
\newcommand{\gmf}{\widehat{\mathbb{G}}_{\rm{m}}}
\newcommand{\gdf}{\mathfrak{g}-\ddf}
\newcommand{\gdcf}{\mathfrak{g}-\dcf}
\newcommand{\fdf}{F-\ddf}
\newcommand{\fdcf}{F-\dcf}
\newcommand{\mw}{\scf_{\text{MW},e}}

\begin{abstract}
We fix a monster model $\mathfrak{D}$ of some stable theory and investigate substructures of $\mathfrak{D}$ which are existentially closed as structures additionally equipped with an action of a fixed group $G$.
We describe them as PAC substructures of $\mathfrak{D}$ and obtain results related to Galois theory.

Assuming that some
class of these existentially closed substructures is elementary,
we show that, under the assumption of having bounded models, 
its theory is simple and eliminates quantifiers up to some existential formulas. Moreover, this theory codes finite sets and allows a geometric elimination of imaginaries, but not always a weak elimination of imaginaries.
\end{abstract}
\maketitle

\tableofcontents
\section{Introduction}
\subsection{Motivation}
We start in a very unusual way. Willem de Sitter gave a solution for Einstein's general relativity, which describes a space without any matter, but in the movement. It was completely opposite to the original Einstein's solution, i.e. to a static space, but containing the matter. Further studies allowed to join both concepts into one, which better or worse describes our perception of the Universe. The following paper shares something with de Sitter's solution: we describe movement without matter, i.e. we do not pay enough attention to prove existence of considered theories on a general level.
There is a lot of important theories,
but many of them have static models.
Our general aim for this and future works is to develop model theoretic dynamics in a way similar to that what happened in algebra and algebraic dynamics.

Our main motivation arises from two papers: \cite{ChaPil} and \cite{nacfa}. The first one describes the model companion of substructures of a monster model of a stable theory, equipped with an additional automorphism. The second one is about the model companion of a theory of fields equipped with an action of a fixed finite group. In this paper (Subsection \ref{sec:stable.to.simple}), we achieve a generalization of both situations: we provide a description of a model companion of substructures of a monster model of an arbitrary stable theory, equipped with an action of a fixed group. Of course, there are some additional assumptions, but they are almost harmless and there are counterexamples for the corresponding statements in the full generality, i.e. without our almost harmless assumptions.

Our second, but also very important, motivation is to prepare background for a more sophisticated research. It was shown that there is a link between
existence of the model companion of a theory with an additional automorphism and the \emph{finite cover property} (\cite{balshe}).
The aforementioned result closes, in some sense, ``the storyline of existence" of model companions of theories with an additional automorphism, which was originated by Kikyo in \cite{kikyo1} (and continued in \cite{kikpil} and \cite{kishe}). What is important for us is that the main counterexamples for existence of the model companion of a theory with an additional automorphism from \cite{kikyo1} are not counterexamples anymore in a more general context. 
More precisely, a theory of structures with extra automorphisms can be treated as a theory of structures with a group action. 
For example, we are interested in structures with a fixed number $n$ (which may be a cardinal) of automorphisms which satisfy some equalities. Such structures can be viewed as structures with an action of group $G$, where $G$ is given by the following presentation:
$$\langle \sigma_1,\ldots,\sigma_n\;\;|\;\; r_1,\ldots,r_m\rangle,$$
where $r_1,\ldots,r_m$ are relations corresponding to the equalities which automorphisms $\sigma_1,\ldots,\sigma_n$ should satisfy.
Therefore, we can treat a theory of structures with an additional automorphism as a theory of structures with a group action of the group $G=\mathbb{Z}$. Hence \cite{kikyo1} was motivated by non-existence of model companions for actions of $\mathbb{Z}$, but we show that model companions of those counterexample theories exist, if we consider actions of a finite group $G$ instead of actions of $\mathbb{Z}$ (Example \ref{rand.graph}, Example \ref{orders}, Remark \ref{Boole2}). 

Another notable phenomena is Hrushovski's proof of non-existence of the model companion of the theory of fields with two commuting automorphisms (\cite[Theorem 3.2]{kikyo2}).
Such fields can be viewed as fields equipped with a group action of $\mathbb{Z}\times\mathbb{Z}$. At the other hand, the model companion of the theory of fields equipped with a group action of $\mathbb{Z}_2\times\mathbb{Z}_2$ (or any other product $G_1\times G_2$ of finite groups $G_1$ and $G_2$) exists (Theorem 2.10 in \cite{nacfa}). The general case (group actions on arbitrary stuctures instead of fields) is even more difficult: the author of \cite{kikyo2} admits at the beginning of Section 3. (in \cite{kikyo2}) that he could not show in general that a model companion of the theory of models with two commuting automorphisms does not exist. However, he additionally assumed existence of $a$, $b$ and $M\models T$ such that $a\ind_M b$ and $\acl(Mab)\neq\dcl(\acl(Ma),\acl(Mb))$, consult the last lines on the page 6. in \cite{kikyo2}. Example \ref{example.empty} shows that without any assumptions, model companions may exist for actions of any group $G$.

Therefore, one could ask about existence of model companions in a wider, than the one considered in \cite{balshe}, context. Any reasonable answer to the last question will put more light on the concepts related to the finite cover property. As a side remark, we note here that even without the assumption about existence of model companion there is some interest in the class of existentially closed models with a group action (\cite{Pilfork}). Therefore we provide a description of such structures (without assumption that the class of existentially closed structures with a group action is an elementary class) in Section \ref{sec:galois}.

During a conversation with Amador Martin-Pizzaro, we found out that models studied by us provide an example for considerations in \cite[p.11]{criteresimple}, where the authors describe a criterion for simplicity of a theory in the shape of ACFA. More precisely, if a theory satisfies the five conditions from \cite{criteresimple}, then it is simple. Still, those five conditions need to be checked, which is done in the next sections of this paper. This means that our theory is related to the present research in the field of simple theories.

The last reason why we provide the following paper is more practical. It is quite common to use results from \cite{ChaPil} during studying a new theory equipped with an automorphism. We give an example about this: in \cite{hils0}, the authors introduce a new theory which describes compact complex manifolds with an automorphism. Its model companion, denoted by CCMA, exists and automatically inherits simplicity and other tameness properties due to the main results of \cite{ChaPil}. We hope that our generalizations of results from \cite{ChaPil}
will be useful in a similar way.

\subsection{Summary of results}
We summarize now the results of this paper.
Let $T$ be an $\mathcal{L}$-theory and let $G$ be a group.
In the crucial moment of this paper (Section \ref{sec:forking}), we assume that the model companion of $T$, say $T^{\mc}$, exists, is stable, allows to eliminate quantifiers and has elimination of imaginaries. 
We introduce the language $\mathcal{L}^G$ given by adding to $\mathcal{L}$ a unary function symbol for each $g\in G$. By $T_G$, we denote the $\mathcal{L}^G$-theory which models are models of $T$ equipped with a group action of $G$ (by $\mathcal{L}$-automorphisms). We assume that the model companion of $T_G$, say $T_G^{\mc}$, exists.

The main difficulty is to understand how models of $T_G^{\mc}$ embed into models of $T^{\mc}$. It was not the case in \cite{ChaPil}, because the authors of \cite{ChaPil} worked with a model complete theory $T$, hence under the assumption $T=T^{\mc}$ (they assumed that $T$ is stable, allows quantifier elimination and elimination of imaginaries), and so every model of $T_G^{\mc}$ is also a model of $T^{\mc}$ (if $G=\mathbb{Z}$, then the $G$-action always extends to saturated over-structures). After a lesson learned during studying the theory $G-\tcf$ (Example \ref{gtcf1}), where models of $T_G^{\mc}$ may be not even separably closed fields (see Theorem 3.6 in \cite{nacfa}) and $T^{\mc}=$ACF, we know that it is more natural to distinguish $T$ from $T^{\mc}$ (or to do not assume that $T$ is stable in the situation of \cite{ChaPil}). However, a lot of new phenomena occur, we still have some nice behaviour (e.g. Proposition \ref{A_Galois}, which says that the relative algebraic closure is normal in the full algebraic closure, which means that a $G$-structure on a model treats equally all ``roots" of an algebraic formula).

The main result of this paper is Theorem \ref{ind_thm_model}, which contains (under one additional assumption) Independence Theorem over a model for a special ternary relation in $T_G^{\mc}$. We use it in Theorem \ref{TG.simple}, which provides reasonable assumptions for simplicity of $T_G^{\mc}$. We have the following sequence of implications, where $(\mathfrak{C},(\sigma_g)_{g\in G})$ is a monster model of $T_G^{\mc}$,
$$\xymatrixcolsep{1.5pc}\xymatrix{G\text{ is finite}\ar@{=>}[r]^-*+++[u]{\text{
\footnotesize Proposition \ref{finite.bounded}}} & \mathfrak{C}\text{ is bounded}
{}\save[]+<0cm,-0.5cm>*\txt<8pc>{%
(Definition \ref{PP.bounded})} \restore
\ar@{=>}[r]^-*+++[u]{\text{\footnotesize Proposition \ref{bounded.split}}} &
\text{alg. closures split (Definition \ref{alg.cls.def})}\ar@{=>}[d]^-*+++[u]{\text{\footnotesize Theorem \ref{ind_thm_model}}} \\ 
& & \text{Ind. Theorem over model for } T_G^{\mc}\ar@{=>}[d]^-*+++[u]{\text{\footnotesize Theorem \ref{TG.simple}}}\\
& & \txt{$T_G^{\mc}$ is simple \\+ description of forking ind. in $T_G^{\mc}$.}}$$
One can wonder whether the assumption about algebraic closures is important for the simplicity of $T_G^{\mc}$. It is indeed, and we discuss this problem in Remark \ref{alg.cls.split.important}. In short: if algebraic closures do not split, then $\mathfrak{C}$ is not bounded, and - after specifying to the theory of fields  - the theory of a PAC field is simple just if the field is bounded.

The second important result is about elimination of imaginaries in $T_G^{\mc}$. It is easy to prove that $T_G^{\mc}$ codes finite tuples (Lemma \ref{finite.codes}). On the other hand, there are examples of theories in the shape of $T_G^{\mc}$ which do not eliminate imaginaries (Remark \ref{no.full.EI}), hence they also do not have  the weak elimination of imaginaries. However, $T_G^{\mc}$ always has the geometric elimination of imaginaries (Theorem \ref{imaginaries}).

We provide also ``semi" quantifier elimination result (Remark \ref{almost.qe}), which is analogous to the similar one for ACFA, but we do not consider it as important.

To obtain the above results, we need a lot of algebraic analysis of the $G$-actions on substructures of a monster model of $T^{\mc}$. 
We transferred the notion of regularity from the ground of fields to the general model theory (Definition \ref{regular.def}, however it turned out that our ``regularity" is in fact Hrushovski's ``stationarity", check Remark \ref{regular.def}).
Regularity is beneficial in the stable context (Corollary \ref{regular.PAPA}). For example, elements in a regular extension over $A$ have stationary types over $A$ (Corollary \ref{reg.stationary}). In Section \ref{sec:galois}, we invoke generalizations of the notions from Galois theory and use them to describe the Galois groups of structures with a $G$-action.
The invariants of a $G$-action play an important role, therefore we provide a description of the substructure of invariants. It turns out that the substructure of invariants, for a finitely generated group $G$, is PAC (Proposition \ref{inv.pac}) and bounded (Lemma \ref{MG.bounded}), hence the theory of invariants, for a finitely generated group $G$, is simple (Theorem \ref{MG.simple}).

Moreover, we use techniques involving Galois groups (e.g. Lemma \ref{N_Galois}) to describe types (Fact \ref{fact_type_descr}), algebraic closure (Corollary \ref{acl_all} and Proposition \ref{A_Galois}), which are used in the main proof of this paper (proof of the Theorem \ref{ind_thm_model}).

We also give an alternative to \cite{PilPol} definition of a PAC substructure in the stable context. We compare our definition of a PAC substructure to the well known definition of a PAC substructure from \cite{PilPol} and to Hrushovski's definition of a PAC substructure in the strongly minimal context (Proposition \ref{PACvsPACPP} and Proposition \ref{PAC.Hru}). We use our definition of a PAC substructure to show that models of the theory $T_G^{\mc}$ are PAC (Proposition \ref{M.is.PAC}).

\subsection{Acknowledgements}
I thank my supervisor, Piotr Kowalski, for his guidance to this point of my mathematical studies, which allowed me to undertake my first serious research.
I thank also Thomas Scanlon, which offered his knowledge and time during my visit in Berkeley. 
Also Martin Hils has significantly contributed to the final draft of this paper remarking that large parts of Section \ref{sec:galois} can be obtained without assumption about existence of model companions of theories with a group action.
Besides this, I am grateful to my wife, Agata, for being proud of me and supporting me during my work.
Last but not least: 
\begin{center}
\textit{Deo gratias!}
\end{center}

\section{Prelude}
\subsection{Preliminaries and conventions}
For any set $X$, a natural number $n>0$ and any function $f:X\to X$, we define $f^{(n)}$ as the composition of $f$ with itself $n$ times. If $A$ and $B$ are two sequences, then $AB$ denotes the concatenation of $A$ and $B$, i.e. $A^{\frown}B$. If $A$ and $B$ are considered only as sets, then $AB$ denotes $A\cup B$. Finally, if $H$ is a group and $A$ is a set, then the orbit of $A$ under an action of $H$ will be denoted by $H\cdot A$ or (if it will not lead to any confusion) by $HA$.

Assume that $\mathcal{L}$ is a language.
We denote the set of all $\mathcal{L}$-formulas by $\mathcal{F}_{\mathcal{L}}$.
By an \emph{$\mathcal{L}$-theory} we mean a non-empty and consistent subset of $\mathcal{F}_{\mathcal{L}}$ which includes all its consequences.
By an \emph{inconsistent $\mathcal{L}$-theory} we mean a non empty and inconsistent subset of $\mathcal{F}_{\mathcal{L}}$ which includes all its consequences, which is equal to the whole $\mathcal{F}_{\mathcal{L}}$.
We assume that theories in this paper are theories with infinite models.

Instead of writing $\mathfrak{C}\models\varphi(c)$ (where $\mathfrak{C}$ is in this context a fixed monster model) or $\models\varphi(c)$, we prefer to write $\varphi^{\mathfrak{C}}(c)$. The same if $M\preceq\mathfrak{C}$ (an elementary substructure): sometimes we use $\varphi^M(c)$ instead of $M\models\varphi(c)$. However, we will use $\sigma_g$ instead of $\sigma_g^M$ for a function symbol $\sigma_g$ which corresponds to an automorphism.

Now, let $N$ and $N'$ be $\mathcal{L}$-structures and let $E$ be a subset of $N$. We use $\langle E\rangle_{\mathcal{L}}$ to denote the $\mathcal{L}$-substructure of $N$ generated by $E$. Moreover, $\acl_{\mathcal{L}}^N(E)$ denotes the algebraic closure of $E$ in $N$ in the sense of the language $\mathcal{L}$ and the $\mathcal{L}$-theory $\theo(N)$ (similarly for $\dcl_{\mathcal{L}}^{N}(E)$ and $\tp_{\mathcal{L}}^N(a/E)$). We say that $N$ is \emph{existentially closed in $N'$} if for every quantifier free $\mathcal{L}$-formula $\varphi(x,y)$ and every finite tuple $b\subseteq N$, $|b|=|y|$, we have
$$N'\models(\exists y)\big(\,\varphi(b,y)\,\big)\quad\Rightarrow\quad
N\models(\exists y)\big(\,\varphi(b,y)\,\big).$$
We write $N\leqslant_1 N'$ if $N$ is existentially closed in $N'$. We say that $M$ is \emph{existentially closed among models of the theory $T$} if for every $M'\models T$ such that $M\subseteq M'$, it follows $M\leqslant_1 M'$. If additionally $M$ is a model of $T$, the it is called shortly an \emph{existentially closed model of the theory $T$}.
If every model of the theory $T$ is existentially closed, the theory $T$ will be called \emph{model complete}. If $T$ is model complete, $M,N\models T$ and $M\subseteq N$, then $M\preceq N$.

\begin{definition}\label{mod_cons}
We call an $\mathcal{L}$-theory $T'$ \emph{model-consistent with} the $\mathcal{L}$-theory $T$ if for each $M\models T$ there exists $M'\models T'$ such that $M\subseteq M'$. 
Equivalently, for each $M\models T$, $T'\cup\Dgat(M)$ is consistent.
\end{definition}

\begin{remark}\label{lud_fact1}
A theory $T'$ is model-consistent with the theory $T$ if and only if $T'_{\forall}\subseteq T_{\forall}$ (i.e. the universal part of the theory $T$ contains the universal part of the theory $T'$).
\end{remark}

We provide here our favourite definition of a model companion (in the spirit of \cite{kaiser1}). The reader may notice that theories of the main interest 
of Section \ref{sec:forking} are model companions (of some theories).

\begin{definition}
We call an $\mathcal{L}$-theory $T'$ a \emph{model companion} of the
$\mathcal{L}$-theory $T$ if the following hold
\begin{enumerate}
\item[i)] $T$ is model-consistent with $T'$,
\item[ii)] $T'$ is model-consistent with $T$,
\item[iii)] $T'$ is model-complete.
\end{enumerate}
\end{definition}

If a model companion of the theory $T$ exists, then it is unique.
The model companion of $T$ will be denoted by $T^{\mc}$.

\begin{remark}
If a model companion $T^{\mc}$ of the theory $T$ exists, then $T_{\forall\exists}\subseteq T^{\mc}$ (i.e. the theory $T^{\mc}$ contains all the $\forall\exists$-formulas belonging to $T$).
\end{remark}

For the rest of this paper we fix a group $(G,\ast)$ (time to time we will assume additional properties of $G$). We are working with an $\mathcal{L}$-theory $T$ and with the language $\mathcal{L}^{G}$ which is the language $\mathcal{L}$ extended by unary function symbols $\bar{\sigma}=(\sigma_g)_{g\in G}$. 

\begin{definition}
\begin{enumerate}
\item We introduce set of $\mathcal{L}^G$-formulas $A_G$, which contains exactly the following axioms:
\begin{enumerate}
\item[i)] $\sigma_g$ is an automorphism of $\mathcal{L}$-structure for every $g\in G$,
\item[ii)] $\sigma_g\circ\sigma_h=\sigma_{g\ast h}$ for every $g,h\in G$.
\end{enumerate}
\item
Let $(M,(\sigma_g)_{g\in G})$ be an $\mathcal{L}^G$-structure. We say that $(\sigma_g)_{g\in G}$ is a \emph{$G$-action} on $M$ if $(M,(\sigma_g)_{g\in G})\models A_G$.

\item
If $T$ is an $\mathcal{L}$-theory, then $T_G$ is an $\mathcal{L}^G$-theory
equal to the set of consequences of $T\cup A_G$, i.e. $T_G=\Cn(T\cup A_G)$.
\end{enumerate}
\end{definition}

We will skip parenthesis in ``$(T_G)^{\mc}$" and abbreviate it to ``$T_G^{\mc}$".
Note that $A_G\subseteq(T_G)_{\forall\exists}\subseteq T_G^{\mc}$, hence each model of $T_G^{\mc}$ (if the model companion exists) is equipped with a $G$-action. Moreover, an $\mathcal{L}^G$-structure $(M,(\sigma_g)_{g\in G})$ may be denoted by ``$(M,\bar{\sigma})$". Note the following, easy but important, fact.

\begin{fact}
For any theory $T$ we have $(T_G)_{\forall}\subseteq (T_{\forall})_G$.
\end{fact}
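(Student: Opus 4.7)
The plan is to establish the inclusion by a semantic/downward-preservation argument. Given $\psi = \forall\bar{x}\,\phi(\bar{x})\in(T_G)_\forall$ with $\phi$ quantifier-free in $\mathcal{L}^G$, and any $(M,\bar{\sigma})\models(T_\forall)_G$, I want to conclude $(M,\bar{\sigma})\models\psi$. Since $\phi$ is quantifier-free, it is preserved under $\mathcal{L}^G$-embeddings, so the task reduces to embedding $(M,\bar{\sigma})$ as an $\mathcal{L}^G$-substructure of some $(N,\bar{\tau})\models T_G$: then the validity of $\psi$ in $(N,\bar{\tau})$ descends to $(M,\bar{\sigma})$.

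The embedding is produced by the standard diagram/compactness method. I would consider the $\mathcal{L}^G$-theory with new constants $c_m$ (one for each $m\in M$),
\[
\Sigma \;:=\; T \,\cup\, A_G \,\cup\, \Dgat_{\mathcal{L}^G}(M,\bar{\sigma}),
\]
and show it is consistent; any model of $\Sigma$ contains a copy of $(M,\bar{\sigma})$ as an $\mathcal{L}^G$-substructure of a model of $T\cup A_G = T_G$. For consistency by compactness, I would pass to a finite fragment of $\Sigma$, involving only finitely many group elements $F\subseteq G$ and finitely many constants $c_{m_1},\ldots,c_{m_n}$. Since $M\models T_\forall$, Remark~\ref{lud_fact1} provides an $\mathcal{L}$-embedding $M\hookrightarrow N$ into some $N\models T$; choosing $N$ sufficiently saturated and strongly homogeneous over $M$, I can extend each partial $\mathcal{L}$-automorphism $\sigma_g|_M$ (for $g\in F$) to a full $\mathcal{L}$-automorphism $\tau_g\in\aut_{\mathcal{L}}(N)$.

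The main obstacle I anticipate is ensuring the global compatibility relations $\tau_g\tau_h = \tau_{gh}$ needed for $(N,\bar{\tau})\models A_G$. These already hold pointwise on $M$ because $(M,\bar{\sigma})\models A_G$, but they must be propagated to all of $N$. My plan is to fix a presentation of $G$ by generators and relations, pick lifts $\tau_g\in\aut_{\mathcal{L}}(N)$ via saturation only for the generators, and then define $\tau_g$ for composite $g$ by the chosen word in generators. Strong homogeneity of $N$ over $M$, together with a back-and-forth refinement of the generator lifts, would then be used to force each relation of the presentation to become an identity in $\aut_{\mathcal{L}}(N)$ globally rather than only on $M$, producing an $\mathcal{L}^G$-structure $(N,\bar{\tau})\models T\cup A_G$ containing $(M,\bar{\sigma})$ and closing the proof.
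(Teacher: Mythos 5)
There is a genuine gap at the heart of the proposal, at the step where you pass from $M\models T_\forall$ to automorphisms of a model of $T$. You embed $M$ into some $N\models T$ and assert that, by taking $N$ sufficiently saturated and strongly homogeneous, each $\sigma_g\vert_M$ extends to $\tau_g\in\aut_{\mathcal{L}}(N)$. But saturation and strong homogeneity let one extend \emph{partial elementary} maps of $N$; the map $\sigma_g\vert_M$ is merely an $\mathcal{L}$-automorphism of the substructure $M$, hence it preserves quantifier-free $\mathcal{L}$-types but need not preserve complete $\mathcal{L}$-types computed in $N$, precisely because $M$ is a substructure and not an elementary substructure of $N$. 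Concretely, one can have $a\in M$ and an extension $M\subseteq N\models T$ in which $a$ and $\sigma_g(a)$ realise different complete $\mathcal{L}$-types over $\emptyset$; then no automorphism of $N$, nor of any elementary extension of $N$, sends $a$ to $\sigma_g(a)$, no matter how saturated the model is. The subsequent ``back-and-forth refinement'' you propose is aimed at the group-compatibility relations $\tau_g\tau_h=\tau_{gh}$ among the lifts and does not touch this more basic obstruction.

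The actual content of the Fact lies exactly where your argument is silent: one must produce $N\models T$ with $M\subseteq N$ for which the maps $m\mapsto\sigma_g(m)$ \emph{are} partial elementary in $N$ for all $g\in G$ — equivalently, a completion of $T\cup\Dgat_{\mathcal{L}}(M)$ to a complete $\mathcal{L}(M)$-theory that is invariant under the action of $G$ on the constants induced by $\bar\sigma$ — or else bypass the passage to a single extension $N\supseteq M$ altogether. The existence of such an invariant completion is not a triviality, and your sketch takes it for granted. The paper states the Fact without proof (``easy but important''), so I cannot point you to the intended argument, but as written your proposal does not close this gap.
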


\begin{remark}\label{tmcgmc.tgmc}
Assume that $T$ is inductive, and the theories $T^{\mc}$ and $T_G^{\mc}$ exist. Then the following are equivalent.
\begin{enumerate}
\item The theory $(T^{\mc})_G^{\mc}$ exists and it follows $(T^{\mc})_G^{\mc}=T_G^{\mc}$.
\item For every $(M,(\sigma_g)_{g\in G})\models T_G^{\mc}$, we have $M\models T^{\mc}$.
\end{enumerate}
\end{remark}

\begin{proof}
Assume the first statement. Since $T^{\mc}$ is inductive, it follows $T^{\mc}\subseteq (T^{\mc})_G\subseteq (T^{\mc})_G^{\mc}$, hence every model $(M,(\sigma_g)_{g\in G})$ of $(T^{\mc})_G^{\mc}=T_G^{\mc}$ satisfies $M\models T^{\mc}$.

To prove implication from the second point to the first point, it is enough to show that $T_G^{\mc}$ is the model companion of $(T^{\mc})_G$. Of course, $T_G^{\mc}$ is model complete. Let $(M,(\sigma_g)_{g\in G})\models T_G^{\mc}$. Because $M\models T^{\mc}$, we have $(M,(\sigma_g)_{g\in G})\models (T^{\mc})_G$. On the other hand, if $T$ is inductive, then $T\subseteq T^{\mc}$. Therefore every model $(M,(\sigma_g)_{g\in G})$ of the theory $(T^{\mc})_G=\Cn(T^{\mc}\cup A_G)$ is a model of the theory $T_G=\Cn(T\cup A_G)$, so it embeds into a model of $T_G^{\mc}$.
\end{proof}

We start with an example which should come to our mind as the first one.

\begin{example}[The empty language and the empty theory case]\label{example.empty}
If we start with $\mathcal{L}=\emptyset$ (formulas consist only of equalities and inequalities of variables) and $T=\emptyset$ then 
$$T^{\mc}=\lbrace(\exists x_1,\ldots,x_n)(\bigwedge\limits_{i\neq j} x_i\neq x_j)\;\;|\;\; n\in\mathbb{N}\rbrace,$$
i.e. the axioms for being an infinite set. Introduce the following set of $\mathcal{L}^G$-formulas
$$T':=T_G\cup\lbrace (\forall x_1,\ldots,x_n)(\exists y)\big(\bigwedge\limits_{i=1}^{n}y\neq x_i\;\wedge\;
\bigwedge\limits_{j=1}^{m}\sigma_{g_j}(y)=y\;\wedge\;\bigwedge\limits_{k=m+1}^{m'}\sigma_{g_k}(y)\neq y\big)|$$ 
$$|\;\;H<G \text{ is finitely generated},\;n,m,m'\in\mathbb{N},$$
$$g_1,\ldots,g_m\in H,\;g_{m+1},\ldots,g_{m'}\in G\setminus H\; \rbrace.$$
Note that if $M\models T_G$, then
$$M\amalg\big(\coprod_{\mathclap{\substack{H<G\\ \text{finitely generated} }}}G/H\big)^{\amalg\omega},$$
where $G/H$ denotes the set of left cosets considered with the standard left action of $G$, is a model of $T'$ and an $\mathcal{L}^G$-extension of $M$.

Now, assume that $M\models T'$, there is $N\models T_G$ which extends $M$ and for some finite $\bar{m}\subseteq M$ and a system of equalities and inequalities in $\mathcal{L}^G$, say $\varphi(\bar{x},y)$, we have $N\models(\exists y)(\varphi(\bar{m},y))$. To show that $M\models(\exists y)(\varphi(\bar{m},y))$ we need to analyse just a few types of equalities/inequalities, which can occur in $\varphi(\bar{x},y)$, mainly
\begin{IEEEeqnarray*}{rCl}
\sigma_g(y) &=& y, \\
\sigma_g(y) &\neq & y, \\
\sigma_g(y) &=& x_i, \\
\sigma_g(y) &\neq & x_i.
\end{IEEEeqnarray*}
If there is a formula of the form $\sigma_g(y)=x_i$, then obviously element $\sigma_g^{-1}(m_i)\in M$ is the solution for $\varphi(\bar{m},y)$. Therefore we can assume that in $\varphi(\bar{x},y)$ there are no formulas of the form $\sigma_g(y)=x_i$. Moreover, we can even demand that our solution will be different from each $m_i$, i.e. we add formulas $y\neq x_i$, $i\leqslant n$, to the system of equalities and inequalities given by $\varphi(\bar{x},y)$.
Define $H:=\langle g\;\;|\;\; ``\sigma_g(y)=y"\text{ occurs in }\varphi(\bar{x},y)\rangle$, of course it is finitely generated. If $\sigma_g(y)\neq y$ occurs in $\varphi(\bar{x},y)$ then $g\not\in H$ (otherwise it would result in a contradiction for the realization of $\varphi(\bar{m},y)$ in $N$). Hence formula $(\forall\bar{x})(\exists y)(\varphi(\bar{x},y))$ can be viewed as an element of $T'$, so $M\models
(\exists y)(\varphi(\bar{m},y))$.

Now we will move to the superstability of $T^{\mc}$ and $T_G^{\mc}$ for the ``empty" theory $T$. 
Superstability of $T^{\mc}$ is well-known (in fact $T^{\mc}$ is strongly minimal), but we provide an argument which we also use to show superstability of $T_G^{\mc}$.
We note that a complete $\mathcal{L}$-type in $T^{\mc}$ over some set of parameters $A$, say $p(x)$, can be chosen only in $|A|+1$ many different ways (i.e. $x$ can be ``equal" to some $a\in A$ or the type $p(x)$ states that ``$x\not\in A$"). Similarly for a type in $n$ variables, $p(x_1,\ldots,x_n)$, we can limit the amount of possibilities: not more than $|A|$ if $|A|\geqslant\omega$.
Therefore $T^{\mc}$ is superstable. We proceed to the case with a $G$-action. Let $p(x)$ be a complete $\mathcal{L}^G$-type in $T_G^{\mc}$ over some set of parameters $A$. Type $p(x)$ can state ``$x\in G\cdot A$" (at most $|G|\cdot|A|$ different ways to do this) or ``$x\not\in G\cdot A$" and
$\lbrace (\sigma_g(x)=x)^{\eta(g)}\;|\;g\in G\rbrace$, where $\eta\in 2^G$,
``$(\sigma_g(x)=x)^1$" corresponds to ``$\sigma_g(x)=x$", and ``$(\sigma_g(x)=x)^0$" corresponds to ``$\sigma_g(x)\neq x$" (we use here quantifier elimination for $T_G^{\mc}$ which can be proven straightforward using the previously provided axioms of $T_G^{\mc}$). 
If $\lambda\geqslant 2^{|G|}+\omega$ and $|A|\leqslant\lambda$, then we have at most $\lambda$ many types over $A$ in $T_G^{\mc}$. It means that $T_G^{\mc}$ is also superstable. 
\end{example}

The above example shows that for one theory, the ``empty" theory $T$, $T_G^{\mc}$ exists for any group $G$. Usually the situation is much more complicated, i.e. for most of the theories $T$, proving the existence of $T_G^{\mc}$ is a hard task highly depending on the choice of $G$.

\begin{question}\label{q220}
Which assumptions about $T$ and $G$ assert the existence of $T_G^{\mc}$?
\end{question} 
\noindent
Baldwin, Kikyo and Shelah proved a few negative results about existence of $T_{\mathbb{Z}}^{\mc}$ in \cite{kikyo1}, \cite{kishe} and in \cite{balshe}.
The first counterexamples for existence of $T_{\mathbb{Z}}^{\mc}$, which motivated Kikyo to focus on this topic, were: the theory of random graph, DLO$_0$ and the theory of atomless Boolean algebras (see the introduction to \cite{kikyo1}). In fact, Baldwin, Kikyo and Shelah investigated existence of model companions only for theories with one automorphism (instead of theories with $G$-actions), and we hope that our generalization will lead to a deeper understanding why such a model companion exists or not. First of all, if $G$ is finite, then $T_G^{\mc}$ exists for the ``counterexample theories":
\begin{itemize}
\item the theory of the random graph (Example \ref{rand.graph}),
\item DLO$_0$ (Example \ref{orders}),
\item the theory of atomless Boolean algebras (Remark \ref{Boole2}).
\end{itemize}

\begin{example}[The random graph]\label{rand.graph}
Assume that $|G|=e$, $(G,\ast)=(\lbrace 1,\ldots,e\rbrace,\ast)$.
The following argument depends only on one property of the theory of graphs. Mainly, 
the atomic diagram of a finite tuple is finite (hence the example and axioms provided in it, can be generalized on other theories owing this property). In the lines below, we define a \emph{consistent configuration} $Q$, which in fact is a conjunction of all formulas belonging to the atomic diagram of some finite tuple of vertices.

Let $\mathcal{L}$ consist of a binary relation $R$, and let $T$ state that $R$ is irreflexive and symmetric ($R$ indicates the existence of an edge between two vertices). The theory $T$ has a model companion $T^{\mc}$ given by the axioms of $T$ together with an additional axiom scheme:
\begin{center}
for every finite sets $X$, $Y$ satisfying $X\cap Y=\emptyset$ there exists a vertex $v$  such that $v\;R\;x$ for all $x\in X$ and $v\centernot R y$ for all $y\in Y$.
\end{center}
We can easily check that $T_G^{\mc}$ exists, we give axioms for this theory and sketch the proof of being a model companion of $T_G$. We start with $T_G$, which is given by axioms of $T$, axioms stating that each $\sigma_i$, where $i\leqslant e$,  is an automorphism and axioms of the form $\sigma_{i}\circ\sigma_{j}(x)=\sigma_{i\ast j}(x)$. 

Let $x_1,\ldots,x_n, y_1,\ldots,y_{n'}$ be variables and let 
$$Z_0:=\bigcup\limits_{i\leqslant e}\lbrace \sigma_i(x_1),\ldots,\sigma_i(x_n)\rbrace,\quad Z:=\bigcup\limits_{i\leqslant e}\lbrace \sigma_i(x_1),\ldots,\sigma_i(x_n),\sigma_i(y_1),\ldots,\sigma_i(y_{n'})\rbrace.$$
Note that $Z_0$ is a set of $\mathcal{L}^G$-terms in variables $x_i$ and $Z$ is a set of $\mathcal{L}^G$-terms in variables $x_i$, $y_j$,  where $i\leqslant n$ and $j\leqslant n'$. 
We code \emph{the configuration} of $Z$ by the following function
$$Q:Z\times Z\ni(z_1,z_2)\mapsto Q_{(z_1,z_2)}\in\lbrace =,R,\centernot R\rbrace,$$
where if $Q_{(z_1,z_2)}$ is equal to $\centernot R$, the string $z_1Q_{(z_1,z_2)}z_2$ should be understood as $z_1\centernot Rz_2\wedge z_1\centernot=z_2$ (the rest is standard).
It is rather technical, but it can be checked whether $Q$ describes a configuration, which is \emph{consistent} (and there is a finite condition to check this). Where ``consistent" means that there exists $V\models T_G$ and there exist $w_1,\ldots,w_n,v_1,\ldots,v_{n'}\in V$ such that 
$$V\models\Big(\bigwedge\limits_{z_1,z_2\in Z}z_1Q_{(z_1,z_2)}z_2\Big)[w_1,\ldots,w_n,v_1,\ldots,v_{n'}],$$
where the formula $\bigwedge\limits_{z_1,z_2\in Z}z_1Q_{(z_1,z_2)}z_2$ in variables $x_1,\ldots, x_n,y_1,\ldots,y_{n'}$ is evaluated on $w_1,\ldots,w_n$, $v_1,\ldots,v_{n'}$.
In such a case we call $Q$ a \emph{consistent configuration} of variables $x_1,\ldots,x_n, y_1,\ldots,y_{n'}$.
A model $V$ of the theory $T'$ satisfies $V\models T_G$ and 
for each $n,n'\in\mathbb{N}$ and any consistent configuration  $Q$ of variables $x_1,\ldots,x_n, y_1,\ldots,y_{n'}$,
it satisfies
the following (in the above notation) axiom:
$$
(\forall x_1,\ldots x_n)\Big(\bigwedge\limits_{z_1,z_2\in Z_0}z_1Q_{(z_1,z_2)}z_2\rightarrow
(\exists y_1,\ldots,y_{n'})\big(
\bigwedge\limits_{z_1,z_2\in Z}z_1Q_{(z_1,z_2)}z_2\big)\Big).$$
Assume that $(V,R,\overline{\sigma})\models T_G$. After adding suitably many vertices to $V$, we obtain an $\mathcal{L}^G$-extension, which is a model of $T'$. 

Similarly to the theory $T^{\mc}$, $T'$ eliminates quantifiers. The proof is rather standard and we only sketch the main steps. 
If $\varphi(\bar{x},y)$ is a conjunction of atomic and negations of atomic formulas in $\mathcal{L}^G$, we want to eliminate the quantifier in $(\exists y)\,\big(\varphi(\bar{x},y)\big)$.
There are two cases. Firstly, there is no consistent configuration $Q$ such that $\varphi(\bar{x},y)$ is contained (as a collection of atomic formulas and negations of atomic formulas, 
maybe after some permutation) in $\bigwedge\limits_{z_1,z_2\in Z}z_1Q_{(z_1,z_2)}z_2$ (notation as above for $n'=1$). In this case, we have
$$T'\vdash (\exists y)\,\big(\varphi(\bar{x},y)\big)\leftrightarrow \bar{x}\neq\bar{x}.$$
Secondly, there exists a consistent configuration $Q$ such that $\varphi(\bar{x},y)$ is contained in $\bigwedge\limits_{z_1,z_2\in Z}z_1Q_{(z_1,z_2)}z_2$. We can show that
$$T'\vdash (\exists y)\,\big(\varphi(\bar{x},y)\big)\leftrightarrow \bigvee\limits_{i\leqslant r} (\exists y)\big(\bigwedge\limits_{z_1,z_2\in Z}z_1Q^i_{(z_1,z_2)}z_2 \big),$$
where $Q^1,\ldots,Q^r$ are all consistent configurations containing $\varphi(\bar{x},y)$ (there are only finitely many such configurations, because there only finitely many possibilities to define the atomic diagram of a tuple of a fixed length). By our axioms, it follows
$$T'\vdash (\exists y)\,\big(\varphi(\bar{x},y)\big)\leftrightarrow \bigvee\limits_{i\leqslant r}\; \bigwedge\limits_{z_1,z_2\in Z_0}z_1Q^i_{(z_1,z_2)}z_2.$$

The theory $T'$ is model complete, and so it is a model companion $T_G^{\mc}$ for $T_G$. Kikyo worked under the assumption $T=T^{\mc}$ (i.e. he considered random graphs with added $G$-action, we consider graphs with added $G$-action).
Therefore, if we want to see that $T'$ is a ``counterexample", 
we need to show that reducts of models of $T'$ are models of $T^{\mc}$.
Assume that $(V,(\sigma_g)_{g\in G})\models T'$, we need to prove that $V$ satisfies the random graph axioms.

Let $X,Y\subseteq V$ be finite, our goal is to find $v\in V$ such that $v$ has an edge with each element of $X$ and has no edge with any element of $Y$. Consider the graph $W:=GX\cup GY \cupdot G$ with edges on $GX\cup GY$ copied from $V$, and the new edges $gR(gw_X)$ defined for all $g\in G$ and all $w_X\in X$. There is a natural $G$-action on $W$ and we see that $1\in G\subseteq W$ has an edge with each element of $X$ and has no edge with any element of $Y$. We take the consistent configuration $Q$ given by the atomic diagram of $(W,(\sigma_g)_{g\in G})$. The axiom of $T'$ corresponding to $Q$ assures existence of the desired vertex $v$.

Summarizing, we defined an $\mathcal{L}^G$-theory $T'$ which extends the theory of graphs with a $G$-action, $T_G$, and has quantifier elimination. We stated that models of $T_G$ extend to models of $T'$, hence $T'=T_G^{\mc}$. Moreover, we noted that $T^{\mc}\subseteq T_G^{\mc}$, hence Remark \ref{tmcgmc.tgmc} implies that $(T^{\mc})_G^{\mc}=T_G^{\mc}$ (i.e. random graphs with action of a finite group have a model companion).

Martin Hils pointed out to us that $T_G^{\mc}$ in the case of the theory of random graph can be described as the theory of the Fra\"{i}ss\'{e} limit of the class of finite graphs equipped with a $G$-action. He also noted that $T_G^{\mc}$ as a Fra\"{i}ss\'{e} limit is $\omega$-categorical.
\end{example}

\begin{definition}\label{def.gen.inv}
Let $M$ be an $\mathcal{L}^G$-structure. By $M^G$ we denote the \emph{$G$-invariants} (\emph{invariants}), given by $$M^G=\lbrace m\in M\;|\;(\forall g\in G)(\sigma_g(m)=m)\rbrace.$$
\end{definition}

\begin{example}[Linear orders]\label{orders}
Assume that we have an $\mathcal{L}^G$-structure $M$.
If there exists an $M^G$-definable linear ordering and $|G|=e\in\mathbb{N}$, then $G$ acts trivially on $M$, i.e.: $M^G=M$. It is because for any preserving order bijection $f:M\to M$ such that $f^{(n)}=\id$, for some $n>0$, element $f(m)\in M$ can not be strictly greater or smaller than element $m\in M$.

Therefore, if $T$ is the theory of linear orders, then $T_G$ is (informally) equal to $T$. Hence $T_G^{\mc}$ is equal to DLO$_0$, which is a model companion of $T$ (in this case $T_G^{\mc}$ is informally equal to $T^{\mc}$). Recall that the author of \cite{kikyo1} stated that $T=$DLO$_0$ is a counterexample for existence of $T_{\mathbb{Z}}^{\mc}$ (we have $T^{\mc}=$DLO$_0$), so our approach differs from the original one. But in this particular case, we can also start with $T=$DLO$_0$ and still $T_G^{\mc}$ exists and is again equal to DLO$_0$.

Similarly for the theory of ordered fields and its model companion as well as the theory of real-closed fields (and the case of a finite group $G$).
\end{example}

 \begin{example}[Rings of exponent $2$]\label{ring_exp}
Assume that $\mathcal{L}$ is the language of rings (i.e. $\lbrace +,-,\cdot,0,1\rbrace$). 
By ``the theory of rings" we mean an $\mathcal{L}$-theory which models are commutative rings with unit.
We introduce here a notation which is used in \cite{nacfa} and is presented there in a more systematic way.

If an $\mathcal{L}^G$-structure $(R,(\sigma_g)_{g\in G})$ is a model of $T_G$ for $T=$ ``the theory of rings", we call it a \emph{$G$-transformal ring}. If moreover $I\trianglelefteqslant R$, we say that $I$ is  a \emph{$G$-invariant ideal} if for each $g\in G$ it is
$$\sigma_g(I)\subseteq I.$$

If $I$ is a $G$-invariant ideal,
then there is a natural $G$-transformal ring structure on $R/I$.

Again, assume that $|G|=e\in\mathbb{N}$, $(G,\ast)=(\lbrace 1,\ldots,e\rbrace,\ast)$.
Let $(R,(\sigma_g)_{g\in G})$ be a $G$-transformal ring. 
For any $\bar{r}=(r_1,\ldots,r_n)\in R^n$ we use the following convention
$$\sigma_k(\bar{r})=\big(\sigma_k(r_1),\ldots,\sigma_k(r_n)\big),$$
where $k\leqslant e$. If $\bar{r}_1,\ldots,\bar{r}_e\in R^n$, then
$$\bar{\sigma}\big(\bar{r}_1,\ldots,\bar{r}_e\big):=
\big(\sigma_1(\bar{r}_1),\ldots,\sigma_e(\bar{r}_e)\big),$$
$$\bar{\sigma}(\bar{r}_1):=\big(\sigma_1(\bar{r}_1),\ldots,\sigma_e(\bar{r}_1)\big).$$

We set a $G$-transformal ring structure on the ring of polynomials over $R$.
Fix $n>0$ and let $X_i$, where $i\leqslant e$, denote the $n$-tuple of variables, $(X_{i,1},\ldots,X_{i,n})$. 
A $G$-transformal
ring structure on the ring $R[X_1,\ldots,X_e]$ is given by
$$\sigma_{k}(f(X_1,\ldots,X_e))=f^{\sigma_{k}}(X_{k\ast 1},\ldots,X_{k\ast e}),$$
where 
$$\Big(\sum\limits_{\mathbf{i}} r_{\mathbf{i}} \bar{X}^{\mathbf{i}} \Big)^{\sigma_{k}}=
\sum\limits_{\mathbf{i}} \sigma_{k}(r_{\mathbf{i}}) \bar{X}^{\mathbf{i}} .$$

We focus on the theory of rings of exponent $2$, i.e. the theory of rings satisfying additional axiom
$$x^2=x.$$

Let $T$ be the theory of rings of exponent $2$, and let $(R,(\sigma_g)_{g\in G})\models T_G$. Because $(R,(\sigma_g)_{g\in G})$ is a $G$-transformal ring, we have a $G$-transfomal ring structure on the ring $R[X_1,\ldots,X_e]$, as above. Moreover, 
$$I_2=(X_{i,j}^2-X_{i,j}\;|\;i\le e,\,j\le n)$$
 is a $G$-invariant ideal and so $R[t_1,\ldots,t_e]:=R[X_1,\ldots,X_e]/I_2$ is a $G$-transformal ring of exponent $2$, where $t_i=(t_{i,1},\ldots,t_{i,n})$ is the image of $(X_{i,1},\ldots,X_{i,n})$ under the quotient map.
The most important property of $R[t_1,\ldots,t_e]$ is that 
it is a free $R$-module of finite rank, so each element of $R[t_1,\ldots,t_e]$ is represented by a finite sequence of elements of $R$, of length bounded by $2^{ne}$.

The theory $T_G^{\mc}$ exists and is given by the following (compare with the axioms in DExample \ref{gtcf1}). An $\mathcal{L}^G$-structure
$(R,(\sigma_g)_{g\in G})$ is a model of $T'$ if $(R,(\sigma_g)_{g\in G})\models T_G$ and 
if for every $n\in\mathbb{N}_{>0}$, we have
\begin{itemize}
\item[($\diamondsuit$)] every finitely generated $I,J\trianglelefteqslant R[t_1,\ldots,t_e]$
(as in the above notation) such that $I\subsetneq J$ and $I$ is a $G$-invariant ideal, there is $r\in R^n$ satisfying $\bar{\sigma}(r)\in V_R(I)\setminus V_R(J)$.
\end{itemize}
The expression ``$\bar{\sigma}(r)\in V_R(I)$" should be understood as 
$$I\subseteq\ker\big(\ev_{\bar{\sigma}(r)}:R[t_1,\ldots,t_e]\to R\big),$$
where $\ev_{\bar{\sigma}(r)}:R[t_1,\ldots,t_e]\to R$ it the unique map such that composed with the quotient map it is equal to the common evaluation map
$\ev_{\bar{\sigma}(r)}:R[X_1,\ldots,X_e]\to R$.

It is a standard argument to show that $T'$ is a model companion $T_G^{\mc}$ of $T_G$. The reader may consult proofs of Lemma 2.8 and Lemma 2.9 in \cite{nacfa}. We note here 
one fact which is used in analogons of these proofs.
For any polynomial $F\in R[X_1,\ldots,X_e]$ there exists a ``truncated" polynomial $\tilde{F}\in R[t_1\ldots,t_e]$ such that for any $r\in R^n$ we have
$$F(r)=0\qquad\iff\qquad \tilde{F}(r)=0.$$
\end{example}
 
 \begin{remark}[Atomless Boolean algebras]\label{Boole2}
The theory considered in Example \ref{ring_exp}, is the theory of Boolean rings which are structures canonically dual to Boolean algebras, see \cite[\S 6.3]{poizat001}
for the quantifier-free translation (i.e. extensions by definitions without quantifiers in both ways).
Therefore, we have proved existence of $T_G^{\mc}$ for $T=$ ``theory of Boolean algebras" and a finite $G$. Recall that 
a model companion of the theory of atomless Boolean algebras with an automorphism does not exist. Theory of atomless Boolean algebras is the model companion of $T$, denoted in this remark by $T^{\mc}$. Therefore the theory $(T^{\mc})_{\mathbb{Z}}^{\mc}$ does not exist, but the theory $T_G^{\mc}$ does.

On the other hand, $\mathcal{L}$-reducts of models of $T_G^{\mc}$ are atomless Boolean algebras. To see this assume that $R\models T_G^{\mc}$ and $r\in R$. Being an atom means that there is no $s\in R$ such that $r\cdot s=r$ and $r\neq s$. Consider $R[t_1,\ldots,t_e]$ defined as in Example \ref{ring_exp} for $|t_1|=\ldots=|t_e|=1$, and $R':=R[t_1,\ldots,t_e]/I$, where $I$ is the ideal generated by the elements $\sigma_i(r)t_i-\sigma_i(r)$, for $i\leqslant e$. We note that $R\subseteq R'$ is an $\mathcal{L}^G$-extension and $R'\models(\exists y)(r\cdot y=y\;\wedge\;r\neq y)$. Model completeness of $T_G^{\mc}$ implies that we also have $R\models(\exists y)(r\cdot y=y\;\wedge\;r\neq y)$, hence $r$ can not be an atom.

Finally, by Remark \ref{tmcgmc.tgmc}, the theory $T_G^{\mc}$ is equal to $(T^{\mc})_G^{\mc}$. Therefore the theory $(T^{\mc})_{\mathbb{Z}}^{\mc}$ does not exist, but the theory $(T^{\mc})_G^{\mc}$ does.
\end{remark}

 \begin{example}[Fields]\label{gtcf1}
Let $T=$ ``theory of fields" in the language of rings as in Example \ref{ring_exp} and let $|G|=e\in\mathbb{N}$. The existence and properties of $T_G^{\mc}$ were one of the the main motivations for this paper and a model for the idea of model-theoretic dynamics. We just recall here the definition of the theory $G-\tcf=T_G^{\mc}$ from \cite{nacfa}, for the proofs we refer the reader to \cite{nacfa}. We use the notation from Example \ref{ring_exp}, and a \emph{$G$-transformal field} means a $G$-transformal ring, which is a field.

 A $G$-transformal field $(K,\overline{\sigma})$ is a model of $G-\tcf$, if
 for every $n\in\mathbb{N}_{>0}$ it satisfies the following axiom scheme:
\begin{itemize}
 \item[($\clubsuit$)]  
 for any $I,J\trianglelefteqslant K[X_1,\ldots,X_e]$ (as in the above notation) such that $I\subsetneq J$
 and $I$ is a $G$-invariant prime ideal,
 there is $a\in K^n$ satisfying $\overline{\sigma}(a)\in V_K(I)\setminus V_K(J)$.
\end{itemize}
Note that ``iterations" like $T_{\mathbb{Z}/2\mathbb{Z}}^{\mc}$ for $T=\mathbb{Z}/2\mathbb{Z}-\tcf$ are equal to $\mathbb{Z}/2\mathbb{Z}\times\mathbb{Z}/2\mathbb{Z}-\tcf$.
The theories like $\mathbb{Z}/2\mathbb{Z}\times\mathbb{Z}/2\mathbb{Z}-\tcf$ exist, so also the ``iteration" theories exist.
The same remains true for the general case (for products $G_1\times G_2$ of finite groups $G_1$ and $G_2$).
\end{example}

The theory $G-\tcf$, where $G$ is finite, inherits some nice properties from the theory ACF, which is the model companion of $T=$"theory of fields". Similarly for ACFA, which 
should be understood as the theory denoted in our convention by $\mathbb{Z}-\tcf$ or by $\acf_{\mathbb{Z}}^{\mc}$. ACF is the theory for the most model theorists, a benchmark if we think that
\begin{center}
``model theory" $=$ ``theory of fields" $-$ ``fields".
\end{center}
It seems to be true that $T_G^{\mc}$ inherits similar properties
from $T^{\mc}$ (for an arbitrary theory $T$) as $G-\tcf$ and ACFA do from ACF (of course if both $T^{\mc}$ and $T_G^{\mc}$ exist).
One of the main questions investigated in this paper is the following one.

\begin{question}
What properties 
the theory $T_G^{\mc}$ inherits from the theory $T^{\mc}$?
\end{question}

\section{Galois theory for structures with group action}\label{sec:galois}
Before we start to investigate the forking independence and other pure model theoretic notions and properties, we provide more facts about invariants, \emph{Galois groups} and \emph{regularity}. One could say: we are going now to do some algebra without algebra.

\subsection{PAC revisited}
We fix (``once for all") an $\mathcal{L}$-structure $\mathfrak{D}$ which is $\kappa_{\mathfrak{D}}$-saturated and $\kappa_{\mathfrak{D}}$-strongly homogeneous and set $T':=\theo_{\mathcal{L}}(\mathfrak{D})$. In other words: $\mathfrak{D}$ is a monster model for the complete theory $T'$.

We start with Definition \ref{regular.def}, which is a very general one and therefore we wished to provide it before we start to assume additional 
properties of $T'$. On the other hand we wanted to motivate the introduction of a new definition (the definition of a \emph{PAC substructure}) by comparing it to the previous ones. It could not be done without references to the results obtained in the next parts of the thesis. Instead of moving the comparison of the new definition of a PAC substructure to an appendix, we decided to provide it 
after the definition appears, but with references (in proofs of Proposition \ref{PACvsPACPP} and Proposition \ref{PAC.Hru}) to Lemma \ref{PACclaim}, which assumes the stability of $T'$ (similarly as Proposition \ref{PACvsPACPP} and Proposition \ref{PAC.Hru}).

The first point of the below definition is extracted from the proof of \cite[Theorem 3.7]{ChaPil}, and it is motivated by some phenomena in the algebra of fields. The second point is another possibility for the well known definition of a \emph{PAC} substructure, which was introduced in Hrushovski's manuscript (\cite{manuscript}) and then generalized by Pillay and Polkowska in \cite{PilPol}. 
 
Our motivation for the definition of a PAC substructure was a phenomena from the field theory, i.e. the description of PAC fields made in \cite[Proposition 11.3.5]{FrJa}. It turns out that our definition of a PAC substructure is a more faithful generalization of the Hrushovski's definition than the definition from \cite{PilPol}. We discuss it in this subsection.

\begin{definition}\label{regular.def}
\begin{enumerate}
\item Let $E\subseteq A$ be small subsets of $\mathfrak{D}$. We say that $E\subseteq A$ is \emph{$\mathcal{L}$-regular} (or just \emph{regular}) if
$$\dcl_{\mathcal{L}}^{\mathfrak{D}}(A)\cap\acl_{\mathcal{L}}^{\mathfrak{D}}(E)=\dcl_{\mathcal{L}}^{\mathfrak{D}}(E).$$

\item Let $N$ be a small $\mathcal{L}$-substructure of $\mathfrak{D}$. We say that $N$ is \emph{pseudo-algebraically closed} (\emph{PAC}) if for every small $\mathcal{L}$-substructure $N'$ of $\mathfrak{D}$, which is $\mathcal{L}$-regular extension of $N$, it follows $N\leqslant_1 N'$ (i.e. $N$ is existentially closed in $N'$).
\end{enumerate}
\end{definition}
\noindent

\begin{remark}\label{regular.remark}
\begin{enumerate}
\item
After posting this paper on Arxive, Silvain Rideau informed us that 
our definition of regularity coincides with the definition of stationarity given in \cite[Definition 5.17]{silvain002} and considered in \cite{hrushovski_onfinima}, and we had not been aware of that. However, our approach to this property is more algebraic and we provide more facts about it arising from the algebra of fields (e.g. Corollary \ref{regular.PAPA}, Lemma \ref{lang413} and Remark \ref{rem.413}). Lemma \ref{PACclaim} puts more light on the relation between term ``stationarity" and term ``regularity" in this context.

\item We will assume that $T'$ eliminates imaginaries and therefore we formulated our definition of regularity as above. However, it can be generalized by 
passing with the previous condition to the imaginary sorts:
$$\dcl_{\mathcal{L}^{\eq}}^{\mathfrak{D}^{\eq}}(A)\cap\acl_{\mathcal{L}^{\eq}}^{\mathfrak{D}^{\eq}}(E)=\dcl_{\mathcal{L}^{\eq}}^{\mathfrak{D}^{\eq}}(E).$$

\item
Let us recall that a field extension $k\subseteq K$ is regular (in the classical sense) if and only if $K$ is separable over $k$ and $k$ is relatively algebraically closed in $K$. 
In the implementation of our definition of regularity to the field case,
we avoid the separability condition and demand that the \emph{perfect closure} of $k$ is relatively algebraically closed in the \emph{perfect closure} of $K$, i.e. we say that $k\subseteq K$ is $\mathcal{L}$-regular if
$$K^{\perf}\cap k^{\alg} = k^{\perf}.$$
Most of the time we will work in the situation which corresponds 
to the case of a perfect basis field $k$ (i.e. $k=k^{\perf}$), and then the last condition reduces to $K\cap k^{\alg}=k$ (by \cite[Lemma 4.10]{lang2002algebra}).

\item
Note that the regularity condition is invariant under the action of automorphisms.

\item
Of course, if $E$ is algebraically closed, then $E\subseteq A$ is regular for any small $A$.

\item
If $E\subseteq A$ is regular, and $E\subseteq A'\subseteq A$, then $E\subseteq A'$ is regular.

\item 
Assume that $E\subseteq A$ and $A\subseteq B$ are regular. It follows that $E\subseteq B$ is regular.

\item
Assume that $T'$ has quantifier elimination and let $P$ be a small $\mathcal{L}$-substructure of $\mathfrak{D}$. There exists a small $\mathcal{L}$-substructure $P^{\ast}$ of $\mathfrak{D}$ such that $P\subseteq P^{\ast}$ and $P^{\ast}$ is PAC. To see this it is enough to consider a small existentially closed model of $T'_{\forall}$ which extends $P$ and embeds it into $\mathfrak{D}$ over $P$. Quantifier elimination is needed here to move, if necessary, image of this embedding such that it will contain $P$. However, if we repeat the proof of existence of existentially closed extensions as we do in the proof of Proposition \ref{prop.PAC.extension}, we may discard the assumption about quantifier elimination and still prove the existence of PAC extensions.

\end{enumerate}
\end{remark}

We analyze regular extensions in Subsection \ref{subs:stable.case}, but before moving to other concepts, we provide the following lemma, which shows 
that the notion of a regular extension generalizes the notion of being an extension of an existentially closed structure. Moreover, being a PAC substructure $P$ means that $P$ is existentially closed in every regular extension, and (if $T'$ has quantifier elimination), by Lemma \ref{lang410}, 
being a PAC substructure $P$ means that $P$ is existentially closed only in regular extensions
(in other words: ``PAC $=$ existentially closed exactly in regular extensions"). Note that, if we assume stability of $T'$ (and EI and QE), by Corollary \ref{PAC.substructures}, every existentially closed substructure of a PAC structure is PAC.

\begin{lemma}\label{lang410}
If the $\mathcal{L}$-theory $T'$ admits quantifier elimination and for some small $\mathcal{L}$-substructures $P\subseteq N$ of $\mathfrak{D}$ it is $P\leqslant_1 N$, then $P\subseteq N$ is regular.
\end{lemma}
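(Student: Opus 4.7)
Let $a \in \dcl_{\mathcal{L}}^{\mathfrak{D}}(N) \cap \acl_{\mathcal{L}}^{\mathfrak{D}}(P)$; my aim will be to show $a \in \dcl_{\mathcal{L}}^{\mathfrak{D}}(P)$. Exploiting quantifier elimination for $T'$, I will first extract a quantifier-free formula $\alpha(x, \bar{p})$, $\bar{p} \subseteq P$, whose solutions in $\mathfrak{D}$ are precisely the realisations $a = a_1, \ldots, a_k$ of $\tp(a/P) = \qftp(a/P)$, together with a quantifier-free formula $\delta(x, \bar{n})$, $\bar{n} \subseteq N$, having $a$ as its unique solution in $\mathfrak{D}$. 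The plan is to assume $k \geq 2$ and obtain a contradiction through $P \leqslant_1 N$. A preliminary observation is that no $a_i$ lies in $\dcl_{\mathcal{L}}^{\mathfrak{D}}(P)$: the group $\aut(\mathfrak{D}/P)$ permutes the conjugates transitively, so any fixed conjugate would force $k = 1$.

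The central step will be to fuse $\delta$ and $\alpha$ into a single condition that $P \leqslant_1 N$ can handle. I will consider
\[
\gamma(\bar{y}, \bar{z}) \; := \; (\exists x\, \delta(x, \bar{y})) \wedge (\forall x\, x'\,(\delta(x, \bar{y}) \wedge \delta(x', \bar{y}) \to x = x')) \wedge (\forall x\,(\delta(x, \bar{y}) \to \alpha(x, \bar{z}))),
\]
asserting that $\delta(x, \bar{y})$ has a unique solution in the ambient structure and that this solution satisfies $\alpha(x, \bar{z})$. By quantifier elimination, $\gamma$ is $T'$-equivalent to a quantifier-free formula $\gamma^{*}(\bar{y}, \bar{z})$. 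Since $\mathfrak{D} \models \gamma(\bar{n}, \bar{p})$ by the choice of $\delta$ and $\alpha$, also $\mathfrak{D} \models \gamma^{*}(\bar{n}, \bar{p})$; because $\gamma^{*}$ is quantifier-free and $N$ is a substructure of $\mathfrak{D}$, we obtain $N \models \gamma^{*}(\bar{n}, \bar{p})$, and hence $N \models \exists \bar{y}\, \gamma^{*}(\bar{y}, \bar{p})$, which is an existential formula with parameters from $P$.

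At this point $P \leqslant_1 N$ transfers the existential into $P$: there is $\bar{q} \subseteq P$ with $\gamma^{*}(\bar{q}, \bar{p})$. Returning to $\mathfrak{D} \models T'$, the $T'$-equivalence yields $\gamma(\bar{q}, \bar{p})$, producing a unique $b \in \mathfrak{D}$ solving $\delta(x, \bar{q})$, and $\alpha(b, \bar{p})$ places $b$ among the conjugates $\{a_1, \ldots, a_k\}$. Uniqueness together with $\bar{q} \subseteq P$ gives $b \in \dcl_{\mathcal{L}}^{\mathfrak{D}}(P)$, contradicting the preliminary observation; hence $k = 1$ and $a \in \dcl_{\mathcal{L}}^{\mathfrak{D}}(P)$. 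The main subtlety I anticipate is arranging $\gamma$ so that quantifier elimination converts its mixed $\exists/\forall$-content into a single quantifier-free formula that remains true when restricted to the substructure $N$; once this bookkeeping is in place, the hypothesis $P \leqslant_1 N$ carries the rest of the argument essentially automatically.
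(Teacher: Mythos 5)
Your argument is correct and takes essentially the same route as the paper's proof: bundle the quantifier-free $N$-definition $\delta(x,\bar n)$ with the quantifier-free algebraicity witness $\alpha(x,\bar p)$ into one sentence asserting existence, uniqueness, and membership in the finite $P$-conjugacy class; apply quantifier elimination for $T'$ to pass to a quantifier-free $\gamma^{*}$ that survives restriction to $N$; push the resulting existential down to $P$ via $P\leqslant_1 N$; and obtain a $P$-definable element among the $\aut_{\mathcal{L}}(\mathfrak{D}/P)$-conjugates of $a$. The only cosmetic difference is presentational: the paper closes the argument directly, noting that such a conjugate $a'$ satisfies $a=f^{-1}(a')=a'$ for any $f\in\aut_{\mathcal{L}}(\mathfrak{D}/P)$ with $f(a)=a'$, whereas you phrase it as a reductio starting from $k\geq 2$; both are the same observation.
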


\begin{proof}
Assume that $a\in\dcl_{\mathcal{L}}^{\mathfrak{D}}(N)\cap\acl_{\mathcal{L}}^{\mathfrak{D}}(P)$.
Consider quantifier free $\mathcal{L}$-formulas $\varphi$ and $\psi$ such that for some finite tuples $n\subseteq N$ and $p\subseteq P$ it follows
$$\varphi(n,\mathfrak{D})=\lbrace a\rbrace,\qquad |\psi(p,\mathfrak{D})|<\infty,\qquad
\psi(p,\mathfrak{D})=\aut_{\mathcal{L}}(\mathfrak{D}/P)\cdot a.$$
Let $\phi(y)$ be a quantifier free $\mathcal{L}$-formula with parameters from $P$, such that $\phi^{\mathfrak{D}}(y)$ if and only if $\mathfrak{D}$ satisfies
$$(\exists\;x)\Big(\psi(p,x)\;\wedge\;\varphi(y,x)\;\wedge\;(\forall\;x')\big(\varphi(y,x')\rightarrow x=x'\big)\Big).$$
Notice that $\phi^{\mathfrak{D}}(n)$, hence (since $\phi(y)$ is quantifier free) $\phi^{N}(n)$. 
Since $P\leqslant_1 N$, we get $P\models\exists y\;\phi(y)$. Let $p'\subseteq P$ be such that $\phi^P(p')$, it follows $\phi^{\mathfrak{D}}(p')$. We obtain
$$\mathfrak{D}\models
(\exists\;x)\Big(\psi(p,x)\;\wedge\;\varphi(p',x)\;\wedge\;(\forall\;x')\big(\varphi(p',x')\rightarrow x=x'\big)\Big),$$
which means that there exists a solution of $\psi(p,x)$ which is contained in $\dcl_{\mathcal{L}}^{\mathfrak{D}}(P)$, say $a'$. 
Because $\psi^{\mathfrak{D}}(p,a')$, we obtain $a'=f(a)$ for some $f\in\aut_{\mathcal{L}}(\mathfrak{D}/P)$. Therefore $a=f^{-1}(a')=a'\in\dcl_{\mathcal{L}}^{\mathfrak{D}}(P)$.
\end{proof}

\begin{remark}
If the theory $T'$ has quantifier elimination, then for any small $\mathcal{L}$-substructure $P$ there exists a non-trivial regular extension. To see this, consider a non-trivial elementary extension $P'\succeq P$. Structure $P'$ can be embedded into $\mathfrak{D}$, and then, by the quantifier elimination, moved over $P$. The thesis follows from Lemma \ref{lang410}. Compare to Corollary \ref{cor.stationary_types_exist}.
\end{remark}

Now, we will show that for any small $\mathcal{L}$-substructure $P$ of $\mathfrak{D}$ there exists a small $\mathcal{L}$-substructure $P^{\ast}$ of $\mathfrak{D}$ such that $P\subseteq P^{\ast}$ and $P^{\ast}$ is PAC.
The proof is similar to the proof of existence of existentially closed models, but we restrict the procedure to regular extensions. The expected goal would be to achieve a description of minimal PAC extensions of a given substructure, but it is not related to the main subject of this paper and therefore it will not be undertaken in the below text.

\begin{lemma}\label{lemma.reg.trans}
Let $(A_{\alpha})_{\alpha\leqslant\lambda}$ be an ascending sequence of small $\mathcal{L}$-substructures of $\mathfrak{D}$ such that for each $\alpha<\lambda$ it follows that $A_{\alpha}\subseteq A_{\alpha+1}$ is regular. 
For each $\beta<\lambda$, it follows that $A_{\beta}\subseteq A_{\lambda}$
is regular.
\end{lemma}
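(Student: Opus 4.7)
The plan is to proceed by transfinite induction on $\lambda \geq \beta + 1$, with $\beta$ fixed. The base case $\lambda = \beta + 1$ is simply the hypothesis that $A_\beta \subseteq A_{\beta+1}$ is regular.

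For the successor step, suppose the claim holds for some $\lambda$ and consider the chain $A_\beta \subseteq A_\lambda \subseteq A_{\lambda+1}$. By the inductive hypothesis, $A_\beta \subseteq A_\lambda$ is regular, and by the hypothesis of the lemma, $A_\lambda \subseteq A_{\lambda+1}$ is regular. Then item (7) of Remark \ref{regular.remark} (transitivity of regularity through a two-step chain) immediately yields regularity of $A_\beta \subseteq A_{\lambda+1}$.

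For the limit step, assume $\lambda$ is a limit ordinal and that the ascending sequence is continuous, i.e.\ $A_\lambda = \bigcup_{\alpha < \lambda} A_\alpha$ (this is the only way the limit index carries nontrivial information, and is the standard reading of ``ascending sequence''). Take any $a \in \dcl_{\mathcal{L}}^{\mathfrak{D}}(A_\lambda) \cap \acl_{\mathcal{L}}^{\mathfrak{D}}(A_\beta)$. Since $\dcl$ has finite character, there is a finite tuple of parameters in $A_\lambda$ witnessing $a \in \dcl_{\mathcal{L}}^{\mathfrak{D}}(A_\lambda)$, and this tuple lies in some $A_\alpha$ with $\beta \leq \alpha < \lambda$. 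Hence $a \in \dcl_{\mathcal{L}}^{\mathfrak{D}}(A_\alpha) \cap \acl_{\mathcal{L}}^{\mathfrak{D}}(A_\beta)$. By the inductive hypothesis, $A_\beta \subseteq A_\alpha$ is regular, so $a \in \dcl_{\mathcal{L}}^{\mathfrak{D}}(A_\beta)$, as required.

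The routine transitivity at successors is immediate from a remark already in the paper; the only real point is the limit case, and that point reduces to the finite-character property of $\dcl$. There is no serious obstacle, provided one accepts the continuity convention $A_\lambda = \bigcup_{\alpha < \lambda} A_\alpha$ at limits; without this convention the statement would not even follow at limit $\lambda$ (an arbitrary extension above the union need not be regular over $A_\beta$), so this reading is forced by the desired conclusion.
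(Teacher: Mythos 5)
Your proof is correct and is exactly the argument the paper intends: the paper's proof is a one-line sketch ("transfinite induction using Remark \ref{regular.remark}.(7)"), and you have filled in the two substantive points it leaves implicit — the successor step via item (7) and the limit step via finite character of $\dcl$. Your observation that continuity of the chain at limits must be assumed is also consistent with the paper, since the lemma is applied in Proposition \ref{prop.PAC.extension} to a tower built with unions at limit stages.
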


\begin{proof}
We need to show that
$$\dcl_{\mathcal{L}}^{\mathfrak{D}}(A_{\lambda})\;\cap\;\acl_{\mathcal{L}}^{\mathfrak{D}}(A_{\beta})=\dcl_{\mathcal{L}}^{\mathfrak{D}}(A_{\beta}),$$
which can be done by a transfinite induction which uses Remark \ref{regular.remark}.(7).
\end{proof}
 
\begin{prop}\label{prop.PAC.extension}
Let $P$ be a small $\mathcal{L}$-substructure of $\mathfrak{D}$. There exists
a small $\mathcal{L}$-substructure $P^{\ast}$ of $\mathfrak{D}$ such that $P\subseteq P^{\ast}$ is regular and $P^{\ast}$ is PAC.
\end{prop}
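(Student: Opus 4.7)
The plan is to adapt the standard Henkin-style construction of existentially closed extensions, restricting at each stage the existential formulas we try to witness to those realized in some \emph{regular} extension of the current structure inside $\mathfrak{D}$. Starting from $P_0 := P$, I build by transfinite recursion an ascending chain $(P_\alpha)_{\alpha \leqslant \lambda}$ of small $\mathcal{L}$-substructures of $\mathfrak{D}$ such that each step $P_\alpha \subseteq P_{\alpha+1}$ is regular, and set $P^\ast := P_\lambda$ for a suitable limit $\lambda$.

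At a successor stage $\alpha + 1$, I enumerate all pairs $(\varphi(x,\bar{y}), \bar{a})$ with $\varphi$ quantifier-free in $\mathcal{L}$ and $\bar{a} \subseteq P_\alpha$ for which some small regular extension $P_\alpha \subseteq N \subseteq \mathfrak{D}$ contains a solution $b \in N$ of $\varphi(x, \bar{a})$. For each such pair I pick a witness $b$ in $\mathfrak{D}$ and adjoin these witnesses to $P_\alpha$ one at a time (itself a small transfinite chain). For a single witness $b$ in a regular extension $N$, Remark \ref{regular.remark}(6) tells us that $\langle P_\alpha, b\rangle_{\mathcal{L}} \subseteq N$ is regular over $P_\alpha$, and Remark \ref{regular.remark}(7) together with Lemma \ref{lemma.reg.trans} ensures that the concatenation of such single-witness adjunctions remains a regular extension of $P_\alpha$. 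A cardinality count on the number of admissible pairs keeps $P_{\alpha+1}$ small.

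At a limit stage $\beta$, I set $P_\beta := \bigcup_{\alpha<\beta} P_\alpha$; Lemma \ref{lemma.reg.trans} ensures that each inclusion $P_\alpha \subseteq P_\beta$ is regular. Taking $\lambda := \omega$ and $P^\ast := P_\omega$ suffices: any existential formula $\varphi(x,\bar{a})$ over $P^\ast$ realized in some small regular extension $P^\ast \subseteq N \subseteq \mathfrak{D}$ has its parameters $\bar{a}$ already in some $P_n$, and by Remark \ref{regular.remark}(7) the extension $P_n \subseteq N$ is also regular, so a witness for $\varphi(x,\bar{a})$ was already adjoined at stage $n+1$ and lies in $P^\ast$. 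Hence $P^\ast \leqslant_1 N$ for every such $N$, which by Definition \ref{regular.def}(2) says $P^\ast$ is PAC.

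The main obstacle is the successor stage: the \emph{simultaneous} adjunction of many witnesses, each regular over $P_\alpha$ individually, need not yield a substructure which is regular over $P_\alpha$ as a whole. This forces me to perform the successor step itself as an internal small transfinite chain and to rely on transitivity of regularity (Remark \ref{regular.remark}(7), Lemma \ref{lemma.reg.trans}) rather than on a one-shot adjunction. A secondary subtlety is that the set of admissible pairs at stage $\alpha$ is bounded---it depends only on intrinsic qf-data of $(b,\bar{a})$ over $P_\alpha$ and on regularity of $\langle P_\alpha, b\rangle_{\mathcal{L}}$ over $P_\alpha$, both computed inside $\mathfrak{D}$---so the enumeration does not secretly range over a proper class of ambient $N$'s.
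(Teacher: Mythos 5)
There is a genuine gap at the successor stage. You correctly observe that a single witness $b$ lying in a regular extension $N$ of $P_\alpha$ gives a regular step $P_\alpha \subseteq \langle P_\alpha, b\rangle_{\mathcal{L}}$ via Remark \ref{regular.remark}(6). But when you adjoin the \emph{second} witness $b'$, you only know that $b'$ sits in some regular extension $N'$ of $P_\alpha$ -- not of $Q_1 := \langle P_\alpha, b\rangle_{\mathcal{L}}$. So there is no reason for $Q_1 \subseteq \langle Q_1, b'\rangle_{\mathcal{L}}$ to be regular: the elements $b$ and $b'$, each individually harmless over $P_\alpha$, can interact to produce a new element of $\dcl_{\mathcal{L}}^{\mathfrak{D}}(Q_1 b') \cap \acl_{\mathcal{L}}^{\mathfrak{D}}(Q_1) \setminus \dcl_{\mathcal{L}}^{\mathfrak{D}}(Q_1)$. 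Transitivity (Remark \ref{regular.remark}(7)) and Lemma \ref{lemma.reg.trans} only say that a chain \emph{all of whose successive steps are regular} has a regular union; they give you nothing unless you first establish regularity of each step, which is exactly what your construction fails to guarantee from the second witness onward. Note also that you cannot fall back on forking-calculus tools such as Corollary \ref{regular.PAPA} to move witnesses into generic position, since at this point of the paper stability of $T'$ is not yet assumed.

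The paper's proof avoids this by being adaptive: the auxiliary chain $(P_{n,\alpha})_{\alpha}$ re-tests, at each internal step, whether the \emph{current} structure $P_{n,\alpha}$ admits a small regular extension $P'$ realising the formula $\varphi_\alpha(p_\alpha, x)$; if yes it takes such a $P'$ as the next term, and if no it does nothing. This way every internal step is regular by fiat, and Lemma \ref{lemma.reg.trans} does apply. Your construction can be repaired the same way: instead of fixing all witnesses in advance relative to $P_\alpha$, decide at internal step $\beta$ whether the current $Q_\beta$ has a regular extension containing a solution of the $\beta$-th formula, and if so adjoin a witness from such an extension (which keeps $Q_\beta \subseteq Q_{\beta+1}$ regular, again by Remark \ref{regular.remark}(6)). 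The final PAC verification then goes through exactly as you and the paper both argue, because a regular extension $N$ of $P^\ast$ is also a regular extension of the relevant intermediate structure $Q_\beta$, so the adaptive test succeeds at the right moment.
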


\begin{proof}
We will construct a tower $(P_n)_{n<\omega}$ of small $\mathcal{L}$-substructures of $\mathfrak{D}$. We set $P_0:=P$. Assume that we already have constructed $P_n$ and let $\varphi_{\alpha}(p_{\alpha},x)$, $\alpha<\lambda$, be an enumeration of all quantifier free $\mathcal{L}$-formulas over $P_n$.

We recursively define an auxiliary tower of small $\mathcal{L}$-substructures of $\mathfrak{D}$, $(P_{n,\alpha})_{\alpha<\lambda}$. Of course, $P_{n,0}:=P_n$ and if $\beta<\lambda$ is a limit ordinal, then
$$P_{n,\beta}:=\bigcup\limits_{\alpha<\beta}P_{n,\alpha}.$$
Now, assume that we have $P_{n,\alpha}$. If there is a small $\mathcal{L}$-substructure $P'$ of $\mathfrak{D}$ such that $P_{n,\alpha}\subseteq P'$ is regular and $P'\models\exists\;x\;\varphi_{\alpha}(p_{\alpha},x)$, then take $P_{n,\alpha+1}=P'$ for some arbitrary choice of such $P'$. If there is no
small $\mathcal{L}$-substructure $P'$ of $\mathfrak{D}$ such that $P_{n,\alpha}\subseteq P'$ is regular and $P'\models\exists\;x\;\varphi_{\alpha}(p_{\alpha},x)$, then set $P_{n,\alpha+1}=P_{n,\alpha}$.
Define $P_{n+1}$ as $$\bigcup\limits_{\alpha<\lambda}P_{n,\alpha}.$$

Now, we define $P^{\ast}$,
$$P^{\ast}:=\bigcup\limits_{n<\omega}P_n.$$
It is a small $\mathcal{L}$-substructure of $\mathfrak{D}$ which extends $P$. We need to check whether $P^{\ast}$ is PAC. Let $N$ be a small $\mathcal{L}$-substructure of $\mathfrak{D}$ such that $P^{\ast}\subseteq N$ is regular. Assume that $N\models \exists\;x\;\varphi(p,x)$ for some $p\subseteq P^{\ast}$ and quantifier free $\mathcal{L}$-formula $\varphi(y,x)$. There exist $n<\omega$ and some ordinal $\alpha$ such that $p\subseteq P_{n,\alpha}$ and $\varphi(p,x)$ is equal to $\varphi_{\alpha}(p_{\alpha},x)$. By Lemma \ref{lemma.reg.trans}, $P_{n,\alpha}\subseteq P^{\ast}$ is regular, hence also $P_{n,\alpha}\subseteq N$ is regular. Therefore $P_{n,\alpha+1}\models\exists\;x\;\varphi(p,x)$ and so $P^{\ast}\models \exists\;x\;\varphi(p,x)$.

The extension $P\subseteq P^{\ast}$ is regular by Lemma \ref{lemma.reg.trans}.
\end{proof}

Before we compare our definition of a PAC substructure to the existing ones, we remind below those definitions of a PAC substructure:
\begin{itemize}
\item the one given by Hrushovski will be distinguished  by ``Hru"-subscript: PAC$_{\text{Hru}}$,
\item the one given by Pillay and Polkowska will be distinguished by ``PP"-subscript: PAC$_{\text{PP}}$.
\end{itemize}
 
\begin{definition}[Definition 1.2 in \cite{manuscript}]\label{hru.PAC.def}
Let $T'$ be strongly minimal and has quantifier elimination and elimination of imaginaries. Let $M\models T'$ satisfy the \emph{definable multiplicity property} (consult ``Framework" at page 8. in \cite{manuscript}). A subset $P$ of $M$ is a \emph{PAC$_{\text{Hru}}$} subset of $M$, if every multiplicity $1$ formula with parameters from $P$ has a solution in $P$.
\end{definition}

\begin{definition}[Definition 3.1 in \cite{PilPol}]\label{PilPol.PAC.def}
Let $T'$ be stable, $\kappa\geqslant |T'|^+$ a cardinal, $M\models T'$, 
and let $P$ be an $\mathcal{L}$-substructure of $M$. We say that $P$ is a \emph{$\kappa$-PAC$_{\text{PP}}$} 
substructure of $M$ if whenever $A\subseteq P$ has cardinality smaller than $\kappa$ and $p(x)$ is complete stationary type over $A$ (in the sense of $M$), then $p$ has a realization in $P$.
\end{definition}

\begin{prop}\label{PACvsPACPP}
Assume that $T'$ is stable, allows quantifier elimination and elimination of imaginaries. 
Let $\kappa\geqslant|T'|^+$ and let $N$ be an $\mathcal{L}$-substructure of $\mathfrak{D}$. 
\begin{enumerate}
\item If the substructure $N$ is $\kappa$-saturated (in the sense of quantifier free part of the $\mathcal{L}$-theory $\theo(N)$) and PAC, then $N$ is a $\kappa$-PAC$_{\text{PP}}$ substructure.

\item If $N$ is a $\kappa$-PAC$_{\text{PP}}$ substructure, then it is a PAC substructure.
\end{enumerate}
\end{prop}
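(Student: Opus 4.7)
For part (1), the plan is the following. Start with a subset $A\subseteq N$ of cardinality less than $\kappa$ and a complete stationary type $p(x)\in S(A)$ (computed in $\mathfrak{D}$). Choose a realization $a\in\mathfrak{D}$ of the unique non-forking extension of $p$ to $N$. I expect to invoke Lemma \ref{PACclaim}, which is precisely the bridge translating non-forking/stationarity in a stable theory into the algebraic regularity condition of Definition \ref{regular.def}(1): it should yield that $N\subseteq\dcl_{\mathcal{L}}^{\mathfrak{D}}(Na)$ is $\mathcal{L}$-regular. The PAC hypothesis on $N$ then gives $N\leqslant_1\dcl_{\mathcal{L}}^{\mathfrak{D}}(Na)$, so every quantifier-free $\mathcal{L}$-formula appearing in $p$ has a solution in $N$. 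Hence the quantifier-free part of $p$ is finitely satisfiable in $N$ over the parameter set $A$; since $|A|<\kappa$ and $N$ is $\kappa$-saturated for quantifier-free types, we can realize the quantifier-free part of $p$ by some $a'\in N$, and quantifier elimination upgrades this to $a'\models p$.

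For part (2), we need to show $N\leqslant_1 N'$ for every $\mathcal{L}$-regular extension $N\subseteq N'$. Fix a quantifier-free $\mathcal{L}$-formula $\varphi(x,\bar{n})$ with $\bar{n}\subseteq N$ and a witness $a\in N'$ with $N'\models\varphi(a,\bar{n})$, and seek a witness inside $N$. The other direction of Lemma \ref{PACclaim} should convert regularity of $N\subseteq N'$ into the statement that $\tp_{\mathcal{L}}^{\mathfrak{D}}(a/N)$ is stationary. By stability, this type does not fork over some subset of $N$ of cardinality at most $|T'|$, and using elimination of imaginaries to represent a canonical base of $\tp_{\mathcal{L}}^{\mathfrak{D}}(a/N)$ as a real tuple inside $\dcl_{\mathcal{L}}^{\mathfrak{D}}(N)$, one can choose $A\subseteq N$ containing $\bar{n}$ with $|A|\leqslant |T'|<\kappa$ such that $\tp_{\mathcal{L}}^{\mathfrak{D}}(a/A)$ is stationary and $\tp_{\mathcal{L}}^{\mathfrak{D}}(a/N)$ is its non-forking extension. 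The $\kappa$-PAC$_{\text{PP}}$ hypothesis then produces $a'\in N$ with $a'\models\tp_{\mathcal{L}}^{\mathfrak{D}}(a/A)$, and in particular $N\models\varphi(a',\bar{n})$, which is what we wanted.

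The main technical obstacle in both directions is Lemma \ref{PACclaim} itself: it is the crucial link between the algebraic regularity condition (Definition \ref{regular.def}) and the stationarity/non-forking vocabulary used in the Pillay--Polkowska definition, and without it neither implication goes through cleanly. A secondary delicate issue, confined to (2), is that the canonical base of $\tp_{\mathcal{L}}^{\mathfrak{D}}(a/N)$ lives a priori in $\mathfrak{D}^{\eq}$, so the elimination of imaginaries assumption must be invoked carefully to realize it as a tuple that can actually be absorbed into a small subset $A$ of $N$ itself, not merely into $\dcl_{\mathcal{L}}^{\mathfrak{D}}(N)$. Once those two points are in place, the rest is bookkeeping: in (1) one only exploits $\kappa$-saturation together with quantifier elimination, and in (2) one only feeds the right stationary type into the $\kappa$-PAC$_{\text{PP}}$ hypothesis.
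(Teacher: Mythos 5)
Your proposal is correct and follows essentially the same route as the paper's proof: both directions hinge on Lemma \ref{PACclaim} to translate between regularity and stationarity, part (1) uses a non-forking extension to $N$ plus PAC and $\kappa$-saturation with QE, and part (2) uses the canonical base of $\tp_{\mathcal{L}}^{\mathfrak{D}}(m/N)$ (via elimination of imaginaries and \cite[Remark 2.26.iii]{anandgeometric}) to pin down a small parameter set in $N$ over which the type is already stationary. The one place you leave slightly loose — producing a small $A\subseteq N$ over which $\tp_{\mathcal{L}}^{\mathfrak{D}}(m/A)$ is stationary — is handled in the paper by taking $A=\dcl_{\mathcal{L}}^{\mathfrak{D}}(N)\cap\acl_{\mathcal{L}}^{\mathfrak{D}}(N_0)$ for a small $N_0\ni\bar n$ with $m\ind_{N_0}N$; you flagged exactly this as the delicate point, so there is no gap in the argument, only a step you chose to leave at the level of "one can choose."
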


\begin{proof}
For the proof of the first point, assume that $N$ is $\kappa$-saturated and PAC and let $p$ be a stationary type over $A\subseteq N$, where $|A|<\kappa$. We extend $p$ to a non-forking extension over $N$, say $p'$, which is also stationary. By Lemma \ref{PACclaim}, for each $m\models p'$, the extension $N\subseteq\dcl_{\mathcal{L}}^{\mathfrak{D}}(Nm)$ is regular, so $N\leqslant_1\dcl_{\mathcal{L}}^{\mathfrak{D}}(Nm)$ (PAC).
Let us choose $m\models p'$.
Because $N\leqslant_1\dcl_{\mathcal{L}}^{\mathfrak{D}}(Nm)$ and the set of formulas
$$\lbrace\psi(x)\;|\;\psi(x)\in \qftp_{\mathcal{L}}^{\mathfrak{D}}(m/A)\rbrace$$
is consistent in $N$, the saturation of $N$ implies existence of $n\in N$ such that
$$n\models\qftp_{\mathcal{L}}^{\mathfrak{D}}(m/A),$$
which implies $n\models p$.

It remains to show that being a $\kappa$-PAC$_{\text{PP}}$ substructure implies being a PAC substructure. Assume that $N\subseteq N'$ is regular, $m\in N'$, $n$ is a finite tuple from $N$ and for some quantifier free $\mathcal{L}$-formula it follows $\varphi^{\mathfrak{D}}(n,m)$. Regularity of $N\subseteq N'$ implies
$$\dcl_{\mathcal{L}}^{\mathfrak{D}}(Nm)\cap\acl_{\mathcal{L}}^{\mathfrak{D}}(N)=\dcl_{\mathcal{L}}^{\mathfrak{D}}(N),$$
hence by Lemma \ref{PACclaim}, we obtain that $\tp_{\mathcal{L}}^{\mathfrak{D}}(m/N)$ is stationary.

We introduce the type $p:=\tp_{\mathcal{L}}^{\mathfrak{D}}(m/\acl_{\mathcal{L}}^{\mathfrak{D}}(N))$, which is stationary.
Cleary, the type $p|_N$ is stationary and $m\ind_N^{\mathfrak{D}}\acl_{\mathcal{L}}^{\mathfrak{D}}(N)$, so, by Remark 2.26.iii) in \cite{anandgeometric}, $\cb(p)\subseteq\dcl_{\mathcal{L}}^{\mathfrak{D}}(N)$. There exists $n\subseteq N_0\subseteq N$ such that $|N_0|\leqslant|T|$ and $m\ind_{N_0} N$,
hence $m\ind_{\acl_{\mathcal{L}}^{\mathfrak{D}}(N_0)}^{\mathfrak{D}}\acl_{\mathcal{L}}^{\mathfrak{D}}(N)$ and $p|_{\acl_{\mathcal{L}}^{\mathfrak{D}}(N_0)}$ is stationary and therefore $\cb(p)\subseteq\acl_{\mathcal{L}}^{\mathfrak{D}}(N_0)$. 
Note that
$\cb(p)\subseteq\dcl_{\mathcal{L}}^{\mathfrak{D}}\big(\dcl_{\mathcal{L}}^{\mathfrak{D}}(N)\cap\acl_{\mathcal{L}}^{\mathfrak{D}}(N_0)\big)$, so - again by Remark 2.26.iii) in \cite{anandgeometric} -
$p|_{\dcl_{\mathcal{L}}^{\mathfrak{D}}(N)\cap\acl_{\mathcal{L}}^{\mathfrak{D}}(N_0)}$ is stationary.
Because $|\dcl_{\mathcal{L}}^{\mathfrak{D}}(N)\cap\acl_{\mathcal{L}}^{\mathfrak{D}}(N_0)|\leqslant|T|$, type $p|_{\dcl_{\mathcal{L}}^{\mathfrak{D}}(N)\cap\acl_{\mathcal{L}}^{\mathfrak{D}}(N_0)}$ has a realization in $N$, say $m_N\in N$. It follows $\varphi^{\mathfrak{D}}(n,m_N)$. Since $\varphi$ is quantifier free, it is also $N\models\exists x\;\varphi(n,x)$.
\end{proof}

We see from Proposition \ref{PACvsPACPP} that the definition of a PAC substructure from \cite{PilPol} is stronger (or at least not weaker) than our definition of a PAC substructure, but is implied by our definition of a PAC substructure if the substructure is saturated enough. Something similar happens in the case of strongly minimal theories. Mainly, by \cite[Remark 3.7]{PilPol} definition of a PAC substructure from \cite{PilPol} implies Hrushovski's definition of a PAC substructure for strongly minimal theories (\cite{manuscript}), and Hrushovski's definition of a PAC substructure, if the substructure is saturated, implies definition of a PAC substructure from \cite{PilPol}.
We visualize the situation:
\begin{itemize}
\item in the set-up of a strongly minimal theory
$$\text{PAC}_{\text{Hru}}\;\;+\;\;\kappa-\text{saturated}\quad\subseteq\quad\kappa-\text{PAC}_{\text{PP}}\quad\subseteq\quad\text{PAC}_{\text{Hru}},$$

\item in the set-up of a stable theory
$$\text{PAC}_{\text{reg}}\;\;+\;\;\kappa-\text{saturated}\quad\subseteq\quad\kappa-\text{PAC}_{\text{PP}}\quad\subseteq\quad\text{PAC}_{\text{reg}},$$
\end{itemize}
where by PAC$_{\text{Hru}}$ we denote Hrushovski's definition of a PAC substructure, and by PAC$_{\text{reg}}$ our definition of a PAC substructure (only in this case, we use subscript for our definition of a PAC substructure). There is a very natural question: whether our definition of a PAC substructure and Hrushovski's definition of a PAC substructure describe the same objects in the strongly minimal set-up? The next result provides the answer.

\begin{prop}\label{PAC.Hru}
Assume that $T'$ is strongly minimal, allows quantifier elimination and eliminates imaginaries (and satisfies assumptions from Hrushovski's manuscript, \cite{manuscript},
needed for his definition of a PAC substructure, which we do not use in the proof, see Definition \ref{hru.PAC.def}).
Let $N$ be an $\mathcal{L}$-substructure of $\mathfrak{D}$.
The following are equivalent.
\begin{enumerate}
\item The substructure $N$ is a PAC substructure.
\item The substructure $N$ is a PAC$_{\text{Hru}}$ substructure.
\end{enumerate}
\end{prop}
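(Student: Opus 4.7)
The plan is to use Lemma \ref{PACclaim} as the bridge between the regularity condition in Definition \ref{regular.def} and stationarity of types, together with the tight correspondence, available in a strongly minimal theory, between multiplicity $1$ formulas and their unique generic stationary types. In one direction the argument is essentially ``produce the type''; in the other it is ``produce the formula''.

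For $(1)\Rightarrow(2)$, I would start with a multiplicity $1$ formula $\varphi(\bar n,\bar y)$ over $N$ and, using $\kappa_{\mathfrak{D}}$-saturation of $\mathfrak{D}$, pick a realization $\bar a$ of $\varphi$ that is generic, i.e.\ of Morley rank equal to that of $\varphi$. Since $\varphi$ has Morley degree $1$, the type $\tp_{\mathcal{L}}^{\mathfrak{D}}(\bar a/\bar n)$ is stationary, so its non-forking extension $\tp_{\mathcal{L}}^{\mathfrak{D}}(\bar a/N)$ is stationary as well. Lemma \ref{PACclaim} then upgrades this into regularity of $N\subseteq\dcl_{\mathcal{L}}^{\mathfrak{D}}(N\bar a)$. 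Invoking the PAC hypothesis together with quantifier elimination to replace $\varphi$ by an equivalent quantifier-free formula, I obtain $N\leqslant_1\dcl_{\mathcal{L}}^{\mathfrak{D}}(N\bar a)$, and the existential witness for $\varphi(\bar n,\bar y)$ in $\dcl_{\mathcal{L}}^{\mathfrak{D}}(N\bar a)$ provided by $\bar a$ is pulled back to $N$.

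For $(2)\Rightarrow(1)$, I would take a regular extension $N\subseteq N'$ and a quantifier-free formula $\varphi(\bar n,\bar y)$ witnessed by some $\bar a\subseteq N'$. Monotonicity of regularity (Remark \ref{regular.remark}(6)) makes $N\subseteq\dcl_{\mathcal{L}}^{\mathfrak{D}}(N\bar a)$ regular, so Lemma \ref{PACclaim} yields that $p:=\tp_{\mathcal{L}}^{\mathfrak{D}}(\bar a/N)$ is stationary. By stability I can then pick a formula $\psi(\bar y)\in p$ of the same Morley rank as $p$ and of Morley degree $1$, and observe that $\psi\wedge\varphi$ still has multiplicity $1$: it lies in $p$, shares the Morley rank of $\psi$, and inherits degree $1$ because the unique generic of $\psi$ is $p$ itself, which already contains $\varphi$. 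Applying PAC$_{\text{Hru}}$ to $\psi\wedge\varphi$ yields the required solution of $\varphi(\bar n,\bar y)$ in $N$.

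The main obstacle I anticipate is the Morley-rank-and-degree bookkeeping in $(2)\Rightarrow(1)$ showing that $\psi\wedge\varphi$ is still multiplicity $1$; this relies on the standard but slightly delicate fact that inside a stationary type no formula of the same Morley rank as the type can split the unique generic. The direction $(1)\Rightarrow(2)$ is then essentially a direct application of Lemma \ref{PACclaim} together with the existence of generic realizations in the sufficiently saturated $\mathfrak{D}$.
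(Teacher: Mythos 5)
Your proposal is correct and takes essentially the same route as the paper: both directions pivot on Lemma \ref{PACclaim} to translate between regularity of $N\subseteq\dcl_{\mathcal{L}}^{\mathfrak{D}}(Nm)$ and stationarity of $\tp_{\mathcal{L}}^{\mathfrak{D}}(m/N)$, and the $(2)\Rightarrow(1)$ direction constructs the same auxiliary multiplicity-$1$ formula $\psi\wedge\varphi$ and applies PAC$_{\text{Hru}}$ to it. You merely spell out two steps the paper leaves implicit (choosing the realization generically over $N$ in $(1)\Rightarrow(2)$, and the rank-and-degree bookkeeping showing $\mlt(\psi\wedge\varphi)=1$), which is fine but does not change the argument.
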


\begin{proof}
Assume that $N$ is a PAC substructure, $n$ is a finite tuple from $N$ and the $\mathcal{L}$-formula $\varphi(n,x)$ is of multiplicity $1$. Chose $m\in\mathfrak{D}$ such that $\varphi^{\mathfrak{D}}(n,m)$ and let $p:=\tp_{\mathcal{L}}^{\mathfrak{D}}(m/N)$. We see that $p$ is stationary, hence by Lemma \ref{PACclaim}, 
$N\subseteq\dcl_{\mathcal{L}}^{\mathfrak{D}}(Nm)$ is a regular extension. Our assumption implies that $N$ is existentially closed in $\dcl_{\mathcal{L}}^{\mathfrak{D}}(Nm)$. Quantifier elimination implies that there exists a realization of $\varphi(n,x)$ in $N$.

We proceed now to the proof of the second implication.
Assume that $N$ is a PAC$_{\text{Hru}}$ substructure, $N'$ is regular over $N$, $n$ is a finite tuple from $N$, $m\in N'$ and for a quantifier free $\mathcal{L}$-formula $\varphi(y,x)$ we have $\varphi^{\mathfrak{D}}(n,m)$.

Because of regularity and Lemma \ref{PACclaim}, we obtain that $p:=\tp_{\mathcal{L}}^{\mathfrak{D}}(m/N)$ is stationary, hence $\mlt(p)=1$. Therefore for some $\psi(n',x)\in p$ we have $\mlt(\psi)=1$. Since $\mlt(\psi\wedge\varphi)=1$, there exists $m'\in N$ such that
$$\psi^{\mathfrak{D}}(n',m')\wedge\varphi^{\mathfrak{D}}(n,m').$$
Because $\varphi$ is quantifier free, we obtain
$$N\models\exists x\;\varphi(n,x).$$
\end{proof}

We tried to generalize Lemma 1.17 from \cite{manuscript} to our, i.e. stable, context. Such a generalization would help in the description of structures with a $G$-action (see Remark \ref{frattini2}).
The hope was hidden in using our definition of a PAC substructure.
Unfortunately we met some problems at the end of our proof of the following conjecture, where the absolute Galois group of a $\mathcal{L}$-substructure of $\mathfrak{D}$ is the group $\aut_{\mathcal{L}}(\acl_{\mathcal{L}}^{\mathfrak{D}}(N)/N)$.

\begin{conj}
Assume that $T'$ is stable, eliminates quantifiers and imaginaries. The absolute Galois group of a definably closed PAC substructure of $\mathfrak{D}$ is projective.
\end{conj}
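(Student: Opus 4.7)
The plan is to imitate the classical Frattini-style argument that the absolute Galois group of a PAC field is projective (cf.\ \cite{FrJa}), and Hrushovski's adaptation in \cite[Lemma 1.17]{manuscript}, replacing the field-theoretic tensor product by a stability-theoretic ``free join'' of regular extensions. Write $\mathcal{G}(N):=\aut_{\mathcal{L}}(\acl_{\mathcal{L}}^{\mathfrak{D}}(N)/N)$; recall that a profinite group is projective iff every finite embedding problem is weakly solvable. So I fix a surjection $\pi:\mathcal{G}(N)\twoheadrightarrow A$ onto a finite group $A$ and a surjection $\alpha:B\twoheadrightarrow A$ with $B$ finite, let $L\subseteq\acl_{\mathcal{L}}^{\mathfrak{D}}(N)$ be the substructure fixed by $\ker\pi$ (so $\mathcal{G}(L/N)\cong A$), and aim to produce a lift $\beta:\mathcal{G}(N)\to B$.

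First, I would realise the problem over an auxiliary regular extension. Using $|\aut_{\mathcal{L}}(\mathfrak{D})|$-saturation, one can build a small $\mathcal{L}$-substructure $M\supseteq N$ together with an algebraic extension $L^{\ast}\supseteq M L$, such that $N\subseteq M$ is $\mathcal{L}$-regular (Definition \ref{regular.def}) and the restriction map $\mathcal{G}(L^{\ast}/M)\to\mathcal{G}(L/N)$ is \emph{isomorphic, as an embedding problem over $A$, to $\alpha$}. Concretely, pick an abstract finite $\mathcal{L}$-structure $L^{\ast}_0$ carrying a free $B$-action that matches the $A$-action on $L$ via $\alpha$, then use stability and elimination of imaginaries to produce an $N$-independent copy of $L^{\ast}_0$ inside $\mathfrak{D}$ whose intersection with $\acl_{\mathcal{L}}^{\mathfrak{D}}(N)$ is exactly $L$; set $M:=\dcl_{\mathcal{L}}^{\mathfrak{D}}(Nc)$ for $c$ a suitable generic parameter and $L^{\ast}:=\dcl_{\mathcal{L}}^{\mathfrak{D}}(M L^{\ast}_0)$. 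The point of stationarity (Lemma \ref{PACclaim}) is that independence plus the definable-closure equality forces $N\subseteq M$ regular.

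Second, I would use the PAC hypothesis to pull the solution back into $\acl_{\mathcal{L}}^{\mathfrak{D}}(N)$. The statement ``there exists a structure $L'$ of the same quantifier-free diagram as $L^{\ast}$, extending $L$ and carrying a $B$-action extending the $A$-action'' can be coded by an existential $\mathcal{L}$-formula $\varphi(\bar{y})$ with parameters in $L\subseteq\acl_{\mathcal{L}}^{\mathfrak{D}}(N)$. Because $L^{\ast}$ witnesses $\varphi$ in $M$ and $N\subseteq M$ is regular, the PAC hypothesis (applied after absorbing the finite parameters from $L$ into an existentially quantified block, so that $\varphi$ becomes a formula over $N$) yields a witness in $N$, hence inside $\acl_{\mathcal{L}}^{\mathfrak{D}}(N)$. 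This witness provides an intermediate Galois substructure $L\subseteq L'\subseteq\acl_{\mathcal{L}}^{\mathfrak{D}}(N)$ with $\mathcal{G}(L'/N)\cong B$ and the restriction map equal to $\alpha$; the corresponding projection $\mathcal{G}(N)\to\mathcal{G}(L'/N)\cong B$ is the desired $\beta$. Taking an inverse limit over the natural directed system of embedding problems gives projectivity.

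The main obstacle — and almost certainly where the author's attempt collapsed — is the second paragraph: in the field case the tensor product $N\otimes_{N_0}\widetilde{L}$ is automatically a regular extension realising exactly the prescribed Galois group $B$, but in a general stable theory with only Definition \ref{regular.def} at hand one must simultaneously ensure (i) $N\subseteq M$ is regular, (ii) no extra automorphisms of $\acl_{\mathcal{L}}^{\mathfrak{D}}(M)/M$ appear that would collapse $B$ onto a proper quotient, and (iii) the restriction $\mathcal{G}(L^{\ast}/M)\twoheadrightarrow\mathcal{G}(L/N)$ is $\alpha$ itself rather than some other surjection onto $A$. Controlling (ii) and (iii) appears to require a strengthening of regularity to a linear-disjointness analogue at the level of Galois covers (roughly: that the profinite Galois group of $M$ is the free pro-product of $\mathcal{G}(N)$ and the new piece coming from $c$), and formulating and proving this in the abstract stable setting is the missing ingredient.
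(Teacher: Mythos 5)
The statement you were asked to prove is explicitly labeled in the paper as a \emph{Conjecture}, and there is no proof to compare your proposal against: immediately before stating it the author writes, ``Unfortunately we met some problems at the end of our proof of the following conjecture.'' So as far as the published text is concerned this is an open problem, not a theorem. That said, your reconstruction tracks the author's own (abandoned) attempt quite faithfully. Leftover scratch material after the end of the document contains an unfinished proof following exactly your outline: reduce to a finite embedding problem $\alpha:B\twoheadrightarrow A\cong\mathcal{G}(L/N)$ with $L$ the fixed substructure of $\ker\rho$; take $n=|B|$ realizations $b_1,\ldots,b_n$ of a stationary type over $N$, independent over $N$ so that $N\subseteq\dcl_{\mathcal{L}}^{\mathfrak{D}}(Nb_1\ldots b_n)$ is $\mathcal{L}$-regular; embed $B\leqslant S_n$ and use Corollary~\ref{regular.PAPA} to splice a $B$-action onto this definable closure; check that the resulting fixed substructure gives a regular extension of $N$ and invoke the PAC property. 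That attempt breaks off exactly at the point of producing the lifting homomorphism $\mathcal{G}(N)\to B$, which matches the gap you flag.

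Your diagnosis of the obstruction is correct, and it is the substantive issue. In the field case $N\otimes_{N_0}\widetilde L$ realises \emph{exactly} the prescribed Galois group $B$ over $N$ and the restriction map is $\alpha$ by construction. In the abstract stable setting Definition~\ref{regular.def} and Corollary~\ref{regular.PAPA} give you an upper bound: the finite $B$-action you build on $\dcl_{\mathcal{L}}^{\mathfrak{D}}(Nb_1\ldots b_n)$ need not exhaust $\aut_{\mathcal{L}}$ of that set over its $B$-fixed part, and even if it does, you still have to identify the induced restriction map with $\alpha$ rather than with some other epimorphism onto $A$. Neither stationarity nor the splicing lemma alone controls this; it would take a genuine stable analogue of linear disjointness at the level of Galois covers (a ``free pro-product'' statement for $\mathcal{G}(M)$ over $\mathcal{G}(N)$), which the paper does not prove. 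There is also a smaller point you elide: the witness the PAC property supplies lives in $N$, whereas the parameters of your formula lie in $L\subseteq\acl_{\mathcal{L}}^{\mathfrak{D}}(N)\setminus N$ in general, so one must absorb them (using normality of $L/N$ and elimination of imaginaries) before PAC is applicable, and then check that the witness actually produces an intermediate Galois substructure inside $\acl_{\mathcal{L}}^{\mathfrak{D}}(N)$ with group $B$ over $N$. These are precisely the unresolved points that caused the paper to downgrade this to a conjecture.
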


However, we proved a weaker version of the above conjecture, mainly:

\begin{prop}
Assume that $T'$ is stable, eliminates quantifiers and imaginaries. The absolute Galois group of a definably closed \textbf{bounded} PAC substructure of $\mathfrak{D}$ is projective.
\end{prop}

Proof of the above proposition and possible generalizations of the proposition will be provided in a subsequent paper.

\subsection{Preliminaries from general Galois theory}
There is a really nice introduction to the general Galois theory (check \cite{invitation}), but it does not cover the infinite extensions case (an extension is finite, if it is generated by a finite tuple, Definition 2. in \cite{invitation}). Another source of needed facts from the general Galois theory, is \cite{casfar}, which covers also infinite extensions. However, the authors of \cite{casfar} do not require in the definition of Galois extension $A\subseteq B$ that $A$ is definably closed, and that is more natural for us (it corresponds to perfect fields in the fields case).
Therefore, we provide, using our favourite approach, the necessary facts in this subsection.
From this point we assume that $T'$ allows to \textbf{eliminate quantifiers}.

\begin{lemma}\label{fhat_lemma}
If $M,M'$ are small $\mathcal{L}$-substructures of $\mathfrak{D}$ and there is
an $\mathcal{L}$-isomorphism $f:M\to M'$, then there exists $\hat{f}\in\aut_{\mathcal{L}}(\mathfrak{D})$ such that $\hat{f}|_M=f$.
\end{lemma}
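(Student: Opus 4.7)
The plan is to reduce the statement to the strong homogeneity of $\mathfrak{D}$ via quantifier elimination. Since $T'=\theo_{\mathcal{L}}(\mathfrak{D})$ admits quantifier elimination, every $\mathcal{L}$-isomorphism between $\mathcal{L}$-substructures of $\mathfrak{D}$ automatically preserves all $\mathcal{L}$-formulas. In particular, for every finite tuple $m \subseteq M$ one has
$$\tp_{\mathcal{L}}^{\mathfrak{D}}(m) \;=\; \tp_{\mathcal{L}}^{\mathfrak{D}}(f(m)),$$
because the quantifier-free type of a tuple determines its full type. Hence $f: M \to M'$ is a partial elementary map between small subsets of $\mathfrak{D}$.

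The main step is then to invoke $\kappa_{\mathfrak{D}}$-strong homogeneity: since both $M$ and $M'$ have cardinality strictly less than $\kappa_{\mathfrak{D}}$ and $f$ is a partial elementary map, there exists an automorphism $\hat{f} \in \aut_{\mathcal{L}}(\mathfrak{D})$ extending $f$. By construction $\hat{f}|_M = f$, which proves the claim.

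I do not anticipate any real obstacle here; the only subtlety is to observe that quantifier elimination is precisely what makes an abstract $\mathcal{L}$-isomorphism of substructures a partial elementary map, after which strong homogeneity finishes the argument. This is the reason the hypothesis "$T'$ allows to eliminate quantifiers" was stated immediately before the lemma.
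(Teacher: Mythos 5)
Your proof is correct and is essentially the same as the paper's: you use quantifier elimination to show that $f$ is a partial elementary map (the paper spells this out via a chain of equivalences through a quantifier-free $\varphi_0$), and then conclude by $\kappa_{\mathfrak{D}}$-strong homogeneity.
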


\begin{proof}
Very easy and standard, let $\varphi(x)$ be an $\mathcal{L}$-formula and $\varphi_0(x)$ a quantifier-free $\mathcal{L}$-formula which is equivalent modulo $T'$ to $\varphi(x)$. For any tuple $m\subseteq M$ we have
$$\mathfrak{D}\models \varphi(m)\;\iff\;\mathfrak{D}\models\varphi_0(m)\;\iff\;M\models\varphi_0(m)\;\iff$$ $$M'\models\varphi_0(f(m))\;\iff\;\mathfrak{D}\models\varphi_0(f(m))\;\iff\mathfrak{D}\models\varphi(f(m)).$$
Because $\mathfrak{D}$ is $\kappa_{\mathfrak{D}}$-strongly homogeneous, there exists a proper $\hat{f}$.
\end{proof}

\begin{definition}\label{galois.ext.def}
\begin{enumerate}
\item Assume that $A\subseteq C$ are small $\mathcal{L}$-substructures of $\mathfrak{D}$. We say that $C$ is \emph{normal over $A$} (or we say that $A\subseteq C$ is a \emph{normal extension}) if $\aut_{\mathcal{L}}(\mathfrak{D}/A)\cdot C\subseteq C$.

\item Assume that $A\subseteq C\subseteq\acl_{\mathcal{L}}^{\mathfrak{D}}(A)$ are small $\mathcal{L}$-substructures of $\mathfrak{D}$ such that $A=\dcl_{\mathcal{L}}^{\mathfrak{D}}(A)$, $C=\dcl_{\mathcal{L}}^{\mathfrak{D}}(C)$ and $C$ is normal over $A$. In this situation we say that $A\subseteq C$ is a \emph{Galois extension}.
\end{enumerate}
\end{definition}

\begin{fact}\label{normal_morph}
If $A\subset C$ is normal and $f\in\aut_{\mathcal{L}}(\mathfrak{D}/A)$, then $f(C)=C$.
\end{fact}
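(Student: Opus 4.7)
The plan is to use the symmetry of the hypothesis: normality asserts that \emph{every} element of $\aut_{\mathcal{L}}(\mathfrak{D}/A)$ maps $C$ into $C$, and this family is a group, so it contains $f^{-1}$ as well.

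First I would unravel the definition: $A \subseteq C$ being normal means that for every $h \in \aut_{\mathcal{L}}(\mathfrak{D}/A)$, we have $h(C) \subseteq C$. Applying this to $h = f$ immediately gives the inclusion $f(C) \subseteq C$.

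For the reverse inclusion, I would observe that $f^{-1}$ is again an $\mathcal{L}$-automorphism of $\mathfrak{D}$ fixing $A$ pointwise, so $f^{-1} \in \aut_{\mathcal{L}}(\mathfrak{D}/A)$. Applying normality once more to $h = f^{-1}$ gives $f^{-1}(C) \subseteq C$, which is equivalent to $C \subseteq f(C)$. Combining the two inclusions yields $f(C) = C$.

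There is essentially no obstacle here; the statement is a one-line consequence of the fact that $\aut_{\mathcal{L}}(\mathfrak{D}/A)$ is closed under taking inverses. The only point worth flagging explicitly in the write-up is that $f^{-1}$ indeed lies in $\aut_{\mathcal{L}}(\mathfrak{D}/A)$, but this is immediate since $f(a) = a$ for all $a \in A$ implies $f^{-1}(a) = a$ for all $a \in A$.
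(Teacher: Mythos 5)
Your proof is correct and takes essentially the same route as the paper: apply normality to $f$ to get $f(C) \subseteq C$, then apply it to $f^{-1} \in \aut_{\mathcal{L}}(\mathfrak{D}/A)$ to get $C \subseteq f(C)$. The paper's own argument is exactly this, just stated more tersely.
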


\begin{proof}
By the definition of a normal extension, it follows $f(C)\subseteq C$ and $f^{-1}(C)\subseteq C$, which implies $f(C)\subseteq C$ and $C\subseteq f(C)$.
\end{proof}

\begin{fact}[Corollary 7. in \cite{invitation}]
Let $A$, $B$ and $C$ be small $\mathcal{L}$-substructures of $\mathfrak{D}$ such that $A\subseteq B\subseteq C\subseteq \acl_{\mathcal{L}}^{\mathfrak{D}}(A)$, $C$ and $B$ are normal over $A$. Then
$$\xymatrix{1 \ar[r] & \aut_{\mathcal{L}}(C/B) \ar[r]^{\subseteq} & \aut_{\mathcal{L}}(C/A) \ar[r]^{|_B} & \aut_{\mathcal{L}}(B/A) \ar[r] & 1}$$
is an exact sequence and hence $\aut_{\mathcal{L}}(C/B)\trianglelefteqslant \aut_{\mathcal{L}}(C/A)$.
\end{fact}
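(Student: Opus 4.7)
The plan is to verify exactness at each of the three positions, treating both the well-definedness of the restriction map and its surjectivity by the standard trick of extending automorphisms from small substructures to the monster $\mathfrak{D}$ via Lemma \ref{fhat_lemma}, and then applying Fact \ref{normal_morph} to normal extensions sitting inside $\mathfrak{D}$.

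First I would check that the arrow $|_B\colon \aut_{\mathcal{L}}(C/A)\to \aut_{\mathcal{L}}(B/A)$ is well-defined. Given $f\in\aut_{\mathcal{L}}(C/A)$, the isomorphism $f\colon C\to C$ fixes $A$, so by Lemma \ref{fhat_lemma} there exists $\hat f\in\aut_{\mathcal{L}}(\mathfrak{D}/A)$ extending $f$. Since $B$ is normal over $A$, Fact \ref{normal_morph} yields $\hat f(B)=B$, hence $f|_B=\hat f|_B\in\aut_{\mathcal{L}}(B/A)$. The restriction map is clearly a group homomorphism, and exactness at $\aut_{\mathcal{L}}(C/A)$ is immediate from the definitions: $f|_B=\id_B$ is exactly the condition $f\in\aut_{\mathcal{L}}(C/B)$, and the inclusion $\aut_{\mathcal{L}}(C/B)\hookrightarrow\aut_{\mathcal{L}}(C/A)$ is tautologically injective.

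The main step is surjectivity of $|_B$. Given $g\in\aut_{\mathcal{L}}(B/A)$, apply Lemma \ref{fhat_lemma} to extend the $\mathcal{L}$-isomorphism $g\colon B\to B$ to some $\hat g\in\aut_{\mathcal{L}}(\mathfrak{D})$. Since $g$ fixes $A$ and $A\subseteq B$, we have $\hat g\in\aut_{\mathcal{L}}(\mathfrak{D}/A)$. Normality of $C$ over $A$ combined with Fact \ref{normal_morph} gives $\hat g(C)=C$, so $f:=\hat g|_C\in\aut_{\mathcal{L}}(C/A)$, and evidently $f|_B=g$. Normality of $\aut_{\mathcal{L}}(C/B)$ in $\aut_{\mathcal{L}}(C/A)$ then follows at no extra cost, since the kernel of a group homomorphism is always normal.

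I do not expect any genuine obstacle here: the whole argument is the standard Galois-theoretic restriction exact sequence, with the only subtlety being that elements of $\aut_{\mathcal{L}}(C/A)$ or $\aut_{\mathcal{L}}(B/A)$ are not a priori restrictions of monster automorphisms, which is exactly what Lemma \ref{fhat_lemma} provides (this uses quantifier elimination together with strong homogeneity of $\mathfrak{D}$, both already in force).
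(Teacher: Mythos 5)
Your proof is correct and spells out exactly the ``easy generalization'' the paper alludes to: the paper's own proof just cites \cite[Corollary 7.]{invitation} together with quantifier elimination (via Lemma \ref{fhat_lemma}) and Fact \ref{normal_morph}, which are precisely the two tools you deploy. Same approach, just written out in full.
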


\begin{proof}
It is an easy generalization of \cite[Corollary 7.]{invitation}, which uses quantifier elimination and Fact \ref{normal_morph}.
\end{proof}

\begin{remark}[Krull topology]
For a small $\mathcal{L}$-substructure $C$ of $\mathfrak{D}$ we consider \emph{Krull topology} on $C^C$ (functions from $C$ to $C$). The base of open sets for this topology is given by the family of sets of the following form
$$U(\bar{a},\bar{b}):=\lbrace f\in C^C\;|\; f(\bar{a})=\bar{b}\rbrace,$$
where $\bar{a}$ and $\bar{b}$ are finite tuples from $C$ of the same length.
\end{remark}

Our goal is to define a topology on $\aut_{\mathcal{L}}(C/A)$ for a Galois extension (or more generally for a normal extension) $A\subseteq C$ such that $\aut_{\mathcal{L}}(C/A)$ will be a topological group. Of course, we will consider topology induced from the Krull topology on $C^C$, but to do this we need to state that $\aut_{\mathcal{L}}(C/A)$ is a closed subset (Fact \ref{krull.topology}). Checking that the group operation and taking inverse is continuous in the induced topology is straightforward and omitted.

\begin{fact}\label{krull.topology}
If $A\subseteq C$ is a normal extension, then $\aut_{\mathcal{L}}(C/A)$ is closed subset of $C^C$ (in the Krull topology).
\end{fact}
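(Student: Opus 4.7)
The plan is to show that the complement $C^C \setminus \aut_{\mathcal{L}}(C/A)$ is open in the Krull topology. Concretely, given any $f \in C^C$ not in $\aut_{\mathcal{L}}(C/A)$, the goal is to produce a finite tuple $\bar{a}$ from $C$ so that the basic open set $U(\bar{a}, f(\bar{a}))$ is disjoint from $\aut_{\mathcal{L}}(C/A)$. Belonging to $\aut_{\mathcal{L}}(C/A)$ amounts to being a function $C \to C$ which (i) fixes $A$ pointwise, (ii) is injective, (iii) commutes with every function symbol of $\mathcal{L}$, (iv) preserves every relation symbol of $\mathcal{L}$ in both directions, and (v) is surjective. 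The strategy is to handle failures of (i)--(iv) syntactically and to eliminate (v) using normality.

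First I would dispatch the failures of (i)--(iv), each of which is witnessable on a finite tuple. For (i), some $a \in A$ has $f(a) \neq a$, and then $U(a, f(a))$ is the desired neighbourhood. For (ii), there exist $c_1 \neq c_2$ with $f(c_1) = f(c_2)$, so $U((c_1, c_2), (f(c_1), f(c_2)))$ rules out any injective competitor. For (iii), if $F$ is a function symbol and $\bar{c}$ a tuple with $f(F^C(\bar{c})) \neq F^C(f(\bar{c}))$, it suffices to take the neighbourhood specifying the values of $f$ on $\bar{c}$ together with $F^C(\bar{c})$. For (iv), if $R^C(\bar{c}) \not\Leftrightarrow R^C(f(\bar{c}))$, then $U(\bar{c}, f(\bar{c}))$ rules out any competitor that respects $R$. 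In each case every $g$ in the chosen neighbourhood inherits the same defect and so cannot belong to $\aut_{\mathcal{L}}(C/A)$.

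The main obstacle is that non-surjectivity need not be visible on any finite tuple, so (v) cannot be treated in the same way. This is exactly where normality of $A \subseteq C$ is used. The plan is to argue that the case ``$f$ satisfies (i)--(iv) but fails (v)'' is vacuous: if $f$ satisfies (i)--(iv), then $f$ is an $\mathcal{L}$-isomorphism from $C$ onto the $\mathcal{L}$-substructure $f(C) \subseteq C$ over $A$, so by Lemma \ref{fhat_lemma} it extends to some $\hat{f} \in \aut_{\mathcal{L}}(\mathfrak{D}/A)$. Fact \ref{normal_morph} then forces $\hat{f}(C) = C$, and since $\hat{f}|_C = f$ this gives $f(C) = C$; hence $f$ is automatically surjective. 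Consequently every $f$ in the complement of $\aut_{\mathcal{L}}(C/A)$ must already fail one of (i)--(iv), and the previous paragraph produces the required open neighbourhood, finishing the proof.
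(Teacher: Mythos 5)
Your proof is correct and is essentially the paper's argument in contrapositive form: you show the complement is open by producing finite witnesses for failures of the pointwise conditions, while the paper shows any $f$ in the closure satisfies those conditions by intersecting a basic neighbourhood of $f$ with $\aut_{\mathcal{L}}(C/A)$. Both then dispose of surjectivity in exactly the same way, via Lemma \ref{fhat_lemma} and Fact \ref{normal_morph}.
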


\begin{proof}
It is a standard proof which we borrow from a book about field theory written by Jerzy Browkin (\cite{browkin}, p. 134).
Assume that $f\in C^C$ belongs to the closure of $\aut_{\mathcal{L}}(C/A)$. Take any 
function $F:C^n\to C^m$ and any $r$-ary relation $R$ which belong to the $\mathcal{L}$-structure of $C$ (i.e. they correspond to function/relation symbols of the language $\mathcal{L}$). Then take any $a\in A$, $c,c'\in C$, $\bar{c}_1,\bar{c}_2\subseteq C$ such that $|\bar{c}_1|=n$ and $|\bar{c}_2|=r$, and consider $U(\bar{a},\bar{b})$ for
$$\bar{a}=(a,c,c',\bar{c}_1,F(\bar{c}_1),\bar{c}_2),\qquad\bar{b}=\big(f(a),f(c),f(c'),f(\bar{c}_1),f(F(\bar{c}_1)),f(\bar{c}_2)\big).$$
Because $f$ belongs to the closure of $\aut_{\mathcal{L}}(C/A)$ and $U(\bar{a},\bar{b})$ is an open neighbourhood of $f$, there is an automorphism $h\in U(\bar{a},\bar{b})\cap\aut_{\mathcal{L}}(C/A)$.
Therefore
$$f(a)=h(a)=a,$$
$$f(F(\bar{c}_1))=h(F(\bar{c}_1))=F(h(\bar{c}_1))=F(f(\bar{c}_1)),$$
$$R(\bar{c}_2)\;\iff\;R(h(\bar{c}_2))\;\iff\; R(f(\bar{c}_2)),$$
$$f(c)=f(c')\;\Rightarrow\;h(c)=h(c')\;\Rightarrow\;c=c'.$$
That means that $f:C\to C$ is an $\mathcal{L}$-monomorphism of $C$ over $A$. After use of Lemma \ref{fhat_lemma} for $M=C$ and $M'=f(C)$, we can can treat $f$ as an automorphism of $\mathfrak{D}$ over $A$, which by Fact \ref{normal_morph} turns out to be also an automorphisms of $C$ over $A$.
\end{proof}

\begin{fact}
Let $A\subseteq C$ and $A\subseteq B$ be Galois extensions and let $B\subseteq C$. Then all mappings in the sequence
$$\xymatrix{1 \ar[r] & \aut_{\mathcal{L}}(C/B) \ar[r]^{\subseteq} & \aut_{\mathcal{L}}(C/A) \ar[r]^{|_B} & \aut_{\mathcal{L}}(B/A) \ar[r] & 1}$$
are continuous.
\end{fact}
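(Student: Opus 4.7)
The plan is straightforward: both topologies are subspace topologies inherited from the Krull topology on $C^C$ (respectively $B^B$), so it suffices to trace basic opens through each map.

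First I would handle the inclusion $\aut_{\mathcal{L}}(C/B) \hookrightarrow \aut_{\mathcal{L}}(C/A)$. A basic open set of $\aut_{\mathcal{L}}(C/A)$ in the Krull topology has the form $U(\bar{a},\bar{b}) \cap \aut_{\mathcal{L}}(C/A)$ for finite tuples $\bar{a},\bar{b} \subseteq C$. Its preimage under the inclusion is exactly $U(\bar{a},\bar{b}) \cap \aut_{\mathcal{L}}(C/B)$, which is by definition open in $\aut_{\mathcal{L}}(C/B)$. So the inclusion is continuous (this is really the statement that the subspace topology on $\aut_{\mathcal{L}}(C/B) \subseteq \aut_{\mathcal{L}}(C/A)$ agrees with the Krull topology inherited directly from $C^C$).

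Next I would handle the restriction map $|_B : \aut_{\mathcal{L}}(C/A) \to \aut_{\mathcal{L}}(B/A)$. A basic open of $\aut_{\mathcal{L}}(B/A)$ is $U_B(\bar{a},\bar{b}) \cap \aut_{\mathcal{L}}(B/A)$, where $\bar{a},\bar{b}$ are finite tuples from $B$. Its preimage under $|_B$ is
$$\{f \in \aut_{\mathcal{L}}(C/A) : f(\bar{a}) = \bar{b}\} = U_C(\bar{a},\bar{b}) \cap \aut_{\mathcal{L}}(C/A),$$
since $\bar{a},\bar{b} \subseteq B \subseteq C$ are also tuples in $C$. This set is open in $\aut_{\mathcal{L}}(C/A)$ by definition of the Krull topology on $C^C$. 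Continuity follows.

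Finally, the two end maps $1 \to \aut_{\mathcal{L}}(C/B)$ and $\aut_{\mathcal{L}}(B/A) \to 1$ are trivially continuous. I do not expect any real obstacle here; the only point worth noting is that restriction is even well-defined as a map into $\aut_{\mathcal{L}}(B/A)$, which uses that $A \subseteq B$ is normal (so $f(B) = B$ for every $f \in \aut_{\mathcal{L}}(C/A)$, by Fact \ref{normal_morph} applied to the extension $\mathfrak{D} \supseteq B \supseteq A$). The continuity proof itself is purely a matter of matching basic opens.
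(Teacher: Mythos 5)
Your proof is correct and fills in exactly the details the paper declines to write out: the paper's own proof reads only ``Straightforward, if we use the above definition of the Krull topology,'' and tracing basic open sets through the inclusion and the restriction map is precisely that argument. (One minor remark on your aside about well-definedness: to get $f(B)=B$ for $f\in\aut_{\mathcal{L}}(C/A)$ from Fact \ref{normal_morph} one should first extend $f$ to an element of $\aut_{\mathcal{L}}(\mathfrak{D}/A)$ via Lemma \ref{fhat_lemma}, since Fact \ref{normal_morph} is stated for global automorphisms; this does not affect the continuity argument itself.)
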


\begin{proof}
Straightforward, if we use the above definition of the Krull topology.
\end{proof}

\begin{fact}\label{fact314}
Assume that $A\subseteq C$ is a Galois extension and $A\subseteq B=\dcl_{\mathcal{L}}^{\mathfrak{D}}(B)\subseteq C$. The extension $A\subseteq B$ is Galois if and only if
$$\aut_{\mathcal{L}}(C/B)\trianglelefteqslant \aut_{\mathcal{L}}(C/A).$$
\end{fact}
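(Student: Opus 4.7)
The plan is to handle both directions via the standard Galois-theoretic computation, with the lift from $\aut_{\mathcal{L}}(C/A)$ to $\aut_{\mathcal{L}}(\mathfrak{D}/A)$ provided by Lemma \ref{fhat_lemma}, and with the ``$C$-fixed points equal $B$'' step secured by the hypothesis $B = \dcl_{\mathcal{L}}^{\mathfrak{D}}(B)$.

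For the forward direction, I would assume $A \subseteq B$ is Galois, hence normal. Given $\tau \in \aut_{\mathcal{L}}(C/A)$, I lift it via Lemma \ref{fhat_lemma} to some $\hat{\tau} \in \aut_{\mathcal{L}}(\mathfrak{D}/A)$; by normality of $B$ over $A$ and Fact \ref{normal_morph}, $\hat{\tau}(B) = B$, and similarly for $\hat{\tau}^{-1}$. Then for any $\sigma \in \aut_{\mathcal{L}}(C/B)$ and $b \in B$, one has $\hat{\tau}^{-1}(b) \in B$, so $\sigma(\hat{\tau}^{-1}(b)) = \hat{\tau}^{-1}(b)$, and therefore $(\tau \sigma \tau^{-1})(b) = b$. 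This shows $\tau \sigma \tau^{-1} \in \aut_{\mathcal{L}}(C/B)$, proving normality of the subgroup.

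For the converse, assume $\aut_{\mathcal{L}}(C/B) \trianglelefteqslant \aut_{\mathcal{L}}(C/A)$ and take any $\tau \in \aut_{\mathcal{L}}(\mathfrak{D}/A)$ and $b \in B$; I want to show $\tau(b) \in B$. Normality of $C$ over $A$ gives $\tau(C) = C$ (Fact \ref{normal_morph}), so $\tau|_C \in \aut_{\mathcal{L}}(C/A)$. For every $\sigma \in \aut_{\mathcal{L}}(C/B)$, the normality hypothesis yields $\tau|_C^{-1} \sigma \tau|_C \in \aut_{\mathcal{L}}(C/B)$, which fixes $b$; rewriting gives $\sigma(\tau(b)) = \tau(b)$. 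Thus $\tau(b) \in C$ is fixed by every element of $\aut_{\mathcal{L}}(C/B)$.

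To upgrade this to $\tau(b) \in B$, I would argue that $C$ is also normal over $B$ (since $\aut_{\mathcal{L}}(\mathfrak{D}/B) \subseteq \aut_{\mathcal{L}}(\mathfrak{D}/A)$ and $C$ is normal over $A$), so that every $\alpha \in \aut_{\mathcal{L}}(\mathfrak{D}/B)$ restricts to an element of $\aut_{\mathcal{L}}(C/B)$ and therefore fixes $\tau(b)$ by the previous paragraph. Since $B = \dcl_{\mathcal{L}}^{\mathfrak{D}}(B)$ and $\mathfrak{D}$ is a monster model, an element of $\mathfrak{D}$ fixed by all automorphisms of $\mathfrak{D}$ over $B$ lies in $B$; hence $\tau(b) \in B$, as required. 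The only delicate point is the ``upgrade'' step; it works frictionlessly here because $B$ is definably closed and $C$ inherits normality from $A$ to $B$, so no extra saturation or quantifier elimination argument is needed beyond what is already in force.
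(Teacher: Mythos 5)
Your proof is correct and is precisely the standard Galois-correspondence argument (lift $\tau$ to $\hat\tau\in\aut_{\mathcal{L}}(\mathfrak{D}/A)$ and conjugate for the forward direction; observe that $C$ is normal over $B$ and use $B=\dcl_{\mathcal{L}}^{\mathfrak{D}}(B)$ to pass from fixedness under $\aut_{\mathcal{L}}(C/B)$ to membership in $B$ for the converse), which is exactly the adaptation of Lemma 9 in the reference that the paper itself invokes without writing out. No gap.
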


\begin{proof}
The proof of \cite[Lemma 9.]{invitation} still works for the infinite extensions case.
\end{proof}

From now we assume that $T'$ additionally admits \textbf{elimination of imaginaries}.

\begin{fact}[The Galois correspondence]\label{galois.correspondence}
Let $A\subseteq C$ be a Galois extension, introduce
$$\mathcal{B}:=\lbrace B\;|\;A\subseteq B=\dcl_{\mathcal{L}}^{\mathfrak{D}}(B)\subseteq C\rbrace,$$
$$\mathcal{H}:=\lbrace H\;|\; H\leqslant\aut_{\mathcal{L}}(C/A)\text{ is closed}\rbrace.$$
Then $\alpha(B):=\aut_{\mathcal{L}}(C/B)$ is a mapping between $\mathcal{B}$ and $\mathcal{H}$, $\beta(H):=C^H$ is a mapping between $\mathcal{H}$ and $\mathcal{B}$ and it follows
$$\alpha\circ\beta=\id,\qquad\beta\circ\alpha=\id.$$
\end{fact}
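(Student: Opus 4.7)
The plan is a standard Galois-correspondence argument, tailored to the model-theoretic set-up. I would first verify that both maps are well-defined. For $B \in \mathcal{B}$, the subgroup $\aut_{\mathcal{L}}(C/B)$ equals the intersection of $\aut_{\mathcal{L}}(C/A)$ with $\bigcap_{b \in B} U(b,b)$, hence is closed. For $H \in \mathcal{H}$, the fixed set $C^H$ clearly contains $A$; to see it is definably closed in $\mathfrak{D}$, take $c \in \dcl_{\mathcal{L}}^{\mathfrak{D}}(C^H)$. Since $C = \dcl_{\mathcal{L}}^{\mathfrak{D}}(C)$ by the definition of a Galois extension, $c \in C$, and given $h \in H$, Lemma \ref{fhat_lemma} lifts $h$ to $\hat{h} \in \aut_{\mathcal{L}}(\mathfrak{D}/C^H)$, which fixes $c$; so $h(c) = \hat{h}(c) = c$ and $c \in C^H$.

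For $\beta \circ \alpha = \id$, the inclusion $B \subseteq C^{\aut_{\mathcal{L}}(C/B)}$ is immediate. For the reverse, given $c \in C \setminus B$, definable closedness of $B$ in $\mathfrak{D}$ yields $\hat{\sigma} \in \aut_{\mathcal{L}}(\mathfrak{D}/B)$ with $\hat{\sigma}(c) \neq c$. Since $A \subseteq C$ is normal and $\aut_{\mathcal{L}}(\mathfrak{D}/B) \subseteq \aut_{\mathcal{L}}(\mathfrak{D}/A)$, the extension $B \subseteq C$ is also normal, and Fact \ref{normal_morph} gives $\hat{\sigma}(C) = C$. Hence $\hat{\sigma}|_C \in \aut_{\mathcal{L}}(C/B)$ moves $c$, and $c \notin C^{\aut_{\mathcal{L}}(C/B)}$.

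The harder direction is $\alpha \circ \beta = \id$, which is where both closedness of $H$ and elimination of imaginaries enter. Inclusion of $H$ in $\aut_{\mathcal{L}}(C/C^H)$ is trivial. Assume, towards contradiction, that some $\sigma \in \aut_{\mathcal{L}}(C/C^H)$ does not belong to $H$. Since $H$ is closed, a basic open neighbourhood $U(\bar{a},\sigma(\bar{a})) \cap \aut_{\mathcal{L}}(C/A)$ of $\sigma$ is disjoint from $H$; so no $h \in H$ satisfies $h(\bar{a}) = \sigma(\bar{a})$. Because $\bar{a}$ is a finite tuple from $C \subseteq \acl_{\mathcal{L}}^{\mathfrak{D}}(A)$ and each $h \in H$ lifts (Lemma \ref{fhat_lemma}) to an automorphism of $\mathfrak{D}$ fixing $A$, the orbit $H \cdot \bar{a}$ is contained in the finite orbit $\aut_{\mathcal{L}}(\mathfrak{D}/A) \cdot \bar{a}$ and is therefore finite.

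Now I invoke elimination of imaginaries: the finite $\emptyset$-definable-from-$H\cdot\bar{a}$ set $H \cdot \bar{a}$ has a code $e$ interdefinable with this set, so $e \in \dcl_{\mathcal{L}}^{\mathfrak{D}}(H\cdot\bar{a}) \subseteq \dcl_{\mathcal{L}}^{\mathfrak{D}}(C) = C$, and the stabiliser of $e$ in $\aut_{\mathcal{L}}(\mathfrak{D})$ coincides with the set-wise stabiliser of $H \cdot \bar{a}$. Every $h \in H$ permutes $H \cdot \bar{a}$, so fixes $e$; hence $e \in C^H$. Therefore $\sigma$ fixes $e$, so (after lifting $\sigma$ to $\mathfrak{D}$ by Lemma \ref{fhat_lemma}) $\sigma$ permutes $H \cdot \bar{a}$, giving $\sigma(\bar{a}) \in H \cdot \bar{a}$, which contradicts the disjointness above. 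The main obstacle is precisely this last direction: the orbit must be finite (use $C \subseteq \acl_{\mathcal{L}}^{\mathfrak{D}}(A)$), and EI is required to produce a single element of $C^H$ that codes the full orbit.
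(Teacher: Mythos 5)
Your proof is correct. Note that the paper itself does not give a proof of this Fact — it says only that the argument is ``standard'' and refers to \cite[Theorem 12.]{invitation} — so there is no argument of the paper's to compare against; your proof supplies the omitted details. The structure is the expected one: well-definedness of both maps (using $C=\dcl_{\mathcal{L}}^{\mathfrak{D}}(C)$ and lifting via Lemma \ref{fhat_lemma} for $\beta$, and clopenness of the $U(b,b)$ for $\alpha$), the easy direction $\beta\circ\alpha=\id$ from definable closedness of $B$ together with normality of $A\subseteq C$ (Fact \ref{normal_morph}), and the harder direction $\alpha\circ\beta=\id$ by the orbit/code argument: a basic open neighbourhood separating $\sigma$ from the closed $H$, finiteness of $H\cdot\bar{a}$ from $C\subseteq\acl_{\mathcal{L}}^{\mathfrak{D}}(A)$, and elimination of imaginaries to place a code of the orbit inside $C^H$. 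All three uses of the running hypotheses (definable closedness of $C$, normality, and EI) are correctly located and genuinely needed.
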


\begin{proof}
We omit the proof. Again it is a standard one, which uses at some point \cite[Theorem 12.]{invitation}.
\end{proof}

\begin{remark}\label{remark.gal}
Assume that $A\subseteq C$ is a Galois extension, $A\subseteq B_1,B_2\subseteq C$, $B_1=\dcl_{\mathcal{L}}^{\mathfrak{D}}(B_1)$, $B_2=\dcl_{\mathcal{L}}^{\mathfrak{D}}(B_2)$ and $H_1,H_2\leqslant\aut_{\mathcal{L}}(C/A)$ are closed subgroups. It follows
\begin{enumerate}
\item $\aut_{\mathcal{L}}(C/\dcl_{\mathcal{L}}^{\mathfrak{D}}(B_1\cup B_2))=
\aut_{\mathcal{L}}(C/B_1)\cap\aut_{\mathcal{L}}(C/B_2)$,

\item $C^{H_1\cap H_2}=\dcl_{\mathcal{L}}^{\mathfrak{D}}(C^{H_1}\cup C^{H_2})$.
\end{enumerate}
\end{remark}

\begin{fact}\label{fact317}
If $A\subseteq C$ is a Galois extension, then $\aut_{\mathcal{L}}(C/A)$ is a profinite group.
\end{fact}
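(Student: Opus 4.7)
The plan is to exhibit $\aut_{\mathcal{L}}(C/A)$ as an inverse limit of finite groups, indexed by ``finite'' Galois subextensions of $A \subseteq C$. The key observation is that since $C \subseteq \acl_{\mathcal{L}}^{\mathfrak{D}}(A)$, every finite tuple from $C$ has a finite $\aut_{\mathcal{L}}(\mathfrak{D}/A)$-orbit, and elimination of imaginaries (which we have standing) lets us package these orbits into genuine definably closed substructures.

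First I would associate to each finite tuple $c \subseteq C$ its orbit $O_c := \aut_{\mathcal{L}}(\mathfrak{D}/A) \cdot c$ (finite, by algebraicity of $c$ over $A$) and set $B_c := \dcl_{\mathcal{L}}^{\mathfrak{D}}(A \cup O_c)$. By construction $O_c$ is invariant under $\aut_{\mathcal{L}}(\mathfrak{D}/A)$, so $B_c$ is normal over $A$ in the sense of Definition \ref{galois.ext.def}; since it is definably closed and sits inside $\acl_{\mathcal{L}}^{\mathfrak{D}}(A)$, the extension $A \subseteq B_c$ is Galois. Crucially, any $\tau \in \aut_{\mathcal{L}}(B_c/A)$ is determined by its action on the finite set $O_c$, so the homomorphism $\aut_{\mathcal{L}}(B_c/A) \to \mathrm{Sym}(O_c)$ is injective and $\aut_{\mathcal{L}}(B_c/A)$ is finite.

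Next I would verify that restriction yields surjective continuous homomorphisms $\rho_c : \aut_{\mathcal{L}}(C/A) \to \aut_{\mathcal{L}}(B_c/A)$. Continuity is immediate from the definition of the Krull topology. For surjectivity, given $\sigma \in \aut_{\mathcal{L}}(B_c/A)$, use Lemma \ref{fhat_lemma} to extend $\sigma$ to $\hat\sigma \in \aut_{\mathcal{L}}(\mathfrak{D}/A)$; Fact \ref{normal_morph} applied to the Galois (hence normal) extension $A \subseteq C$ gives $\hat\sigma(C) = C$, so $\hat\sigma|_C \in \aut_{\mathcal{L}}(C/A)$ restricts to $\sigma$ on $B_c$. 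The tuples $c$ are directed under inclusion (with $B_c \subseteq B_{c'}$ whenever $c \subseteq c'$), giving an inverse system of finite groups, and restriction induces a continuous map
\[
\Phi : \aut_{\mathcal{L}}(C/A) \longrightarrow \varprojlim_{c} \aut_{\mathcal{L}}(B_c/A).
\]

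Finally I would show $\Phi$ is a homeomorphism. Injectivity is clear: if $f \ne g$ in $\aut_{\mathcal{L}}(C/A)$, pick $x \in C$ with $f(x) \ne g(x)$ and observe they disagree on $B_{(x)}$. For surjectivity, given a coherent family $(\sigma_c)$, define $\sigma : C \to C$ by $\sigma(x) = \sigma_{(x)}(x)$; coherence makes this well-defined, and compatibility of the $\sigma_c$ with the $\mathcal{L}$-structure on each $B_c$ (which is the induced structure from $\mathfrak{D}$) propagates to show $\sigma$ is an $\mathcal{L}$-automorphism of $C$ fixing $A$. Continuity of $\Phi^{-1}$ is immediate from the definitions of both topologies. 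Thus $\aut_{\mathcal{L}}(C/A)$ is an inverse limit of finite groups, i.e. profinite. The main subtlety will be the surjectivity of each $\rho_c$, where one must carefully combine Lemma \ref{fhat_lemma} with the normality of $A \subseteq C$ via Fact \ref{normal_morph}; the rest is the standard profinite bookkeeping.
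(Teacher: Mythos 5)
Your proof is correct and complete. The paper itself skips the argument, deferring to Milne's Proposition 7.8, which proves the \emph{topological} characterization of profinite (Hausdorff, compact via Tychonoff, totally disconnected); you instead exhibit $\aut_{\mathcal{L}}(C/A)$ directly as an inverse limit of finite groups. The building blocks are effectively the same in either route --- the finite Galois subextensions $B_c = \dcl_{\mathcal{L}}^{\mathfrak{D}}(A \cup O_c)$ and the surjectivity of the restriction maps, which you correctly get by lifting $\sigma\in\aut_{\mathcal{L}}(B_c/A)$ to $\hat\sigma\in\aut_{\mathcal{L}}(\mathfrak{D}/A)$ via Lemma \ref{fhat_lemma} and then using Fact \ref{normal_morph} on the normal extension $A\subseteq C$ to land back inside $\aut_{\mathcal{L}}(C/A)$ --- but your packaging bypasses the separate topological verifications, replacing them with the coherence argument that the limit of the $\sigma_c$ assembles to an $\mathcal{L}$-automorphism of $C$. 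One small inaccuracy: you appeal to elimination of imaginaries to ``package the orbits into genuine definably closed substructures,'' but EI plays no role here --- the finiteness of $O_c$ follows from $C\subseteq\acl_{\mathcal{L}}^{\mathfrak{D}}(A)$, and $B_c$ is a definably closed substructure simply by construction as a $\dcl$, so that sentence can be dropped without affecting the argument.
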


\begin{proof}
We need to show that $\aut_{\mathcal{L}}(C/A)$ is a Hausdorff, compact and totally disconnected space. 
We will skip this standard proof.
The reader may adjust the proof of Proposition 7.8 from J.S. Milne ``Fields and Galois theory" (\cite{milneFT}), but with
changes which are necessary in our general context, and using the aforementioned definition of the Krull topology.

\end{proof}

We end this subsection with a very useful lemma, which states that for any $G$-action on an $\mathcal{L}$-structure $N$ (contained in $\mathfrak{D}$) the extension $N^G\subseteq N$ is always a Galois extension.

\begin{lemma}\label{N_Galois}
Assume that $N$ is a small definably closed $\mathcal{L}$-substructure of $\mathfrak{D}$ equipped with a $G$-action $(\tau_g)_{g\in G}$. 
\begin{enumerate}
\item
If $N\subseteq\acl_{\mathcal{L}}^{\mathfrak{D}}(N^G)$, then for each $b\in N$ we have
$$\aut_{\mathcal{L}}(\mathfrak{D}/N^G)\cdot b=G\cdot b,$$
hence $N^G\subseteq N$ is a Galois extension.

\item
If $G$ is finite, then $N\subseteq\acl_{\mathcal{L}}^{\mathfrak{D}}(N^G)$, hence also the first point follows.
\end{enumerate}
\end{lemma}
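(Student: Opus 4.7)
The plan is to prove part (2) first, then leverage the same core argument for part (1), and finally extract Galois-ness from the orbit description.

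For part (2), I would fix $b \in N$ and note that the orbit $G \cdot b$ is finite because $G$ is finite. Since elimination of imaginaries has already been assumed in this subsection, the finite set $G \cdot b$ admits a canonical parameter $e$. This code lies in $\dcl_{\mathcal{L}}^{\mathfrak{D}}(G \cdot b) \subseteq \dcl_{\mathcal{L}}^{\mathfrak{D}}(N) = N$ (since $N$ is definably closed by hypothesis), and each $\tau_g$ permutes $G \cdot b$ setwise, so each $\tau_g$ fixes $e$; thus $e \in N^G$. The formula over $e$ that defines the finite set $G \cdot b$ then witnesses $b \in \acl_{\mathcal{L}}^{\mathfrak{D}}(N^G)$. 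Since $b \in N$ was arbitrary, $N \subseteq \acl_{\mathcal{L}}^{\mathfrak{D}}(N^G)$, which reduces (2) to (1).

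For part (1), the inclusion $G \cdot b \subseteq \aut_{\mathcal{L}}(\mathfrak{D}/N^G) \cdot b$ is immediate: each $\tau_g$ is an $\mathcal{L}$-automorphism of $N$ fixing $N^G$ pointwise, so Lemma \ref{fhat_lemma} extends it to some $\hat{\tau}_g \in \aut_{\mathcal{L}}(\mathfrak{D}/N^G)$. For the reverse inclusion, the hypothesis $N \subseteq \acl_{\mathcal{L}}^{\mathfrak{D}}(N^G)$ forces $\aut_{\mathcal{L}}(\mathfrak{D}/N^G) \cdot b$ to be finite, and in particular $G \cdot b$ is a finite subset of $N$. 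Now I would repeat the EI step from part (2): the code $e$ of $G \cdot b$ lies in $N^G$, so there is a formula $\psi(x,e)$ over $N^G$ whose solution set in $\mathfrak{D}$ is exactly $G \cdot b$. Any $f \in \aut_{\mathcal{L}}(\mathfrak{D}/N^G)$ preserves this formula and hence permutes $G \cdot b$; thus $f(b) \in G \cdot b$.

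To conclude that $N^G \subseteq N$ is a Galois extension in the sense of Definition \ref{galois.ext.def}, I need three things. Normality of $N^G \subseteq N$ is immediate from the orbit equality, since $\aut_{\mathcal{L}}(\mathfrak{D}/N^G) \cdot b \subseteq N$ for every $b \in N$. Inclusion of $N$ in $\acl_{\mathcal{L}}^{\mathfrak{D}}(N^G)$ is assumed. For $N^G = \dcl_{\mathcal{L}}^{\mathfrak{D}}(N^G)$, I would take $a \in \dcl_{\mathcal{L}}^{\mathfrak{D}}(N^G)$; then $a$ is fixed by every element of $\aut_{\mathcal{L}}(\mathfrak{D}/N^G)$, in particular by each extension $\hat{\tau}_g$. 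Also $a \in \dcl_{\mathcal{L}}^{\mathfrak{D}}(N^G) \subseteq \dcl_{\mathcal{L}}^{\mathfrak{D}}(N) = N$, so $\tau_g(a) = \hat{\tau}_g(a) = a$ for all $g \in G$ and hence $a \in N^G$.

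The main obstacle, such as it is, is the EI step: making sure the canonical parameter of $G \cdot b$ genuinely lands in $N^G$ and not merely in $N$. This needs both pieces of information — that $N$ is definably closed (to pull the code into $N$) and that $G$ acts by automorphisms permuting $G \cdot b$ setwise (to push the code further into the fixed substructure). Everything else is bookkeeping with Lemma \ref{fhat_lemma} and the definitions.
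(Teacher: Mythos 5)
Your proposal is correct and follows essentially the same route as the paper: extract a canonical parameter for the finite orbit $G\cdot b$ via elimination of imaginaries, observe that it lands in $N$ (since $N$ is definably closed) and is fixed by each $\tau_g$ (since they permute $G\cdot b$ setwise), hence lies in $N^G$, and use the resulting formula over $N^G$ to pin the $\aut_{\mathcal{L}}(\mathfrak{D}/N^G)$-orbit of $b$ inside $G\cdot b$. The paper treats both items in a single pass (noting that $F=G\cdot b$ is finite under either hypothesis and then running the EI argument once), whereas you run the EI step twice — once to establish $N\subseteq\acl_{\mathcal{L}}^{\mathfrak{D}}(N^G)$ for item (2) and again for the orbit equality in item (1) — but this is a harmless redundancy of exposition rather than a different method; you also spell out the verification of $\dcl_{\mathcal{L}}^{\mathfrak{D}}(N^G)=N^G$, which the paper omits (it is the argument of Remark~\ref{inv.perfect}).
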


\begin{proof}
We omit the proof of $\dcl_{\mathcal{L}}^{\mathfrak{D}}(N^G)=N^G$.
Both items will be proved simultaneously.

Let $b_1\in N$ and consider
$$F:=\lbrace \tau_g(b_1)\;|\;g\in G\rbrace\subseteq N.$$
Because of Lemma \ref{fhat_lemma}, the set $F$ can be regarded	as a subset of
$$\aut_{\mathcal{L}}(\mathfrak{D}/N^G)\cdot b_1.$$
If we assume that $N\subseteq\acl_{\mathcal{L}}^{\mathfrak{D}}(N^G)$ (the first item) or $|G|<\infty$ (the second item), it follows that $F$ is finite.
Say $F=\lbrace b_1,\ldots,b_m\rbrace$, where $m\in\mathbb{N}$. We will show that
$$\aut_{\mathcal{L}}(\mathfrak{D}/N^G)\cdot b_1=F.$$
By elimination of imaginaries there exists a code $b_F\in\mathfrak{D}$ for the set $F$. Set $F$ is $\lbrace b_1,\ldots,b_m\rbrace$-definable, so $b_F\in\dcl_{\mathcal{L}}^{\mathfrak{D}}(b_1,\ldots,b_m)\subseteq\dcl_{\mathcal{L}}^{\mathfrak{D}}(N)=N$.
We extend, by Lemma \ref{fhat_lemma}, each $\tau_g$ to $\tilde{\tau}_g\in\aut_{\mathcal{L}}(\mathfrak{D}/N^G)$. Now we observe that
\begin{IEEEeqnarray*}{rCl}
(\forall g\in G)(\tilde{\tau}_g(F)=F) &\iff & (\forall g\in G)(\tilde{\tau}_g(b_F)=b_F)\\
& \iff & (\forall g\in G)(\tau_g(b_F)=b_F) \\
&\iff & b_F\subseteq N^G \\
&\Rightarrow &  (\forall f\in\aut_{\mathcal{L}}(\mathfrak{D}/N^G))(f(b_F)=b_F) \\
&\iff & (\forall f\in\aut_{\mathcal{L}}(\mathfrak{D}/N^G))(f(F)=F).
\end{IEEEeqnarray*}
The last item implies that
$$\aut_{\mathcal{L}}(\mathfrak{D}/N^G)\cdot b_1\subseteq F.$$
\end{proof}

\subsection{Galois groups versus group $G$}\label{subs:galois}
The idyllic situation will be the following one: an $\mathcal{L}$-theory $T$ has a model companion $T^{\mc}$  which has quantifier elimination and elimination of imaginaries (and is stable), and the model companion $T_G^{\mc}$ of the theory $T_G$ exists,
$$\xymatrixcolsep{7.5pc}\xymatrix{ T \ar@{~}^*+\txt{\small adding \\ \small group action}[r] \ar@{~}[d]_*+\txt{\small passing to\\ \small model companion} & T_G \ar@{~}[d]^*+\txt{\small passing to\\ \small model companion} \\ T^{\mc} & T_G^{\mc}}$$
Existence of $T^{\mc}$ and $T_G^{\mc}$ is a strong assumption and therefore we want to avoid it (at least until Section \ref{sec:forking}). Note that in the idyllic situation $T_{\forall}\subseteq T^{\mc}$.

Recall that we are working in a big $\mathcal{L}$-structure $\mathfrak{D}$ such that $T'=\theo_{\mathcal{L}}(\mathfrak{D})$ allows quantifier elimination and elimination of imaginary elements (in the idyllic situation $T'$ is a completion of $T^{\mc}$). 
Now, let $M$ be a small $\mathcal{L}$-substructure of $\mathfrak{D}$ 
equipped with a $G$-action $(\sigma_g)_{g\in G}$. Our goal is to study existentially closed models of some $\mathcal{L}^G$-theory, which will correspond in the idyllic situation to $T_G$. 
Assume that $T$ is an $\mathcal{L}$-theory such that $T_{\forall}\subseteq T'$.
We have the following sequence of implications
$$\xymatrixcolsep{3.5pc}\xymatrix{
(M,(\sigma_g)_{g\in G})\text{ is existentially closed among models of }T_G \ar@{=>}[d]^{\text{ since }(T_{\forall})_G\supseteq(T_G)_{\forall}}\\
(M,(\sigma_g)_{g\in G})\text{ is existentially closed among models of }(T_{\forall})_G \ar@{=>}[d]^{\text{ since }T_{\forall}\subseteq T'}\\
(M,(\sigma_g)_{g\in G})\text{ is an existentially closed model of }(T'_{\forall})_G}$$
We easily see that the last line is the most general assumption if we aim to study 
$\mathcal{L}$-substructures of $\mathfrak{D}$ equipped with a $G$-action, which are
existentially closed among models of $T_G$ for some $\mathcal{L}$-theory $T$ (such that $T_{\forall}\subseteq T'$).
Therefore we can even avoid introducing $T$ and focus only on existentially closed models of $(T'_{\forall})_G$ (i.e on existentially closed $\mathcal{L}$-substructures of $\mathfrak{D}$ with a $G$-action), which is an inductive theory.

Let us summarize the assumptions:
\begin{itemize}
\item $\mathfrak{D}$ is a properly saturated $\mathcal{L}$-structure,

\item $T'=\theo_{\mathcal{L}}(\mathfrak{D})$ allows quantifier elimination and elimination of imaginary elements,

\item $M$ is a small $\mathcal{L}$-substructure of $\mathfrak{D}$ equipped with a $G$-action $(\sigma_g)_{g\in G}$,

\item $(M,(\sigma_g)_{g\in G})$ is existentially closed model of $(T'_{\forall})_G$.

\end{itemize}

\begin{remark}\label{dcl_M}
It follows $\dcl_{\mathcal{L}}^{\mathfrak{D}}(M)=M$.
\end{remark}

\begin{proof}
For each $m\in\dcl_{\mathcal{L}}^{\mathfrak{D}}(M)$ we chose a quantifier free $\mathcal{L}$-formula $\varphi_m(y,x)$ and a finite tuple $c_m\subseteq M$ such that
$$\varphi_m(c_m,\mathfrak{D})=\lbrace m\rbrace.$$
Now, we will extend $G$-action $(\sigma_g)_{g\in G}$ from $M$ to $\tilde{M}:=\dcl_{\mathcal{L}}^{\mathfrak{D}}(M)$. 
For each $g\in G$ chose, by Lemma \ref{fhat_lemma}, 
$\tilde{\sigma}_g\in\aut_{\mathcal{L}}(\mathfrak{D})$ 
such that $\tilde{\sigma}_g|_M=\sigma_g$. We observe that
\begin{itemize}
\item $\tilde{\sigma}_1|_{\tilde{M}}=\id_{\tilde{M}}$,
\item $\tilde{\sigma}_g(\tilde{M})\subseteq \tilde{M}$ for each $g\in G$,
\item $\tilde{\sigma}_h\tilde{\sigma}_g(m)=\tilde{\sigma}_{hg}(m)$ for each $h,g\in G$ and each $m\in \tilde{M}$,
\item $m=\tilde{\sigma}_g(\tilde{\sigma}_{g^{-1}}(m))\in \im\tilde{\sigma}_g$, for each $m\in \tilde{M}$ and each $g\in G$ (hence $\tilde{\sigma}|_{\tilde{M}}$ is surjection on $\tilde{M}$),
\end{itemize}
therefore $(\tilde{\sigma}_g|_{\tilde{M}})_{g\in G}\leqslant\aut_{\mathcal{L}}(\tilde{M})$ defines a $G$-action on $\tilde{M}$ which extends the $G$-action $(\sigma_g)_{g\in G}$ on $M$.

 Because $\tilde{M}\subseteq\mathfrak{D}$, it follows
$(\tilde{M},(\tilde{\sigma}_g|_{\tilde{M}})_{g\in G})\models (T'_{\forall})_G$.
Since we have $\varphi_m^{\mathfrak{D}}(c_m,m)$ and $\varphi_m(y,x)$ is quantifier free, we obtain that $\varphi_m^{\tilde{M}}(c_m,m)$, so $\tilde{M}\models\exists x\;\varphi_m(c_m,x)$. Hence $M\models\exists x\;\varphi_m(c_m,x)$, say $\varphi_m^{M}(c_m,m_M)$ for some $m_M\in M$. Again, since $\varphi(y,x)$ is quantifier free, it is $\varphi_m^{\mathfrak{D}}(c_m,m_M)$, so $m=m_M\in M$. Therefore $\dcl_{\mathcal{L}}^{\mathfrak{D}}(M)\subseteq M$.
\end{proof}

\begin{cor}\label{cor321}
If $A\subseteq M$, then $\dcl_{\mathcal{L}}^{\mathfrak{D}}(A)\subseteq M$.
\end{cor}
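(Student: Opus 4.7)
The plan is to deduce this immediately from Remark \ref{dcl_M}, which established that $\dcl_{\mathcal{L}}^{\mathfrak{D}}(M)=M$. The only tool I need beyond that remark is monotonicity of the definable closure operator in the language $\mathcal{L}$, which is a general property of $\dcl$ and holds regardless of any assumptions on $T'$, $M$, or the $G$-action.

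Concretely, I would first recall that if $A \subseteq B$ are small subsets of $\mathfrak{D}$, then every element definable over $A$ is also definable over $B$ (one just uses the same defining formula with parameters from $A \subseteq B$), so $\dcl_{\mathcal{L}}^{\mathfrak{D}}(A) \subseteq \dcl_{\mathcal{L}}^{\mathfrak{D}}(B)$. Applying this to the inclusion $A \subseteq M$ gives
\[
\dcl_{\mathcal{L}}^{\mathfrak{D}}(A) \subseteq \dcl_{\mathcal{L}}^{\mathfrak{D}}(M).
\]
Then I would invoke Remark \ref{dcl_M}, which states $\dcl_{\mathcal{L}}^{\mathfrak{D}}(M) = M$ (using the hypothesis that $(M,(\sigma_g)_{g\in G})$ is existentially closed among models of $(T'_{\forall})_G$), to conclude $\dcl_{\mathcal{L}}^{\mathfrak{D}}(A) \subseteq M$.

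There is no real obstacle here: the content of the statement is already carried by Remark \ref{dcl_M}, and the corollary is just packaging the monotonicity observation. The only minor thing to be careful about is that $A$ is a subset of $M$ and not, say, a subset of $\mathfrak{D}$ lying in the algebraic closure of $M$ — but the hypothesis $A \subseteq M$ is precisely what makes monotonicity directly applicable without any further work.
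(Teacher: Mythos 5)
Your proof is correct and is exactly the intended deduction: the paper states the corollary without proof precisely because it follows from Remark \ref{dcl_M} by monotonicity of $\dcl_{\mathcal{L}}^{\mathfrak{D}}$, which is what you do.
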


Assume that $N$ is a small $\mathcal{L}$-substructure of $\mathfrak{D}$ equipped with a $G$-action $(\tau_g)_{g\in G}$.
From Remark \ref{dcl_M} and Corollary \ref{cor321} we see that the assumption
$$\dcl_{\mathcal{L}}^{\mathfrak{D}}(N)=N$$
is harmless 
if we aim to study $\mathcal{L}^G$-structures which are existentially closed among models of $T_G$ (or
if we aim to study models of $T_G^{\mc}$). Therefore we make it, i.e. from now on \textbf{we assume that $\dcl_{\mathcal{L}}^{\mathfrak{D}}(N)=N$}.

\begin{remark}\label{inv.perfect}
It follows $\dcl_{\mathcal{L}}^{\mathfrak{D}}(N^G)=N^G$.
\end{remark}

\begin{proof}
Assume that $n\in\dcl_{\mathcal{L}}^{\mathfrak{D}}(N^G)\subseteq N$ and for some $\mathcal{L}$-formula $\varphi(y,x)$ and some finite tuple $c\subseteq N^G$ it is
$$\varphi(c,\mathfrak{D})=\lbrace n\rbrace.$$
Let $g\in G$ and extend $\tau_g$ to $\tilde{\tau}_g\in\aut_{\mathcal{L}}(\mathfrak{D})$ (by Lemma \ref{fhat_lemma}). Then
$$\varphi^{\mathfrak{D}}(c,n)\;\Rightarrow\;\varphi^{\mathfrak{D}}(\tilde{\tau}_g(c),\tilde{\tau}_g(n))\;\Rightarrow\;\varphi^{\mathfrak{D}}(c,\tau_g(n)),$$
which implies $\tau_g(n)=n$.
\end{proof}

\begin{remark}\label{dcl_F}
We have $\dcl_{\mathcal{L}}^{\mathfrak{D}}(N\cap\acl_{\mathcal{L}}^{\mathfrak{D}}(N^G))=N\cap\acl_{\mathcal{L}}^{\mathfrak{D}}(N^G)$.
\end{remark}

\begin{proof}
The intersection of definably closed sets is a definably closed set.
\end{proof}

We will see in Remark \ref{equiv_remark} that a $G$-action on models of $T_G^{\mc}$ (which will be studied in Section \ref{sec:forking}) is mostly determined by that what happens on the relative algebraic closure (i.e. the algebraic closure in $\mathfrak{D}$ intersected with the universe of a model of $T_G^{\mc}$) of empty set or the set of invariants. Therefore, we are especially interested in the description of a $G$-action on sets of the form $F:=N\cap\acl_{\mathcal{L}}^{\mathfrak{D}}(N^G)$.

We note here that $(\tau_g|_F)_{g\in G}$ is a $G$-action on $F$. Of course $\tau_g(F)\subseteq F$ for every $g\in G$. Moreover, $(\tau_g|_F:F\to F)_{g\in G}$ is a collection of $\mathcal{L}$-monomorphisms. We use $\tau_g|_F\circ\tau_{g^{-1}}|_F=\tau_{gg^{-1}}|_F=\id_F$ to see that each $\tau_g|_F$ is onto.

\begin{lemma}\label{F_Galois}
The extension 
$N^G\subseteq F$ is a Galois extension.
\end{lemma}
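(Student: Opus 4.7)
The plan is to reduce the statement to a direct application of Lemma \ref{N_Galois}, taking $F$ (rather than $N$) as the ambient definably closed $\mathcal{L}$-substructure carrying the $G$-action.

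First I would collect the bookkeeping required by Definition \ref{galois.ext.def}(2). The chain $N^G\subseteq F\subseteq \acl_{\mathcal{L}}^{\mathfrak{D}}(N^G)$ is immediate: $N^G\subseteq N$ and $N^G\subseteq \acl_{\mathcal{L}}^{\mathfrak{D}}(N^G)$ give $N^G\subseteq F$, while $F\subseteq \acl_{\mathcal{L}}^{\mathfrak{D}}(N^G)$ holds by the very definition of $F$. Also $N^G=\dcl_{\mathcal{L}}^{\mathfrak{D}}(N^G)$ by Remark \ref{inv.perfect} and $F=\dcl_{\mathcal{L}}^{\mathfrak{D}}(F)$ by Remark \ref{dcl_F}. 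What remains to check is normality of $F$ over $N^G$.

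Next I would use that, as noted just before the statement, the collection $(\tau_g|_F)_{g\in G}$ is a $G$-action on $F$ by $\mathcal{L}$-automorphisms. In order to apply Lemma \ref{N_Galois} to the pair $(F,(\tau_g|_F)_{g\in G})$, I must identify the invariants: since $F\subseteq N$, one has $F^G=F\cap N^G$, and since $N^G\subseteq F$ was checked above, this equals $N^G$. The hypothesis of Lemma \ref{N_Galois}(1) then becomes $F\subseteq \acl_{\mathcal{L}}^{\mathfrak{D}}(N^G)$, which is exactly the defining property of $F$.

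Applying Lemma \ref{N_Galois}(1) to $F$ therefore yields that $F^G\subseteq F$, i.e.\ $N^G\subseteq F$, is a Galois extension. Since every step is a direct verification or an invocation of a previously established remark or lemma, there is no serious obstacle; the only point that needs a moment's attention is the identification $F^G=N^G$, which is forced by the inclusion $N^G\subseteq F$ established at the outset.
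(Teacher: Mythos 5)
Your argument is correct and follows essentially the same route as the paper's proof: both reduce to Lemma \ref{N_Galois}(1) applied to the $G$-action $(\tau_g|_F)_{g\in G}$ on $F$, using Remark \ref{dcl_F} for $\dcl_{\mathcal{L}}^{\mathfrak{D}}(F)=F$, the identification $F^G=N^G$, and the defining inclusion $F\subseteq\acl_{\mathcal{L}}^{\mathfrak{D}}(N^G)=\acl_{\mathcal{L}}^{\mathfrak{D}}(F^G)$.
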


\begin{proof}
We need to check the assumptions of Lemma \ref{N_Galois} for the $G$-action $(\tau_g|_F)_{g\in G}$. Note that,
by Remark \ref{dcl_F}, it follows $\dcl_{\mathcal{L}}^{\mathfrak{D}}(F)=F$.
Because $N^G\subseteq F\subseteq N$, we see that
$F^G= N^G$, therefore $F\subseteq\acl_{\mathcal{L}}^{\mathfrak{D}}(N^G)=\acl_{\mathcal{L}}^{\mathfrak{D}}(F^G)$. By the first point of Lemma \ref{N_Galois} it follows that $F^G\subseteq F$ is a Galois extension, but $F^G=N^G$.
\end{proof}

\begin{cor}
We have the following:
\begin{enumerate}
\item $\aut_{\mathcal{L}}(\acl_{\mathcal{L}}^{\mathfrak{D}}(N^G)/F)\trianglelefteqslant \aut_{\mathcal{L}}(\acl_{\mathcal{L}}^{\mathfrak{D}}(N^G)/N^G)$ (by Lemma \ref{F_Galois} and Fact \ref{fact314}),

\item $\aut_{\mathcal{L}}(\acl_{\mathcal{L}}^{\mathfrak{D}}(N^G)/N^G)$ and $\aut_{\mathcal{L}}(\acl_{\mathcal{L}}^{\mathfrak{D}}(N^G)/F)$ are profinite groups	 (by Fact \ref{fact317} and Remark \ref{dcl_F},

\item $\aut_{\mathcal{L}}(F/N^G)$ is a profinite group (by Fact \ref{fact317} and Lemma \ref{F_Galois}).
\end{enumerate}
\end{cor}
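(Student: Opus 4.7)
The plan is to derive each of the three claims by reducing to Fact \ref{fact314} and Fact \ref{fact317}, using Lemma \ref{F_Galois} together with the observation that the algebraic closure of a definably closed set is automatically a Galois extension.

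First, I would verify that the ``ambient'' extension $N^G\subseteq\acl_{\mathcal{L}}^{\mathfrak{D}}(N^G)$ is Galois in the sense of Definition \ref{galois.ext.def}. Since $\dcl_{\mathcal{L}}^{\mathfrak{D}}(N^G)=N^G$ (Remark \ref{inv.perfect}) and $\acl_{\mathcal{L}}^{\mathfrak{D}}(N^G)$ is definably closed, the only thing to check is normality, which is immediate from the standard fact that any $\mathcal{L}$-automorphism of $\mathfrak{D}$ fixing $N^G$ permutes its algebraic closure. An analogous argument shows that $F\subseteq\acl_{\mathcal{L}}^{\mathfrak{D}}(N^G)$ is Galois: $F$ is definably closed by Remark \ref{dcl_F}, $F\subseteq\acl_{\mathcal{L}}^{\mathfrak{D}}(N^G)=\acl_{\mathcal{L}}^{\mathfrak{D}}(F)$ (since $N^G\subseteq F\subseteq\acl_{\mathcal{L}}^{\mathfrak{D}}(N^G)$ forces equality of algebraic closures), and normality follows as before.

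With these Galois extensions in hand, the three items drop out quickly. For (1), Lemma \ref{F_Galois} gives that $N^G\subseteq F$ is Galois, and we have just noted that $N^G\subseteq\acl_{\mathcal{L}}^{\mathfrak{D}}(N^G)$ is Galois, so Fact \ref{fact314} applied with $A=N^G$, $B=F$, $C=\acl_{\mathcal{L}}^{\mathfrak{D}}(N^G)$ yields the normality of $\aut_{\mathcal{L}}(C/F)$ inside $\aut_{\mathcal{L}}(C/N^G)$. For (2), both $N^G\subseteq\acl_{\mathcal{L}}^{\mathfrak{D}}(N^G)$ and $F\subseteq\acl_{\mathcal{L}}^{\mathfrak{D}}(N^G)$ are Galois, so Fact \ref{fact317} makes the two automorphism groups profinite. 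For (3), Lemma \ref{F_Galois} provides that $N^G\subseteq F$ is Galois, and a final application of Fact \ref{fact317} gives profiniteness of $\aut_{\mathcal{L}}(F/N^G)$.

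Essentially no obstacle is expected; the only subtle point is confirming the definitional hypotheses of ``Galois extension'' (definable closedness of the bottom and top, plus normality) in each of the three relevant pairs, and all of these were established in Remark \ref{inv.perfect}, Remark \ref{dcl_F}, and Lemma \ref{F_Galois}. Everything else is a direct citation.
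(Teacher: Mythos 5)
Your proof is correct and is essentially the paper's own argument made explicit: the paper's ``proof'' consists of the parenthetical citations in the statement, and you have correctly unpacked them, verifying that $N^G\subseteq\acl_{\mathcal{L}}^{\mathfrak{D}}(N^G)$ and $F\subseteq\acl_{\mathcal{L}}^{\mathfrak{D}}(N^G)$ are Galois before invoking Fact \ref{fact314} and Fact \ref{fact317}, and using Lemma \ref{F_Galois} for $N^G\subseteq F$.
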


The following proposition binds together a trace of the group $G$ with some Galois group.
There are similar results in this paper which use
the notion of a \emph{Frattini cover}, e.g. Definition \ref{frattini0}, Corollary \ref{frattini1} and Remark \ref{frattini2}.

\begin{prop}\label{prop.generators}
The group $\aut_{\mathcal{L}}(F/N^G)$ is generated as a profinite group by $(\tau_g|_{F})_{g\in G}$.
\end{prop}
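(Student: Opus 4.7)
The plan is to use the Galois correspondence from Fact \ref{galois.correspondence} applied to the Galois extension $N^G \subseteq F$ established in Lemma \ref{F_Galois}. Let $H$ denote the closed subgroup of $\aut_{\mathcal{L}}(F/N^G)$ topologically generated by $(\tau_g|_F)_{g \in G}$. My goal is to show $H = \aut_{\mathcal{L}}(F/N^G)$, and by the Galois correspondence it suffices to verify that the corresponding fixed substructure $F^H$ equals $N^G$.

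First I would verify that $(\tau_g|_F)_{g \in G}$ really does sit inside $\aut_{\mathcal{L}}(F/N^G)$. By the discussion preceding Lemma \ref{F_Galois}, each $\tau_g|_F$ is an $\mathcal{L}$-automorphism of $F$, and by definition of invariants $\tau_g$ acts as the identity on $N^G \subseteq F$, so $\tau_g|_F \in \aut_{\mathcal{L}}(F/N^G)$ for each $g$. Hence $H$ is a well-defined closed subgroup of the profinite group $\aut_{\mathcal{L}}(F/N^G)$.

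Next I would compute $F^H$. Since $H$ is the topological closure of the subgroup algebraically generated by $\lbrace \tau_g|_F : g \in G \rbrace$, an element $x \in F$ lies in $F^H$ if and only if it is fixed by every $\tau_g|_F$: the ``only if'' is immediate, and the ``if'' direction uses that the stabilizer of $x$ in $\aut_{\mathcal{L}}(F/N^G)$ is closed (it is the preimage of $\lbrace x \rbrace$ under the continuous evaluation map $f \mapsto f(x)$, with $\lbrace x \rbrace$ clopen since the Krull topology on $F$ is discrete on points), and a closed subgroup containing the generators contains their closed generated subgroup. Therefore
\[
F^H = \lbrace x \in F : \tau_g(x) = x \text{ for all } g \in G \rbrace = F^G.
\]
Since $N^G \subseteq F \subseteq N$, we have $F^G = F \cap N^G = N^G$, so $F^H = N^G$.

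Finally, applying the Galois correspondence (Fact \ref{galois.correspondence}) to the Galois extension $N^G \subseteq F$, together with the fact that $H$ is by construction a closed subgroup, yields
\[
H = \aut_{\mathcal{L}}(F/F^H) = \aut_{\mathcal{L}}(F/N^G),
\]
which is exactly the desired conclusion. The only subtle step is checking that the fixed-set computation is not spoiled by passage to the topological closure; everything else is essentially bookkeeping with the Galois correspondence already set up in the previous subsection.
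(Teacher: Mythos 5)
Your proof is correct and follows essentially the same route as the paper's: apply the Galois correspondence from Fact \ref{galois.correspondence} to the Galois extension $N^G \subseteq F$ established by Lemma \ref{F_Galois}, compute the fixed set of the topological closure of $(\tau_g|_F)_{g\in G}$ to be $N^G$, and conclude by injectivity of the correspondence. The only cosmetic difference is in how you identify $F^H$ with $F^G$: you argue directly via the closedness of point-stabilizers, while the paper sandwiches $F^{\cl(H)}$ between $N^G$ (obtained from the Galois correspondence applied to the closed subgroup $\cl(H)$) and $F^H$ (via $H\subseteq\cl(H)$); both are fine.
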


\begin{proof}
It is enough to show that $\cl(H)=\aut_{\mathcal{L}}(F/N^G)$, where $H:=(\tau_g|_{F})_{g\in G}$ and $\cl$ denotes the closure in the topological sense. 
By the Galois correspondence (Fact \ref{galois.correspondence}), we obtain that $N^G\subseteq F^{\cl(H)}$. Because $H\subseteq\cl(H)$, it follows $N^G=F^G=F^H\supseteq F^{\cl(H)}$. Using $F^{\cl(H)}=N^G=F^{\aut_{\mathcal{L}}(F/N^G)}$ and again the Galois correspondence, we see that $\cl(H)=\aut_{\mathcal{L}}(F/N^G)$.
\end{proof}

We end this subsection with a general and technical lemma, which will be more useful after assuming stability (see Proposition \ref{prop.G.closed} and the proof of Proposition \ref{inv.bounded}).

\begin{lemma}\label{lemma37}
Let $\mathcal{G}:=\aut_{\mathcal{L}}(\acl_{\mathcal{L}}^{\mathfrak{D}}(N^G)/N^G)$ and
$\mathcal{N}:=\aut_{\mathcal{L}}(\acl_{\mathcal{L}}^{\mathfrak{D}}(N^G)/F)$. The following are equivalent.
\begin{enumerate}
\item There exists an $\mathcal{L}$-substructure $N'$ of $\mathfrak{D}$ equipped with a $G$-action $(\tau'_g)_{g\in G}$, which extends the $G$-action of $(F,(\tau_g|_F)_{g\in G})$, such that $N^G\subsetneq (N')^G$ and $N'\subseteq \acl_{\mathcal{L}}^{\mathfrak{D}}(N^G)$.

\item There exists a closed subgroup $\mathcal{G}_0<\mathcal{G}$ such that $\mathcal{G}_0\neq\mathcal{G}$ and $\mathcal{G}_0\mathcal{N}=\mathcal{G}$.
\end{enumerate}
\end{lemma}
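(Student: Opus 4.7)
The plan is to use the Galois correspondence (Fact \ref{galois.correspondence}) applied to the Galois extension $N^G\subseteq\acl_{\mathcal{L}}^{\mathfrak{D}}(N^G)$, which bijectively translates definably closed intermediate substructures into closed subgroups of $\mathcal{G}$. By Lemma \ref{F_Galois} and Fact \ref{fact314}, $\mathcal{N}\trianglelefteqslant\mathcal{G}$ with quotient $\mathcal{G}/\mathcal{N}\cong\aut_{\mathcal{L}}(F/N^G)$, and by Proposition \ref{prop.generators} this quotient is topologically generated by the images of $(\tau_g|_F)_{g\in G}$. The whole proof will revolve around matching this data with closed subgroups that surject onto $\mathcal{G}/\mathcal{N}$.

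For the implication $(1)\Rightarrow(2)$, I start from a given $N'$ and note that $N^G\subseteq(N')^G$ is fixed pointwise by each $\tau'_g$, so Lemma \ref{fhat_lemma} allows me to extend $\tau'_g$ to some $\tilde{\tau}'_g\in\aut_{\mathcal{L}}(\mathfrak{D}/N^G)$; restricting to $\acl_{\mathcal{L}}^{\mathfrak{D}}(N^G)$ yields $\bar{\tau}'_g\in\mathcal{G}$. I define $\mathcal{G}_0$ to be the closed subgroup generated by the $\bar{\tau}'_g$'s. Since $N'\subseteq\acl_{\mathcal{L}}^{\mathfrak{D}}(N^G)$ and $\tau'_g=\bar{\tau}'_g|_{N'}$, every element of $(N')^G$ is fixed by each $\bar{\tau}'_g$, so $(N')^G$ lies in the fixed set of $\mathcal{G}_0$; as $(N')^G\supsetneq N^G$, the Galois correspondence forces $\mathcal{G}_0\neq\mathcal{G}$. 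Moreover, because the $G$-action on $N'$ extends the one on $F$, the image of $\bar{\tau}'_g$ in $\mathcal{G}/\mathcal{N}$ is precisely $\tau_g|_F$, and these generate $\mathcal{G}/\mathcal{N}$ topologically, whence $\mathcal{G}_0\mathcal{N}=\mathcal{G}$.

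For the implication $(2)\Rightarrow(1)$, the surjectivity of the composite $\mathcal{G}_0\hookrightarrow\mathcal{G}\twoheadrightarrow\mathcal{G}/\mathcal{N}$, which follows from $\mathcal{G}_0\mathcal{N}=\mathcal{G}$, lets me pick for each $g\in G$ a lift $\rho_g\in\mathcal{G}_0$ with $\rho_g|_F=\tau_g|_F$. Setting $N_0:=\acl_{\mathcal{L}}^{\mathfrak{D}}(N^G)^{\mathcal{G}_0}$ (which strictly extends $N^G$ because $\mathcal{G}_0\neq\mathcal{G}$) and $N':=\dcl_{\mathcal{L}}^{\mathfrak{D}}(N_0\cup F)$, I propose the action $\tau'_g:=\rho_g|_{N'}$; the substructure $N'$ is preserved by $\rho_g$ because $\rho_g$ fixes $N_0$ and sends $F$ to $F$ (normality of $F$ over $N^G$, from Lemma \ref{F_Galois}).

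The main obstacle is that the lifts $\rho_g$ are not canonical, so checking that $(\tau'_g)_{g\in G}$ is genuinely a $G$-action (and not just a collection of automorphisms) is the delicate point. This is resolved by the following two-source-agreement argument: $\rho_g\rho_h$ and $\rho_{gh}$ restrict to the same map on $F$ (both equal $\tau_{gh}|_F$, as $(\tau_g|_F)_{g\in G}$ is a $G$-action on $F$) and both fix $N_0$ pointwise (being elements of $\mathcal{G}_0$), so they coincide on $\dcl_{\mathcal{L}}^{\mathfrak{D}}(N_0\cup F)=N'$. Once the $G$-action is in hand, the extension property $\tau'_g|_F=\tau_g|_F$ is immediate, and $(N')^G\supseteq N_0\supsetneq N^G$ finishes the argument.
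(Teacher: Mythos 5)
Your argument is correct, and structurally it is the same Galois-theoretic proof as the paper's; what differs is the level of abstraction in the two verifications. In $(1)\Rightarrow(2)$ the paper takes the canonical choice $\mathcal{G}_0:=\aut_{\mathcal{L}}(\acl_{\mathcal{L}}^{\mathfrak{D}}(N^G)/(N')^G)$ and shows $\mathcal{G}_0\mathcal{N}=\mathcal{G}$ via a commutative diagram of restriction maps; you instead take the (a priori smaller) closed subgroup generated by the lifted $\bar{\tau}'_g$'s, which works equally well since the fixed-set argument for $\mathcal{G}_0\neq\mathcal{G}$ only needs $\mathcal{G}_0$ to fix $(N')^G\supsetneq N^G$ pointwise, and surjectivity onto $\mathcal{G}/\mathcal{N}\cong\aut_{\mathcal{L}}(F/N^G)$ follows directly from Proposition \ref{prop.generators} and compactness. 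In $(2)\Rightarrow(1)$ the paper and you build the same $N_0$ and $N'=\dcl_{\mathcal{L}}^{\mathfrak{D}}(N_0\cup F)$; the paper checks abstractly that the restriction map $\alpha:\aut_{\mathcal{L}}(N'/N_0)\to\aut_{\mathcal{L}}(F/N^G)$ is a (topological) isomorphism and pulls back $\tau_g|_F$, while your lift-and-verify argument, with the observation that $\rho_g\rho_h$ and $\rho_{gh}$ agree on $N_0$ because both lie in $\mathcal{G}_0$ and on $F$ because both restrict to $\tau_{gh}|_F$, hence on $\dcl_{\mathcal{L}}^{\mathfrak{D}}(N_0\cup F)=N'$, is exactly the injectivity of $\alpha$ made explicit. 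Your version is more hands-on and makes the homomorphism check transparent; the paper's version packages the same content into the cleaner identity $N'=\acl_{\mathcal{L}}^{\mathfrak{D}}(N^G)^{\mathcal{G}_0\cap\mathcal{N}}$ via Remark \ref{remark.gal} and then reads off everything from the Galois correspondence.
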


\begin{proof}
The original proof of \cite[Lemma 3.7]{nacfa}, which is adapted here, was mostly made by Piotr Kowalski.
We know that $N^G\subseteq F$ is a Galois extension and we know what are the topological generators of the profinite group $\mathcal{G}/\mathcal{N}\cong\aut_{\mathcal{L}}(F/N^G)$ (by Proposition \ref{prop.generators}).

We assume point 1) and prove point 2). 
Without loss of generality, we assume that $N'=\dcl_{\mathcal{L}}^{\mathfrak{D}}(N')$ (if not, then we can extend, in a unique way, the $G$-action $(\tau'_g)_{g\in G}$ on definable closure of $N'$).
Let us define
$$\mathcal{G}_0:=\aut_{\mathcal{L}}(\acl_{\mathcal{L}}^{\mathfrak{D}}(N^G)/(N')^G)$$
and observe that $(N')^G\subseteq \dcl_{\mathcal{L}}^{\mathfrak{D}}((N'^G)\cup F)$ is Galois (by Lemma \ref{F_Galois}). We have the following commuting diagram
$$\xymatrix{\mathcal{G}=\aut_{\mathcal{L}}(\acl_{\mathcal{L}}^{\mathfrak{D}}(N^G)/N^G) 
\ar[r]^{\pi}& \aut_{\mathcal{L}}(F/N^G) \\
\mathcal{G}_0=\aut_{\mathcal{L}}(\acl_{\mathcal{L}}^{\mathfrak{D}}(N^G)/(N')^G)
\ar[r]^{\pi'} \ar[u]_{\subseteq}
& \aut_{\mathcal{L}}(\dcl_{\mathcal{L}}^{\mathfrak{D}}((N')^G\cup F)/(N')^G)
\ar[u]^{\alpha},}$$
where $\pi$, $\pi'$ and $\alpha$ are restrictions. It is easy, but not necessary, to notice that $\ker\alpha=\lbrace\id\rbrace$.
Moreover $\alpha$ is continuous, its domain and codomain are Hausdorff and compact spaces. Because image of a compact space is compact and a compact subset of Hausdorff space is closed, we obtain that $\im\alpha$ is a closed subset of $\aut_{\mathcal{L}}(F/N^G)$.
Note that, for each $g\in G$,
$$\alpha\Big(\tau'_g|_{\dcl_{\mathcal{L}}^{\mathfrak{D}}\big((N')^G\cup F\big)}\Big)=\tau_g|_F,$$
hence
$$(\tau_g|_F)_{g\in G}\subseteq\im\alpha\subseteq\aut_{\mathcal{L}}(F/N^G)$$
and, because $(\tau_g|_F)_{g\in G}$ is a dense subset, the map $\alpha$ is onto.
Since $\alpha\circ\pi'$ is surjective,
also the following composition of maps
$$\mathcal{G}_0\subseteq\mathcal{G}\xrightarrow{\pi}\aut_{\mathcal{L}}(F/N^G)\cong\mathcal{G}/\mathcal{N}$$
is surjective. Hence $\pi(\mathcal{G}_0)=(\mathcal{G}_0\mathcal{N})/\mathcal{N}=\mathcal{G}/\mathcal{N}$ and $\mathcal{G}_0\mathcal{N}=\mathcal{G}$.
Because $N^G\subseteq \acl_{\mathcal{L}}^{\mathfrak{D}}(N^G)$ is Galois, $N^G\subseteq (N')^G\subseteq\acl_{\mathcal{L}}^{\mathfrak{D}}(N^G)$ and $\dcl_{\mathcal{L}}^{\mathfrak{D}}((N')^G)=(N')^G$ (by Remark \ref{inv.perfect} for $N'$), we obtain that $\mathcal{G}_0$ is closed. The condition $N^G\subsetneq (N')^G$ implies $\mathcal{G}_0\neq \mathcal{G}$.

Now, we assume point 2) and prove point 1). Let $N':=\acl_{\mathcal{L}}^{\mathfrak{D}}(N^G)^{\mathcal{G}_0\cap\mathcal{N}}$
 and $N'_0:=\acl_{\mathcal{L}}^{\mathfrak{D}}(N^G)^{\mathcal{G}_0}$. Since $\mathcal{G}_0\neq\mathcal{G}$, it follows that $N^G\subsetneq N'_0$. Note that, by Remark \ref{remark.gal}, we have
\begin{IEEEeqnarray*}{rCl}
N' &=& \acl_{\mathcal{L}}^{\mathfrak{D}}(N^G)^{\mathcal{G}_0\cap\mathcal{N}}\\
   &=& \dcl_{\mathcal{L}}^{\mathfrak{D}}\big(\acl_{\mathcal{L}}^{\mathfrak{D}}(N^G)^{\mathcal{G}_0}\cup\acl_{\mathcal{L}}^{\mathfrak{D}}(N^G)^{\mathcal{N}}\big)\\
   &=& \dcl_{\mathcal{L}}^{\mathfrak{D}}(N'_0\cup F).
\end{IEEEeqnarray*}
By the above we note that
$$\aut_{\mathcal{L}}(\mathfrak{D}/N'_0)\cdot N'\subseteq N',$$
so $N'_0\subseteq N'$ is Galois.
Consider the following commuting diagram
$$\xymatrix{ \mathcal{G}_0\cap\mathcal{N}=\aut_{\mathcal{L}}(\acl_{\mathcal{L}}^{\mathfrak{D}}(N^G)/N') \ar[r]^-{\subseteq} \ar[d]_{\subseteq} & \mathcal{G}_0=\aut_{\mathcal{L}}(\acl_{\mathcal{L}}^{\mathfrak{D}}(N^G)/N'_0) \ar[r]^-{\pi'} \ar[d]_{\subseteq}& \aut_{\mathcal{L}}(N'/N'_0) \ar[d]_{\alpha} \\
\mathcal{N}=\aut_{\mathcal{L}}(\acl_{\mathcal{L}}^{\mathfrak{D}}(N^G)/F) \ar[r]^-{\subseteq} & \mathcal{G}=\aut_{\mathcal{L}}(\acl_{\mathcal{L}}^{\mathfrak{D}}(N^G)/N^G) \ar[r]^-{\pi} & \aut_{\mathcal{L}}(F/N^G),}$$
where $\pi$, $\pi'$ and $\alpha$ are restrictions. Because $N'=\dcl_{\mathcal{L}}^{\mathfrak{D}}(N'_0\cup F)$, we see that $\ker\alpha=\lbrace\id\rbrace$.
Moreover, note that
$$\aut_{\mathcal{L}}(F/N^G)=\pi(\mathcal{G})=\pi(\mathcal{G}_0\mathcal{N})=\pi(\mathcal{G}_0)=\alpha\pi'(\mathcal{G}_0),$$
so $\alpha$ is an isomorphism. Finally, we can define a $G$-action on $N'$, which extends $(\tau_g|F)_{g\in G}$. Set $\tau'_g:=\alpha^{-1}(\tau_g|F)$, where $g\in G$. Because $\mathcal{G}_0\cap\mathcal{N}\subseteq \mathcal{N}$, by the Galois correspondence (Fact \ref{galois.correspondence}), we obtain
$$\acl_{\mathcal{L}}^{\mathfrak{D}}(N^G)\supseteq N'=\acl_{\mathcal{L}}^{\mathfrak{D}}(N^G)^{\mathcal{G}_0\cap\mathcal{N}}\supseteq\acl_{\mathcal{L}}^{\mathfrak{D}}(N^G)^{\mathcal{N}}=F.$$
Since $N'_0\subseteq N'$ is Galois and $(\tau'_g)_{g\in G}\leqslant\aut_{\mathcal{L}}(N'/N'_0)$, the Galois correspondence implies that $N'_0\subseteq (N')^G$. 
Because $\mathcal{G}_0\neq \mathcal{G}$, it follows $N'_0\supsetneq N^G$, in particular, we get $N^G\subsetneq (N')^G$.
\end{proof}

\subsection{Regularity in stable case}\label{subs:stable.case}
Let us remind that we are working in a big $\mathcal{L}$-structure $\mathfrak{D}$ such that $T'=\theo_{\mathcal{L}}(\mathfrak{D})$ has quantifier elimination and elimination of imaginaries. Now, we add one more, but a stronger assumption: \textbf{$T'$ is stable}.

We begin with a sequence of facts leading to Corollary \ref{regular.PAPA}, which will be the main tool in many proofs in the rest of this paper.

\begin{fact}
Let $E,A\subseteq\mathfrak{D}$ and let $A$ be $\mathcal{L}$-regular over $E$.
Then there exists a unique extension of $\tp_{\mathcal{L}}^{\mathfrak{D}}(a/E)$ to a type over $A$ for each $a\in\acl_{\mathcal{L}}^{\mathfrak{D}}(E)$.
\end{fact}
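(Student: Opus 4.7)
The plan is as follows. Since $a\in\acl_{\mathcal{L}}^{\mathfrak{D}}(E)$, the type $p:=\tp_{\mathcal{L}}^{\mathfrak{D}}(a/E)$ is algebraic, so by strong homogeneity of $\mathfrak{D}$ its realization set
$$R:=\aut_{\mathcal{L}}(\mathfrak{D}/E)\cdot a=\{a_1=a,a_2,\ldots,a_n\}$$
is finite and forms a single $\aut_{\mathcal{L}}(\mathfrak{D}/E)$-orbit. Any complete extension of $p$ to a type over $A$ is still algebraic, and by strong homogeneity its realization set is precisely an $\aut_{\mathcal{L}}(\mathfrak{D}/A)$-orbit sitting inside $R$. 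Thus extensions of $p$ to complete types over $A$ are in bijection with the $\aut_{\mathcal{L}}(\mathfrak{D}/A)$-orbits partitioning $R$, and the task is to show that there is just one such orbit.

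First I would set up the contradiction: suppose $S\subsetneq R$ is an $\aut_{\mathcal{L}}(\mathfrak{D}/A)$-orbit. The key tool is elimination of imaginaries for $T'$: the finite set $S$ has a canonical code $c_S\in\mathfrak{D}$. Next I would pin down where $c_S$ lives. On the one hand, $S$ is fixed setwise by every $A$-automorphism of $\mathfrak{D}$, so $c_S\in\dcl_{\mathcal{L}}^{\mathfrak{D}}(A)$. On the other hand, $S\subseteq R\subseteq\acl_{\mathcal{L}}^{\mathfrak{D}}(E)$, hence $c_S\in\dcl_{\mathcal{L}}^{\mathfrak{D}}(a_1,\ldots,a_n)\subseteq\acl_{\mathcal{L}}^{\mathfrak{D}}(E)$.

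Now I would invoke regularity of $E\subseteq A$ (Definition \ref{regular.def}): it yields
$$c_S\in\dcl_{\mathcal{L}}^{\mathfrak{D}}(A)\cap\acl_{\mathcal{L}}^{\mathfrak{D}}(E)=\dcl_{\mathcal{L}}^{\mathfrak{D}}(E).$$
Therefore $S$ is fixed setwise by every $E$-automorphism of $\mathfrak{D}$, which contradicts the fact that $R$ is a single $\aut_{\mathcal{L}}(\mathfrak{D}/E)$-orbit and $\emptyset\neq S\subsetneq R$. Existence of at least one extension is automatic (any $\aut_{\mathcal{L}}(\mathfrak{D}/A)$-orbit in $R$ works, and one can also extend types by standard compactness), so uniqueness is the whole content.

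The only mildly delicate point, and the one I would be most careful about, is justifying the identification of extensions of $p$ with $\aut_{\mathcal{L}}(\mathfrak{D}/A)$-orbits in $R$; this rests on quantifier elimination and the strong homogeneity of $\mathfrak{D}$, both already in force. Note that although this subsection assumes stability of $T'$, the argument above does not actually use it: the statement holds in any complete theory with quantifier elimination and elimination of imaginaries in a suitably saturated $\mathfrak{D}$.
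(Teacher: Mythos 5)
Your argument is correct and is essentially the paper's own proof: both rest on taking the code of the finite set of realizations (an $\aut_{\mathcal{L}}(\mathfrak{D}/A)$-orbit inside $\aut_{\mathcal{L}}(\mathfrak{D}/E)\cdot a$), observing that it lies in $\dcl_{\mathcal{L}}^{\mathfrak{D}}(A)\cap\acl_{\mathcal{L}}^{\mathfrak{D}}(E)$, and then applying regularity to place it in $\dcl_{\mathcal{L}}^{\mathfrak{D}}(E)$, forcing the set to be $E$-invariant. The only cosmetic difference is that you frame it as a contradiction with a proper sub-orbit, while the paper directly shows the two orbits coincide.
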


\begin{proof}
It is standard and we only sketch the main steps. Let $X$ be the set of realizations of $\tp_{\mathcal{L}}^{\mathfrak{D}}(a/A)$. The set $X$ is finite and it is contained in $\acl_{\mathcal{L}}^{\mathfrak{D}}(E)$, hence almost $E$-definable. Therefore, the code of $X$, say $d$, belongs to $\acl_{\mathcal{L}}^{\mathfrak{D}}(E)$.
From the definition of being a code and definability of $X$ over $A$, we note that also $d\in\dcl_{\mathcal{L}}^{\mathfrak{D}}(A)$ and so (by the regularity) $d\in\dcl_{\mathcal{L}}^{\mathfrak{D}}(E)$. The last thing implies that $X$ is invariant under action of $\aut_{\mathcal{L}}(\mathfrak{D}/E)$, so $\aut_{\mathcal{L}}(\mathfrak{D}/A)\cdot a= X=\aut_{\mathcal{L}}(\mathfrak{D}/E)\cdot a$. Therefore, for any $a'\in\mathfrak{D}$, we have $a'\equiv_E a$ if and only if $a'\equiv_A a$.
\end{proof}

\begin{fact}\label{regular}
Let $E,A\subseteq\mathfrak{D}$, $A$ be $\mathcal{L}$-regular over $E$, $f_1,f_2\in\aut_{\mathcal{L}}(\mathfrak{D})$ and let $f_1|_E=f_2|_E$. Then there exists $h\in\aut_{\mathcal{L}}(\mathfrak{D})$ such that $h|_A=f_1|_A$ and $h|_{\acl_{\mathcal{L}}^{\mathfrak{D}}(E)}=f_2|_{\acl_{\mathcal{L}}^{\mathfrak{D}}(E)}$.
\end{fact}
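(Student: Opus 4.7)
The plan is to reduce the claim to finding, for each $g \in \aut_{\mathcal{L}}(\mathfrak{D}/E)$, an automorphism $g' \in \aut_{\mathcal{L}}(\mathfrak{D})$ such that $g'|_A = \id_A$ and $g'|_{\acl_{\mathcal{L}}^{\mathfrak{D}}(E)} = g|_{\acl_{\mathcal{L}}^{\mathfrak{D}}(E)}$. Granting this, set $g := f_1^{-1} \circ f_2$ (which belongs to $\aut_{\mathcal{L}}(\mathfrak{D}/E)$ by hypothesis) and define $h := f_1 \circ g'$. Then $h|_A = f_1|_A$ since $g'$ fixes $A$ pointwise, and $h|_{\acl_{\mathcal{L}}^{\mathfrak{D}}(E)} = f_1 \circ g|_{\acl_{\mathcal{L}}^{\mathfrak{D}}(E)} = f_2|_{\acl_{\mathcal{L}}^{\mathfrak{D}}(E)}$.

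To produce $g'$, consider the partial map
$$\tilde{g} : A \cup \acl_{\mathcal{L}}^{\mathfrak{D}}(E) \longrightarrow A \cup \acl_{\mathcal{L}}^{\mathfrak{D}}(E), \qquad \tilde{g}|_A := \id_A, \qquad \tilde{g}|_{\acl_{\mathcal{L}}^{\mathfrak{D}}(E)} := g|_{\acl_{\mathcal{L}}^{\mathfrak{D}}(E)}$$
(well-defined on the overlap since $g$ fixes $E \subseteq A \cap \acl_{\mathcal{L}}^{\mathfrak{D}}(E)$, and one first checks that $A \cap \acl_{\mathcal{L}}^{\mathfrak{D}}(E) \subseteq \dcl_{\mathcal{L}}^{\mathfrak{D}}(A) \cap \acl_{\mathcal{L}}^{\mathfrak{D}}(E) = \dcl_{\mathcal{L}}^{\mathfrak{D}}(E)$ by regularity, so $g$ fixes this overlap pointwise). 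The main step is to verify that $\tilde{g}$ is an $\mathcal{L}$-elementary map. For this, take any finite tuples $\bar{a} \subseteq A$ and $\bar{b} \subseteq \acl_{\mathcal{L}}^{\mathfrak{D}}(E)$; it suffices to show $\tp_{\mathcal{L}}^{\mathfrak{D}}(\bar{b}/\bar{a}) = \tp_{\mathcal{L}}^{\mathfrak{D}}(g(\bar{b})/\bar{a})$. Since $g \in \aut_{\mathcal{L}}(\mathfrak{D}/E)$, we have $\bar{b} \equiv_E g(\bar{b})$, and both are algebraic over $E$; the previous fact (uniqueness of extension of $\tp(\bar b / E)$ to a type over $A$) then upgrades this to $\bar{b} \equiv_A g(\bar{b})$, which gives what we want.

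Finally, since $A$ and $E$ are small, $A \cup \acl_{\mathcal{L}}^{\mathfrak{D}}(E)$ is small, so $\kappa_{\mathfrak{D}}$-strong homogeneity of $\mathfrak{D}$ allows us to extend the partial elementary map $\tilde{g}$ to an automorphism $g' \in \aut_{\mathcal{L}}(\mathfrak{D})$. This $g'$ satisfies exactly the two conditions required above, completing the construction. The main obstacle is the consistency check for $\tilde g$ on the overlap $A \cap \acl_{\mathcal{L}}^{\mathfrak{D}}(E)$ and the verification that it is elementary, both of which rest entirely on the regularity hypothesis through the previous fact.
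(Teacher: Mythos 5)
Your proof is correct and rests on exactly the same key ingredient as the paper's: the uniqueness of extensions of types over $E$ of elements of $\acl_{\mathcal{L}}^{\mathfrak{D}}(E)$ to types over $A$ (the fact immediately preceding this one). The normalization $g:=f_1^{-1}\circ f_2$ is a harmless cosmetic reduction; the paper instead applies the uniqueness fact directly to the images $f_1(E)\subseteq f_1(A)$ (after noting regularity is preserved by automorphisms), but both verifications of elementarity are the same computation, so this is essentially the paper's argument.
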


\begin{proof}
We know that $f_1(A)$ is regular over $f_1(E)$ (Remark \ref{regular.remark}.(4)).
Our aim is to show that for any $\mathcal{L}$-formula $\varphi$,  any tuple $a\in A$ and any tuple $b\in\acl_{\mathcal{L}}^{\mathfrak{D}}(E)$ it follows
$$\varphi^{\mathfrak{D}}(a,b)\quad\iff\quad\varphi^{\mathfrak{D}}(f_1(a),f_2(b)).$$
Then we can naturally extend a partial elementary map given by $f_1$ and $f_2$.

Because $f_1|_E=f_2|_E$, we have
$$\tp_{\mathcal{L}}^{\mathfrak{D}}(f_2(b)/f_1(E))\;\subseteq\;\tp_{\mathcal{L}}^{\mathfrak{D}}(f_2(b)/f_1(Ea)),\;\;\tp_{\mathcal{L}}^{\mathfrak{D}}(f_1(b)/f_1(Ea)).$$
But, by the previous fact, there is only one extension of $\tp_{\mathcal{L}}^{\mathfrak{D}}(f_2(b)/f_1(E))$ to a type over $f_1(A)$, hence also only one extension to a type over $f_1(Ea)$ and so
\begin{equation*}
\tp_{\mathcal{L}}^{\mathfrak{D}}(f_2(b)/f_1(Ea))\;=\;tp_{\mathcal{L}}^{\mathfrak{D}}(f_1(b)/f_1(Ea)).
\end{equation*}
\end{proof}

\begin{cor}\label{reg.stationary}
Assume that $E\subseteq A$ is $\mathcal{L}$-regular and let $A_0\subseteq A$. The type $\tp_{\mathcal{L}}^{\mathfrak{D}}(A_0/E)$ has exactly one extension to a type over $\acl_{\mathcal{L}}^{\mathfrak{D}}(E)$. Therefore $\tp_{\mathcal{L}}^{\mathfrak{D}}(A_0/E)$ is stationary.
\end{cor}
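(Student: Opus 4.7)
The plan is to reduce the statement to the previous (unnumbered) fact, that for $E\subseteq A$ regular and $a\in\acl_{\mathcal{L}}^{\mathfrak{D}}(E)$ the type $\tp_{\mathcal{L}}^{\mathfrak{D}}(a/E)$ has a unique extension to $A$, together with Fact \ref{regular}, which allows us to glue two automorphisms that agree on $E$ into a single one that agrees with one of them on $A$ and with the other on $\acl_{\mathcal{L}}^{\mathfrak{D}}(E)$. Once uniqueness of the extension to $\acl_{\mathcal{L}}^{\mathfrak{D}}(E)$ is established, stationarity follows because $T'$ is stable and, by elimination of imaginaries, $\acl_{\mathcal{L}}^{\mathfrak{D}^{\eq}} = \acl_{\mathcal{L}}^{\mathfrak{D}}$.

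First, I note that by Remark \ref{regular.remark}(6), since $E\subseteq A$ is regular and $A_0\subseteq A$, the extension $E \subseteq \dcl_{\mathcal{L}}^{\mathfrak{D}}(EA_0)$ is also regular (using that the generated substructure still sits inside $A$). So I may apply Fact \ref{regular} with this smaller regular extension.

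Next, let $A_1, A_2$ be any two tuples realising $\tp_{\mathcal{L}}^{\mathfrak{D}}(A_0/E)$; I want to show $A_1\equiv_{\acl_{\mathcal{L}}^{\mathfrak{D}}(E)} A_2$, for which it suffices to show that each $A_i\equiv_{\acl_{\mathcal{L}}^{\mathfrak{D}}(E)} A_0$. By $\kappa_{\mathfrak{D}}$-strong homogeneity of $\mathfrak{D}$, pick $f_i\in\aut_{\mathcal{L}}(\mathfrak{D})$ with $f_i|_E=\id_E$ and $f_i(A_0)=A_i$. Setting $f_1$ to this chosen automorphism and $f_2=\id_{\mathfrak{D}}$, so that $f_1|_E = f_2|_E$, I invoke Fact \ref{regular} with the regular extension $E \subseteq \dcl_{\mathcal{L}}^{\mathfrak{D}}(EA_0)$ to produce $h\in\aut_{\mathcal{L}}(\mathfrak{D})$ with $h|_{A_0}=f_1|_{A_0}$ (hence $h(A_0)=A_i$) and $h|_{\acl_{\mathcal{L}}^{\mathfrak{D}}(E)}=\id$. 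Thus $A_i\equiv_{\acl_{\mathcal{L}}^{\mathfrak{D}}(E)} A_0$, as required.

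Finally, uniqueness of the extension of $\tp_{\mathcal{L}}^{\mathfrak{D}}(A_0/E)$ to $\acl_{\mathcal{L}}^{\mathfrak{D}}(E)$ gives stationarity in the stable theory $T'$: any two nonforking extensions to a larger set $B\supseteq E$ restrict to the same type over $\acl_{\mathcal{L}}^{\mathfrak{D}}(E)$, and since extensions to $\acl_{\mathcal{L}}^{\mathfrak{D}}(E)$ are unique, they are conjugate over $\acl_{\mathcal{L}}^{\mathfrak{D}}(E)$, hence equal by the usual characterisation of nonforking in stable theories with elimination of imaginaries. I do not expect any serious obstacle here: the only mildly delicate point is invoking Fact \ref{regular} on the intermediate regular extension $E\subseteq\dcl_{\mathcal{L}}^{\mathfrak{D}}(EA_0)$ rather than on $E\subseteq A$ directly, and this is handled cleanly by Remark \ref{regular.remark}(6).
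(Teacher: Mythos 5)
Your proof is correct and follows essentially the same route as the paper's: fix an arbitrary $f\in\aut_{\mathcal{L}}(\mathfrak{D}/E)$ and apply Fact~\ref{regular} to $f_1=f$, $f_2=\id$ to glue them into a single automorphism $h$ that agrees with $f$ on $A_0$ and fixes $\acl_{\mathcal{L}}^{\mathfrak{D}}(E)$ pointwise. The detour through the intermediate regular extension $E\subseteq\dcl_{\mathcal{L}}^{\mathfrak{D}}(EA_0)$ is unnecessary (the paper applies Fact~\ref{regular} to $E\subseteq A$ directly and restricts to $A_0$), and your parenthetical justification is slightly off since $\dcl_{\mathcal{L}}^{\mathfrak{D}}(EA_0)$ need not lie in $A$, but regularity of that subextension does hold by the same computation, so nothing breaks.
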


\begin{proof}
Let $f\in\aut_{\mathcal{L}}(\mathfrak{D}/E)$. 
We can extend, by Fact \ref{regular}, $f:A_0\to f(A_0)$ and $\id:\acl_{\mathcal{L}}^{\mathfrak{D}}(E)\to
\acl_{\mathcal{L}}^{\mathfrak{D}}(E)$, to $h\in\aut_{\mathcal{L}}(\mathfrak{D}/\acl_{\mathcal{L}}^{\mathfrak{D}}(E))$ such that $h(a)=f(a)$ for each $a\in A_0$. Therefore
$A_0\equiv_E A_0'$ implies $A_0\equiv_{\acl_{\mathcal{L}}^{\mathfrak{D}}(E)} A_0'$.
\end{proof}

\begin{lemma}\label{PACclaim}
For a small set $E\subseteq\mathfrak{D}$ and a complete type $p$ over $E$ it follows:
\begin{IEEEeqnarray*}{rCl}
p\text{ is stationary} &\iff & (\forall A_0\models p)(E\subseteq EA_0\text{ is $\mathcal{L}$-regular}) \\
 &\iff & (\exists A_0\models p)(E\subseteq EA_0\text{ is $\mathcal{L}$-regular}).
\end{IEEEeqnarray*}
\end{lemma}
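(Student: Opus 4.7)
The plan is to prove the chain of implications: (stationary) $\Rightarrow$ ($\forall$-regular) $\Rightarrow$ ($\exists$-regular) $\Rightarrow$ (stationary). The middle implication is trivial because any complete type $p$ has a realization. So the actual work is in the two remaining directions.

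For (stationary) $\Rightarrow$ ($\forall$-regular), I would fix an arbitrary realization $A_0\models p$ and some $c\in\dcl_{\mathcal{L}}^{\mathfrak{D}}(EA_0)\cap\acl_{\mathcal{L}}^{\mathfrak{D}}(E)$, and aim to show $c\in\dcl_{\mathcal{L}}^{\mathfrak{D}}(E)$. Since $\mathfrak{D}$ is a monster, it suffices to check that every $f\in\aut_{\mathcal{L}}(\mathfrak{D}/E)$ fixes $c$. Given such $f$, the image $f(A_0)$ again realizes $p$, and by stationarity of $p$ the types of $A_0$ and $f(A_0)$ over $\acl_{\mathcal{L}}^{\mathfrak{D}}(E)$ coincide. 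Hence there is $h\in\aut_{\mathcal{L}}(\mathfrak{D}/\acl_{\mathcal{L}}^{\mathfrak{D}}(E))$ with $h(A_0)=f(A_0)$ (as tuples). Then $h^{-1}f$ fixes $E$ pointwise and fixes $A_0$ pointwise, so fixes $EA_0$ pointwise, so fixes $c$ (since $c\in\dcl_{\mathcal{L}}^{\mathfrak{D}}(EA_0)$). Moreover $h$ fixes $c$ because $c\in\acl_{\mathcal{L}}^{\mathfrak{D}}(E)$ and $h\in\aut_{\mathcal{L}}(\mathfrak{D}/\acl_{\mathcal{L}}^{\mathfrak{D}}(E))$. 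Combining, $f(c)=h\cdot(h^{-1}f)(c)=h(c)=c$, which is what we wanted.

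For ($\exists$-regular) $\Rightarrow$ (stationary), I would simply invoke Corollary \ref{reg.stationary} applied with $A:=EA_0$ (which is regular over $E$ by assumption) and $A_0$ itself: the corollary gives that $\tp_{\mathcal{L}}^{\mathfrak{D}}(A_0/E)=p$ is stationary.

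The only subtle point, and the step I would expect to need the most care, is the first implication: one must justify the existence of the automorphism $h\in\aut_{\mathcal{L}}(\mathfrak{D}/\acl_{\mathcal{L}}^{\mathfrak{D}}(E))$ sending $A_0$ to $f(A_0)$ \emph{as tuples} (not merely making the two realize the same type). This is where strong homogeneity of $\mathfrak{D}$ together with stationarity of $p$ over $\acl_{\mathcal{L}}^{\mathfrak{D}}(E)$ is used. Everything else is a direct consequence of the definitions of $\dcl_{\mathcal{L}}^{\mathfrak{D}}$, regularity, and the already-proven Corollary \ref{reg.stationary}.
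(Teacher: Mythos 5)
Your proof is correct and takes essentially the same approach as the paper's: the key move in both is producing an automorphism $h$ over an algebraic extension of $E$ that matches $f$ on $A_0$, then observing that $h^{-1}f$ fixes $EA_0$ pointwise and therefore fixes the witness in $\dcl_{\mathcal{L}}^{\mathfrak{D}}(EA_0)\cap\acl_{\mathcal{L}}^{\mathfrak{D}}(E)$, and using Corollary \ref{reg.stationary} for the converse. The only presentational difference is that the paper runs the forward direction by contraposition (getting two distinct non-forking extensions to $Eb$ for a single witness $b\in\acl_{\mathcal{L}}^{\mathfrak{D}}(E)$), whereas you argue directly by extending to all of $\acl_{\mathcal{L}}^{\mathfrak{D}}(E)$; both rest on the same standard fact that any extension of a type to a subset of $\acl_{\mathcal{L}}^{\mathfrak{D}}(E)$ is non-forking, which you might state explicitly when writing this up.
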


\begin{proof}
First of all, note that, because the $\mathcal{L}$-regularity does not change under an action by automorphisms, the second equivalence is obvious. Hence, we need only to prove the first equivalence.

From left to right, by the contraposition. 
This part of the proof arose by help of Thomas Scanlon and was improved by help of Piotr Kowalski.
Let $A_0\models p$ be such that there exists an element
$$b\in\dcl_{\mathcal{L}}^{\mathfrak{D}}(EA_0)\cap\acl_{\mathcal{L}}^{\mathfrak{D}}(E)\setminus\dcl_{\mathcal{L}}^{\mathfrak{D}}(E).$$
Because $b\not\in\dcl_{\mathcal{L}}^{\mathfrak{D}}(E)$, it follows that there exists $f\in\aut_{\mathcal{L}}(\mathfrak{D}/E)$ such that $f(b)\neq b$.
Since $b\in\acl_{\mathcal{L}}^{\mathfrak{D}}(E)$ we have
$$(a_j)_{j\in J}\ind_E^{\mathfrak{D}}b,\qquad \big(f(a_j)\big)_{j\in J}\ind_E^{\mathfrak{D}} b,$$
where $(a_j)_{j\in J}$ is some enumeration of $A_0$.

Assume that $(a_j)_{j\in J}\equiv_{Eb}\big(f(a_j)\big)_{j\in J}$ and let $h\in\aut_{\mathcal{L}}(\mathfrak{D}/Eb)$ witnesses it, i.e. $h\big((a_j)_{j\in J}\big)=\big(f(a_j)\big)_{j\in J}$. Note that $hf|_{EA_0}=\id_{EA_0}$ and so, by $b\in\dcl_{\mathcal{L}}^{\mathfrak{D}}(EA_0)$, it follows $hf(b)=b$.
Hence $f(b)=h^{-1}(b)=b$ and we get a contradiction. Therefore $(a_j)_{j\in J}\not\equiv_{Eb}\big(f(a_j)\big)_{j\in J}$.

We obtain at least two different non forking extensions of $p$ to $Eb$, which is not possible.

Implication from right to left. Assume that $E\subseteq EA_0$ is $\mathcal{L}$-regular for some $A_0\models p$, i.e. $p=\tp_{\mathcal{L}}^{\mathfrak{D}}(A_0/E)$. It is enough to use Corollary \ref{reg.stationary}.
\end{proof}

\begin{cor}\label{cor.stationary_types_exist}
For every small $\mathcal{L}$-substructure $N$ of $\mathfrak{D}$ and every $n<\omega$, there exists a non-algebraic stationary type over $N$ in $n$ many variables.
\end{cor}

\begin{proof}
Consider a small $N'\succeq N$ such that $|N'\setminus N|\geqslant n$. Without loss of generality, we may assume that $N'\subseteq\mathfrak{D}$. By Lemma \ref{lang410}, $N\subseteq N'$ is regular. Take a tuple $a\subseteq N'\setminus N$ of length $n$. By Remark \ref{regular.remark}.(6), also $N\subseteq Na$ is regular, hence, by Lemma \ref{PACclaim}, $\tp_{\mathcal{L}}^{\mathfrak{D}}(a/N)$ is stationary.
\end{proof}


\begin{fact}\label{PAPA}
Assume that $E,A,B\subseteq\mathfrak{D}$, $E=\acl_{\mathcal{L}}^{\mathfrak{D}}(E)$, $f_1,f_2\in\aut_{\mathcal{L}}(\mathfrak{D})$ and $f_1|_E=f_2|_E$. If $A\ind^{\mathfrak{D}}_E B$ and $f_1(A)\ind^{\mathfrak{D}}_{f_1(E)}f_2(B)$, then there exists $h\in\aut_{\mathcal{L}}(\mathfrak{D})$ such that $h|_A=f_1|_A$ and $h|_B=f_2|_B$.
\end{fact}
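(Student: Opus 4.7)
The plan is to reduce the statement to the stationarity of types over algebraically closed sets in the stable theory $T'$.

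First, set $g := f_1^{-1}\circ f_2 \in \aut_{\mathcal{L}}(\mathfrak{D})$. Since $f_1|_E = f_2|_E$, it follows that $g|_E = \id_E$, so $g\in\aut_{\mathcal{L}}(\mathfrak{D}/E)$. In particular, $\tp_{\mathcal{L}}^{\mathfrak{D}}(g(B)/E) = \tp_{\mathcal{L}}^{\mathfrak{D}}(B/E)$, witnessed by the partial elementary map $b\mapsto g(b)$ extending $\id_E$.

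Second, apply $f_1^{-1}$ to the hypothesis $f_1(A)\ind^{\mathfrak{D}}_{f_1(E)}f_2(B)$; since non-forking independence is invariant under automorphisms, we obtain $A\ind^{\mathfrak{D}}_E f_1^{-1}f_2(B) = g(B)$. Together with the assumption $A\ind^{\mathfrak{D}}_E B$ (equivalently $B\ind^{\mathfrak{D}}_E A$), we have two realizations of $\tp_{\mathcal{L}}^{\mathfrak{D}}(B/E)$, namely $B$ and $g(B)$, both independent from $A$ over $E$.

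Third, because $T'$ is stable and $E = \acl_{\mathcal{L}}^{\mathfrak{D}}(E)$, the type $\tp_{\mathcal{L}}^{\mathfrak{D}}(B/E)$ is stationary. Hence its non-forking extension to $EA$ is unique, which gives
$$\tp_{\mathcal{L}}^{\mathfrak{D}}(B/EA) = \tp_{\mathcal{L}}^{\mathfrak{D}}(g(B)/EA),$$
via the partial elementary map $\id_{EA}\cup (b\mapsto g(b))$. By $\kappa_{\mathfrak{D}}$-strong homogeneity of $\mathfrak{D}$ (as in Lemma \ref{fhat_lemma}), this extends to some $h'\in\aut_{\mathcal{L}}(\mathfrak{D}/EA)$ with $h'(b) = g(b)$ for every $b\in B$.

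Finally, define $h := f_1\circ h'$. Then $h|_A = f_1\circ\id_A = f_1|_A$ and $h|_B = f_1\circ g|_B = f_1\circ f_1^{-1}\circ f_2|_B = f_2|_B$, as required. The only nontrivial ingredient is stationarity over algebraically closed sets, which is already a working assumption in this subsection; the proof itself is essentially a bookkeeping argument that transports the desired amalgamation back to that standard fact.
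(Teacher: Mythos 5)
Your proof is correct and follows essentially the same route as the paper's: both reduce the amalgamation to uniqueness of non-forking extensions of a stationary type over the algebraically closed set $E$ (or its $f_1$-image) and then use strong homogeneity to extend the resulting partial elementary map. You phrase the reduction via the single automorphism $g = f_1^{-1}\circ f_2$ and then conjugate back by $f_1$, whereas the paper works directly with $\tp_{\mathcal{L}}^{\mathfrak{D}}(f_1(b)/f_1(Ea))$ and $\tp_{\mathcal{L}}^{\mathfrak{D}}(f_2(b)/f_1(Ea))$ as two non-forking extensions of the same stationary type over $f_1(E)$; this is only a cosmetic difference.
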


\begin{proof}
It just rephrases the idea from the proof of \cite[Theorem 3.3]{lascar91a} to a little generalization of the PAPA (\cite[Definition 3.1]{lascar91a}, \cite[Definiton 3.3]{ChaPil}).

Forking independence of $A$ and $B$ over $E$ implies forking independence of $f_1(A)$ and $f_1(B)$ over $f_1(E)$. As in the proof of Fact \ref{regular}, we want to show that
for any $\mathcal{L}$-formula $\varphi$,  any tuple $a\in A$ and any tuple $b\in B$ it follows
$$\varphi^{\mathfrak{D}}(a,b)\quad\iff\quad\varphi^{\mathfrak{D}}(f_1(a),f_2(b)).$$

Again, because $f_1|_E=f_2|_E$,
$$\tp_{\mathcal{L}}^{\mathfrak{D}}(f_2(b)/f_1(E))\;\subseteq\;\tp_{\mathcal{L}}^{\mathfrak{D}}(f_2(b)/f_1(Ea)),\;\;\tp_{\mathcal{L}}^{\mathfrak{D}}(f_1(b)/f_1(Ea)).$$
We obtained two non-forking extensions of a stationary type.
\end{proof}

\begin{cor}\label{regular.PAPA}
Assume that $E,A,B\subseteq\mathfrak{D}$, $A$ is $\mathcal{L}$-regular over $E$, $f_1,f_2\in\aut_{\mathcal{L}}(\mathfrak{D})$, $f_1|_E=f_2|_E$. If $A\ind^{\mathfrak{D}}_E B$ and $f_1(A)\ind^{\mathfrak{D}}_{f_1(E)} f_2(B)$, then there exists $h\in\aut_{\mathcal{L}}(\mathfrak{D})$ such that $h|_A=f_1|_A$ and $h|_B=f_2|_B$.
\end{cor}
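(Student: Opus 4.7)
The plan is to reduce to Fact \ref{PAPA} by first ``straightening out'' $f_1$ on $\acl_{\mathcal{L}}^{\mathfrak{D}}(E)$ so that it agrees there with $f_2$. This is exactly what Fact \ref{regular} provides.

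First, apply Fact \ref{regular} to the pair $f_1, f_2$ (which agree on $E$) and to the regular extension $E\subseteq A$: this yields $\tilde{f}_1\in\aut_{\mathcal{L}}(\mathfrak{D})$ with $\tilde{f}_1|_A=f_1|_A$ and $\tilde{f}_1|_{\acl_{\mathcal{L}}^{\mathfrak{D}}(E)}=f_2|_{\acl_{\mathcal{L}}^{\mathfrak{D}}(E)}$. In particular $\tilde{f}_1$ and $f_2$ agree on the algebraically closed set $E':=\acl_{\mathcal{L}}^{\mathfrak{D}}(E)$, and $\tilde{f}_1(A)=f_1(A)$, $\tilde{f}_1(E')=f_2(E')=\acl_{\mathcal{L}}^{\mathfrak{D}}(f_1(E))$.

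Next, I would check the two forking independence hypotheses needed to apply Fact \ref{PAPA} with base $E'$. On the one hand, in the stable theory $T'$ we have $\acl_{\mathcal{L}}^{\mathfrak{D}}(E)\ind_E^{\mathfrak{D}} B$, hence by transitivity $A\ind_E^{\mathfrak{D}} B$ is equivalent to $A\ind_{E'}^{\mathfrak{D}} B$, which gives the first. On the other hand, applying the same equivalence to the assumption $f_1(A)\ind_{f_1(E)}^{\mathfrak{D}}f_2(B)$, we get $f_1(A)\ind_{\acl_{\mathcal{L}}^{\mathfrak{D}}(f_1(E))}^{\mathfrak{D}}f_2(B)$; rewriting the base as $\tilde{f}_1(E')$ yields the second.

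Finally, apply Fact \ref{PAPA} to $\tilde{f}_1, f_2$ with base $E'=\acl_{\mathcal{L}}^{\mathfrak{D}}(E)$ to obtain $h\in\aut_{\mathcal{L}}(\mathfrak{D})$ with $h|_A=\tilde{f}_1|_A=f_1|_A$ and $h|_B=f_2|_B$. There is no real obstacle here: the only delicate point is the (standard) fact that forking independence over $E$ coincides with forking independence over $\acl_{\mathcal{L}}^{\mathfrak{D}}(E)$ in a stable theory, which allows us to translate the hypotheses from base $E$ to base $E'$.
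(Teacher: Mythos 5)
Your proposal is correct and follows essentially the same route as the paper: use Fact \ref{regular} to replace $f_1$ by an automorphism $\tilde f_1$ that agrees with $f_1$ on $A$ and with $f_2$ on $\acl_{\mathcal{L}}^{\mathfrak{D}}(E)$, translate both independence hypotheses to the base $\acl_{\mathcal{L}}^{\mathfrak{D}}(E)$, and then apply Fact \ref{PAPA}. The paper is slightly terser about the passage from $E$ to $\acl_{\mathcal{L}}^{\mathfrak{D}}(E)$, but the argument is the same.
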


\begin{proof}
By Fact \ref{regular}, there exists $f_1'\in\aut_{\mathcal{L}}(\mathfrak{D})$ such that $f_1'|_A=f_1|_A$ and $f_1'|_{\acl_{\mathcal{L}}^{\mathfrak{D}}(E)}=f_2|_{\acl_{\mathcal{L}}^{\mathfrak{D}}(E)}$.

From the assumptions we obtain the following
$$A\acl_{\mathcal{L}}^{\mathfrak{D}}(E)\ind^{\mathfrak{D}}_{\acl_{\mathcal{L}}^{\mathfrak{D}}(E)} B\acl_{\mathcal{L}}^{\mathfrak{D}}(E),\quad 
f_1(A)\acl_{\mathcal{L}}^{\mathfrak{D}}(f_2(E))\ind^{\mathfrak{D}}_{\acl_{\mathcal{L}}^{\mathfrak{D}}(f_2(E))} f_2(B)\acl_{\mathcal{L}}^{\mathfrak{D}}(f_2(E)).$$
Finally we use Fact \ref{PAPA} for $f_1'$ and $f_2$.
\end{proof}

Now, after Corollary \ref{regular.PAPA}, we can provide three more facts about regularity: Lemma \ref{lang413}, Corollary \ref{cor.413} and Remark \ref{rem.413}.
These facts are generalization of similar properties of the ``classical regularity" (i.e. regularity from the algebra of fields).

\begin{lemma}\label{lang413}
Assume that $E\subseteq A$ is $\mathcal{L}$-regular, $E\subseteq B$ and $B\ind_E^{\mathfrak{D}} A$,
then $B\subseteq BA$ is $\mathcal{L}$-regular.
\end{lemma}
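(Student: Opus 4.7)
The plan is to rephrase the desired regularity in terms of stationarity of a type via Lemma~\ref{PACclaim}, and then to obtain that stationarity by transferring it from $\tp_{\mathcal{L}}^{\mathfrak{D}}(A/E)$ along the non-forking extension guaranteed by the hypothesis $B \ind_E^{\mathfrak{D}} A$.

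In detail, I would first apply Lemma~\ref{PACclaim} to the regular extension $E \subseteq A$ (noting $EA = A$ since $E \subseteq A$): this gives that the type $p := \tp_{\mathcal{L}}^{\mathfrak{D}}(A/E)$ is stationary. By the symmetry of forking independence in the stable theory $T'$, the hypothesis $B \ind_E^{\mathfrak{D}} A$ says precisely that $q := \tp_{\mathcal{L}}^{\mathfrak{D}}(A/B)$ is a non-forking extension of $p$. The key classical fact to invoke next is that in a stable theory the non-forking extension of a stationary type is again stationary. A one-line argument: any non-forking extension $r$ of $q$ to some set $C \supseteq B$ is, by transitivity of non-forking, also a non-forking extension of $p$, hence uniquely determined by the stationarity of $p$; so $q$ itself has a unique non-forking extension to each such $C$, which is exactly the definition of being stationary. (Equivalently, one can observe that canonical bases are preserved under non-forking extensions, so $\cb(q) = \cb(p) \subseteq \dcl_{\mathcal{L}}^{\mathfrak{D}}(E) \subseteq \dcl_{\mathcal{L}}^{\mathfrak{D}}(B)$.)

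To finish, I would apply Lemma~\ref{PACclaim} once more in the converse direction, with $B$ in place of $E$ and with $A \models q$ as the realization: stationarity of $q$ yields that $B \subseteq BA$ is $\mathcal{L}$-regular, which is precisely the statement of the lemma. The only real bookkeeping consists in the two applications of Lemma~\ref{PACclaim}, and I do not foresee any serious obstacle; in particular, there is no need to invoke the heavier tool Corollary~\ref{regular.PAPA} here, because the characterization of regularity via stationarity already packages all the stable-theoretic content we need.
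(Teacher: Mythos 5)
Your proof is correct, but it takes a genuinely different route from the paper's. The paper works directly with automorphisms: given $a\in\dcl_{\mathcal{L}}^{\mathfrak{D}}(BA)\cap\acl_{\mathcal{L}}^{\mathfrak{D}}(B)$ and an arbitrary $f\in\aut_{\mathcal{L}}(\mathfrak{D}/B)$, it applies Corollary~\ref{regular.PAPA} (together with $\acl_{\mathcal{L}}^{\mathfrak{D}}(B)\ind_E^{\mathfrak{D}} A$) to amalgamate $f|_{\acl_{\mathcal{L}}^{\mathfrak{D}}(B)}$ with $\id_A$ into some $h\in\aut_{\mathcal{L}}(\mathfrak{D}/BA)$, giving $f(a)=h(a)=a$ and hence $a\in\dcl_{\mathcal{L}}^{\mathfrak{D}}(B)$. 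You instead translate both regularity statements into stationarity via Lemma~\ref{PACclaim} and pass between them using the standard fact that a non-forking extension of a stationary type is stationary. Your version is more modular and makes manifest that the lemma is really a transfer principle for stationarity along non-forking extensions; the paper's is more hands-on and displays the automorphism picture behind the regularity condition. It is fair to say you avoid Corollary~\ref{regular.PAPA} itself, but I would not call the tools lighter: the ``regular implies stationary'' direction of Lemma~\ref{PACclaim} is obtained via Corollary~\ref{reg.stationary}, which, like Corollary~\ref{regular.PAPA}, is powered by Fact~\ref{regular}. The two proofs are of comparable depth, just packaged differently. (One small caveat: your parenthetical about canonical bases is a little loose, since $\cb$ is usually attached to stationary types; but your primary transitivity argument is self-contained and correct, so nothing hinges on it.)
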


\begin{proof}
Let $a\in\dcl_{\mathcal{L}}^{\mathfrak{D}}(BA)\cap\acl_{\mathcal{L}}^{\mathfrak{D}}(B)$ and let $f\in\aut_{\mathcal{L}}(\mathfrak{D}/B)$. Because of
$$\acl_{\mathcal{L}}^{\mathfrak{D}}(B)\ind_E^{\mathfrak{D}} A$$
and by Corollary \ref{regular.PAPA}, we extend $f:\acl_{\mathcal{L}}^{\mathfrak{D}}(B)\to\acl_{\mathcal{L}}^{\mathfrak{D}}(B)$ and $\id: A\to A$, to $h\in\aut_{\mathcal{L}}(\mathfrak{D}/BA)$. We see that $f(a)=h(a)$, which is equal to $a$ ($a$ is definable over $BA$). Because $f$ was arbitrary, $a\in\dcl_{\mathcal{L}}^{\mathfrak{D}}(B)$.
\end{proof}

\begin{cor}\label{cor.413}
Assume that $E\subseteq A$ and $E\subseteq B$ are $\mathcal{L}$-regular, and $B\ind_E^{\mathfrak{D}} A$,
then $E\subseteq BA$ is $\mathcal{L}$-regular.
\end{cor}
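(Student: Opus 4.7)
My plan is to chain together the two facts already established immediately above: Lemma \ref{lang413} and the transitivity of regularity noted in Remark \ref{regular.remark}(7). The situation is a two-step tower $E \subseteq B \subseteq BA$, where each step can be handled by one of these ingredients.

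First I would apply Lemma \ref{lang413} to the data $E \subseteq A$ regular, $E \subseteq B$, and $B \ind_E^{\mathfrak{D}} A$. Its conclusion gives directly that the extension $B \subseteq BA$ is $\mathcal{L}$-regular. At this stage I have not yet used the assumption that $E \subseteq B$ is regular; that goes into the second step.

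The second step is to invoke transitivity of regularity (Remark \ref{regular.remark}(7)): since both $E \subseteq B$ and $B \subseteq BA$ are $\mathcal{L}$-regular, the composite $E \subseteq BA$ is $\mathcal{L}$-regular, which is what we want.

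I expect no real obstacle here; the corollary is essentially a formal consequence of the lemma and the transitivity remark, with the forking independence hypothesis being consumed entirely in the application of Lemma \ref{lang413}. The only point worth double-checking is that Remark \ref{regular.remark}(7) is stated in the form needed (for arbitrary small subsets, not just for definably closed substructures), but as written it applies verbatim.
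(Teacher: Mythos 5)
Your proposal is correct and is exactly the paper's argument: the paper's proof reads simply ``By Lemma \ref{lang413} and Remark \ref{regular.remark},'' and your expansion (Lemma \ref{lang413} gives $B\subseteq BA$ regular, then Remark \ref{regular.remark}(7) chains $E\subseteq B$ and $B\subseteq BA$ into $E\subseteq BA$) is precisely how those two ingredients combine.
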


\begin{proof}
By Lemma \ref{lang413} and Remark \ref{regular.remark}.
\end{proof}

\begin{cor}\label{PAC.substructures}
If $P'$ is a small PAC $\mathcal{L}$-substructure of $\mathfrak{D}$ and $P\leqslant_1 P'$, then $P$ is PAC.
\end{cor}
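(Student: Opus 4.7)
My plan is to prove this corollary via an amalgamation argument: take a copy of the given regular extension of $P$ that is forking-independent from $P'$ over $P$, apply the PAC-ness of $P'$ to the amalgam, and then descend through $P \leqslant_1 P'$.

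More concretely, to verify the PAC condition for $P$, I would fix a small $\mathcal{L}$-substructure $N$ of $\mathfrak{D}$ with $P \subseteq N$ regular, together with a quantifier-free $\mathcal{L}$-formula $\varphi(y,x)$ and a finite tuple $p \subseteq P$ for which $N \models \exists x\, \varphi(p,x)$, aiming to produce a witness inside $P$. Using stability of $T'$ and the saturation/strong homogeneity of $\mathfrak{D}$, I would choose a small $\mathcal{L}$-substructure $N^* \subseteq \mathfrak{D}$ together with an $\mathcal{L}$-isomorphism $f : N \to N^*$ fixing $P$ pointwise such that $N^* \ind^{\mathfrak{D}}_P P'$. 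Since regularity is invariant under automorphisms of $\mathfrak{D}/P$ (Remark \ref{regular.remark}.(4)), $P \subseteq N^*$ is again regular; since $\varphi$ is quantifier-free, $f$ transports the witness from $N$ to $N^*$, so $N^* \models \exists x\, \varphi(p,x)$.

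Next I would invoke Lemma \ref{lang413} with $E = P$, $A = N^*$ and $B = P'$: the hypotheses $P \subseteq N^*$ regular and $P' \ind^{\mathfrak{D}}_P N^*$ both hold, so $P'$ is a regular $\mathcal{L}$-extension inside $N' := \langle P' \cup N^* \rangle_{\mathcal{L}}$. The PAC-ness of $P'$ then gives $P' \leqslant_1 N'$, and since the witness coming from $N^*$ also lies in $N'$, we have $N' \models \exists x\, \varphi(p,x)$, whence $P' \models \exists x\, \varphi(p,x)$, and finally the assumption $P \leqslant_1 P'$ yields $P \models \exists x\, \varphi(p,x)$, as required.

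The only delicate step is the construction of the independent copy $N^*$ inside $\mathfrak{D}$: one extends the type of an enumeration of $N$ over $P$ to a nonforking extension over $P'$ in the stable theory $T'$ and realizes it inside the saturated model $\mathfrak{D}$. That is precisely where stability of $T'$ and the saturation/strong homogeneity of $\mathfrak{D}$ are genuinely used; the remaining steps merely chain together Lemma \ref{lang413}, the PAC-ness assumption on $P'$, and the hypothesis $P \leqslant_1 P'$.
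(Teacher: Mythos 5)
Your proof is correct and takes essentially the same route as the paper: construct an independent copy of the regular extension over $P$ via the stable theory $T'$, apply Lemma~\ref{lang413} to see that $P'$ sits regularly in the amalgam, use PAC-ness of $P'$ to get the witness in $P'$, and descend to $P$ via $P\leqslant_1 P'$. The only cosmetic difference is that you generate the amalgam with $\langle\cdot\rangle_{\mathcal{L}}$ whereas the paper uses $\dcl_{\mathcal{L}}^{\mathfrak{D}}(\cdot)$; both are fine since the regularity condition only sees the definable closure.
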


\begin{proof}
Assume that $P\subseteq N$ is regular and $N\models\exists\;x\;\varphi(p,x)$ for some $p\subseteq P$ and quantifier free $\mathcal{L}$-formula. Let $N'\equiv_P N$ be such that $P'\ind^{\mathfrak{D}}_P N'$. By Lemma \ref{lang413}, $P'\subseteq\dcl_{\mathcal{L}}^{\mathfrak{D}}(P',N')$ is regular, but $\dcl_{\mathcal{L}}^{\mathfrak{D}}(P',N')\models\exists\;x\;\varphi(p,x)$ and $P'$ is PAC, thus $P'\models\exists\;x\;\varphi(p,x)$.
\end{proof}

\begin{remark}\label{rem.413}
Assume that $E\subseteq A$ and $E\subseteq B$ are normal extensions.
\begin{enumerate}
\item Obviously, the extension $E\subseteq\dcl_{\mathcal{L}}^{\mathfrak{D}}(AB)$ is normal.

\item
It $E\subseteq A$ and $E\subseteq B$ are Galois, then $E\subseteq\dcl_{\mathcal{L}}^{\mathfrak{D}}(AB)$ is also Galois.

\item
The map
$$\Phi:\aut_{\mathcal{L}}(\dcl_{\mathcal{L}}^{\mathfrak{D}}(AB)/E)\to\aut_{\mathcal{L}}(A/E)\times\aut_{\mathcal{L}}(B/E),$$
given by $f\mapsto (f|_A,f|_B)$, is a continuous embedding.

\item
If $A\ind_E^{\mathfrak{D}}B$ and $E\subseteq A$ is $\mathcal{L}$-regular, then $\Phi$ is onto, hence an isomorphism (by Corollary \ref{regular.PAPA}).
\end{enumerate}
\end{remark}

\subsection{Structures with $G$-action, boundedness and PAC}\label{subsec:PAC}
Now we come back to the $G$-action case and start with introducing the following general convention.

\begin{definition}
Let $(M,\bar{\sigma})$ be an $\mathcal{L}^G$-structure and let $M\subseteq N$ be an extension of $\mathcal{L}$-structures. For $f\in\aut(N)$ by $(M^f,\bar{\sigma}^f)$ we denote the $\mathcal{L}^G$-structure $(f(M),(f\circ\sigma_g\circ f^{-1})_{g\in G})$.
\end{definition}
\noindent
We keep assumptions from the beginning of Subsection \ref{subs:stable.case}
and fix the following:
\begin{itemize}
\item 
an $\mathcal{L}$-substructure $M$ of $\mathfrak{D}$, which is equipped with a $G$-action $(\sigma_g)_{g\in G}$ such that $(M,(\sigma_g)_{g\in G})$ is an existentially closed model of $(T'_{\forall})_G$,

\item $F:=M\cap\acl_{\mathcal{L}}^{\mathfrak{D}}(M^G)$,
\end{itemize}
and consider profinite groups
\begin{itemize}
\item $\mathcal{G}:=\aut_{\mathcal{L}}(\acl_{\mathcal{L}}^{\mathfrak{D}}(M^G)/M^G)$,
\item $\mathcal{N}:=\aut_{\mathcal{L}}(\acl_{\mathcal{L}}^{\mathfrak{D}}(M^G)/F)$.
\end{itemize}

\begin{prop}\label{prop.G.closed}
The negation of each from equivalent conditions from Lemma \ref{lemma37} is satisfied when we replace $(N,(\tau_g)_{g\in G})$ from the formulation of Lemma \ref{lemma37} with $(M,(\sigma_g)_{g\in G})$.
\end{prop}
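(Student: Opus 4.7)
The plan is to argue by contradiction, showing that condition~(1) of Lemma~\ref{lemma37} fails for $(M,\bar{\sigma})$ because $(M,\bar{\sigma})$ is existentially closed among models of $(T'_{\forall})_G$. First I would observe that the extension $F\subseteq M$ is $\mathcal{L}$-regular: since $M^G\subseteq F\subseteq\acl_{\mathcal{L}}^{\mathfrak{D}}(M^G)$, we have $\acl_{\mathcal{L}}^{\mathfrak{D}}(F)=\acl_{\mathcal{L}}^{\mathfrak{D}}(M^G)$, hence $\dcl_{\mathcal{L}}^{\mathfrak{D}}(M)\cap\acl_{\mathcal{L}}^{\mathfrak{D}}(F)=M\cap\acl_{\mathcal{L}}^{\mathfrak{D}}(M^G)=F=\dcl_{\mathcal{L}}^{\mathfrak{D}}(F)$ by Remark~\ref{dcl_F}. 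Suppose now towards a contradiction that condition~(1) holds, producing some $(M',(\sigma'_g))$ with $M'\subseteq\acl_{\mathcal{L}}^{\mathfrak{D}}(M^G)$, $\sigma'_g|_F=\sigma_g|_F$, and some $m'\in(M')^G\setminus M^G$. Since the actions agree on $F$, an element of $F$ that is $(\sigma'_g)$-invariant is already in $M^G$, so $m'\notin F$.

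Next, using stability of $T'$ and saturation of $\mathfrak{D}$, pick an $F$-elementary isomorphism $\tau\colon M'\to M''$ with $M''\ind^{\mathfrak{D}}_F M$, and transport the action by setting $\sigma''_g:=\tau\circ\sigma'_g\circ\tau^{-1}$ and $m'':=\tau(m')$. Being $F$-elementary, $\tau$ preserves ``not in $F$'', so $m''\notin F$; combined with $m''\in\acl_{\mathcal{L}}^{\mathfrak{D}}(F)$ and $M\cap\acl_{\mathcal{L}}^{\mathfrak{D}}(F)=F$, this gives $m''\notin M$. For each $g\in G$, extend $\sigma_g$ and $\sigma''_g$ via Lemma~\ref{fhat_lemma} to $f_1,f_2\in\aut_{\mathcal{L}}(\mathfrak{D})$; they agree on $F$, and $f_1(M)=M$, $f_2(M'')=M''$, $f_1(F)=F=f_2(F)$. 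Applying Corollary~\ref{regular.PAPA} with $E=F$, $A=M$, $B=M''$ (using regularity of $F\subseteq M$ and $M\ind^{\mathfrak{D}}_F M''$) yields $h_g\in\aut_{\mathcal{L}}(\mathfrak{D})$ with $h_g|_M=\sigma_g$ and $h_g|_{M''}=\sigma''_g$. Set $\tilde{M}:=\dcl_{\mathcal{L}}^{\mathfrak{D}}(M\cup M'')$ and $\tilde{\sigma}_g:=h_g|_{\tilde{M}}$. The identities $\sigma_g\sigma_h=\sigma_{gh}$ on $M$ and $\sigma''_g\sigma''_h=\sigma''_{gh}$ on $M''$ force $\tilde{\sigma}_g\tilde{\sigma}_h=\tilde{\sigma}_{gh}$ on $\tilde{M}$, so $(\tilde{M},\tilde{\sigma})\models(T'_{\forall})_G$ is an $\mathcal{L}^G$-extension of $(M,\bar{\sigma})$, and $\tilde{\sigma}_g(m'')=m''$ for all $g\in G$.

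To finish, by quantifier elimination for $T'$ pick a quantifier-free $\mathcal{L}$-formula $\varphi(x,\bar{c})$ over $M^G$ whose solution set $S$ in $\mathfrak{D}$ is finite and contains $m''$, and enumerate $M^G\cap S=\{m^*_1,\ldots,m^*_\ell\}$. The $G$-action on the finite set $S\cap M$ (via $\sigma_g$) factors through a finite quotient, so a finite $G_0\subseteq G$ can be chosen for which $G_0$-invariance and $G$-invariance coincide on $S\cap M$. The existential $\mathcal{L}^G$-formula
$$\psi(\bar{c}):=\exists x\Bigl(\varphi(x,\bar{c})\wedge\bigwedge_{g\in G_0}\sigma_g(x)=x\wedge\bigwedge_{i=1}^{\ell}x\neq m^*_i\Bigr)$$
is satisfied in $\tilde{M}$ by $m''$ (fully $G$-invariant and distinct from each $m^*_i\in M$ since $m''\notin M$), so existential closedness of $(M,\bar{\sigma})$ in $(T'_{\forall})_G$ yields a witness $x_0\in M$. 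Then $x_0\in S\cap M$, and $G_0$-invariance forces $x_0\in M^G\cap S=\{m^*_1,\ldots,m^*_\ell\}$, contradicting $x_0\neq m^*_i$ for every $i$. The step I expect to be the main obstacle is precisely this last one: turning the potentially infinite demand of full $G$-invariance into a single existential $\mathcal{L}^G$-formula. This requires exploiting the finiteness of $S$ to reduce $G$-invariance to invariance under a finite $G_0$, and keeping careful track of which ``old'' invariants $m^*_i$ must be excluded so that the formula certifies a genuinely new invariant.
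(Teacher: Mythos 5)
Your argument is correct and shares the paper's skeleton — regularity of $F\subseteq M$, amalgamation of the two $G$-actions over $F$ via Corollary~\ref{regular.PAPA} into some $\tilde{M}$, and then existential closedness of $(M,\bar{\sigma})$ — but the final step is genuinely different. The paper takes an arbitrary $n\in N'$, applies existential closedness to a purely $\mathcal{L}$-algebraic formula over $M^G$, and then invokes the \emph{normality} of $F$ over $M^G$ (Lemma~\ref{F_Galois}) to conclude $n\in F$; this yields the stronger statement $N'\subseteq F$, from which $(N')^G=F^G=M^G$ follows. You instead fix a putative new invariant $m'$ and encode $G$-invariance directly into an existential $\mathcal{L}^G$-formula, reducing the a priori infinite conjunction $\bigwedge_{g\in G}\sigma_g(x)=x$ to a finite one over a well-chosen $G_0$ (exploiting finiteness of $S\cap M$) and excluding the old invariants $m^*_i$; existential closedness then produces the contradiction. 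Both are sound; the paper's route is shorter and proves more, while yours replaces the Galois-theoretic normality step with a self-contained combinatorial argument. One small streamlining: the passage to an independent copy $M''$ is actually unnecessary, because $M'\subseteq\acl^{\mathfrak{D}}_{\mathcal{L}}(M^G)=\acl^{\mathfrak{D}}_{\mathcal{L}}(F)$ already forces $M'\ind^{\mathfrak{D}}_F M$ (algebraic types do not fork), and $m'\notin M$ holds directly since $m'\in\acl^{\mathfrak{D}}_{\mathcal{L}}(M^G)\setminus F$ while $M\cap\acl^{\mathfrak{D}}_{\mathcal{L}}(M^G)=F$; the paper's own "move" is dispensable for the same reason.
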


\begin{proof}
Assume that there is an $\mathcal{L}$-substructure $N'\subseteq\acl_{\mathcal{L}}^{\mathfrak{D}}(M^G)$, $F\subseteq N'$, equipped with a $G$-action $(\tau'_g)_{g\in G}$ which extends $(\sigma_g|_F)_{g\in G}$. We will show that $(N')^G=M^G$.

We start with checking that $F\subseteq M$ is $\mathcal{L}$-regular. By Remark \ref{dcl_M} and Remark \ref{dcl_F} we obtain the following
\begin{IEEEeqnarray*}{rCl}
\dcl_{\mathcal{L}}^{\mathfrak{D}}(M)\cap\acl_{\mathcal{L}}^{\mathfrak{D}}(M\cap\acl_{\mathcal{L}}^{\mathfrak{D}}(M^G)) &=& M\cap\acl_{\mathcal{L}}^{\mathfrak{D}}(M\cap\acl_{\mathcal{L}}^{\mathfrak{D}}(M^G)) \\
&\subseteq & M\cap\acl_{\mathcal{L}}^{\mathfrak{D}}(M^G) = \dcl_{\mathcal{L}}^{\mathfrak{D}}(M\cap\acl_{\mathcal{L}}^{\mathfrak{D}}(M^G)),
\end{IEEEeqnarray*}
which implies $\mathcal{L}$-regularity. Now we ``move" $N'$ by some $f\in\aut_{\mathcal{L}}(\mathfrak{D}/F)$ to obtain
$$M\ind_F^{\mathfrak{D}} (N')^f.$$
By Corollary \ref{regular.PAPA}, we extend each pair $(\sigma_g,f\tau'_gf^{-1})$, where $g\in G$ to an $\mathcal{L}$-automorphism of $\mathfrak{D}$ and then note that
$\langle M,(N')^f\rangle_{\mathcal{L}}$ with these extensions of pairs of automorphisms is a model of $(T'_{\forall})_G$, let us denote it by $(\tilde{M},(\tilde{\sigma}_g)_{g\in G})$. Note that
$$(M,(\sigma_g)_{g\in G})\leqslant_1(\tilde{M},(\tilde{\sigma}_g)_{g\in G}).$$

Let $n\in N'\subseteq\acl_{\mathcal{L}}^{\mathfrak{D}}(M^G)$, let $\varphi(y,x)$ be a quantifier free $\mathcal{L}$-formula and let $m$ be a finite tuple from $M^G$ such that $\varphi(m,\mathfrak{D})$ is finite and contains $n$. We have
$\varphi(m,\tilde{M})\neq\emptyset$, so $\varphi(m,M)\neq\emptyset$. 
By Lemma \ref{F_Galois}, $M^G\subseteq F=M\cap\acl_{\mathcal{L}}^{\mathfrak{D}}(M^G)$ is Galois, hence - as a normal extension - it contains $n$.
Therefore $N'\subseteq F\subseteq M$, so $(N')^G=M^G$.
\end{proof}

\begin{definition}\label{frattini0}
Let $H,H'$ be profinite groups and $\pi: H\to H'$ be a continuous epimorphism.
The mapping $\pi$ is called a \emph{Frattini cover} if for each closed subgroup $H_0$ of $H$, the condition $\pi(H_0)=H'$ implies that $H_0=H$.
\end{definition}

\begin{cor}\label{frattini1}
The restriction map
$$\pi:\aut_{\mathcal{L}}(\acl_{\mathcal{L}}^{\mathfrak{D}}(M^G)/M^G)\to\aut_{\mathcal{L}}(M\cap\acl_{\mathcal{L}}^{\mathfrak{D}}(M^G)/M^G)$$
is a Frattini cover.
\end{cor}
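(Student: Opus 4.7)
The plan is to unwind the statement into the second condition of Lemma \ref{lemma37} and then invoke Proposition \ref{prop.G.closed}.

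First I would record the standard Galois-theoretic setup: by Lemma \ref{F_Galois} the extension $M^G\subseteq F$ is Galois, so restriction to $F$ gives the exact sequence of profinite groups
$$1\longrightarrow\mathcal{N}\longrightarrow\mathcal{G}\xrightarrow{\;\pi\;}\aut_{\mathcal{L}}(F/M^G)\longrightarrow 1.$$
In particular $\pi$ is continuous and surjective, and for any subgroup $\mathcal{G}_0\leqslant\mathcal{G}$ the condition $\pi(\mathcal{G}_0)=\aut_{\mathcal{L}}(F/M^G)$ is equivalent to $\mathcal{G}_0\mathcal{N}=\mathcal{G}$.

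Next I would verify the Frattini condition. Let $\mathcal{G}_0\leqslant\mathcal{G}$ be a closed subgroup with $\pi(\mathcal{G}_0)=\aut_{\mathcal{L}}(F/M^G)$, equivalently $\mathcal{G}_0\mathcal{N}=\mathcal{G}$. Suppose for contradiction that $\mathcal{G}_0\neq\mathcal{G}$. Then $\mathcal{G}_0$ witnesses the second condition of Lemma \ref{lemma37} applied to $(M,(\sigma_g)_{g\in G})$. But this exactly contradicts Proposition \ref{prop.G.closed}, which asserts that the equivalent conditions of Lemma \ref{lemma37} fail for our fixed existentially closed $(M,(\sigma_g)_{g\in G})$. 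Hence $\mathcal{G}_0=\mathcal{G}$, so $\pi$ is a Frattini cover.

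The proof is essentially a dictionary-translation: the non-triviality hypothesis in Proposition \ref{prop.G.closed} has been designed precisely to be the negation of the Frattini condition via the identification $\aut_{\mathcal{L}}(F/M^G)\cong\mathcal{G}/\mathcal{N}$. Consequently there is no genuine obstacle; the only point requiring a touch of care is to confirm that $\pi$ is well-defined as a map onto $\aut_{\mathcal{L}}(F/M^G)$ (which follows because $F$ is normal over $M^G$ by Lemma \ref{F_Galois} together with Fact \ref{normal_morph}, ensuring every element of $\mathcal{G}$ restricts to an $\mathcal{L}$-automorphism of $F$) and surjective (which is part of the exact sequence recalled above).
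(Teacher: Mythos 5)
Your proof is correct and matches the paper's approach exactly; the paper gives the one-line proof ``By Proposition \ref{prop.G.closed}'' and you have simply unwound the implicit translation through Lemma \ref{lemma37}, correctly identifying that $\pi(\mathcal{G}_0)=\aut_{\mathcal{L}}(F/M^G)$ is equivalent to $\mathcal{G}_0\mathcal{N}=\mathcal{G}$ so that the negation of condition (2) of Lemma \ref{lemma37} is precisely the Frattini property.
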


\begin{proof}
By Proposition \ref{prop.G.closed}.
\end{proof}

\begin{remark}\label{frattini2}
The above corollary can be formulated in a, in some sense, stronger version
in the case of the theory of fields, i.e. Theorem 3.40 in \cite{nacfa} or Theorem 5. and 6. in \cite{sjogren}. Theorem 3.40 in \cite{nacfa} states that for $\mathfrak{D}\models\acf$, if $G$ is a finite group, then
$$\aut_{\mathcal{L}}(\acl_{\mathcal{L}}^{\mathfrak{D}}(M^G)/M^G)\to G$$
is the \emph{universal Frattini cover} (i.e. we require additionally that the profinite group 
$$\aut_{\mathcal{L}}(\acl_{\mathcal{L}}^{\mathfrak{D}}(M^G)/M^G)$$
 is projective, consult Proposition 22.6.1 in \cite{FrJa} or Theorem 3.38 in \cite{nacfa}). By Proposition \ref{prop.generators}, we see that 
 $$G=\aut_{\mathcal{L}}(M\cap\acl_{\mathcal{L}}^{\mathfrak{D}}(M^G)/M^G),$$
  so Corollary \ref{frattini1} states that the map 
$\aut_{\mathcal{L}}(\acl_{\mathcal{L}}^{\mathfrak{D}}(M^G)/M^G)\to G$ is a Frattini cover. We will see (Lemma \ref{inv.pac}) that the invariants for a finitely generated group are a PAC substructure, but the absolute Galois group of a PAC field always is projective (Theorem 11.6.2 in \cite{FrJa}), hence for fields we can easily add ``universal" before ``Frattini cover" from the thesis of \cite[Theorem 3.40]{nacfa}.
\end{remark}

\begin{definition}\label{Gal.bounded}
We say that a small subset $A$ of $\mathfrak{D}$ is \emph{Galois bounded} if the profinite group
$$\aut_{\mathcal{L}}(\acl_{\mathcal{L}}^{\mathfrak{D}}(A)/\dcl_{\mathcal{L}}^{\mathfrak{D}}(A))$$ 
is small, i.e.: if for every $n\in\mathbb{N}_{>0}$ it has only finitely many closed subgroups of index $n$.
\end{definition}

\begin{prop}\label{inv.bounded}
If $G$ is finitely generated, then $\mathcal{G}$ is finitely generated as a profinite group.
\end{prop}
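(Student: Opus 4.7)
The plan is to combine two earlier results: Proposition \ref{prop.generators}, which identifies topological generators of $\aut_{\mathcal{L}}(F/M^G) \cong \mathcal{G}/\mathcal{N}$ with the restrictions $(\sigma_g|_F)_{g\in G}$, and Corollary \ref{frattini1}, which says that the restriction map $\pi:\mathcal{G}\to \mathcal{G}/\mathcal{N}$ is a Frattini cover.

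First I would observe that if $g_1,\dots,g_k$ generate $G$, then $(\sigma_{g_1}|_F,\dots,\sigma_{g_k}|_F)$ topologically generate $\aut_{\mathcal{L}}(F/M^G)$. Indeed, the subgroup of $\aut_{\mathcal{L}}(F/M^G)$ topologically generated by these restrictions contains every $\sigma_g|_F$ (as every $g\in G$ is a word in the $g_i$ and their inverses, and the map $g\mapsto \sigma_g|_F$ is a group homomorphism), hence contains $(\sigma_g|_F)_{g\in G}$, whose closure is all of $\aut_{\mathcal{L}}(F/M^G)$ by Proposition \ref{prop.generators}.

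Next, I would pick lifts $h_1,\dots,h_k\in \mathcal{G}$ of $\sigma_{g_1}|_F,\dots,\sigma_{g_k}|_F$ under $\pi$, and let $\mathcal{G}_0$ be the closed subgroup of $\mathcal{G}$ topologically generated by $h_1,\dots,h_k$. Since $\pi$ is a continuous homomorphism, $\pi(\mathcal{G}_0)$ is a closed subgroup of $\mathcal{G}/\mathcal{N}$ containing each $\sigma_{g_i}|_F$, hence $\pi(\mathcal{G}_0)=\mathcal{G}/\mathcal{N}$; equivalently, $\mathcal{G}_0\mathcal{N}=\mathcal{G}$. By Corollary \ref{frattini1}, $\pi$ is a Frattini cover, so the condition $\pi(\mathcal{G}_0)=\mathcal{G}/\mathcal{N}$ forces $\mathcal{G}_0=\mathcal{G}$. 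Thus $\mathcal{G}$ is topologically generated by $h_1,\dots,h_k$.

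No step should present a serious obstacle here, since the heavy lifting has already been done: Proposition \ref{prop.generators} provides the needed generating set of $\mathcal{G}/\mathcal{N}$, and the Frattini property from Corollary \ref{frattini1} is precisely the tool designed to transfer topological generation through $\pi$. The only subtlety is the routine fact that continuous images of closed subgroups of profinite groups remain closed, which follows from compactness of $\mathcal{G}_0$.
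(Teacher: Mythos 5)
Your argument is correct and matches the paper's proof in substance: the paper also reduces finite generation of $\mathcal{G}$ to finite generation of $\mathcal{G}/\mathcal{N}\cong\aut_{\mathcal{L}}(F/M^G)$ via Proposition \ref{prop.generators}, lifts finitely many topological generators, and concludes $\mathcal{G}_0=\mathcal{G}$ from the Frattini property. The only cosmetic difference is that the paper cites Proposition \ref{prop.G.closed} directly, whereas you cite Corollary \ref{frattini1}, which is simply that proposition repackaged as the statement that $\pi$ is a Frattini cover.
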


\begin{proof}
The proof follows the proof of \cite[Theorem 3.22]{nacfa}. 
First we recall that group $\aut_{\mathcal{L}}(F/M^G)\cong\mathcal{G}/\mathcal{N}$
and is generated as a profinite group by $(\sigma_g|_F)_{g\in G}$ (by Proposition \ref{prop.generators}), hence
$$\mathcal{G}/\mathcal{N}=\cl\big(\langle \sigma_g|_F\;|\; g\in G\rangle\big)=\cl\big(\langle \sigma_{g_i}|_F\;|\; i\leqslant n\rangle\big),$$
where $g_1,\ldots,g_n$ generate $G$.

Choose finitely many generators $h_i\mathcal{N}$, $i\leqslant n$, of the topological group $\mathcal{G}/\mathcal{N}$. Let $\mathcal{G}_0$ be the topological subgroup of $\mathcal{G}$ generated by $\lbrace h_1,\ldots,h_n\rbrace$. Because $\mathcal{G}_0\mathcal{N}=\mathcal{G}$, by
Proposition \ref{prop.G.closed} it must be $\mathcal{G}_0=\mathcal{G}$.
\end{proof}

\begin{cor}\label{inv.bounded2}
If $G$ is finitely generated, then $M^G$ is Galois bounded.
\end{cor}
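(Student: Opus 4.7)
The plan is to combine Proposition \ref{inv.bounded} with the standard fact that topologically finitely generated profinite groups have only finitely many open subgroups of each finite index, and to use Remark \ref{inv.perfect} to identify the relevant Galois group.

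First, I would observe that by Remark \ref{inv.perfect} we have $\dcl_{\mathcal{L}}^{\mathfrak{D}}(M^G) = M^G$, so the profinite group appearing in Definition \ref{Gal.bounded} with $A := M^G$ is precisely
\[
\aut_{\mathcal{L}}(\acl_{\mathcal{L}}^{\mathfrak{D}}(M^G)/\dcl_{\mathcal{L}}^{\mathfrak{D}}(M^G)) = \aut_{\mathcal{L}}(\acl_{\mathcal{L}}^{\mathfrak{D}}(M^G)/M^G) = \mathcal{G}.
\]
Hence the claim reduces to showing that $\mathcal{G}$ has only finitely many closed subgroups of each finite index.

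Next, by Proposition \ref{inv.bounded}, since $G$ is finitely generated, the profinite group $\mathcal{G}$ is topologically finitely generated; fix topological generators $h_1,\ldots,h_n$. Recall that in any profinite group, a closed subgroup of finite index is automatically open, since its complement is a union of finitely many translates of it and therefore closed. So it suffices to bound the number of open subgroups of $\mathcal{G}$ of index $d$ for each $d$.

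Finally, every open subgroup $H \leqslant \mathcal{G}$ of index $d$ corresponds to a continuous transitive action of $\mathcal{G}$ on the coset space $\mathcal{G}/H$ of size $d$, i.e.\ to a continuous homomorphism $\mathcal{G} \to S_d$. Any such homomorphism is completely determined by the images of the topological generators $h_1,\ldots,h_n$, so there are at most $|S_d|^n = (d!)^n$ such homomorphisms and hence at most $(d!)^n$ open subgroups of $\mathcal{G}$ of index $d$. This gives smallness of $\mathcal{G}$, and therefore $M^G$ is Galois bounded. There is no real obstacle here; the only mild subtlety is the identification of the Galois group in Definition \ref{Gal.bounded} with $\mathcal{G}$ via Remark \ref{inv.perfect}, after which the corollary is a direct application of Proposition \ref{inv.bounded} and a classical fact about topologically finitely generated profinite groups.
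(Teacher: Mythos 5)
Your proof is correct and follows essentially the same route as the paper: the paper derives the corollary from Proposition \ref{inv.bounded} together with a citation to Ribes--Zalesskii [Proposition 2.5.1.a)] for the fact that a topologically finitely generated profinite group has only finitely many closed (equivalently, open) subgroups of each finite index, which is exactly the classical fact you spell out via the $\bigl(d!\bigr)^n$ counting argument. Your explicit invocation of Remark \ref{inv.perfect} to identify the group in Definition \ref{Gal.bounded} with $\mathcal{G}$ is a point the paper leaves implicit, but it is the same identification.
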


\begin{proof}
The above proposition and \cite[Proposition 2.5.1.a)]{ribzal}, which says that a finitely generated profinite group is Galois bounded, imply the corollary.
\end{proof}

\begin{prop}\label{inv.pac}
If $G$ is finitely generated then the $\mathcal{L}$-substructure $M^G$ is PAC.
\end{prop}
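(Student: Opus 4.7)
The plan is to verify the defining property of PAC directly: given a small $\mathcal{L}$-substructure $N' \supseteq M^G$ of $\mathfrak{D}$ which is $\mathcal{L}$-regular over $M^G$, I must show that $M^G \leqslant_1 N'$. So fix a quantifier-free $\mathcal{L}$-formula $\varphi(\bar m, x)$ with $\bar m \subseteq M^G$ and an element $n \in N'$ satisfying $\varphi^{\mathfrak{D}}(\bar m, n)$. After replacing $N'$ with a suitable $\aut_{\mathcal{L}}(\mathfrak{D}/M^G)$-conjugate I may assume $N' \ind^{\mathfrak{D}}_{M^G} M$, and then Lemma~\ref{lang413} tells me that $M \subseteq \dcl_{\mathcal{L}}^{\mathfrak{D}}(MN')$ is also $\mathcal{L}$-regular.

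The second step is to lift the $G$-action from $M$ to $\dcl_{\mathcal{L}}^{\mathfrak{D}}(MN')$ while keeping $N'$ pointwise fixed. For each $g \in G$, extend $\sigma_g$ to some $\tilde\sigma_g \in \aut_{\mathcal{L}}(\mathfrak{D})$ via Lemma~\ref{fhat_lemma} and apply Corollary~\ref{regular.PAPA} with $E := M^G$, $A := N'$, $B := M$, $f_1 := \id$, $f_2 := \tilde\sigma_g$: both maps restrict to the identity on $M^G$, and the independence assumptions hold because $f_1(N') = N' \ind^{\mathfrak{D}}_{M^G} M = f_2(M)$. This yields $h_g \in \aut_{\mathcal{L}}(\mathfrak{D})$ with $h_g|_{N'} = \id$ and $h_g|_M = \sigma_g$. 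Set $\tau_g := h_g|_{\dcl_{\mathcal{L}}^{\mathfrak{D}}(MN')}$. The identities $\tau_g \tau_{g'} = \tau_{gg'}$ and $\tau_1 = \id$ hold on the generating set $M \cup N'$ by direct inspection, hence on the whole of $\dcl_{\mathcal{L}}^{\mathfrak{D}}(MN')$, since two automorphisms of $\mathfrak{D}$ that agree on a set agree on its definable closure. Consequently $(\dcl_{\mathcal{L}}^{\mathfrak{D}}(MN'), (\tau_g)_{g \in G})$ is an $\mathcal{L}^G$-extension of $(M, (\sigma_g)_{g \in G})$ sitting inside $\mathfrak{D}$, and so it is a model of $(T'_{\forall})_G$.

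Here finite generation of $G$ is used decisively. Fix generators $g_1, \ldots, g_k$ of $G$ and form the quantifier-free $\mathcal{L}^G$-formula
\[
\psi(x) \,:=\, \varphi(\bar m, x) \,\wedge\, \bigwedge_{i=1}^{k} \sigma_{g_i}(x) = x,
\]
which has parameters in $M$. The element $n$ witnesses $\psi$ in $\dcl_{\mathcal{L}}^{\mathfrak{D}}(MN')$, since $n \in N'$ is $\tau_g$-fixed for every $g$. By the existential closedness of $(M, (\sigma_g))$ among models of $(T'_{\forall})_G$ there exists $m' \in M$ with $\psi(m')$; being fixed by all generators $\sigma_{g_i}$, this $m'$ lies in $M^G$, and it realises $\varphi(\bar m, x)$ in $M^G$, as required.

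The main obstacle is the middle step, where one must simultaneously extend every $\sigma_g$ to $\dcl_{\mathcal{L}}^{\mathfrak{D}}(MN')$ in a coherent, action-preserving way; this is exactly what the ``PAPA''-style Corollary~\ref{regular.PAPA} together with the $\mathcal{L}$-regularity of the extension buys us. Finite generation enters only at the very end, but is indispensable there: without it, the condition ``$x$ is $G$-invariant'' is an infinite conjunction, not a formula, and the existential closedness of $(M, (\sigma_g))$ could not be applied to $\psi$.
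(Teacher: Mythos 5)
Your proof is correct and follows essentially the same route as the paper's: move $N'$ to be independent from $M$ over $M^G$, extend the $G$-action to $\dcl_{\mathcal{L}}^{\mathfrak{D}}(MN')$ via Corollary~\ref{regular.PAPA} with the trivial action on $N'$, and then use existential closedness of $(M,\bar\sigma)$ on the quantifier-free formula conjoined with the finitely many invariance conditions $\sigma_{g_i}(x)=x$. You simply unpack the paper's ``like in previous proofs'' remark into an explicit application of Corollary~\ref{regular.PAPA}; the invocation of Lemma~\ref{lang413} is unused but harmless.
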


\begin{proof}
Assume that there is an $\mathcal{L}$-regular extension $M^G\subseteq N'$ and $N'\models\exists x\;\varphi(c,x)$ for some quantifier free $\mathcal{L}$-formula $\varphi(y,x)$ and some finite tuple $c\subseteq M^G$. We ``move" $N'$ by some $f\in\aut_{\mathcal{L}}(\mathfrak{D}/M^G)$ to $N'':=f(N')$ such that
$$N''\ind_{M^G}^{\mathfrak{D}} M.$$
Because $M^G\subseteq N''$ is $\mathcal{L}$-regular, like in previous proofs, we can extend $G$-action from $M$ to $\langle M\cup N''\rangle_{\mathcal{L}}$ (on $N''$ by the trivial action) and get a model of $(T'_{\forall})_G$, say $(\tilde{M},(\tilde{\sigma}_g)_{g\in G})$. 
If $n\in N'$ satisfies $\varphi(c,x)$, then $f(n)$ also satisfies 
$\varphi(c,x)$ in $N''$ and in $\tilde{M}$. 
We see that $f(n)\in\tilde{M}^G$ and $\varphi(y,x)$ is quantifier free, so 
$$\tilde{M}\models\exists x\;\varphi(c,x)\;\wedge\;\bigwedge\limits_{g\text{ is generator of }G}\sigma_g(x)=x.$$
We use that $(M,(\sigma_g)_{g\in G})$ is existentially closed in $(\tilde{M},(\tilde{\sigma}_g)_{g\in G})$ to obtain a solution of the formula
$$\varphi(c,x)\;\wedge\;\bigwedge\limits_{g\text{ is generator of }G}\sigma_g(x)=x$$
in $M$, and so in $M^G$.
\end{proof}

\begin{remark}
Consider $G$-actions on fields (i.e. $\mathcal{L}$ is the language of rings, models of $T$ are fields and $\mathfrak{D}\models\acf$), where $G$ is finitely generated. Then Remark \ref{inv.perfect}, Proposition \ref{inv.bounded} and Proposition \ref{inv.pac} combined with \cite[Fact 2.6.7]{kim1}, prove that the $\mathcal{L}$-theory of a subfield of invariants is supersimple.
\end{remark}

The interesting question is whether the phenomena from the above remark is valid in a more general situation? 
It is under the assumption of existence of $T_G^{\mc}$, and we will come back to this in Section \ref{sec:forking}, where the existence of $T_G^{\mc}$ is assumed. Before moving forward, we will observe that for finitely generated $G$, invariants are bounded in the sense of another approach to the ``boundedness".
Namely, \cite[Definition 2.3]{PilPol}:

\begin{definition}\label{PP.bounded}
Assume that $T'$ is an $\mathcal{L}'$-theory and $M'\models T'$. We say that a definably closed $\mathcal{L}'$-substructure $P'$ of $M'$ is \emph{bounded} if there is some cardinal $\kappa$ such that whenever $(M'',P'')$ is elementarily equivalent to $(M',P')$ (as an $\mathcal{L}'\cup\lbrace P'\rbrace$-structure), then 
$\aut_{\mathcal{L}'}(\acl_{\mathcal{L}'}^{M''}(P'')/P'')$ has cardinality at most $\kappa$.
\end{definition}

Note that by \cite[Remark 2.6.(1)]{PilPol} ``bounded" and ``Galois bounded" in the case of fields means the same.

\begin{lemma}\label{MG.bounded}
If $G$ is finitely generated then $M^G$ is bounded.
\end{lemma}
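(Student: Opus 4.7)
The goal is to exhibit a cardinal $\kappa$ bounding the cardinality of $\aut_{\mathcal{L}}(\acl_{\mathcal{L}}^{M''}(P'')/P'')$ uniformly over all $\mathcal{L}\cup\lbrace P\rbrace$-structures $(M'', P'')$ elementarily equivalent to $(M, M^G)$, where $P$ is a new unary predicate interpreted as $M^G$. I plan to mimic the proof of \cite[Remark 2.6(1)]{PilPol}: Galois boundedness of $M^G$, which we have from Corollary \ref{inv.bounded2}, should transfer to any such $(M'', P'')$ by means of a first-order encoding in $\mathcal{L} \cup \lbrace P\rbrace$, so that the corresponding Galois group will inherit the same finite bound $k_n$ on the number of open subgroups of index $n$ for every $n$. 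A profinite group with this ``smallness'' property embeds into a countable product of finite groups and hence has cardinality at most $2^{\aleph_0}$, so I would take $\kappa := 2^{\aleph_0}$.

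The main task is the first-order encoding. Fix $n \geq 1$ and let $k_n$ be the number of closed subgroups of index $n$ in $\mathcal{G}$, finite by Corollary \ref{inv.bounded2}. By the Galois correspondence (Fact \ref{galois.correspondence}), these subgroups are in bijection with the intermediate $\mathcal{L}$-substructures $M^G \subseteq L \subseteq \acl_{\mathcal{L}}^{\mathfrak{D}}(M^G)$ satisfying $L = \dcl_{\mathcal{L}}^{\mathfrak{D}}(L)$ and $[\mathcal{G} : \aut_{\mathcal{L}}(\acl_{\mathcal{L}}^{\mathfrak{D}}(M^G)/L)] = n$. Elimination of imaginaries provides, for each such $L$, a code $a_L \in \acl_{\mathcal{L}}^{\mathfrak{D}}(M^G)$ with $L = \dcl_{\mathcal{L}}^{\mathfrak{D}}(M^G, a_L)$, and quantifier elimination yields a quantifier-free $\mathcal{L}$-formula $\psi_L(x; \bar c_L)$ over $M^G$ whose solution set is the $\aut_{\mathcal{L}}(\mathfrak{D}/M^G)$-orbit of $a_L$, of size $n$. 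Using the PAC property of $M^G$ (Proposition \ref{inv.pac}) together with Lemma \ref{F_Galois}, every such orbit and its parameters can in fact be witnessed inside $M$, so the statement ``there are at most $k_n$ pairwise distinct orbits of size $n$ arising from algebraic formulas with parameters in $P$'' can be written as a first-order $\mathcal{L} \cup \lbrace P\rbrace$-sentence $\varphi_n$ which is true in $(M, M^G)$.

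By elementary equivalence, $\varphi_n$ then holds in $(M'', P'')$ for every $n$, forcing $\aut_{\mathcal{L}}(\acl_{\mathcal{L}}^{M''}(P'')/P'')$ to be Galois bounded with the same bounds $k_n$; the cardinality estimate $2^{\aleph_0}$ then yields boundedness in the sense of Definition \ref{PP.bounded}. The step I expect to be the main obstacle is the fidelity of the encoding $\varphi_n$: in \cite{PilPol} the field-theoretic case uses explicit polynomial data, while in the general stable setting one must argue carefully that QE, EI and PAC together allow one to express ``orbit of an algebraic code'' by a definable condition on parameters in $P$, uniformly in $n$. Once this is granted, the rest is routine profinite bookkeeping.
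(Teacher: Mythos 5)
Your approach has a genuine gap, and it is exactly where you flag uncertainty. You want to mimic \cite[Remark 2.6(1)]{PilPol} by expressing Galois boundedness of $M^G$ as a first-order property of the pair $(M,M^G)$ and then transferring it to elementarily equivalent $(M'',P'')$. But the sentence $\varphi_n$ you propose, ``there are at most $k_n$ pairwise distinct orbits of size $n$ arising from algebraic formulas with parameters in $P$'', quantifies over \emph{all} $\mathcal{L}$-formulas, and in an arbitrary stable $T'$ there is no uniform finite family of formulas (nor any analogue of the generic degree-$n$ polynomial) that would make this a single first-order sentence. The reason the field-theoretic argument in \cite[Remark 2.6(1)]{PilPol} works is precisely that finite Galois extensions are captured by polynomials of bounded degree; the paper itself cautions that this remark is stated for fields only, so ``mimicking'' it here is not a priori legitimate. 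The observation that Galois boundedness alone is not visibly first-order is also why the paper does not use Corollary \ref{inv.bounded2} directly to derive boundedness.

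The paper instead uses a shorter and more robust argument that you did not find. Proposition \ref{inv.bounded} gives finitely many topological generators $g_1,\dots,g_n$ of $\mathcal{G}=\aut_{\mathcal{L}}(\acl_{\mathcal{L}}^{\mathfrak{D}}(M^G)/M^G)$, and the key observation is the chain of equalities
$$M^G=\big(\acl_{\mathcal{L}}^{\mathfrak{D}}(M^G)\big)^{\mathcal{G}}=\big(\acl_{\mathcal{L}}^{\mathfrak{D}}(M^G)\big)^{\langle g_1,\dots,g_n\rangle}=\big(\acl_{\mathcal{L}}^{\mathfrak{D}}(M^G)\big)^{\lbrace g_1,\dots,g_n\rbrace},$$
using that the fixed set of a dense subgroup equals that of its closure and that $M^G$ is definably closed (Remark \ref{dcl_M}). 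Thus $M^G$ is the fixed set of finitely many automorphisms of its algebraic closure, and boundedness then follows immediately from \cite[Lemma 2.4]{PilPol}, which handles exactly this configuration. This is the lemma you needed: it packages the first-order transfer in a way that avoids the uniformity problem you ran into, by replacing ``counting extensions of degree $n$'' with ``being fixed by $n$ named automorphisms''. Your argument would become correct if you replaced the $\varphi_n$-encoding with this observation and the cited lemma.
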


\begin{proof}
Because of Remark \ref{dcl_M}, $M^G$ is a definably closed $\mathcal{L}$-substructure of $\mathfrak{D}$.
By Proposition \ref{inv.bounded}, we can chose finitely many generators of the topological group $\mathcal{G}$, say $g_1,\ldots,g_n$. Observe that
$$M^G=\big(\acl_{\mathcal{L}}^{\mathfrak{D}}(M^G)\big)^{\mathcal{G}}=\big(\acl_{\mathcal{L}}^{\mathfrak{D}}(M^G)\big)^{\langle g_1,\ldots,g_n\rangle}
=\big(\acl_{\mathcal{L}}^{\mathfrak{D}}(M^G)\big)^{\lbrace g_1,\ldots,g_n\rbrace}.$$
Therefore, by \cite[Lemma 2.4]{PilPol}, $M^G$ is bounded.
\end{proof}

Now we can ask about PAC for $M$ instead of $M^G$. 
It turns out that to obtain the thesis we do not need to assume that $G$ is finitely generated (as in Proposition \ref{inv.pac}, where it was important for definability of $M^G$), but the proof is much more complicated and needs an auxiliary fact. 

\begin{fact}\label{total.independent}
Let $I$ be ordered set of size smaller than $\kappa_\mathfrak{D}$ (small in the sense of saturation of $\mathfrak{D}$). Let
$(p_i(x))_{i\in I}$ be a family of complete $\mathcal{L}$-types over a small subset $A$ of $\mathfrak{D}$ (where $x$ is a finite tuple of variables). There exists sequence $(a_i)_{i\in I}\subseteq\mathfrak{D}$ such that for each $i\in I$ we have $a_i\models p_i(x)$ and $a_i\ind^{\mathfrak{D}}_A (a_j)_{j\neq i}$.
\end{fact}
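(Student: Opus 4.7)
The plan is to construct the sequence $(a_i)_{i\in I}$ by transfinite recursion along a fixed well-ordering of $I$, taking at each stage a non-forking realization of the prescribed type over the parameters chosen so far, and then to upgrade the resulting one-sided independence into the symmetric total independence demanded in the statement.

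More precisely, I would fix a well-ordering $\{i_\alpha : \alpha < \lambda\}$ of $I$ with $\lambda < \kappa_\mathfrak{D}$ and, supposing $(a_{i_\beta})_{\beta < \alpha}$ has already been chosen, use stability of $T'$ to take a non-forking extension $q_\alpha(x)$ of the complete type $p_{i_\alpha}$ to the parameter set $A \cup \{a_{i_\beta} : \beta < \alpha\}$. Since this parameter set has cardinality strictly below $\kappa_\mathfrak{D}$ and $\mathfrak{D}$ is $\kappa_\mathfrak{D}$-saturated, $q_\alpha$ is realized in $\mathfrak{D}$; pick $a_{i_\alpha}\models q_\alpha$. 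By construction $a_{i_\alpha}\models p_{i_\alpha}$ and
\[ a_{i_\alpha}\ind^{\mathfrak{D}}_A \{a_{i_\beta} : \beta < \alpha\}. \]

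To obtain the required total independence, I would fix $i \in I$ and, using finite character of non-forking, reduce to showing that $a_i \ind^\mathfrak{D}_A (a_j)_{j\in J}$ for any finite $J \subseteq I \setminus \{i\}$. Reindexing so that $a_1,\ldots,a_n$ lists the tuple $\{a_i\} \cup \{a_j : j\in J\}$ in the order induced by the chosen well-ordering, we have $a_k \ind^\mathfrak{D}_A (a_1,\ldots,a_{k-1})$ for each $k\leq n$. An induction on $n$ then yields $a_i \ind^\mathfrak{D}_A \{a_\ell : \ell\leq n,\ \ell\neq i\}$: at the inductive step, from $a_n \ind^\mathfrak{D}_A (a_1,\ldots,a_{n-1})$, symmetry followed by the chain rule of non-forking gives $a_i \ind^\mathfrak{D}_{A\cup\{a_\ell : \ell<n,\ \ell\neq i\}} a_n$; combining this with the inductive hypothesis $a_i \ind^\mathfrak{D}_A \{a_\ell : \ell<n,\ \ell\neq i\}$ via transitivity of non-forking delivers the desired conclusion.

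The essential point to verify is this last inductive step; conceptually it presents no real obstacle, since it relies only on the standard symmetry, transitivity and finite character of non-forking available in any stable theory. The cardinality bound $|I| < \kappa_\mathfrak{D}$ is used exactly once, namely to ensure that the transfinite recursion never leaves $\mathfrak{D}$: the auxiliary parameter set at each stage remains small enough for saturation to realize the required non-forking type.
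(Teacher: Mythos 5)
Your proof is correct and follows the same approach as the paper: the paper likewise constructs $(a_i)$ by recursion so that $a_i\ind^{\mathfrak{D}}_A a_{<i}$ and then simply asserts that this yields $a_i\ind^{\mathfrak{D}}_A (a_j)_{j\neq i}$. You have usefully filled in the inductive argument (via symmetry, base monotonicity, and transitivity of non-forking) that the paper leaves implicit in its final sentence.
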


\begin{proof}
We recursively construct a sequence $(a_i)_{i\in I}$ of realisations of the family
$(p_i(x))_{i\in I}$, i.e. $a_i\models p_i(x)$ for $i\in I$,
such that for every $i\in I$ we have $a_i\ind^{\mathfrak{D}}_A a_{<i}$.
For each $i\in I$, it follows $a_i\ind^{\mathfrak{D}}_A (a_j)_{j\neq i}$.
\end{proof}

\begin{prop}\label{M.is.PAC}
The structure $M$ is PAC as an $\mathcal{L}$-substructure of $\mathfrak{D}$.
\end{prop}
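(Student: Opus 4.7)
The plan is the following. Fix a small $\mathcal{L}$-substructure $N' \supseteq M$ of $\mathfrak{D}$ with $M \subseteq N'$ regular, a finite tuple $c \subseteq M$, and a quantifier-free $\mathcal{L}$-formula $\varphi(c,x)$ realized in $N'$ by some $n$. Since $(M,(\sigma_g))$ is existentially closed in $(T'_\forall)_G$ and $\varphi$ is quantifier-free (hence also quantifier-free as an $\mathcal{L}^G$-formula), it is enough to construct an $\mathcal{L}^G$-extension $(\tilde M,(\tilde\sigma_g)_{g\in G})\models (T'_\forall)_G$ of $(M,(\sigma_g))$ with $n\in\tilde M$.

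First I would construct, for every $g\in G$, an $\mathcal{L}$-embedding $\hat\sigma_g\colon N' \hookrightarrow \mathfrak{D}$ extending $\sigma_g\colon M\to M$; this is obtained by lifting $\sigma_g$ to an automorphism of $\mathfrak{D}$ via Lemma \ref{fhat_lemma} and restricting. Letting $p_g := \tp_{\mathcal{L}}^{\mathfrak{D}}(\hat\sigma_g(N')/M)$ (relative to a fixed enumeration of $N'$), Fact \ref{total.independent} applied to the family $(p_g)_{g\in G}$ produces realizations that are totally independent over $M$. Conjugating each $\hat\sigma_g$ by a suitable $M$-automorphism of $\mathfrak{D}$, I may assume that the images $N'_g := \hat\sigma_g(N')$ are mutually independent over $M$, while each $\hat\sigma_g|_M = \sigma_g$ is retained.

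Set $\tilde M := \langle M \cup \bigcup_{g\in G} N'_g\rangle_{\mathcal{L}}$ and, for each $h\in G$, prescribe a map $\tilde\sigma_h$ on $M\cup\bigcup_g N'_g$ by $\tilde\sigma_h|_M = \sigma_h$ and $\tilde\sigma_h\circ\hat\sigma_g = \hat\sigma_{hg}$ on $N'$, so that $\tilde\sigma_h$ sends $N'_g$ isomorphically onto $N'_{hg}$. The pieces are compatible on $M$ since $\sigma_{hg}\circ\sigma_g^{-1} = \sigma_h$, and the group law $\tilde\sigma_{h_1}\tilde\sigma_{h_2} = \tilde\sigma_{h_1 h_2}$ is immediate from $\hat\sigma_{h_1 h_2 g}=\hat\sigma_{h_1(h_2 g)}$. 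The main technical point — and the expected obstacle — is verifying that $\tilde\sigma_h$ extends to an $\mathcal{L}$-automorphism of $\tilde M$. I would handle this by a transfinite induction on an enumeration $G = \{g_\alpha : \alpha < |G|\}$, applying Corollary \ref{regular.PAPA} at successor stages to patch one more copy $N'_{g_\alpha}$ onto the $\mathcal{L}$-automorphism already constructed on $\tilde M_\alpha := \langle M\cup\bigcup_{\beta<\alpha} N'_{g_\beta}\rangle$. Its hypotheses are in place: $M\subseteq N'_{g_\alpha}$ is regular, $N'_{g_\alpha}\ind^{\mathfrak{D}}_M \tilde M_\alpha$ and $N'_{hg_\alpha}\ind^{\mathfrak{D}}_M \tilde\sigma_h(\tilde M_\alpha)$ both follow from the total independence of the family $(N'_k)_{k\in G}$ (since $k\mapsto hk$ is a bijection on $G$), together with preservation of $\ind$ under taking the generated substructure, and the source and target partial maps agree on $M$. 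Limit stages are accommodated by taking the union of the partial $\mathcal{L}$-isomorphisms.

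Finally, $(\tilde M,(\tilde\sigma_g)_{g\in G})$ is an $\mathcal{L}^G$-extension of $(M,(\sigma_g))$ lying in $(T'_\forall)_G$, because $\tilde M\subseteq\mathfrak{D}$ satisfies $T'_\forall$ and $(\tilde\sigma_g)$ has just been verified to be a $G$-action by $\mathcal{L}$-automorphisms of $\tilde M$. Since $n\in N' = N'_{1_G} \subseteq \tilde M$ witnesses $\exists x\,\varphi(c,x)$ and $\varphi$ is quantifier-free, existential closedness of $(M,(\sigma_g))$ in $(T'_\forall)_G$ yields a solution of $\varphi(c,x)$ in $M$, completing the proof that $M$ is PAC.
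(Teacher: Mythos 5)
Your proof is correct and follows essentially the same approach as the paper's: transport copies of the witness across $\mathfrak{D}$ via lifts $\hat{\sigma}_g$ of the $\sigma_g$, make these copies mutually independent over $M$ using Fact~\ref{total.independent}, and assemble a $G$-action on the generated substructure by a transfinite recursion through Corollary~\ref{regular.PAPA}, after which existential closedness of $(M,\bar{\sigma})$ in $(T'_\forall)_G$ yields the solution in $M$. The paper works a bit more economically --- it keeps only the finite tuples $a_g := f_g(\hat{\sigma}_g(n))$ rather than whole copies of $N'$, which keeps Fact~\ref{total.independent} applicable exactly as stated (for finite tuples of variables) and produces the leaner auxiliary structure $M' = \dcl_{\mathcal{L}}^{\mathfrak{D}}(M, (a_g)_{g\in G})$.
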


\begin{proof}
The proof was made after a conversation with Piotr Kowalski about a similar fact in the case of the theory of fields (\cite[Theorem 3.]{sjogren}).

Assume	that $M\subseteq N$ is an $\mathcal{L}$-regular extension for some small $\mathcal{L}$-substructure $N$ of $\mathfrak{D}$. Moreover, let $N\models\exists x\;\varphi(m,x)$ for a tuple $m$ from $M$ and a quantifier free $\mathcal{L}$-formula $\varphi(y,x)$. We need to show that $M\models\exists x\;\varphi(m,x)$.

Choose an element $n\in N$ such that $\varphi^{N}(m,n)$. 
By Lemma \ref{fhat_lemma}, for each $g\in G$ the automorphism $\sigma_g$ extends $\hat{\sigma}_g\in\aut_{\mathcal{L}}(\mathfrak{D}/M^G)$.
By Fact \ref{total.independent} for the family $\big(\tp_{\mathcal{L}}^{\mathfrak{D}}(\hat{\sigma}_g(n)/M)\big)_{g\in G}$, there exists a sequence $(a_g)_{g\in G}\subseteq\mathfrak{D}$ and a sequence $(f_g)_{g\in G}\subseteq\aut_{\mathcal{L}}(\mathfrak{D}/M)$ such that
$$a_g=f_g\big(\hat{\sigma}_g(n)\big),\quad\text{and}\quad a_g\ind^{\mathfrak{D}}_M(a_h)_{h\neq g}$$
for all $g\in G$. We assume here that $G$ is ordered as a set, say $(a_g)_{g\in G}=(a_{h_i})_{i\in I}$, and use it to define a $G$-action on $(a_g)_{g\in G}$.
\
\\
\\
\textbf{Claim:}
There exists a collection of elements $(\tau_g)_{g\in G}\subseteq\aut_{\mathcal{L}}(\mathfrak{D})$ such that for each $g,g'\in G$ it follows
\begin{itemize}
\item $\big(\tau_g(a_{h_i})\big)_{i\in I}\subseteq\lbrace a_{h_i}\;|\;i\in I\rbrace$,
\item $\tau_g|_M=\sigma_g|_M$,
\item $\tau_g\circ\tau_{g'}=\tau_{gg'}$ on the set $\lbrace a_{h_i}\;|\;i\in I\rbrace$.
\end{itemize}
\
\\
\\
Proof of the claim:
Let $g\in G$, we will define $\tau_g$ recursively: we will construct a sequence $(\tau_{g,i})_{i\in I}\subseteq\aut_{\mathcal{L}}(\mathfrak{D})$ such that for all $i,j\in I$ it follows that
\begin{itemize}
\item if $j\leqslant i$ then $\tau_{g,i}(a_{h_j})=a_{gh_j}$,

\item $\tau_{g,i}|_M=\sigma_g|_M$.
\end{itemize}

We start with
$$\tau_{g,0}:=f_{gh_0}\circ\hat{\sigma}_{gh_0}\circ\hat{\sigma}_{h_0}^{-1}\circ f_{h_0}^{-1}$$
(to be in accordance with the order convention we should write ``$f_{h_i}$" instead of ``$f_{gh_0}$", where $i\in I$ is such that $h_i=gh_0$).
For the successor step: assume that we constructed $\tau_{g,\alpha}$ and want to define $\tau_{g,\alpha+1}$. Because
$$(a_{h_j})_{j\leqslant\alpha}\ind^{\mathfrak{D}}_M a_{h_{\alpha+1}},\quad
(a_{gh_j})_{j\leqslant\alpha}\ind^{\mathfrak{D}}_M a_{gh_{\alpha+1}},$$
$M\subseteq f_{h_{\alpha+1}}(N)\ni a_{h_{\alpha+1}}$ is an $\mathcal{L}$-regular extension, and because $\tau_{g,\alpha}|_M=\sigma_g|_M$, by Corollary \ref{regular.PAPA}, we can extend
$$\tau_{g,\alpha}\quad\text{and}\quad f_{gh_{\alpha+1}}\circ\hat{\sigma}_{gh_{\alpha+1}}\circ\hat{\sigma}_{h_{\alpha+1}}^{-1}\circ f_{h_{\alpha+1}}^{-1}$$
to $\theta\in\aut_{\mathcal{L}}(\mathfrak{D})$ such that
$$\theta|_{M(a_{h_j})_{j\leqslant\alpha}}=\tau_{g,\alpha}|_{M(a_{h_j})_{j\leqslant\alpha}},\qquad \theta|_{Ma_{h_{\alpha+1}}}=f_{gh_{\alpha+1}}\circ\hat{\sigma}_{gh_{\alpha+1}}\circ\hat{\sigma}_{h_{\alpha+1}}^{-1}\circ f_{h_{\alpha+1}}^{-1}|_{Ma_{h_{\alpha+1}}}.$$
We set $\tau_{g,\alpha+1}:=\theta$.
For $\alpha\in I$ which is a limit ordinal we proceed in the following way. For $i<\alpha$ we introduce 
$$\theta_{i}:=\tau_{g,i}|_{\dcl_{\mathcal{L}}^{\mathfrak{D}}(M,\,a_{h_{\leqslant i}})}$$
and we note that $\theta_j$ extends $\theta_i$ for $i\leqslant j<\alpha$. Hence we define $\theta_{\alpha}:\dcl_{\mathcal{L}}^{\mathfrak{D}}(M,\,a_{h_{<\alpha}})\to
\dcl_{\mathcal{L}}^{\mathfrak{D}}(M,\,a_{gh_{<\alpha}})$ as the limit of $(\theta_i)_{i<\alpha}$. It is an $\mathcal{L}$-isomorphism and so extends, by Lemma \ref{fhat_lemma}, to an element of $\aut_{\mathcal{L}}(\mathfrak{D})$,
say $\hat{\theta}_{\alpha}$. We only need to set $\tau_{g,\alpha}:=\hat{\theta}_{\alpha}$.

Assume that for $g\in G$ we constructed (as in the above process) a sequence $(\tau_{g,i})_{i\in I}\subseteq\aut_{\mathcal{L}}(\mathfrak{D})$. Again we need to use a limit argument to define $\tau_g\in\aut_{\mathcal{L}}(\mathfrak{D})$. 
Now for every $i\in I$, we introduce
$$\theta_{i}:=\tau_{g,i}|_{\dcl_{\mathcal{L}}^{\mathfrak{D}}(M,\,a_{h_{\leqslant i}})}$$
and set $\tau_g$ to be an extension (by Lemma \ref{fhat_lemma}) of the limit of the sequence $(\theta_i)_{i\in I}$.

The family $(\tau_g)_{g\in G}$ defined above satisfies for each $g\in G$ and each $i\in I$
\begin{itemize}
\item $\tau_{g}(a_{h_i})=a_{gh_i}$,

\item $\tau_{g}|_M=\sigma_g|_M$.
\end{itemize}
To reach our first goal (i.e. the thesis of the claim), we need only to check whether for every $g,g'\in G$ it follows
$\tau_g\circ\tau_{g'}=\tau_{gg'}$ on the set $\lbrace a_{h_i}\;|\;i\in I\rbrace$, which is straightforward. Here ends the proof of the claim.

Now, it is easy to define a $G$-action on $M':=\dcl_{\mathcal{L}}^{\mathfrak{D}}(M,(a_{h_i})_{i\in I})$ by the collection $(\tau_g)_{g\in G}$, thus $(M',(\tau_g)_{g\in G})\models (T'_{\forall})_G$ and $(M,(\sigma_g)_{g\in G})\leqslant_1(M',(\tau_g)_{g\in G})$. 

Because $\varphi(y,x)$ is quantifier free and by the choice of $(a_g)_{g\in G}$, we have $\mathfrak{D}\models\varphi(\sigma_g(m),a_g)$ and $M'\models \varphi(\sigma_g(m),a_g)$ for each $g\in G$. In particular for the neutral element, and so $M'\models\exists \;x\varphi(m,x)$. 
Since $(M,(\sigma_g)_{g\in G})\leqslant_1(M',(\tau_g)_{g\in G})$, it follows also $M\models\exists x\;\varphi(m,x)$. 
\end{proof}

\section{If model companion exists}\label{sec:forking}
\subsection{From stability to simplicity}\label{sec:stable.to.simple}
At last in this paper, we will assume that $T_G^{\mc}$ exists.
But before fixing the set-up, we will give very general definitions and recall a well known theorem which will be used afterwards.

\begin{definition}\label{ind_relat}
Suppose that $T$ is an $\mathcal{L}$-theory and $\mathfrak{C}\models T$ is a monster model. A ternary relation $\ind^{\circ}$ on small subsets of $\mathfrak{C}$ will be called \emph{independence relation} if the following are satisfied.
\begin{enumerate}
\item[(i)] (invariance) If $A\ind^{\circ}_E B$, $f\in\aut_{\mathcal{L}}(\mathfrak{C})$, then $f(A)\ind^{\circ}_{f(E)} f(B)$.

\item[(ii)] (local character) For every finite subset $A$ and every small subset $B$ there is a subset $E$ such that $|E|\leqslant|T|$ and $A\ind^{\circ}_E B$.

\item[(iii)] (finite character) $A\ind^{\circ}_E B$ if and only if for every finite tuple $\bar{b}\subseteq B$ it is $A\ind^{\circ}_E \bar{b}$.

\item[(iv)] (symmetry) $A\ind^{\circ}_E B$ if and only if $B\ind^{\circ}_E A$.

\item[(v)] (transitivity) Assume $E_0\subseteq E_1\subseteq E_2$. Then $A\ind^{\circ}_{E_0} E_2$ if and only if $A\ind^{\circ}_{E_0}E_1$ and $A\ind^{\circ}_{E_1} E_2$.

\item[(vi)] (existence) For any $A$, $B$, $E$ there is $f\in\aut_{\mathcal{L}}(\mathfrak{C}/E)$ such that $f(A)\ind^{\circ}_E B$.
\end{enumerate}
\end{definition}

\begin{definition}
Suppose that $T$ is an $\mathcal{L}$-theory and $\ind^{\circ}$ is an independence relation on a monster model $\mathfrak{C}\models T$. We say that $\ind^{\circ}$ satisfies \emph{the Independence Theorem over a model} if the following holds:
\begin{center}
For every small $M\preceq\mathfrak{C}$, small subsets $A,B\subseteq\mathfrak{C}$ and tuples $a,b\subseteq\mathfrak{C}$ \\ such that $A\ind^{\circ}_M B$, $a\ind^{\circ}_MA$, $b\ind^{\circ}_MB$ and $a\equiv_M b$, \\there exists
\\a tuple $c\subseteq\mathfrak{C}$ such that $c\equiv_{MA}a$, $c\equiv_{MB}b$ and $c\ind^{\circ}_MAB$.
\end{center}
\end{definition}

\begin{theorem}[Theorem 4.2 in \cite{KimPi}]\label{simple.thm}
If a theory $T$ admits an independence relation which satisfies the Independence Theorem over a model, then $T$ is simple and this independence relation coincides with the forking independence.
\end{theorem}

The reader may expect that now we will define some ternary relation and prove that it is an independence relation satisfying the Independence Theorem over a model. Indeed, but to do this we need to clarify what is the stage for our objects.
\textbf{For the rest of this section}, we fix the following set-up:
\begin{itemize}
\item $\kappa_{\mathfrak{C}}$ is a big enough cardinal number,
\item $T$ is an $\mathcal{L}$-theory,
\item $T^{\mc}$ exists and has quantifier elimination,
\item $T_G^{\mc}$ exists,
\item $(\mathfrak{C},(\sigma_g)_{g\in G})\models T_G^{\mc}$ is $\kappa_{\mathfrak{C}}$-saturated and $\kappa_{\mathfrak{C}}$-strongly homogeneous,
\item $\kappa_{\mathfrak{D}}>|\mathfrak{C}|$,
\item $\mathfrak{D}$ is a $\kappa_{\mathfrak{D}}$-saturated and $\kappa_{\mathfrak{D}}$-strongly homogeneous model of $T^{\mc}$, which is an $\mathcal{L}$-extension of $\mathfrak{C}$ 
(note that $\Cn\big(T^{\mc}\cup\Dgat_{\mathcal{L}}(\mathfrak{C})\big)$ is complete),

\item $T':=\theo_{\mathcal{L}}(\mathfrak{D})$ has elimination of imaginaries and is stable,

\item a \emph{small subset of $\mathfrak{C}$} is a subset of $\mathfrak{C}$ of cardinality strictly less than $\kappa_{\mathfrak{C}}$ (similarly for substructures and tuples),
\item a \emph{small subset of $\mathfrak{D}$} is a subset of $\mathfrak{D}$ of cardinality strictly less than $\kappa_{\mathfrak{D}}$ (similarly for substructures and tuples).
\end{itemize}

We will use results obtained in the previous section. Therefore we need to explain how we fulfil assumptions from Section \ref{sec:galois}, what we do right now. We have a big saturated $\mathcal{L}$-structure $\mathfrak{D}$, theory of $\mathfrak{D}$, denoted by $T'$, eliminates quantifiers and imaginaries, and is stable.
If $(M,(\sigma_g)_{g\in G})\preceq (\mathfrak{C},(\sigma_g)_{g\in G})$, then 
$(M,(\sigma_g)_{g\in G})\models T_G^{\mc}$ and $(M,(\sigma_g)_{g\in G})$ is existentially closed model of $T_G$. If we come back to the beginning of Subsection \ref{subs:galois}, then we note that only $T_{\forall}\subseteq T'$ needs to be verified. Observe that $T_{\forall}=T^{\mc}_{\forall}\subseteq T^{\mc}\subseteq T'$.

Now, we will describe how the algebraic closures look in the theory $T_G^{\mc}$ and how they correspond to the algebraic closures in the theory $T^{\mc}$, which is important for the proof of the main theorem (Theorem \ref{ind_thm_model}).

\begin{remark}\label{cap_acl}
Note that the quantifier elimination for $T^{\mc}$ implies that for any $A\subseteq\mathfrak{C}$ it follows $\acl_{\mathcal{L}}^{\mathfrak{D}}(A)\cap\mathfrak{C}\subseteq\acl_{\mathcal{L}}^{\mathfrak{C}}(A)$.
\end{remark}

\begin{lemma}\label{ind_G_extensions}
Assume that $(M,\bar{\sigma})\preceq(\mathfrak{C},\bar{\sigma})$. Let $M'$ be an $\mathcal{L}$-substructure of $\mathfrak{D}$ equipped with a $G$-action $(\sigma'_g)_{g\in G}$ and let $E=\acl_{\mathcal{L}^G}^{\mathfrak{C}}(E)\subseteq M\cap M'$ be such that $\sigma_g|_E=\sigma_g'|_E$ for each $g\in G$. If $M\ind^{\mathfrak{D}}_E M'$, then there exists a $G$-action $(\tau_g)_{g\in G}$ on $\langle M\cup M'\rangle_{\mathcal{L}}$ which extends both $(\sigma_g)_{g\in G}$ and $(\sigma'_g)_{g\in G}$.
\end{lemma}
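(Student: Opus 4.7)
The plan is to construct each $\tau_g$ separately by simultaneously extending $\sigma_g:M\to M$ and $\sigma'_g:M'\to M'$ to a single $\mathcal{L}$-automorphism of $\mathfrak{D}$ via Corollary \ref{regular.PAPA}, and then to verify that the restriction of the family $(\tau_g)_{g\in G}$ to $\langle M\cup M'\rangle_{\mathcal{L}}$ defines a $G$-action. First I would use Lemma \ref{fhat_lemma} to extend $\sigma_g$ to some $\hat{\sigma}_g\in\aut_{\mathcal{L}}(\mathfrak{D})$ and $\sigma'_g$ to some $\hat{\sigma}'_g\in\aut_{\mathcal{L}}(\mathfrak{D})$; by hypothesis these agree on $E$. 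Since $E=\acl_{\mathcal{L}^G}^{\mathfrak{C}}(E)$ is in particular closed under $\bar{\sigma}$, we have $\hat{\sigma}_g(E)=E=\hat{\sigma}'_g(E)$; moreover $\hat{\sigma}_g(M)=M$ (as $(M,\bar{\sigma})\preceq(\mathfrak{C},\bar{\sigma})$ forces $M$ to be $G$-invariant) and $\hat{\sigma}'_g(M')=M'$. Consequently the second independence hypothesis of Corollary \ref{regular.PAPA}, namely $\hat{\sigma}_g(M)\ind^{\mathfrak{D}}_{\hat{\sigma}_g(E)}\hat{\sigma}'_g(M')$, collapses to the given independence $M\ind_E^{\mathfrak{D}}M'$.

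The heart of the argument is the verification that $E\subseteq M$ is $\mathcal{L}$-regular. Since $(M,\bar{\sigma})\preceq(\mathfrak{C},\bar{\sigma})\models T_G^{\mc}$, the $\mathcal{L}^G$-structure $(M,\bar{\sigma})$ is in particular an existentially closed model of $(T'_{\forall})_G$, so by Remark \ref{dcl_M} we have $\dcl_{\mathcal{L}}^{\mathfrak{D}}(M)=M$. Using Remark \ref{cap_acl} together with the evident inclusion $\acl_{\mathcal{L}}^{\mathfrak{C}}(E)\subseteq\acl_{\mathcal{L}^G}^{\mathfrak{C}}(E)=E$, we obtain
\[
\dcl_{\mathcal{L}}^{\mathfrak{D}}(M)\cap\acl_{\mathcal{L}}^{\mathfrak{D}}(E)\;=\;M\cap\acl_{\mathcal{L}}^{\mathfrak{D}}(E)\;\subseteq\;\mathfrak{C}\cap\acl_{\mathcal{L}}^{\mathfrak{D}}(E)\;\subseteq\;\acl_{\mathcal{L}}^{\mathfrak{C}}(E)\;\subseteq\;E.
\]
The reverse inclusion $E\subseteq\dcl_{\mathcal{L}}^{\mathfrak{D}}(E)\subseteq\dcl_{\mathcal{L}}^{\mathfrak{D}}(M)\cap\acl_{\mathcal{L}}^{\mathfrak{D}}(E)$ is immediate, hence all these sets equal $E=\dcl_{\mathcal{L}}^{\mathfrak{D}}(E)$, which is precisely the $\mathcal{L}$-regularity condition of Definition \ref{regular.def}.

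With regularity in hand, Corollary \ref{regular.PAPA} produces, for every $g\in G$, an element $\tau_g\in\aut_{\mathcal{L}}(\mathfrak{D})$ satisfying $\tau_g|_M=\sigma_g$ and $\tau_g|_{M'}=\sigma'_g$. Such a $\tau_g$ preserves the set $M\cup M'$, and its inverse $\tau_{g^{-1}}$ does too, so $\tau_g$ restricts to an $\mathcal{L}$-automorphism of $\langle M\cup M'\rangle_{\mathcal{L}}$. The $G$-action axioms reduce to an identity on the generating set $M\cup M'$: on $M$ both $\tau_{g\ast h}|_M$ and $(\tau_g\circ\tau_h)|_M$ equal $\sigma_{g\ast h}|_M=\sigma_g\circ\sigma_h|_M$, analogously on $M'$, and $\tau_1$ restricts to the identity on each of $M$ and $M'$. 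Since two $\mathcal{L}$-homomorphisms that agree on a generating set coincide, these equalities propagate to $\langle M\cup M'\rangle_{\mathcal{L}}$.

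The main obstacle is the regularity verification in the middle paragraph. The hypothesis $E=\acl_{\mathcal{L}^G}^{\mathfrak{C}}(E)$ cannot be weakened to mere $\mathcal{L}$-algebraic closedness, as the $G$-orbit of a naively algebraically closed $E$ could produce new $\mathcal{L}$-algebraic elements obstructing the equality $M\cap\acl_{\mathcal{L}}^{\mathfrak{D}}(E)=E$; it is the interplay between Remark \ref{cap_acl} (controlling $\acl_{\mathcal{L}}^{\mathfrak{D}}$ over $\mathfrak{C}$), Remark \ref{dcl_M} (turning existential-closedness into definable-closedness inside $\mathfrak{D}$), and the inclusion $\acl_{\mathcal{L}}^{\mathfrak{C}}(E)\subseteq\acl_{\mathcal{L}^G}^{\mathfrak{C}}(E)=E$ that forces everything to land in $E$. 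Once this is established, the remainder of the proof is a routine application of the regular PAPA transfer.
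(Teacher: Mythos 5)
Your proof is correct and follows essentially the same route as the paper: extend $\sigma_g$ and $\sigma'_g$ to automorphisms of $\mathfrak{D}$ via Lemma \ref{fhat_lemma}, verify the $\mathcal{L}$-regularity of $E\subseteq M$ using Remark \ref{dcl_M}, Remark \ref{cap_acl}, and the fact that $\acl_{\mathcal{L}}^{\mathfrak{C}}(E)\subseteq\acl_{\mathcal{L}^G}^{\mathfrak{C}}(E)=E$, then amalgamate via Corollary \ref{regular.PAPA}. The only cosmetic differences are that you invoke Remark \ref{dcl_M} by name to get $\dcl_{\mathcal{L}}^{\mathfrak{D}}(M)=M$ (which is the cleaner reference) and that you spell out the routine verification that the resulting family $(\tau_g)_g$ satisfies the $G$-action identities on the generated substructure, a step the paper simply asserts.
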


\begin{proof}
If for every $g\in G$ there exists $\tau_g\in\aut_{\mathcal{L}}(\mathfrak{D})$ such that $\tau_g|_M=\sigma_g$ and $\tau_g|_{M'}=\sigma_g'$, then $(\tau_g)_{g\in G}$ acts on $\langle M\cup M'\rangle_{\mathcal{L}}$ as a $G$-action. Hence, it is enough to show that such $\tau_g$ exists for each $g\in G$.

Fix $g\in G$. By Lemma \ref{fhat_lemma}, $\sigma_g$ and $\sigma_g'$ can be viewed as elements of $\aut_{\mathcal{L}}(\mathfrak{D})$. We have $\sigma_g(E)=E$, $\sigma_g(M)=M$ and $\sigma_g'(M')=M'$, so $\sigma_g(M)\ind^{\mathfrak{D}}_{\sigma_g(E)} \sigma_g'(M')$. Before we can use Corollary \ref{regular.PAPA}, we need to verify the regularity assumption: $\dcl_{\mathcal{L}}^{\mathfrak{D}}(M)\cap\acl_{\mathcal{L}}^{\mathfrak{D}}(E)=\dcl_{\mathcal{L}}^{\mathfrak{D}}(E)$. Because $\dcl_{\mathcal{L}}^{\mathfrak{D}}(M)\subseteq\mathfrak{C}$, it follows, by Remark \ref{cap_acl}, that $\dcl_{\mathcal{L}}^{\mathfrak{D}}(M)=M$. Therefore
\begin{IEEEeqnarray*}{rCl}
\dcl_{\mathcal{L}}^{\mathfrak{D}}(M)\cap\acl_{\mathcal{L}}^{\mathfrak{D}}(E) &=& M\cap\acl_{\mathcal{L}}^{\mathfrak{D}}(E)\subseteq\mathfrak{C}\cap\acl_{\mathcal{L}}^{\mathfrak{D}}(E) \\
&\subseteq & \acl_{\mathcal{L}}^{\mathfrak{C}}(E)\subseteq \acl_{\mathcal{L}^G}^{\mathfrak{C}}(E) \\
&=& E\subseteq\dcl_{\mathcal{L}}^{\mathfrak{D}}(E),
\end{IEEEeqnarray*}
where we used, again, Remark \ref{cap_acl}.
\end{proof}

\noindent We give below another version of the above lemma, this time for relatively algebraically closed sets (i.e. $E$ is equal to $\acl_{\mathcal{L}}^{\mathfrak{D}}(G\cdot A)\cap\mathfrak{C}$ instead of being equal to $\acl_{\mathcal{L}^G}^{\mathfrak{C}}(A)$).

\begin{lemma}\label{ind_G_extensions2}
Assume that $(M,\bar{\sigma})\preceq(\mathfrak{C},\bar{\sigma})$. Let $M'$ be an $\mathcal{L}$-substructure of $\mathfrak{D}$ equipped with a $G$-action $(\sigma'_g)_{g\in G}$ and let $E\subseteq M\cap M'$ be such that $\sigma_g|_E=\sigma_g'|_E$ for each $g\in G$ and $E=\acl_{\mathcal{L}}^{\mathfrak{D}}(GA)\cap\mathfrak{C}$ for some $A\subseteq\mathfrak{C}$. If $M\ind^{\mathfrak{D}}_E M'$, then there exists a $G$-action 
$(\tau_g)_{g\in G}$ on $\langle M\cup M'\rangle_{\mathcal{L}}$ which extends $(\sigma_g)_{g\in G}$ and $(\sigma'_g)_{g\in G}$.
\end{lemma}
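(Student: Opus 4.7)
The plan is to mimic the proof of Lemma \ref{ind_G_extensions} almost verbatim; only the regularity step needs a new argument, because the hypothesis on $E$ has changed. For each fixed $g\in G$ I would first lift $\sigma_g$ and $\sigma'_g$ to elements of $\aut_{\mathcal{L}}(\mathfrak{D})$ via Lemma \ref{fhat_lemma} (they agree on $E$ by assumption). Since $\sigma_g(M)=M$, $\sigma_g(E)=E$ and $\sigma'_g(M')=M'$, the hypothesis $M\ind^{\mathfrak{D}}_E M'$ gives $\sigma_g(M)\ind^{\mathfrak{D}}_{\sigma_g(E)}\sigma'_g(M')$. I would then apply Corollary \ref{regular.PAPA} to produce a single $\tau_g\in\aut_{\mathcal{L}}(\mathfrak{D})$ with $\tau_g|_M=\sigma_g$ and $\tau_g|_{M'}=\sigma'_g$. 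Each $\tau_g$ stabilizes $M$ and $M'$, hence stabilizes $\langle M\cup M'\rangle_{\mathcal{L}}$; the cocycle identity $\tau_{g\ast h}=\tau_g\circ\tau_h$ holds on the generators $M\cup M'$ by construction, so it propagates to the whole generated substructure. This yields the required $G$-action $(\tau_g)_{g\in G}$.

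The only non-mechanical step is verifying the regularity hypothesis of Corollary \ref{regular.PAPA}, i.e.
\[
\dcl_{\mathcal{L}}^{\mathfrak{D}}(M)\cap\acl_{\mathcal{L}}^{\mathfrak{D}}(E)=\dcl_{\mathcal{L}}^{\mathfrak{D}}(E).
\]
In Lemma \ref{ind_G_extensions} this was obtained by routing the intersection through $\acl_{\mathcal{L}}^{\mathfrak{C}}(E)$ via Remark \ref{cap_acl} and then using $E=\acl_{\mathcal{L}^G}^{\mathfrak{C}}(E)$; neither input is directly available now. Instead, I plan to exploit the specific shape $E=\acl_{\mathcal{L}}^{\mathfrak{D}}(GA)\cap\mathfrak{C}$: idempotence of $\acl$ gives $\acl_{\mathcal{L}}^{\mathfrak{D}}(E)\subseteq\acl_{\mathcal{L}}^{\mathfrak{D}}(GA)$, while Remark \ref{dcl_M} applied to $M\preceq\mathfrak{C}$ gives $\dcl_{\mathcal{L}}^{\mathfrak{D}}(M)=M\subseteq\mathfrak{C}$. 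Combining these ingredients yields the chain
\begin{IEEEeqnarray*}{rCl}
\dcl_{\mathcal{L}}^{\mathfrak{D}}(M)\cap\acl_{\mathcal{L}}^{\mathfrak{D}}(E) &=& M\cap\acl_{\mathcal{L}}^{\mathfrak{D}}(E)\\
 &\subseteq & \mathfrak{C}\cap\acl_{\mathcal{L}}^{\mathfrak{D}}(GA)=E\subseteq\dcl_{\mathcal{L}}^{\mathfrak{D}}(E),
\end{IEEEeqnarray*}
with the reverse inclusion $\dcl_{\mathcal{L}}^{\mathfrak{D}}(E)\subseteq\dcl_{\mathcal{L}}^{\mathfrak{D}}(M)\cap\acl_{\mathcal{L}}^{\mathfrak{D}}(E)$ being immediate from $E\subseteq M$.

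I do not foresee a serious obstacle beyond this bookkeeping: once the chain above is recorded, the rest is a line-for-line replay of Lemma \ref{ind_G_extensions}. The conceptual content is simply that the new assumption $E=\acl_{\mathcal{L}}^{\mathfrak{D}}(GA)\cap\mathfrak{C}$ forces $E$ to be relatively algebraically closed in $\mathfrak{C}$, which is precisely what the regularity condition of Definition \ref{regular.def} requires over the set $\dcl_{\mathcal{L}}^{\mathfrak{D}}(M)=M\subseteq\mathfrak{C}$.
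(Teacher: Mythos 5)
Your proof is correct and follows the paper's own proof of Lemma \ref{ind_G_extensions2} essentially verbatim: both replay the argument of Lemma \ref{ind_G_extensions}, noting that $\sigma_g(E)\subseteq E$ and that the regularity check reduces to the chain $\dcl_{\mathcal{L}}^{\mathfrak{D}}(M)\cap\acl_{\mathcal{L}}^{\mathfrak{D}}(E)\subseteq\mathfrak{C}\cap\acl_{\mathcal{L}}^{\mathfrak{D}}(GA)=E\subseteq\dcl_{\mathcal{L}}^{\mathfrak{D}}(E)$. The paper routes the identity $\dcl_{\mathcal{L}}^{\mathfrak{D}}(M)=M$ through Remark \ref{cap_acl} while you invoke Remark \ref{dcl_M}, but these are interchangeable in the present setting and the argument is otherwise the same.
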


\begin{proof}
We can repeat the previous proof after checking two things.
First one, it is easy to see that $\sigma_g(E)\subseteq E$ for each $g\in G$.
Second one, we need to verify regularity of $E\subseteq M$:
$$\dcl_{\mathcal{L}}^{\mathfrak{D}}(M)\cap\acl_{\mathcal{L}}^{\mathfrak{D}}\big(\acl_{\mathcal{L}}^{\mathfrak{D}}(GA)\cap\mathfrak{C} \big)\subseteq \mathfrak{C}\cap\acl_{\mathcal{L}}^{\mathfrak{D}}(GA)\subseteq\dcl_{\mathcal{L}}^{\mathfrak{D}}\big(\acl_{\mathcal{L}}^{\mathfrak{D}}(GA)\cap\mathfrak{C} \big).$$
\end{proof}

\noindent
The following several facts generalize similar ones from \cite{acfa1}, \cite{ChaPil} and \cite{nacfa}.

\begin{fact}\label{equiv_fact}
Assume that $(M_1,\bar{\tau_1}),(M_2,\bar{\tau_2})\models T_G^{\mc}$, let $M_1,M_2\subseteq\mathfrak{D}$, $E\subseteq M_1\cap M_2$,
 and set $E_i:=\acl_{\mathcal{L}}^{\mathfrak{D}}(\bigcup_{g\in G}\tau_{i,g}(E))\cap M_i$.
If
\begin{enumerate}
\item[i)] $E_1\subseteq M_1\cap M_2$, and $\bar{\tau_1}$ and $\bar{\tau_2}$ agree on $E_1$,
\end{enumerate}
or
\begin{enumerate}
\item[ii)]$E_2\subseteq M_1\cap M_2$, and $\bar{\tau_1}$ and $\bar{\tau_2}$ agree on $E_2$,
\end{enumerate}
then $(M_1,\bar{\tau_1})\equiv_E (M_2,\bar{\tau_2})$.
\end{fact}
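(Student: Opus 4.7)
The proof is an amalgamation argument analogous to its precursors in \cite{acfa1}, \cite{ChaPil}, \cite{nacfa}. By the visible symmetry of the hypotheses, it suffices to treat case (i); case (ii) is identical after swapping the roles of the two indices.

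First I use $\kappa_{\mathfrak{D}}$-strong homogeneity of $\mathfrak{D}$ to pick $f\in\aut_{\mathcal{L}}(\mathfrak{D}/E_1)$ with
$$f(M_2)\ind^{\mathfrak{D}}_{E_1}M_1.$$
Since $f$ fixes $E_1\supseteq E$ pointwise and $\bar{\tau_1},\bar{\tau_2}$ already agree on $E_1$, the map $f$ is an $\mathcal{L}^G$-isomorphism $(M_2,\bar{\tau_2})\to(M_2^f,\bar{\tau_2}^f)$ which is the identity on $E$, so it is enough to establish $(M_1,\bar{\tau_1})\equiv_E(M_2^f,\bar{\tau_2}^f)$.

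Next I apply the method of proof of Lemma \ref{ind_G_extensions2}: the common subset $E_1=\acl_{\mathcal{L}}^{\mathfrak{D}}(G\cdot E)\cap M_1$ has exactly the algebraic form the lemma requires, the two $G$-actions still agree on $E_1$ after transport, and the independence arranged above lets me combine, for each $g\in G$, the partial $\mathcal{L}$-elementary maps $\tau_{1,g}$ and $\tau_{2,g}^f$ into a single element of $\aut_{\mathcal{L}}(\mathfrak{D})$ via Corollary \ref{regular.PAPA}. The resulting family $(\rho_g)_{g\in G}$ restricts to a $G$-action on the $\mathcal{L}$-substructure $P:=\langle M_1\cup M_2^f\rangle_{\mathcal{L}}$ of $\mathfrak{D}$ extending both $\bar{\tau_1}$ and $\bar{\tau_2}^f$.

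Finally, $(P,(\rho_g)_{g\in G})$ is an $\mathcal{L}$-substructure of $\mathfrak{D}\models T^{\mc}$ carrying a $G$-action, hence satisfies $(T_G)_{\forall}$. Model-consistency of $T_G^{\mc}$ with $T_G$ therefore embeds it into some $(N,\bar{\rho}^{N})\models T_G^{\mc}$, and model completeness of $T_G^{\mc}$ promotes both $(M_1,\bar{\tau_1})$ and $(M_2^f,\bar{\tau_2}^f)$ to elementary $\mathcal{L}^G$-substructures, yielding $(M_1,\bar{\tau_1})\equiv_E(M_2^f,\bar{\tau_2}^f)$ as required. The step I expect to need the most care is verifying the regularity input
$$\dcl_{\mathcal{L}}^{\mathfrak{D}}(M_i)\cap\acl_{\mathcal{L}}^{\mathfrak{D}}(E_1)=\dcl_{\mathcal{L}}^{\mathfrak{D}}(E_1)$$
used to invoke Corollary \ref{regular.PAPA}; this is the same kind of computation done in the proof of Lemma \ref{ind_G_extensions2} and should follow from $E_1$ being relatively algebraically closed in $M_i$ together with quantifier elimination for $T^{\mc}$ and Remark \ref{cap_acl}.
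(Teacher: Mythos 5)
Your proof is correct and follows essentially the same route as the paper's: arrange $M_1\ind^{\mathfrak{D}}_{E_1}f(M_2)$, amalgamate the two $G$-actions into one on $\langle M_1\cup M_2^f\rangle_{\mathcal{L}}$, embed into a model of $T_G^{\mc}$, and conclude by model completeness. The only minor divergence is that the paper first fixes a monster $(\mathfrak{C}_1,\bar\tau_1)\succeq(M_1,\bar\tau_1)$ of $T_G^{\mc}$ inside $\mathfrak{D}$ and verifies $E_1=\acl_{\mathcal{L}}^{\mathfrak{D}}(GE_1)\cap\mathfrak{C}_1$ so that Lemma \ref{ind_G_extensions2} applies verbatim, whereas you unfold that lemma and check the regularity hypothesis $\dcl_{\mathcal{L}}^{\mathfrak{D}}(M_1)\cap\acl_{\mathcal{L}}^{\mathfrak{D}}(E_1)=E_1$ directly (which does hold, since $\dcl_{\mathcal{L}}^{\mathfrak{D}}(M_1)=M_1$ by Remark \ref{dcl_M} and $M_1\cap\acl_{\mathcal{L}}^{\mathfrak{D}}(E_1)\subseteq M_1\cap\acl_{\mathcal{L}}^{\mathfrak{D}}(GE)=E_1$) — a somewhat more economical presentation of the same idea.
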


\begin{proof}
It is a standard argument and we will prove only item i).
Consider $(\mathfrak{C}_1,\bar{\tau}_1)\succeq (M_1,\bar{\tau}_1)$ which is a monster model for $T_G^{\mc}$. Without loss of generality, we assume that $\mathfrak{C}_1\subseteq\mathfrak{D}$. Note that for the $G$-action on $\mathfrak{C}_1$ it follows
\begin{IEEEeqnarray*}{rCl}
\acl_{\mathcal{L}}^{\mathfrak{D}}(GE)\cap M_1=E_1 &\subseteq & \acl_{\mathcal{L}}^{\mathfrak{D}}(GE_1)\cap\mathfrak{C}_1 \\
&\subseteq & \acl_{\mathcal{L}}^{\mathfrak{D}}\Big(G\big(\acl_{\mathcal{L}}^{\mathfrak{D}}(GE)\cap M_1\big)\Big)\cap\mathfrak{C}_1\\
&\subseteq & \acl_{\mathcal{L}}^{\mathfrak{D}}\Big(G\big(\acl_{\mathcal{L}}^{\mathfrak{D}}(GE)\big)\Big)\cap\mathfrak{C}_1\\
&\subseteq & \acl_{\mathcal{L}}^{\mathfrak{D}}\big(\acl_{\mathcal{L}}^{\mathfrak{D}}(GE)\big)\cap\mathfrak{C}_1\\
&\subseteq & \acl_{\mathcal{L}}^{\mathfrak{D}}(GE)\cap\mathfrak{C}_1
=\acl_{\mathcal{L}}^{\mathfrak{D}}(GE)\cap M_1=E_1.
\end{IEEEeqnarray*}
Let $f\in\aut_{\mathcal{L}}(\mathfrak{D}/E_1)$ be such that $M_1\ind^{\mathfrak{D}}_{E_1} f(M_2)$ and so also $(M_2^f, \bar{\tau}_2^f)\cong_{E_1}(M_2,\bar{\tau}_2)$. By Lemma \ref{ind_G_extensions2}, both $\mathcal{L}^G$-structures, $(M_1,\bar{\tau}_1)$ and $(M_2^f, \bar{\tau}_2^f)$, can be extended simultaneously and then embedded into a model of $T_G^{\mc}$, say $(N,\bar{\tau})$. Model completeness of $T_G^{\mc}$ implies that
$(M_1,\bar{\tau}_1)\preceq (N,\bar{\tau})$ and $(M_2^f,\bar{\tau}_2^f)\preceq (N,\bar{\tau})$, thus even
$(M_1,\bar{\tau}_1)\equiv_{E_1}(M_2^f,\bar{\tau}_2^f)\cong_{E_1}(M_2,\bar{\tau}_2)$.
\end{proof}

\begin{remark}\label{equiv_remark}
We can always set $E=\emptyset$ in the above fact, hence
the theory of a model of $T_G^{\mc}$ is determined by
the relative algebraic closure of the empty set and by the
action of $G$ on the relative algebraic closure of the empty set.
\end{remark}

\begin{fact}\label{fact_type_descr}
Let $c$, $c'$ be (small) tuples from $\mathfrak{C}$ and let $E\subseteq \mathfrak{C}$ be also small. 
Then
$$\tp_{\mathcal{L}^G}^{\mathfrak{C}}(c/E)=\tp_{\mathcal{L}^G}^{\mathfrak{C}}(c'/E)$$
if and only if there exist small $\mathcal{L}^G$-substructures $C,C'\subseteq\mathfrak{C}$ such that $E\subseteq C\cap C'$, $c\in C$, $c'\in C'$ and $C'=\acl_{\mathcal{L}^G}^{\mathfrak{C}}(C')$, and there exists an $\mathcal{L}^G_E$-isomorphism $f:(C,\bar{\sigma})\to(C',\bar{\sigma})$ sending $c$ to $c'$.
\end{fact}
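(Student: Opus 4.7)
Direction ($\Rightarrow$) will be immediate from the strong homogeneity of $(\mathfrak{C},\bar{\sigma})$. Given $\tp_{\mathcal{L}^G}^{\mathfrak{C}}(c/E)=\tp_{\mathcal{L}^G}^{\mathfrak{C}}(c'/E)$, I pick $\alpha\in\aut_{\mathcal{L}^G}(\mathfrak{C}/E)$ with $\alpha(c)=c'$ and set $C':=\acl_{\mathcal{L}^G}^{\mathfrak{C}}(Ec')$, $C:=\alpha^{-1}(C')$, $f:=\alpha|_C$. Then $C$ is an $\mathcal{L}^G$-substructure as the preimage of one under an $\mathcal{L}^G$-automorphism, $E\subseteq C\cap C'$ and $c\in C$, and $f$ is the desired $\mathcal{L}^G_E$-isomorphism sending $c$ to $c'$.

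For the direction ($\Leftarrow$), my plan is to transport the data into $\mathfrak{D}$ and reduce to Fact \ref{equiv_fact}. Using Lemma \ref{fhat_lemma}, I lift $f$ to $\hat{f}\in\aut_{\mathcal{L}}(\mathfrak{D})$ and set $M_2:=\hat{f}^{-1}(\mathfrak{C})\subseteq\mathfrak{D}$ equipped with the pulled-back $G$-action $\tau_{2,g}:=\hat{f}^{-1}\circ\sigma_g\circ\hat{f}|_{M_2}$. Then $\hat{f}$ becomes an $\mathcal{L}^G$-isomorphism $(M_2,\bar{\tau}_2)\to(\mathfrak{C},\bar{\sigma})$, so $(M_2,\bar{\tau}_2)\models T_G^{\mc}$; moreover $c\in M_2$ because $\hat{f}(c)=f(c)=c'\in\mathfrak{C}$, and the $G$-equivariance of $f$ forces $\bar{\tau}_2$ to coincide with $\bar{\sigma}$ on $C$.

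Applying Fact \ref{equiv_fact} to $(\mathfrak{C},\bar{\sigma})$ and $(M_2,\bar{\tau}_2)$ with base $Ec$, I aim to obtain $(\mathfrak{C},\bar{\sigma})\equiv_{Ec}(M_2,\bar{\tau}_2)$, and then conclude via the chain
\begin{equation*}
\tp_{\mathcal{L}^G}^{\mathfrak{C}}(c/E)\;=\;\tp_{\mathcal{L}^G}^{M_2}(c/E)\;=\;\tp_{\mathcal{L}^G}^{\mathfrak{C}}(\hat{f}(c)/\hat{f}(E))\;=\;\tp_{\mathcal{L}^G}^{\mathfrak{C}}(c'/E),
\end{equation*}
where the second equality comes from the $\mathcal{L}^G$-isomorphism $\hat{f}$ and the third from $\hat{f}(c)=c'$ together with $\hat{f}|_E=\id_E$.

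The key step, and the place where the hypothesis $C'=\acl_{\mathcal{L}^G}^{\mathfrak{C}}(C')$ is essential, is the verification of the algebraic-closure condition $E_2\subseteq M_1\cap M_2$ required by Fact \ref{equiv_fact}. Because $\bar{\tau}_2$ matches $\bar{\sigma}$ on $C\supseteq Ec$, one has $\bigcup_{g\in G}\tau_{2,g}(Ec)\subseteq C$, hence $E_2\subseteq\acl_{\mathcal{L}}^{\mathfrak{D}}(C)\cap M_2$. Transporting through $\hat{f}$ moves this into $\acl_{\mathcal{L}}^{\mathfrak{D}}(C')\cap\mathfrak{C}$, which by Remark \ref{cap_acl} lies in $\acl_{\mathcal{L}}^{\mathfrak{C}}(C')\subseteq\acl_{\mathcal{L}^G}^{\mathfrak{C}}(C')=C'$, and pulling back yields $E_2\subseteq C\subseteq\mathfrak{C}$. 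Agreement of the two actions on $E_2$ is then automatic from the agreement on $C$, so Fact \ref{equiv_fact} applies and the plan goes through; without the hypothesis that $C'$ be $\mathcal{L}^G$-algebraically closed, this very containment is exactly what would fail.
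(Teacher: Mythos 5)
Your proposal is correct, and the backward direction ($\Leftarrow$) genuinely differs from the paper's own proof. The paper works directly: it picks a \emph{small} elementary $\mathcal{L}^G$-substructure $(M,\bar\sigma)\preceq(\mathfrak{C},\bar\sigma)$ containing $C\cup C'$, lifts $f$ to $\hat f$, moves $\hat f(M)$ by some $h\in\aut_{\mathcal{L}}(\mathfrak{D}/C')$ to arrange $M\ind^{\mathfrak{D}}_{C'}h\hat f(M)$, amalgamates via Lemma \ref{ind_G_extensions} (where the hypothesis $C'=\acl_{\mathcal{L}^G}^{\mathfrak{C}}(C')$ enters), embeds into a model of $T_G^{\mc}$, and then embeds that model back into $\mathfrak{C}$ over $M$ — which requires $M$ small — to obtain an explicit automorphism $ihfi^{-1}\in\aut_{\mathcal{L}^G}(\mathfrak{C}/E)$ sending $c$ to $c'$. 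You instead transport $\mathfrak{C}$ itself, setting $M_2:=\hat f^{-1}(\mathfrak{C})$ with the pulled-back action, and invoke Fact \ref{equiv_fact} as a black box with base $Ec$, checking its hypothesis (ii) using $C'=\acl_{\mathcal{L}^G}^{\mathfrak{C}}(C')$ together with Remark \ref{cap_acl}. Your route is a cleaner reduction: it avoids re-running the amalgamation (which is already packaged inside Fact \ref{equiv_fact}) and it never needs to re-embed a large structure back into $\mathfrak{C}$, so you can work with all of $\mathfrak{C}$ rather than a small elementary substructure. The paper's route has the mild virtue of producing an actual element of $\aut_{\mathcal{L}^G}(\mathfrak{C}/E)$ witnessing $c\mapsto c'$, rather than just the equality of types; this makes no difference for the statement as written. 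Your identification of exactly where $C'=\acl_{\mathcal{L}^G}^{\mathfrak{C}}(C')$ is needed — namely to push $\acl_{\mathcal{L}}^{\mathfrak{D}}(C')\cap\mathfrak{C}$ down into $C'$ via Remark \ref{cap_acl} — matches where the paper needs it (as the hypothesis $E=\acl_{\mathcal{L}^G}^{\mathfrak{C}}(E)$ of Lemma \ref{ind_G_extensions}), and is a good observation.
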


\begin{proof}
Implication from left to right is obvious if we set $C:=\acl_{\mathcal{L}^G}^{\mathfrak{C}}(Ec)$ and $C':=\acl_{\mathcal{L}^G}^{\mathfrak{C}}(Ec')$, so we skip this part of the proof.

Consider a small elementary $\mathcal{L}^G$-substructure $(M,\bar{\sigma})$ of $(\mathfrak{C},\bar{\sigma})$ which contains $C\cup C'$. 
We assume that $f$ is equal to $\hat{f}$ from Lemma \ref{fhat_lemma}.
There exists $h\in\aut_{\mathcal{L}}(\mathfrak{D}/C')$ such that
$$M\ind_{C'}^{\mathfrak{D}} h(M^f).$$
Note that $\bar{\sigma}$ and $\bar{\sigma}^{hf}$ coincide on $C'$.
By Lemma \ref{ind_G_extensions}, $(\langle M\cup M^{hf}\rangle,\bar{\sigma}\cup\bar{\sigma}^{hf})\models (T_G)_{\forall}$. We embed $(M,\bar{\sigma})$ and $(M^{hf},\bar{\sigma}^{hf})$ into some $(N,\bar{\rho})\models T_G^{\mc}$, and because $(M,\bar{\sigma})\preceq(N,\bar{\rho})$, we can further embed $(N,\bar{\rho})$ in $(\mathfrak{C},\bar{\sigma})$ over $M$. Now, the image of $M^{hf}$ under the composition of these embeddings, say $i(M^{hf})$, is $\mathcal{L}^G$-isomorphic to $i(M)=M$ by the isomorphism $ihfi^{-1}$. However, $(M,\bar{\sigma}),(M^{hf},\bar{\sigma}^{hf})\models T_G^{\mc}$, so $ihfi^{-1}$ extends to an element of $\aut_{\mathcal{L}^G}(\mathfrak{C})$. We note that $ihfi^{-1}(c)=c'$ and $ihfi^{-1}|_E=\id_E$.
\end{proof}

\begin{lemma}\label{acl_lemma}
For each small $A\subseteq\mathfrak{C}$ it follows
$$\acl_{\mathcal{L}^G}^{\mathfrak{C}}(A)=\acl_{\mathcal{L}}^{\mathfrak{C}}(G\cdot A).$$
\end{lemma}

\begin{proof}
Naturally $\acl_{\mathcal{L}}^{\mathfrak{C}}(G\cdot A)\subseteq\acl_{\mathcal{L}^G}^{\mathfrak{C}}(A)$.
We will show that
$\acl_{\mathcal{L}}^{\mathfrak{C}}(G\cdot A)\supseteq\acl_{\mathcal{L}^G}^{\mathfrak{C}}(A)$.

Let $E:=\acl_{\mathcal{L}}^{\mathfrak{C}}(G\cdot A)$, clearly it is
$\acl_{\mathcal{L}^G}^{\mathfrak{C}}(A)=\acl_{\mathcal{L}^G}^{\mathfrak{C}}(E)$. Reductio ad absurdum, suppose that there exists $d\in\acl_{\mathcal{L}^G}^{\mathfrak{C}}(E)\setminus E$. Let $\varphi(x)$ be an $\mathcal{L}^G_E$-formula such that $\varphi^{\mathfrak{C}}(d)$ and $k:=|\varphi(\mathfrak{C})|<\infty$.

Consider a small $\mathcal{L}^G$-substructure $(M,\bar{\sigma})$ of $(\mathfrak{C},\bar{\sigma})$ which is a model of $T_G^{\mc}$ and contains $E$ and all $k$ realizations of $\varphi(x)$. 
There is $f\in\aut(\mathfrak{D}/E)$ such that $M\ind^{\mathfrak{D}}_E f(M)$.
Of course, $f$ is an $\mathcal{L}^G$-isomorphisms over $E$ between $(M,\bar{\sigma})$ and $(M^f,\bar{\sigma}^f)$, hence $(M^f,\bar{\sigma}^f)\models T_G^{\mc}$ and $\varphi^{M^f}(f(d))$.
By Lemma \ref{ind_G_extensions}, the
$G$-action $\bar{\sigma}$ on $M$ and the $G$-action $\bar{\sigma}^f$ on $M^f$ can be simultaneously extended to a $G$-action $\bar{\rho}$ on the $\mathcal{L}$-structure $\langle M\cup M^f\rangle$.

If $f(d)\in M$, then $f(d)\ind^{\mathfrak{D}}_E f(d)$ implies that $f(d)\in\acl_{\mathcal{L}}^{\mathfrak{D}}(E)$ and $d\in\acl_{\mathcal{L}}^{\mathfrak{D}}(E)$; but $d\in\mathfrak{C}$, so it would lead, by Remark \ref{cap_acl}, to $d\in\acl_{\mathcal{L}}^{\mathfrak{C}}(E)=E$, which is impossible.
Therefore $f(d)\not\in M$.

It follows that the $\mathcal{L}^G$-structure $(\langle M\cup M^f\rangle,\bar{\rho})$ is a model of $(T_G)_{\forall}$. Therefore we can embed (as an $\mathcal{L}^G$-substructure) $(\langle M\cup M'\rangle,\bar{\rho})$ into a model of $T_G$ and then into a model of $T_G^{\mc}$, say $(N,\bar{\rho})$.
The theory $T_G^{\mc}$ is model complete, so $(M,\bar{\sigma})\preceq(N,\bar{\rho})$ and
$(M,\bar{\sigma})\equiv(N,\bar{\rho})$. Now we can embed $(N,\bar{\rho})$ into $(\mathfrak{C},\bar{\sigma})$ over $M$ 
as an elementary $\mathcal{L}^G$-substructure.

Because $(M,\bar{\sigma})\preceq(N,\bar{\rho})$ and $(M^f,\bar{\sigma}^f)\preceq(N,\bar{\rho})$, there are at least $k+1$ realizations of the formula $\varphi$ in $N$, hence in $\mathfrak{C}$, which can not happen.
\end{proof}

\begin{lemma}\label{acl_lemma2}
For each small $A\subseteq\mathfrak{C}$ it follows
$$\acl_{\mathcal{L}}^{\mathfrak{C}}(G\cdot A)=
\acl_{\mathcal{L}}^{\mathfrak{D}}(G\cdot A)\cap\mathfrak{C}.$$
\end{lemma}

\begin{proof}
By Remark \ref{cap_acl}, we have $\acl_{\mathcal{L}}^{\mathfrak{D}}(G\cdot A)\cap\mathfrak{C}\subseteq \acl_{\mathcal{L}}^{\mathfrak{C}}(G\cdot A)$. For the proof of the second inclusion, suppose that there is $d\in\acl_{\mathcal{L}}^{\mathfrak{C}}(G\cdot A)\setminus E$, where $E:=\acl_{\mathcal{L}}^{\mathfrak{D}}(G\cdot A)\cap\mathfrak{C}$. By Lemma \ref{acl_lemma}, $d\in\acl_{\mathcal{L}^G}^{\mathfrak{C}}(A)\setminus E$.

We repeat the proof of Lemma \ref{acl_lemma}, for $E=\acl_{\mathcal{L}}^{\mathfrak{D}}(G\cdot A)\cap\mathfrak{C}$, but instead of using Lemma \ref{ind_G_extensions} we use Lemma \ref{ind_G_extensions2}, and, moreover, we do not need to use Remark \ref{cap_acl} anymore in the proof.
\end{proof}

\begin{cor}\label{acl_all}
For each small $A\subseteq\mathfrak{C}$ it follows
$$\acl_{\mathcal{L}^G}^{\mathfrak{C}}(A)=\acl_{\mathcal{L}}^{\mathfrak{C}}(G\cdot A)=
\acl_{\mathcal{L}}^{\mathfrak{D}}(G\cdot A)\cap\mathfrak{C}.$$
\end{cor}

\begin{proof}
By Lemma \ref{acl_lemma} and Lemma \ref{acl_lemma2}.
\end{proof}

\begin{remark}\label{almost.qe}
Now, it is not unexpected that $T_G^{\mc}$ allows quantifier elimination up to some existential formulas, similarly as ACFA. The proofs for the situation described in \cite[Paragraph 1.6]{acfa1} remain the same in our context, so we just provide those existential formulas.

Every $\mathcal{L}^G$-formula $\varphi(\bar{x})$ is equivalent modulo $T_G^{\mc}$ to an $\mathcal{L}^G$-formula $\exists\bar{y}\;\psi(\bar{y},\bar{x})$ such that $\psi(\bar{y},\bar{x})$ is quantifier free and $\psi(\bar{a},\bar{b})$ implies that
$\bar{a}\subseteq\acl_{\mathcal{L}}^{\mathfrak{C}}(G\cdot\bar{b})$.
\end{remark}

\noindent
For some time we expected that the possibility of defining a $G$-action on ``roots" of some algebraic $\mathcal{L}$-formula should not distinct these ``roots" (e.g. if $f(x)$ is a polynomial over a field $K$ with a $G$-action such that $f(x)$ has a root in $K$, we can ask whether we can extend the $G$-action on 
on a field extension of $K$ containing all the roots of $f$), but can not prove it. Subsection 3.2 in \cite{nacfa} describes algebraic extension in the case of $G-\tcf$ and do not give any clue about
extending the $G$-action on the remaining ``roots", which would led to normality of the relative algebraic closure for fields with an action of a finite group. The following Proposition provides positive answer in a greater (than only the fields case) generality, to the question: whether $A\subseteq\acl_{\mathcal{L}^G}^{\mathfrak{C}}(A)$ is a normal extension in the sense of $\mathcal{L}$-structure $\mathfrak{D}$?

\begin{prop}\label{A_Galois}
Let $A$ be a small subset of $\mathfrak{C}$ and let $A':=\dcl_{\mathcal{L}^G}^{\mathfrak{C}}(A)$. Then $A'\subseteq\acl_{\mathcal{L}}^{\mathfrak{D}}(A')\cap\mathfrak{C}$ is a Galois extension in the sense of the structure $\mathfrak{D}$ (Definition \ref{galois.ext.def}), in particular:
$$\aut_{\mathcal{L}}(\mathfrak{D}/A')\cdot\big(\acl_{\mathcal{L}}^{\mathfrak{D}}(A')\cap\mathfrak{C} \big)\subseteq \acl_{\mathcal{L}}^{\mathfrak{D}}(A')\cap\mathfrak{C}$$
and $dcl_{\mathcal{L}}^{\mathfrak{D}}(A')=A'$.
\end{prop}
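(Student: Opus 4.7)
We verify the three conditions of Definition \ref{galois.ext.def}(2) for $A' \subseteq B := \acl_{\mathcal{L}}^{\mathfrak{D}}(A') \cap \mathfrak{C}$: namely, $A' = \dcl_{\mathcal{L}}^{\mathfrak{D}}(A')$, $B = \dcl_{\mathcal{L}}^{\mathfrak{D}}(B)$, and $B$ is normal over $A'$ in $\mathfrak{D}$. For the two definable-closure conditions I first pick a small $(M, \bar{\sigma}) \preceq (\mathfrak{C}, \bar{\sigma})$ containing $B$; this is possible since $B$ is the relative algebraic closure of the small set $A'$. By Remark \ref{dcl_M}, $\dcl_{\mathcal{L}}^{\mathfrak{D}}(M) = M \subseteq \mathfrak{C}$, and so $\dcl_{\mathcal{L}}^{\mathfrak{D}}(A'), \dcl_{\mathcal{L}}^{\mathfrak{D}}(B) \subseteq \mathfrak{C}$. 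Quantifier elimination for $T^{\mc}$ expresses each such element by a quantifier-free $\mathcal{L}$-formula, which is preserved to the substructure $\mathfrak{C}$, placing the element in $\dcl_{\mathcal{L}}^{\mathfrak{C}}(A') \subseteq \dcl_{\mathcal{L}^{G}}^{\mathfrak{C}}(A') = A'$, respectively in $\dcl_{\mathcal{L}}^{\mathfrak{C}}(B) \subseteq \acl_{\mathcal{L}^{G}}^{\mathfrak{C}}(B) = B$ via Corollary \ref{acl_all} and idempotence of algebraic closure.

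The substantive content is normality. Fix $b \in B$ and $f \in \aut_{\mathcal{L}}(\mathfrak{D}/A')$; the goal is $f(b) \in \mathfrak{C}$. Since $b \in \acl_{\mathcal{L}}^{\mathfrak{D}}(A')$, quantifier elimination gives a quantifier-free $\mathcal{L}$-formula $\varphi_{0}(x, \bar{a})$ with $\bar{a} \in A'$ defining the finite $\aut_{\mathcal{L}}(\mathfrak{D}/A')$-orbit $O$ of $b$; write $k := |O|$ and note $f(b) \in O$. I will show that all $k$ elements of $O$ lie inside $\mathfrak{C}$, via the existentially closed property of $(\mathfrak{C}, \bar{\sigma}) \models T_{G}^{\mc}$.

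The plan is to extend the $G$-action $\bar{\sigma}$ from $\mathfrak{C}$ to a $G$-action $\hat{\sigma}$ on $\mathfrak{D}$. By Lemma \ref{fhat_lemma} each individual $\sigma_{g}$ extends to an $\mathcal{L}$-automorphism of $\mathfrak{D}$, and a Zorn / transfinite-recursion argument---at each step specifying $\hat{\sigma}_{g}(d)$ for all $g \in G$ simultaneously as realizations of the appropriate pushed-forward $\mathcal{L}$-types, using the saturation and strong homogeneity of $\mathfrak{D}$ together with quantifier elimination---produces a coherent family $(\hat{\sigma}_{g})_{g \in G}$ satisfying $\hat{\sigma}_{g} \hat{\sigma}_{h} = \hat{\sigma}_{gh}$. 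Under the natural assumption $T \subseteq T^{\mc}$, $(\mathfrak{D}, \hat{\sigma}) \models T_{G}$ and $(\mathfrak{C}, \bar{\sigma}) \subseteq (\mathfrak{D}, \hat{\sigma})$ is an $\mathcal{L}^{G}$-extension of $T_{G}$-models. The existentially closed property of $(\mathfrak{C}, \bar{\sigma})$ now transfers the $\mathcal{L}^{G}$-existential sentence
$$\exists x_{1}, \ldots, x_{k} \bigl( \textstyle\bigwedge_{i} \varphi_{0}(x_{i}, \bar{a}) \wedge \bigwedge_{i \neq j} x_{i} \neq x_{j} \bigr),$$
witnessed in $\mathfrak{D}$ by the $k$ distinct elements of $O$, down to $\mathfrak{C}$. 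Together with the universal sentence $\forall x_{1}, \ldots, x_{k+1} \bigl( \bigwedge_{i} \varphi_{0}(x_{i}, \bar{a}) \to \bigvee_{i \neq j} x_{i} = x_{j} \bigr)$, preserved from $\mathfrak{D}$ to its substructure $\mathfrak{C}$, this forces $\varphi_{0}(x, \bar{a})$ to have exactly $k$ solutions in $\mathfrak{C}$, matching the $k$ solutions in $\mathfrak{D}$. Hence $O \subseteq \mathfrak{C}$, and in particular $f(b) \in \mathfrak{C} \cap O \subseteq B$.

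The main obstacle is the coherent extension of $\bar{\sigma}$ to a $G$-action on all of $\mathfrak{D}$ in the previous paragraph: one has to verify that the simultaneous choices of $\hat{\sigma}_{g}(d)$ for every $g \in G$ can be made inside $\mathfrak{D}$ in a way compatible with the group law, and that this process can be iterated through a transfinite chain. Once this construction is carried out, the remainder of the argument is essentially formal: it is a direct invocation of the existentially closed property against a purely algebraic formula.
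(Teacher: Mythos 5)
The gap is the claimed extension of the $G$-action from $\mathfrak{C}$ to all of $\mathfrak{D}$. You flag this as ``the main obstacle'' but treat it as a routine transfinite construction; in fact it cannot be carried out in general, and this impossibility is precisely what makes the present setting more delicate than \cite{ChaPil}. Lemma~\ref{fhat_lemma} extends each $\sigma_g$ \emph{separately} to some $\hat\sigma_g\in\aut_{\mathcal{L}}(\mathfrak{D})$, but the group-law constraint $\hat\sigma_g\hat\sigma_h=\hat\sigma_{gh}$ on $\acl_{\mathcal{L}}^{\mathfrak{D}}(\mathfrak{C})\setminus\mathfrak{C}$ is a rigid cocycle condition that saturation does not resolve. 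Concretely, take $G=\mathbb{Z}/2\mathbb{Z}$, $T$ the theory of fields of characteristic $0$, so $T^{\mc}=\acf$ and $T_G^{\mc}=G-\tcf$. A model $\mathfrak{C}$ of $G-\tcf$ is generally a proper (not algebraically closed) subfield of $\mathfrak{D}$. An extension of the generator $\sigma$ to an involution $\hat\sigma$ of $\mathfrak{D}$ would give $[\mathfrak{D}:\mathfrak{D}^{\hat\sigma}]=2$, so by Artin--Schreier $\mathfrak{D}^{\hat\sigma}$ would be real closed, hence orderable; but $\mathfrak{D}^{\hat\sigma}$ contains $\mathfrak{C}^{G}$, which is PAC (Proposition~\ref{inv.pac}), and a characteristic-$0$ PAC field has a point on the conic $x^2+y^2=-1$, so it cannot embed in any orderable field. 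Once the extension step fails, your normality argument collapses: the existentially closed property of $(\mathfrak{C},\bar\sigma)$ only compares it against genuine $\mathcal{L}^G$-extensions that carry a $G$-action, and $\mathfrak{D}$ is not such an extension.

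The paper's proof routes around this by never attempting to act with $G$ on $\mathfrak{D}$. Instead it invokes Lemma~\ref{N_Galois} with the profinite group $H:=\aut_{\mathcal{L}^G}\big(\acl_{\mathcal{L}^G}^{\mathfrak{C}}(A')/A'\big)$ playing the role of the acting group, acting only on the small substructure $N:=\acl_{\mathcal{L}^G}^{\mathfrak{C}}(A')=\acl_{\mathcal{L}}^{\mathfrak{D}}(A')\cap\mathfrak{C}$ (equality by Corollary~\ref{acl_all}). Since $A'$ is $\mathcal{L}^G$-definably closed in $\mathfrak{C}$ one gets $N^H=A'$, and trivially $N\subseteq\acl_{\mathcal{L}}^{\mathfrak{D}}(A')=\acl_{\mathcal{L}}^{\mathfrak{D}}(N^H)$, so the hypotheses of Lemma~\ref{N_Galois}(1) are met. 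The mechanism inside Lemma~\ref{N_Galois} is to use elimination of imaginaries in $\mathfrak{D}$ to code the finite $H$-orbit of any $b\in N$ by an element of $\dcl_{\mathcal{L}}^{\mathfrak{D}}(N)=N$ and conclude that this orbit coincides with the full $\aut_{\mathcal{L}}(\mathfrak{D}/N^H)$-orbit; no action on $\mathfrak{D}$ beyond the ambient automorphism group is ever needed. That coding-of-orbits idea, not an extension of the action, is what your proof is missing.
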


\begin{proof}
Note that, by Remark \ref{dcl_M},
$$\dcl_{\mathcal{L}}^{\mathfrak{D}}(\acl_{\mathcal{L}}^{\mathfrak{D}}(A')\cap\mathfrak{C})\subseteq \acl_{\mathcal{L}}^{\mathfrak{D}}(A')\cap\mathfrak{C},$$
hence the set $\acl_{\mathcal{L}}^{\mathfrak{D}}(A')\cap\mathfrak{C}$
is definably closed in the sense of $\mathfrak{D}$. Moreover, by Corollary \ref{acl_all},
$\acl_{\mathcal{L}^G}^{\mathfrak{C}}(A')=\acl_{\mathcal{L}}^{\mathfrak{D}}(A')\cap\mathfrak{C}\subseteq\acl_{\mathcal{L}}^{\mathfrak{D}}(A')$.
Consider the following profinite group
$$H:=\aut_{\mathcal{L}^G}(\acl_{\mathcal{L}^G}^{\mathfrak{C}}(A')/A').$$
Since $A'$ is definably closed in the sense of $(\mathfrak{C},(\sigma_g)_{g\in G})$, we notice that $\acl_{\mathcal{L}^G}^{\mathfrak{C}}(A')^H=A'$
(if $a\in\acl_{\mathcal{L}^G}^{\mathfrak{C}}(A')^H$ and $f\in\aut_{\mathcal{L}^G}(\mathfrak{C}/A')$, then $f|_{\acl_{\mathcal{L}^G}^{\mathfrak{C}}(A')}\in H$ and so $f(a)=a$, thus $a\in\dcl_{\mathcal{L}^G}^{\mathfrak{C}}(A')=A'$).
By Lemma \ref{N_Galois} for $N=\acl_{\mathcal{L}^G}^{\mathfrak{C}}(A')\subseteq \acl_{\mathcal{L}}^{\mathfrak{D}}(A')$ and the action of the group $H$, it follows that
$$A'=N^H\subseteq N=\acl_{\mathcal{L}^G}^{\mathfrak{C}}(A')=\acl_{\mathcal{L}}^{\mathfrak{D}}(A')\cap\mathfrak{C}$$
is a Galois extension.
\end{proof}

We introduce now a ternary relation on all small subsets of $\mathfrak{C}$. 
Let $A,B,E$ be small subsets of $\mathfrak{C}$ (i.e. of cardinality strictly less than $\kappa_{\mathfrak{C}}$), we define the following
$$A\ind^{\circ}_E B\quad\iff\quad G\cdot A\ind^{\mathfrak{D}}_{G\cdot E}G\cdot B.$$
It only seemingly depends on the choice of $\mathfrak{D}$, i.e. if $\mathfrak{D}'\models T'$, $\mathfrak{C}\subseteq\mathfrak{D}'$, and $\mathfrak{D}'$ is $|\mathfrak{C}|^{+}$-saturated, then
$$G\cdot A\ind^{\mathfrak{D}}_{G\cdot E}G\cdot B\quad\iff\quad G\cdot A\ind^{\mathfrak{D}'}_{G\cdot E}G\cdot B.$$

\begin{remark}\label{ind_acl_rem}
It follows that
$$A\ind^{\circ}_E B\quad\iff\quad A\ind^{\circ}_{\acl_{\mathcal{L}^G}^{\mathfrak{C}}(E)} B\quad\iff\quad \acl_{\mathcal{L}^G}^{\mathfrak{C}}(A)\ind^{\circ}_{\acl_{\mathcal{L}^G}^{\mathfrak{C}}(E)}\acl_{\mathcal{L}^G}^{\mathfrak{C}}(B).$$
\end{remark}

\begin{proof}
We use standard properties of the forking independence in stable theories and Corollary \ref{acl_all}.
\end{proof}

\begin{prop}\label{6properties}
The relation $\ind^{\circ}$ is an independence relation.
\end{prop}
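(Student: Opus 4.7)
The plan is to derive each of the six clauses of Definition \ref{ind_relat} for $\ind^{\circ}$ from the corresponding clause for the forking independence $\ind^{\mathfrak{D}}$, which is available because $T'$ is stable; I expect the real difficulty to lie in existence (vi), where an $\mathcal{L}$-automorphism of $\mathfrak{D}$ witnessing non-forking must be converted into an $\mathcal{L}^G$-automorphism of $\mathfrak{C}$, while the other five clauses are routine. For invariance (i), given $f \in \aut_{\mathcal{L}^G}(\mathfrak{C})$, I extend it via Lemma \ref{fhat_lemma} to some $\hat{f} \in \aut_{\mathcal{L}}(\mathfrak{D})$ and observe that $f(G\cdot X) = G\cdot f(X)$ for every $X \subseteq \mathfrak{C}$ (since $f$ commutes with every $\sigma_g$), then apply invariance of $\ind^{\mathfrak{D}}$. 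Symmetry (iv) and transitivity (v) are immediate from the same clauses for $\ind^{\mathfrak{D}}$, applied to the orbit-chain $G\cdot E_0 \subseteq G\cdot E_1 \subseteq G\cdot E_2$. For finite character (iii), every finite $\bar{c} \subseteq G\cdot B$ lies inside $G\cdot \bar{b}$ for some finite $\bar{b} \subseteq B$, so both finite-character formulations translate into each other by monotonicity. For local character (ii), $\ind^{\mathfrak{D}}$ supplies $E_0 \subseteq G\cdot B$ with $|E_0| \leq |T'|$ and $G\cdot A \ind^{\mathfrak{D}}_{E_0} G\cdot B$; picking one preimage in $B$ for each element of $E_0$ yields $E \subseteq B$ of cardinality at most $|T_G^{\mc}|$ with $G\cdot E \supseteq E_0$, and monotonicity gives $A \ind^{\circ}_E B$.

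For existence (vi), I may by Remark \ref{ind_acl_rem} assume $E = \acl_{\mathcal{L}^G}^{\mathfrak{C}}(E)$, whence by Corollary \ref{acl_all} also $E = \acl_{\mathcal{L}}^{\mathfrak{D}}(G\cdot E) \cap \mathfrak{C}$, matching the hypothesis of Lemma \ref{ind_G_extensions2}. I then pick small elementary $\mathcal{L}^G$-substructures $(M, \bar{\sigma}), (M', \bar{\sigma}) \preceq (\mathfrak{C}, \bar{\sigma})$ with $E \cup B \subseteq M$ and $E \cup A \subseteq M'$, and use existence for $\ind^{\mathfrak{D}}$ to choose $h \in \aut_{\mathcal{L}}(\mathfrak{D}/E)$ with $h(M') \ind^{\mathfrak{D}}_E M$. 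Equipping $h(M')$ with the transported action $\tau_g := h\sigma_g h^{-1}$, which agrees with $\bar{\sigma}$ on $E$ because $h$ fixes $E$ pointwise and $E$ is $G$-invariant, turns $h$ into an $\mathcal{L}^G$-isomorphism $(M', \bar{\sigma}) \to (h(M'), \bar{\tau})$ over $E$, so $(h(M'), \bar{\tau}) \models T_G^{\mc}$.

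Lemma \ref{ind_G_extensions2} now supplies a common $G$-action on $\langle M \cup h(M')\rangle_{\mathcal{L}}$ extending both $\bar{\sigma}$ and $\bar{\tau}$, producing a model of $(T_G)_{\forall}$ which embeds into some $(N, \bar{\rho}) \models T_G^{\mc}$. Model completeness gives $(M, \bar{\sigma}) \preceq (N, \bar{\rho})$, and I pull $(N, \bar{\rho})$ back elementarily into $(\mathfrak{C}, \bar{\sigma})$ over $M$ by some $\iota$, setting $A'' := \iota(h(A))$. Then $\iota \circ h$ is an $\mathcal{L}^G$-isomorphism of $(M', \bar{\sigma})$ onto an elementary $\mathcal{L}^G$-substructure of $(\mathfrak{C}, \bar{\sigma})$ fixing $E$ pointwise, so $\mathcal{L}^G$-strong homogeneity of $\mathfrak{C}$ produces an $\mathcal{L}^G$-automorphism of $\mathfrak{C}$ over $E$ sending $A$ to $A''$, which witnesses $A \equiv_E^{\mathcal{L}^G} A''$. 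Finally, since $T'$ eliminates quantifiers, the $\mathcal{L}$-reduct of $\iota$ is partial elementary in $\mathfrak{D}$ and extends to some $\hat{\iota} \in \aut_{\mathcal{L}}(\mathfrak{D}/M)$; pushing $h(G\cdot A) \ind^{\mathfrak{D}}_E M$ through $\hat{\iota}$ yields $G\cdot A'' \ind^{\mathfrak{D}}_E G\cdot B$, which weakens to $G\cdot A'' \ind^{\mathfrak{D}}_{G\cdot E} G\cdot B$ because $G\cdot E \subseteq E \subseteq \acl_{\mathcal{L}}^{\mathfrak{D}}(G\cdot E)$ and algebraic extensions of the base are invisible to stable non-forking. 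Hence $A'' \ind^{\circ}_E B$, completing (vi).
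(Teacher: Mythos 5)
Your proof is correct and follows essentially the same route as the paper: each clause is reduced to the corresponding clause for $\ind^{\mathfrak{D}}$, with (vi) handled by moving a small model by an $\mathcal{L}$-automorphism of $\mathfrak{D}$, amalgamating the $G$-actions via the $\ind_G$-extension lemma, re-embedding into $\mathfrak{C}$, and converting the result into an $\mathcal{L}^G$-automorphism by homogeneity.

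The one place you genuinely diverge is the final independence-verification in (vi). The paper extends the $\mathcal{L}^G$-automorphism $\tilde{f}$ of $\mathfrak{C}$ to an $\mathcal{L}$-automorphism $h$ of $\mathfrak{D}$, and then proves $h(M)\ind^{\mathfrak{D}}_{E'}M$ by hand: it takes a putative dividing quantifier-free $\mathcal{L}_{E'}$-formula $\varphi(h(m),m_0)$ and traces it through the chain of structures $\mathfrak{D}$, $\langle f(M),M\rangle$, $N$ and back, using $h|_M = if|_M$ and $i|_M=\id_M$, to transfer the witness to $\varphi(f(m),m_0)$ and contradict $f(M)\ind^{\mathfrak{D}}_{E'}M$. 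You instead observe that the $\mathcal{L}$-reduct of the re-embedding $\iota$ is already $\mathcal{L}$-partial-elementary in $\mathfrak{D}$ by quantifier elimination, extend it to $\hat{\iota}\in\aut_{\mathcal{L}}(\mathfrak{D}/M)$, and push $h(G\cdot A)\ind^{\mathfrak{D}}_{E} M$ through $\hat{\iota}$ by invariance. This is exactly the content of the paper's formula-tracing claim, packaged more abstractly; it buys you a shorter argument at the cost of making explicit use of Lemma \ref{fhat_lemma} for $\iota$ as well as for $h$. One cosmetic simplification you could make: once $E=\acl_{\mathcal{L}^G}^{\mathfrak{C}}(E)$, one actually has $G\cdot E = E$ (each $\sigma_g$ is an $\mathcal{L}^G$-automorphism over $\emptyset$, so it fixes $\mathcal{L}^G$-algebraically closed sets setwise), so the weakening of the base from $E$ to $G\cdot E$ at the end is an equality and the ``algebraic extensions are invisible to stable non-forking'' appeal is not needed.
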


\begin{proof}
We need to check the items (i)-(vi) from Definition \ref{ind_relat}.
\vspace{4mm}
\\
\underline{(i) invariance}\hspace{2mm} It follows from Lemma \ref{fhat_lemma}. Let $A\ind^{\circ}_E B$ and $f\in\aut_{\mathcal{L}^G}(\mathfrak{C})$. Then there exists $\hat{f}\in\aut_{\mathcal{L}}(\mathfrak{D})$ such that $\hat{f}|_{\mathfrak{C}}=f$, hence
$$G\cdot A\ind^{\mathfrak{D}}_{G\cdot E} G\cdot B\;\Rightarrow\;\hat{f}(G\cdot A)\ind^{\mathfrak{D}}_{\hat{f}(G\cdot E)} \hat{f}(G\cdot B)\;\Rightarrow
f(G\cdot A)\ind^{\mathfrak{D}}_{f(G\cdot E)} f(G\cdot B)\;\Rightarrow$$
$$G\cdot f(A)\ind^{\mathfrak{D}}_{G\cdot f(E)} G\cdot f(B).$$
\vspace{4mm}
\\
\underline{(ii) local character}\hspace{2mm} Assume that $A$ is a finite subset of $\mathfrak{C}$ and $B$ is a small subset of $\mathfrak{C}$. There exists some $E'\subseteq G\cdot B$, $|E'|\leqslant|T^{\mc}|$, such that $G\cdot A\ind^{\mathfrak{D}}_{E'}G\cdot B$. Therefore we can choose $E\subseteq B$ satisfying $|E|\leqslant|E'|$ and $E'\subseteq G\cdot E$. By the transitivity of $\ind^{\mathfrak{D}}$, we obtain that
$$G\cdot A\ind^{\mathfrak{D}}_{E'}G\cdot B\;\Rightarrow G\cdot A\ind^{\mathfrak{D}}_{G\cdot E} G\cdot B.$$
Naturally $|E|\leqslant |E'|\leqslant |T^{\mc}|\leqslant |T_G^{\mc}|$.
(Note that, in fact, the size of $E$ does not depend on the size of $G$.)
\vspace{4mm}
\\
\underline{(iii)-(v)}\hspace{2mm} These items are easy to verify.
\vspace{4mm}
\\
\underline{(vi) existence}\hspace{2mm} Let $A$, $B$ and $E$ be small subsets of $\mathfrak{C}$. Our aim is to find $f_0\in\aut_{\mathcal{L}^G}(\mathfrak{C}/E)$ such that $f_0(A)\ind^{\circ}_E B$. Take a small $(M,\bar{\sigma})\preceq(\mathfrak{C},\bar{\sigma})$ which contains $A$, $B$ and $E$. We introduce $E':=\acl_{\mathcal{L}^G}^{\mathfrak{C}}(E)$.

There exists $f\in\aut_{\mathcal{L}}(\mathfrak{D}/E')$ such that $f(M)\ind^{\mathfrak{D}}_{E'} M$. By Lemma \ref{ind_G_extensions}, $\bar{\sigma}^f$ and $\bar{\sigma}$ extend simultaneously to $\langle M^f\cup M\rangle$. We have
$(\langle M^f\cup M\rangle,\bar{\sigma}^f\cup\bar{\sigma})\models (T_G)_{\forall}$, hence there exists $(N,\bar{\rho})\models T_G^{\mc}$ and 
we can embed $(N,\bar{\rho})$ into $(\mathfrak{C},\bar{\sigma})$ over $M$, say $i:(N,\bar{\rho})\to(\mathfrak{C},\bar{\sigma})$. 

The image of $f(M)\subseteq N$ under this embedding, $if(M)$, will be $\mathcal{L}^G$-isomorphic over $E'$ to $f(M)$, hence also $\mathcal{L}^G$-isomorphic over $E'$ to $M$. Since $T_G^{\mc}$ is model complete and
$(M^{if},\bar{\sigma}^{if})\cong (M,\bar{\sigma})\models T_G^{\mc}$, it follows $(M^{if},\bar{\sigma}^{if})\preceq(\mathfrak{C},\bar{\sigma})$. Therefore homogeneity of $(\mathfrak{C},\bar{\sigma})$ assure us that there is $\tilde{f}\in\aut_{\mathcal{L}^G}(\mathfrak{C}/E')$ such that $\tilde{f}|_M=if|_M$.
By Lemma \ref{fhat_lemma}, there exists $h\in\aut_{\mathcal{L}}(\mathfrak{D}/E')$ such that $h|_{\mathfrak{C}}=\tilde{f}$.

We claim that $h(M)\ind^{\mathfrak{D}}_{E'}M$.
Suppose that, contrary to our claim, for some quantifier free $\mathcal{L}_{E'}$-formula $\varphi(x,y)$, $m,m_0\in M$, a collection $\lbrace f_i\rbrace_{i<\omega}$ of elements of $\aut_{\mathcal{L}}(\mathfrak{D}/E')$ and some $k<\omega$ we have $\varphi^{\mathfrak{D}}(h(m),m_0)$ and the set
$$\lbrace \varphi(x,f_i(m_0))\;|\;i,\omega\rbrace$$
is $k$-inconsistent. We will prove the claim if we show that $\varphi^{\mathfrak{D}}(f(m),m_0)$ (which will contradict $f(M)\ind^{\mathfrak{D}}_{E'}M$). 
We use that $i|_M=\id_M$,
\begin{IEEEeqnarray*}{rCl}
\varphi^{\mathfrak{D}}(h(m),m_0) &\iff & \varphi^{\mathfrak{D}}(if(m),m_0) \\
&\iff & \varphi^{\mathfrak{D}}(if(m),i(m_0)) \\
&\iff & \varphi^{N}(f(m),m_0) \\
&\iff & \varphi^{\langle f(M),M\rangle_{\mathcal{L}}}(f(m),m_0) \\
&\iff & \varphi^{\mathfrak{D}}(f(m),m_0)
\end{IEEEeqnarray*}

Since $\tilde{f}$ is an element of $\aut_{\mathcal{L}^G}(\mathfrak{C}/E)$
and $G\cdot\acl_{\mathcal{L}^G}^{\mathfrak{C}}(E)=\acl_{\mathcal{L}^G}^{\mathfrak{C}}(E)=E'$, we get 
$$G\cdot \tilde{f}(M)\ind^{\mathfrak{D}}_{G\cdot \acl_{\mathcal{L}^G}^{\mathfrak{C}}(E)} G\cdot M.$$
Therefore, by Remark \ref{ind_acl_rem}, $\tilde{f}(M)\ind^{\circ}_E M$ and so, by the previously proved finite character of $\ind^{\circ}$, it follows $\tilde{f}(A)\ind^{\circ}_E B$.

\end{proof}

Before proving the Independence Theorem, we need to introduce two more notions about heirs/coheirs and about algebraic closures.

\begin{definition}
Let $A\subseteq B\subseteq\mathfrak{D}$, $p\in S_{\mathcal{L}}^{\mathfrak{D}}(A)$,
$q\in S_{\mathcal{L}}^{\mathfrak{D}}(B)$, $p\subseteq q$.
\begin{enumerate}
\item We say that $q$ is \emph{heir} of $p$ if for any $\mathcal{L}(A)$-formula $\varphi(x,y)$, existence of $b\in B$ such that $\varphi(x,b)\in q$ implies existence of $a\in A$
such that $\varphi(x,a)\in p$.

\item We say that $q$ is \emph{coheir} of $p$ if for any $\mathcal{L}(A)$-formula $\varphi(x,y)$, existence of $b\in B$ such that $\varphi(x,b)\in q$ implies existence of $a\in A$
such that $\mathfrak{D}\models\varphi(a,b)$.
\end{enumerate}
\end{definition}
\noindent
The above definition brings back common definitions of heir and coheir (e.g. \cite[Definition 8.1.1]{tentzieg}), but we make it over arbitrary sets instead of models. The following lemma is different than the usual one, because $M$ is not necessary a model of our stable theory $T^{\mc}$.

\begin{lemma}\label{dnf_heir}
Assume that $(M,\bar{\sigma})\preceq(\mathfrak{C},\bar{\sigma})$.
If $M\subseteq B\subseteq\mathfrak{C}$ and $p\in S_{\mathcal{L}}^{\mathfrak{D}}(B)$ does not fork over $M$, then $p$ is the heir of $p|M$. 
\end{lemma}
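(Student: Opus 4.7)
The lemma is the natural extension of the standard stability-theoretic fact that non-forking extensions over a model are heirs, to the present setting where $M$ is only assumed to be an elementary $\mathcal{L}^G$-substructure of the monster $(\mathfrak{C}, \bar\sigma)$ of $T_G^{\mc}$, rather than a model of the ambient stable theory $T^{\mc}$. The plan is to adapt the standard definability-of-types argument, handling the algebraic parameters via a Galois-orbit trick.

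First I would extract the defining formula. By stability of $T'$ and elimination of imaginaries, $p$ admits, for each $\mathcal{L}$-formula $\varphi(x,y)$, a defining formula $d_p\varphi(y,\bar{e})$ with $\bar{e}\in\acl_{\mathcal{L}}^{\mathfrak{D}}(M)$. Given $\varphi(x,b)\in p$ with $b\in B$, the statement $d_p\varphi(b,\bar{e})$ holds in $\mathfrak{D}$, and it suffices to produce $a\in M$ with $d_p\varphi(a,\bar{e}_i)$ holding for some $\aut_{\mathcal{L}}(\mathfrak{D}/M)$-conjugate $\bar{e}_i$ of $\bar{e}$. Indeed, all such conjugates define non-forking extensions of $p|M$ to $B$ with the same restriction to $M$, so any such $a$ will witness $\varphi(x,a)\in p|M$.

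To arrange the parameters to live over $M$, I would pass to the finite $\aut_{\mathcal{L}}(\mathfrak{D}/M)$-orbit $\{\bar{e}_1,\ldots,\bar{e}_k\}$ of $\bar{e}$, which is $M$-invariant and, by elimination of imaginaries together with Proposition \ref{A_Galois} (which says that $M\subseteq\acl_{\mathcal{L}}^{\mathfrak{D}}(M)\cap\mathfrak{C}$ is Galois), can be coded by an element of $M$. The disjunction
\[
\Theta(y)\;:=\;\bigvee_{i=1}^{k} d_p\varphi(y,\bar{e}_i)
\]
is then an $\mathcal{L}(M)$-formula, and $\Theta(b)$ holds in $\mathfrak{D}$.

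It remains to descend the existential sentence $\exists y\,\Theta(y)$ from $\mathfrak{D}$ to $M$. I would chain three observations: (i)~quantifier elimination for $T^{\mc}$ allows replacing $\Theta(y)$ by a quantifier-free $\mathcal{L}(M)$-formula; (ii)~$(\mathfrak{C},\bar\sigma)$ is an existentially closed model of $T_G$, and by freely adjoining a $G$-orbit of a required witness from $\mathfrak{D}$ (as in the construction used in the proof of Proposition \ref{M.is.PAC}), $\mathfrak{C}$ is $\mathcal{L}$-existentially closed in $\mathfrak{D}$; (iii)~$M\preceq_{\mathcal{L}^G}\mathfrak{C}$ implies $M\preceq_{\mathcal{L}}\mathfrak{C}$. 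Combining (i)--(iii) yields $M\models\exists y\,\Theta(y)$, and any witness $a\in M$ satisfies $d_p\varphi(a,\bar{e}_i)$ for some $i$, giving $\varphi(x,a)\in p|M$. The main technical obstacle is the Galois-orbit coding in the middle step, which is what forces us to rely on the finer Galois theory of $\mathcal{L}^G$-substructures developed in Section \ref{sec:galois} rather than on the usual ``$M$ is a model'' assumption.
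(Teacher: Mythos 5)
Your argument takes a genuinely different route from the paper's. The paper reduces the lemma to showing that $\tp_{\mathcal{L}}^{\mathfrak{D}}(b/M)$ is finitely satisfiable in $M$ (which it does by the same QE-plus-elementary-substructure descent you use at the end, applied directly to a formula of $\tp_{\mathcal{L}}^{\mathfrak{D}}(b/M)$) and then cites the relevant portion of the textbook argument in Tent--Ziegler, Lemma 8.3.5.(2). You instead unwind definability of types in the stable ambient theory, pass to the $\aut_{\mathcal{L}}(\mathfrak{D}/M)$-orbit of the canonical parameter, code the orbit by an element of $M$, and only then descend. Your route is more explicit and self-contained; the paper's is shorter and leans on a citation. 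Note that the coding step should appeal to $\dcl_{\mathcal{L}}^{\mathfrak{D}}(M)=M$ (Remark \ref{dcl_M}, applicable because $(M,\bar\sigma)\preceq(\mathfrak{C},\bar\sigma)$ makes $M$ an existentially closed $\mathcal{L}^{G}$-substructure) rather than to Proposition \ref{A_Galois}; that is a citation slip, but the conclusion is right.

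One step in your proposal is simply false and must be removed: observation (ii), that $\mathfrak{C}$ is $\mathcal{L}$-existentially closed in $\mathfrak{D}$. Being PAC gives $\mathfrak{C}\leqslant_1 N$ only for \emph{regular} extensions $N$, and $\mathfrak{C}\subseteq\mathfrak{D}$ is generally not regular. In the field case, $\mathfrak{C}$ is a PAC field properly contained in its algebraic closure $\mathfrak{D}$, so a quantifier-free formula such as $x^2=c$ with $c\in\mathfrak{C}$ a nonsquare has a solution in $\mathfrak{D}$ but none in $\mathfrak{C}$. The Proposition \ref{M.is.PAC} device of freely adjoining independent $G$-orbits only produces regular extensions; it cannot supply witnesses for algebraic formulas without breaking compatibility with the $G$-action. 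Fortunately (ii) is not actually needed: the witness $b$ of $\Theta_0$ already lies in $B\subseteq\mathfrak{C}$, and $\Theta_0$ is quantifier-free, so $\mathfrak{C}\models\Theta_0(b)$ simply because $\mathfrak{C}$ is an $\mathcal{L}$-substructure of $\mathfrak{D}$. After deleting (ii) and invoking substructure-transfer of quantifier-free formulas in its place, your argument is sound, and the final descent is precisely the QE/$M\preceq_{\mathcal{L}}\mathfrak{C}$ maneuver the paper itself uses, applied to a different formula.
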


\begin{proof}
We start with an $\mathcal{L}$-formula $\varphi(x,y)$ and $b\in B$ such that $\varphi(x,b)\in p$.
Our aim is to prove existence of an element $b'\in M$ such that $\varphi(x,b')\in p|_M$.
We want to repeat a part of the proof of \cite[Lemma 8.3.5.(2)]{tentzieg}, more precisely: the last paragraph of it.

To do this, we need only to show that $\tp_{\mathcal{L}}^{\mathfrak{D}}(b/M)$ is finitely satisfiable in $M$, which will imply existence of a global coheir extension of $\tp_{\mathcal{L}}^{\mathfrak{D}}(b/M)$ 
(a standard reasoning, see e.g. the argument in the proof of  \cite[Lemma 8.1.3]{tentzieg}) - the rest of our proof 
goes in the same way as in the last paragraph of the proof of \cite[Lemma 8.3.5.(2)]{tentzieg}.

Let
$\psi(y,m)\in\tp_{\mathcal{L}}^{\mathfrak{D}}(b/M)$, i.e.
 $\mathfrak{D}\models\psi(b,m)$. There is a quantifier free $\mathcal{L}$-formula $\psi_0$ equivalent to $\psi$ modulo $T^{\mc}$. It follows that $\mathfrak{D}\models\psi_0(b,m)$, and because $b,m\in\mathfrak{C}$, we have $\mathfrak{C}\models\psi_0(b,m)$. Therefore 
$$\mathfrak{C}\models (\exists y)\big(\psi_0(y,m)\big),$$
which, by $(M,\bar{\sigma})\preceq(\mathfrak{C},\bar{\sigma})$ implies
$$M\models (\exists y)\big(\psi_0(y,m)\big).$$
Take $b_0\in M$ such that $M\models\psi_0(b_0,m)$, it follows that $\mathfrak{D}\models\psi_0(b_0,m)$, hence also $\mathfrak{D}\models\psi(b_0,m)$.
\end{proof}

\begin{definition}\label{alg.cls.def}
Assume that $N$ is an $\mathcal{L}$-substructure of an $\mathcal{L}$-structure $N'$.
We say that \emph{algebraic closures in $N'$ split over $N$}, if for every $M\preceq N$ and every $A\subseteq N$ which contains $M$ it follows
$$\acl_{\mathcal{L}}^{N'}(A)=\dcl_{\mathcal{L}}^{N'}(\acl_{\mathcal{L}}^{N'}(A)\cap N,\acl_{\mathcal{L}}^{N'}(M)).$$
\end{definition}

We will be interested in the case when $N'=\mathfrak{D}$ and $N=\mathfrak{C}$, so we will ask whether
for every $(M,\bar{\sigma})\preceq (\mathfrak{C},\bar{\sigma})$ and every $A\subseteq\mathfrak{C}$ which contains $M$ it follows
$$\acl_{\mathcal{L}}^{\mathfrak{D}}(A)=\dcl_{\mathcal{L}}^{\mathfrak{D}}(\acl_{\mathcal{L}}^{\mathfrak{D}}(A)\cap\mathfrak{C},\acl_{\mathcal{L}}^{\mathfrak{D}}(M)).$$
In a special case when $A=\acl_{\mathcal{L}^G}^{\mathfrak{C}}(A)$, the last line turns out to be simpler:
$$\acl_{\mathcal{L}}^{\mathfrak{D}}(A)=\dcl_{\mathcal{L}}^{\mathfrak{D}}(A,\acl_{\mathcal{L}}^{\mathfrak{D}}(M)).$$

\begin{remark}
\begin{enumerate}
\item
If $\mathfrak{C}=\mathfrak{D}$, then algebraic closures split. Such situation occurs if we can extend the $G$-action from $\mathfrak{C}$ to $\mathfrak{D}$, which is true for free groups (ACFA, \cite{sjogren}, \cite{ChaPil}) and in some other cases (like $G=\mathbb{Q}$ and the theory $\mathbb{Q}$ACFA \cite{qacfa}).

\item
The case of finite groups is rather opposite to the cases considered in the item 1. above.
However (which is rather unexpected), if $G$ is finite, then algebraic closures splits as well (Corollary \ref{G_finite_alg.cls}). An example 
of a corresponding theory is given in Example \ref{gtcf1}.

\item A reader interested in this concept, which is related to the boundedness, should consult Lemma 3.8 in \cite{Polkowska}, Proposition 2.5.(ii) in \cite{PilPol} and our Proposition \ref{bounded.split}.
\end{enumerate}
\end{remark}

Now, after all the preparations above, we can prove the main theorem of the thesis.

\begin{theorem}[The Independence Theorem over a model]\label{ind_thm_model}
Assume that algebraic closures in $\mathfrak{D}$ split over $\mathfrak{C}$.
Let $(M,\bar{\sigma})\preceq(\mathfrak{C},\bar{\sigma})$ and let $p_1(x_1)$, $p_2(x_2)$, $p_3(x_3)$, $p_{12}(x_1,x_2)$, $p_{23}(x_2,x_3)$ and $p_{13}(x_1,x_3)$ be complete $\mathcal{L}^G$-types over $M$ which satisfy $p_i(x_i),p_j(x_j)\subseteq p_{ij}(x_i,x_j)$ and if $a_ia_j\models p_{ij}(x_i,x_j)$ then
$$a_j\ind_M^{\circ} a_i.$$
There exists a complete $\mathcal{L}^G$-type $p_{123}(x_1,x_2,x_3)$ which extends each $p_{ij}(x_i,x_j)$ and such that if $a_1a_2a_3\models p_{123}(x_1,x_2,x_3)$ then
$$a_3\ind_M^{\circ}a_1a_2.$$
\end{theorem}

\begin{proof}
Our proof mimics proofs of similar facts from \cite{acfa1}, \cite{ChaPil} and primarily \cite{markernotes}. For some reasons, we needed to glue together different arguments from all of those proofs and add a new reasoning and new ideas which deal with phenomena coming from the fact that $\mathfrak{C}$ may be not a model of $T^{\mc}$.

Choose elements $a,b,c_1,c_2$ such that
$ab\models p_{12}$, $ac_1\models p_{13}$ and $bc_2\models p_{23}$.
There exists $f_c\in\aut_{\mathcal{L}^G}(\mathfrak{C}/Ma)$ such that for $c_1':=f(c_1)$ it follows that $c_1'\ind_{Ma}^{\circ}b$.
Using $c_1\ind_M^{\circ} a$, invariance and transitivity of $\ind^{\circ}$, we conclude
$c_1'\ind_M^{\circ}ab$.
Because $ac_1\equiv_M^{\mathcal{L}^G}ac_1'$, we can assume without loss of generality that we already have $c_1\ind_M^{\circ}ab$.

Introduce the following $\mathcal{L}^G$-substructures of $\mathfrak{C}$
$$A:=\acl_{\mathcal{L}^G}^{\mathfrak{C}}(Ma),\;\;
B:=\acl_{\mathcal{L}^G}^{\mathfrak{C}}(Mb),\;\; C_1:=\acl_{\mathcal{L}^G}^{\mathfrak{C}}(Mc_1),$$
$$C_2:=\acl_{\mathcal{L}^G}^{\mathfrak{C}}(Mc_2),\;\;
D:=\langle\acl_{\mathcal{L}^G}^{\mathfrak{C}}(Mab),\acl_{\mathcal{L}^G}^{\mathfrak{C}}(Mac_1)\rangle.$$
Because $c_1,c_2\models p_3$, there exists $f_0:(C_1,\bar{\sigma})\to(C_2,\bar{\sigma})$, an $\mathcal{L}^G$-isomorphism over $M$ such that $f_0(c_1)=c_2$. Moreover, $B\ind_M^{\mathfrak{D}}C_1$ and $B\ind_M^{\mathfrak{D}}C_2$, and $B$ is $\mathcal{L}$-regular over $M$ (even $\mathfrak{C}$ is $\mathcal{L}$-regular over $M$), so by Corollary \ref{regular.PAPA} and Lemma \ref{fhat_lemma}, $\id:B\to B$ and $f_0:C_1\to C_2$ extend to $f\in\aut_{\mathcal{L}}(\mathfrak{D})$. We note that
$$f:(\langle B,C_1\rangle,\bar{\sigma})\to (\langle B,C_2\rangle,\bar{\sigma})$$
is an $\mathcal{L}^G$-isomorphisms over $B$.
Let $C:=f^{-1}(\acl_{\mathcal{L}^G}^{\mathfrak{C}}(BC_2))$, by Corollary \ref{acl_all} it follows that
$\langle B,C_1\rangle\subseteq C\subseteq \acl_{\mathcal{L}}^{\mathfrak{D}}(BC_1)$ and
$$f:(C,\bar{\sigma}^{f^{-1}})\to (\acl_{\mathcal{L}^G}^{\mathfrak{C}}(Mbc_2),\bar{\sigma})$$
is an $\mathcal{L}^G$-isomorphisms over $B$.
Note that $\bar{\sigma}^{f^{-1}}$ coincides with $\bar{\sigma}$ on $\langle B, C_1\rangle$. We are going now to prove the following.
\
\\
\\
\textbf{Claim:} $D$ is $\mathcal{L}$-regular over $\langle B, C_1\rangle$.
\\
Proof of the claim: We want to show that
$$\dcl_{\mathcal{L}}^{\mathfrak{D}}\big(\acl_{\mathcal{L}^G}^{\mathfrak{C}}(AB),\acl_{\mathcal{L}^G}^{\mathfrak{C}}(AC_1)\big)\,\cap\,\acl_{\mathcal{L}}^{\mathfrak{D}}(BC_1)\subseteq \dcl_{\mathcal{L}}^{\mathfrak{D}}(BC_1).$$
We will show the above inclusion following the main steps of the proof of the claim from the proof of \cite[Theorem 3.7]{ChaPil}. Let us take an element 
$$\alpha\in\dcl_{\mathcal{L}}^{\mathfrak{D}}\big(\acl_{\mathcal{L}}^{\mathfrak{C}}(AB),\acl_{\mathcal{L}}^{\mathfrak{C}}(AC_1)\big)\,\cap\,\acl_{\mathcal{L}}^{\mathfrak{D}}(BC_1)$$
(recall that, by Corollary \ref{acl_all}, $\acl_{\mathcal{L}^G}^{\mathfrak{C}}(F)=\acl_{\mathcal{L}}^{\mathfrak{C}}(F)$ for any $G$-invariant set $F$)
and consider elements $\beta\in\acl_{\mathcal{L}}^{\mathfrak{C}}(AB)$, $\gamma\in
\acl_{\mathcal{L}}^{\mathfrak{C}}(AC_1)$ and an 
$\mathcal{L}$-formula $\psi_{\alpha}$ such that $\psi_{\alpha}(x,\beta,\gamma)$ defines $\lbrace\alpha\rbrace$ in $\mathfrak{D}$.

Now, we use Corollary \ref{acl_all} and obtain that $\beta\in\acl_{\mathcal{L}}^{\mathfrak{D}}(AB)$, $\gamma\in
\acl_{\mathcal{L}}^{\mathfrak{D}}(AC_1)$, so we can choose  
$\mathcal{L}$-formulas $\psi_{\beta}$ and $\psi_{\gamma}$ and elements 
$\tilde{a}\in A$, $\tilde{b}\in B$ and $\tilde{c}\in C_1$ such that
$\tp_{\mathcal{L}}^{\mathfrak{D}}(\beta/\tilde{a}\tilde{b})$ and
$\tp_{\mathcal{L}}^{\mathfrak{D}}(\gamma/\tilde{a}\tilde{c})$ are algebraic,
$\psi_{\beta}(y,\tilde{a},\tilde{b})$ isolates $\tp_{\mathcal{L}}^{\mathfrak{D}}(\beta/\tilde{a}\tilde{b})$ and $\psi_{\gamma}(z,\tilde{a},\tilde{c})$ isolates
$\tp_{\mathcal{L}}^{\mathfrak{D}}(\gamma/\tilde{a}\tilde{c})$.

We introduce new $\mathcal{L}$-formulas which will code definability and algebraicity:
\begin{IEEEeqnarray*}{rCl}
\psi_{\alpha}'(x,y,z) &\text{ given by }& \;\psi_{\alpha}(x,y,z)\wedge(\forall x_1,x_2)(\bigwedge\limits_{i\leqslant 2}\psi_{\alpha}(x_i,y,z)\rightarrow x_1=x_2),\\
\psi_{\beta}'(y,v,w) &\text{ given by }& \;\psi_{\beta}(y,v,w)\wedge(\forall y_1,\ldots,y_{n_{\beta}})(\bigwedge\limits_{i\leqslant n_{\beta}}\psi_{\beta}(y_i,v,w)\rightarrow \bigvee\limits_{i\neq j}y_i=y_j),\\
\psi_{\gamma}'(z,v,w') &\text{ given by }& \;\psi_{\gamma}(z,v,w')\wedge(\forall z_1,\ldots,z_{n_{\gamma}})(\bigwedge\limits_{i\leqslant n_{\gamma}}\psi_{\gamma}(z_i,v,w')\rightarrow \bigvee\limits_{i\neq j}z_i=z_j),
\end{IEEEeqnarray*}
where $n_{\beta}:=|\psi_{\beta}(\mathfrak{D},\tilde{a},\tilde{b})|$ and $n_{\gamma}:=|\psi_{\gamma}(\mathfrak{D},\tilde{a},\tilde{c})|$. Obviously
$$\mathfrak{D}\models \psi_{\beta}'(\beta,\tilde{a},\tilde{b})\,\wedge\,\psi_{\gamma}'(\gamma,\tilde{a},\tilde{c})\,\wedge\,\psi_{\alpha}'(\alpha,\beta,\gamma),$$
hence we get
$$\mathfrak{D}\models(\exists y,z)\big(\psi_{\beta}'(y,\tilde{a},\tilde{b})\,\wedge\,\psi_{\gamma}'(z,\tilde{a},\tilde{c})\,\wedge\,\psi_{\alpha}'(\alpha,y,z)\big).$$
We are going now to show that $\tp_{\mathcal{L}}^{\mathfrak{D}}(\tilde{b}\tilde{c}\alpha/M\tilde{a})$ is a heir of $\tp_{\mathcal{L}}^{\mathfrak{D}}(\tilde{b}\tilde{c}\alpha/M)$, so we will be able to replace $\tilde{a}$ with some tuple from $M$.

Because of  $C_1\ind_M^{\mathfrak{D}}BA$, it follows $C_1\ind_B^{\mathfrak{D}}A$, hence also $A\ind_B^{\mathfrak{D}}BC_1$. By $A\ind_M^{\mathfrak{D}}B$, we get $A\ind_M^{\mathfrak{D}}BC_1$ and so $BC_1\ind_M^{\mathfrak{D}}A$.
Note that $\tilde{b}\tilde{c}\alpha\subseteq\acl_{\mathcal{L}}^{\mathfrak{D}}(BC_1)$, therefore $\tilde{b}\tilde{c}\alpha\ind_M^{\mathfrak{D}}\tilde{a}$ and, by Lemma \ref{dnf_heir}, $\tp_{\mathcal{L}}^{\mathfrak{D}}(\tilde{b}\tilde{c}\alpha/M\tilde{a})$ is a heir of $\tp_{\mathcal{L}}^{\mathfrak{D}}(\tilde{b}\tilde{c}\alpha/M)$.

There exists $\tilde{m}\in M$ such that 
$$\mathfrak{D}\models(\exists y,z)\big(\psi_{\beta}'(y,\tilde{m},\tilde{b})\,\wedge\,\psi_{\gamma}'(z,\tilde{m},\tilde{c})\,\wedge\,\psi_{\alpha}'(\alpha,y,z)\big).$$
It means that there are $\beta',\gamma'\in\mathfrak{D}$ such that
$$\mathfrak{D}\models \psi_{\beta}'(\beta',\tilde{m},\tilde{b})\,\wedge\,\psi_{\gamma}'(\gamma',\tilde{m},\tilde{c})\,\wedge\,\psi_{\alpha}'(\alpha,\beta',\gamma'),$$
so, by the definitions of $\psi_{\beta}'$ and $\psi_{\gamma}'$, 
$$\beta'\in\acl_{\mathcal{L}}^{\mathfrak{D}}(\tilde{m},\tilde{b}),\quad\gamma'\in\acl_{\mathcal{L}}^{\mathfrak{D}}(\tilde{m},\tilde{c}),\quad\alpha\in\dcl_{\mathcal{L}}^{\mathfrak{D}}(\beta',\gamma').$$

Let us summarize the proof of the claim up to this point. We obtained the following:
$$\alpha\in\dcl_{\mathcal{L}}^{\mathfrak{D}}\big(\acl_{\mathcal{L}}^{\mathfrak{D}}(B),\acl_{\mathcal{L}}^{\mathfrak{D}}(C_1)\big)\cap\acl_{\mathcal{L}}^{\mathfrak{C}}(BC_1),$$
and we wish to show that $\alpha\in\dcl_{\mathcal{L}}^{\mathfrak{D}}(BC_1)$.
Let $f\in\aut_{\mathcal{L}}(\mathfrak{D}/BC_1)$ and note that, that the assumption that algebraic closures split, implies
\begin{IEEEeqnarray*}{rCl}
\alpha\in\dcl_{\mathcal{L}}^{\mathfrak{D}}\big(\acl_{\mathcal{L}}^{\mathfrak{D}}(B),\acl_{\mathcal{L}}^{\mathfrak{D}}(C_1)\big) &=& \dcl_{\mathcal{L}}^{\mathfrak{D}}\Big(
\dcl_{\mathcal{L}}^{\mathfrak{D}}\big(B,\acl_{\mathcal{L}}^{\mathfrak{D}}(M)\big),\dcl_{\mathcal{L}}^{\mathfrak{D}}\big(C_1,\acl_{\mathcal{L}}^{\mathfrak{D}}(M)\big)\Big)\\
&=& \dcl_{\mathcal{L}}^{\mathfrak{D}}\big(BC_1,\acl_{\mathcal{L}}^{\mathfrak{D}}(M)\big).
\end{IEEEeqnarray*}
We see that $M\subseteq\acl_{\mathcal{L}}^{\mathfrak{C}}(BC_1)$ is regular, hence,
by Fact \ref{regular}, there exists $h\in\aut_{\mathcal{L}}(\mathfrak{D}/\acl_{\mathcal{L}}^{\mathfrak{D}}(M))$ such that
$h|_{\acl_{\mathcal{L}}^{\mathfrak{C}}(BC_1)}=f|_{\acl_{\mathcal{L}}^{\mathfrak{C}}(BC_1)}$. In fact, because $f|_{BC_1}=\id_{BC_1}$, we have $h\in\aut_{\mathcal{L}}(\mathfrak{D}/BC_1\acl_{\mathcal{L}}^{\mathfrak{D}}(M))$.
Because $\alpha\in\acl_{\mathcal{L}}^{\mathfrak{C}}(BC_1)$, we can compute
$$f(\alpha)=h(\alpha)=\alpha.$$
Here ends the proof of the claim.

By Lemma \ref{fhat_lemma}, each automorphism $f^{-1}\circ\sigma_g\circ f$ of $C$, where $g\in G$, extends to an automorphism of $\acl_{\mathcal{L}}^{\mathfrak{D}}(BC_1)$, say $h_g$.
We can use the claim and Fact \ref{regular} to extend the $G$-action $\bar{\sigma}$ on $D$ and the collection $(h_g)_{g\in G}$ of automorphisms of $\acl_{\mathcal{L}}^{\mathfrak{D}}(BC_1)$. For each $g\in G$, we denote by $\tau_g\in\aut_{\mathcal{L}}(\mathfrak{D})$ an automorphism such that $\tau_g|_D=\sigma_g$ and $\tau_g|_{\acl_{\mathcal{L}}^{\mathfrak{D}}(BC_1)}=h_g$.

Note that $\bar{\tau}:=(\tau_g)_{g\in G}$ defines a $G$-action on $\langle D, C\rangle$ which extends $G$-actions of the structures $(D,\bar{\sigma})$ and $(C,\bar{\sigma}^{f^{-1}})$, and therefore the $\mathcal{L}^G$-structure $(\langle D,C\rangle,\bar{\tau})$ is a model of $(T_G)_{\forall}$ and embeds into a model of $T_G^{\mc}$, say $(N,\bar{\rho})$. Obviously $(M,\bar{\sigma})\preceq (N,\bar{\rho})$ and so we can embed $(N,\bar{\rho})$ into $(\mathfrak{C},\bar{\sigma})$ over $M$ as an elementary $\mathcal{L}^G$-substructure.

Let
$$p_{123}(x_1,x_2,x_3):=\tp_{\mathcal{L}^G}^{\mathfrak{C}}(a'b'c_1'/M),$$
where tuple $a'b'c_1'$ is the image of the tuple $abc_1$ under the above embedding.

The image (under the above embedding) of $\acl_{\mathcal{L}^G}^{\mathfrak{C}}(Mab)$ is $\mathcal{L}^G_M$-isomorphic to $\acl_{\mathcal{L}^G}^{\mathfrak{C}}(Mab)$, and the image (under the above embedding) of $\acl_{\mathcal{L}^G}^{\mathfrak{C}}(Mac_1)$ is $\mathcal{L}^G_M$-isomorphic to 
$\acl_{\mathcal{L}^G}^{\mathfrak{C}}(Mac_1)$. Moreover, $f:(C,\bar{\sigma}^{f^{-1}})\to (\acl_{\mathcal{L}^G}^{\mathfrak{C}}(Mbc_2),\bar{\sigma})$ induces an $\mathcal{L}^G_M$-isomorphism between the image (under the above embedding) of $C$ and $\acl_{\mathcal{L}^G}^{\mathfrak{C}}(Mbc_2)$.
Therefore, a multiple use of Fact \ref{fact_type_descr} proves
$$\tp_{\mathcal{L}^G}^{\mathfrak{C}}(a'b'/M)=
\tp_{\mathcal{L}^G}^{\mathfrak{C}}(ab/M)=p_{12}(x_1,x_2),$$
$$\tp_{\mathcal{L}^G}^{\mathfrak{C}}(a'c_1'/M)=
\tp_{\mathcal{L}^G}^{\mathfrak{C}}(ac_1/M)=p_{13}(x_1,x_3),$$
$$\tp_{\mathcal{L}^G}^{\mathfrak{C}}(b'c_1'/M)=
\tp_{\mathcal{L}^G}^{\mathfrak{C}}(bc_2/M)=p_{23}(x_2,x_3),$$
and hence $p_{123}$ extends $p_{12}$, $p_{13}$ and $p_{23}$.

By $D'$ we denote the image of $D$ by the discussed above embedding. 
Let $i:D\to D'$ be the induced $\mathcal{L}^G_M$-isomorphism.
Because $D'$ and $D$ are $\mathcal{L}$-isomorphic (over $M$) $\mathcal{L}$-substructures of $\mathfrak{D}$, by Lemma \ref{fhat_lemma} there exists $\hat{i}\in\aut_{\mathcal{L}}(\mathfrak{D}/M)$ which extends $i$. We have the following 
$$c_1\ind_M^{\mathfrak{C}}ab\;\iff\;Gc_1\ind_M^{\mathfrak{D}}Ga Gb\;\Rightarrow\;
\hat{i}(Gc_1)\ind_M^{\mathfrak{D}}\hat{i}(GaGb)\;\Rightarrow$$
$$\Rightarrow\;i(Gc_1)\ind_M^{\mathfrak{D}}i(GaGb)\;\Rightarrow\;Gi(c_1)\ind_M^{\mathfrak{D}}Gi(a)Gi(b)\;\iff\;c_1'\ind_M^{\mathfrak{C}}a'b',$$
which finishes the proof of the theorem.
\end{proof}

\begin{theorem}\label{TG.simple}
If algebraic closures in $\mathfrak{D}$ split over $\mathfrak{C}$, then $T_G^{\mc}$ is simple and $\ind^{\circ}$ is the forking independence.
\end{theorem}

\begin{proof}
By Proposition \ref{6properties}, Theorem \ref{ind_thm_model} and Theorem \ref{simple.thm}.
\end{proof}

\begin{remark}
The theory $G-\tcf$ from \cite{nacfa} and the theory ACFA are supersimple but not stable, hence we should not expect that $T_G^{\mc}$ is usually stable. Moreover, the theory $\mathbb{Q}$ACFA from \cite{qacfa} is not supersimple (and the theory ACF is superstable), hence a general implication ``superstable $\Rightarrow$ supersimple" does not hold. The reader may consult Corollary \ref{simple.final}, where we argue for the aforementioned implication in the case of a finite group $G$ ("superstable + finite $G$ $\Rightarrow$ supersimple").
\end{remark}

\begin{remark}\label{alg.cls.split.important}
One could ask whether the assumption about algebraic closures in the statement of Theorem \ref{TG.simple} is necessary. If we consider the theory of existentially closed fields equipped with actions of the infinite dihedral group, $D_{\infty}-\tcf$, (as in Section 8. in \cite{sjogren}, which was axiomatized in \cite{ozlempiotr}) it turns out that these fields are PAC fields, but they are not bounded. Hence the theory $D_{\infty}-\tcf$ is not simple (Fact 2.6.7 in \cite{kim1}). Therefore, it is not possible to state Theorem \ref{TG.simple} without any assumptions related to boundedness and requiring that algebraic closures split is a property which is not stronger than being bounded (see Proposition \ref{bounded.split}).
\end{remark}

\subsection{When algebraic closures split?}\label{G_finite}
The main result of this subsection (Proposition \ref{bounded.split}) shows that boundedness implies that algebraic closures split (i.e. requiring that algebraic closures split is a property not stronger than being by $\mathfrak{C}$ bounded). Moreover if $G$ is finite, then $\mathfrak{C}$ is bounded (in $\mathfrak{D}$).

\begin{prop}\label{finite.bounded}
If $G$ is finite and $(M,\bar{\sigma})\models T_G^{\mc}$, then $M$ is bounded (Definition \ref{PP.bounded}).
\end{prop}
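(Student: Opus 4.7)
The plan is to reduce the boundedness of $M$ to the already established finite generation of the Galois group over the invariants, exploiting that $G$ is finite. First I would embed $M$, via its $\mathcal{L}$-reduct, into a monster model $\mathfrak{D}$ of $T^{\mc}$ (which exists since $(T_G)_\forall \supseteq T_\forall = T^{\mc}_\forall$, so the $\mathcal{L}$-reduct of any model of $T_G^{\mc}$ embeds into $\mathfrak{D}$). Since $G$ is finite, the second point of Lemma \ref{N_Galois} applied to the $G$-action $\bar{\sigma}$ on $M$ yields $M \subseteq \acl_{\mathcal{L}}^{\mathfrak{D}}(M^G)$, so $\acl_{\mathcal{L}}^{\mathfrak{D}}(M) = \acl_{\mathcal{L}}^{\mathfrak{D}}(M^G)$. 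In particular, $\aut_{\mathcal{L}}(\acl_{\mathcal{L}}^{\mathfrak{D}}(M)/M)$ is a closed subgroup of the profinite group $\mathcal{G} = \aut_{\mathcal{L}}(\acl_{\mathcal{L}}^{\mathfrak{D}}(M^G)/M^G)$.

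Then I would apply Proposition \ref{inv.bounded} (applicable because $G$, being finite, is finitely generated) to conclude that $\mathcal{G}$ is finitely generated as a profinite group; inspecting its proof, $\mathcal{G}$ is topologically generated by lifts of the generators of $\mathcal{G}/\mathcal{N} \cong \aut_{\mathcal{L}}(F/M^G)$, which in turn is generated by the finitely many $\sigma_g|_F$ (Proposition \ref{prop.generators}). A standard fact about profinite groups is that every $n$-generated profinite group has cardinality at most $2^{\aleph_0}$. Hence $|\mathcal{G}| \leq 2^{\aleph_0}$, and consequently $|\aut_{\mathcal{L}}(\acl_{\mathcal{L}}^{\mathfrak{D}}(M)/M)| \leq 2^{\aleph_0}$, with the bound depending only on $|G|$.

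It remains to verify the uniformity required by Definition \ref{PP.bounded}. Since $G$ is finite, the set of invariants $M^G$ is $\mathcal{L}^G$-definable in $(M, \bar{\sigma})$ via $\bigwedge_{g \in G} \sigma_g(x) = x$, so for any $(M'', \bar{\sigma}'') \equiv (M, \bar{\sigma})$ as $\mathcal{L}^G$-structures the invariants $M''^{G}$ correspond to the same defining formula and $(M'', \bar{\sigma}'')$ is again a model of $T_G^{\mc}$. Embedding the $\mathcal{L}$-reduct of $M''$ into a monster $\mathfrak{D}''$ of $T^{\mc}$ and running the same two steps above yields the identical bound $2^{\aleph_0}$ on $|\aut_{\mathcal{L}}(\acl_{\mathcal{L}}^{\mathfrak{D}''}(M'')/M'')|$.

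The main obstacle is the bookkeeping in this last paragraph: Definition \ref{PP.bounded} is formulated with a unary predicate in the ambient language $\mathcal{L}$, whereas the extra structure we need (the $G$-action, and therefore the invariants $M^G$) lives in the richer language $\mathcal{L}^G$. One must check that elementary equivalence in whichever enrichment one chooses still transports Lemma \ref{N_Galois} and Proposition \ref{inv.bounded} across to the new model, and that the cardinal $\kappa = 2^{\aleph_0}$ (which depends only on $|G|$) is genuinely uniform. This is mostly formal, but it is the only point in the argument where care is needed.
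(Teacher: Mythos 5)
Your proof diverges from the paper's in a way that creates a genuine gap. The issue is not the cardinality bound on the Galois group of the fixed model $(\mathfrak{D},M)$ — observing $\mathcal{H} := \aut_{\mathcal{L}}(\acl_{\mathcal{L}}^{\mathfrak{D}}(M)/M) \leq \mathcal{G}$ with $\mathcal{G}$ finitely generated does give $|\mathcal{H}| \leq 2^{\aleph_0}$. The problem, which you correctly flag but do not resolve, is the uniformity demanded by Definition~\ref{PP.bounded}: you must bound $|\aut_{\mathcal{L}}(\acl_{\mathcal{L}}^{M''}(P'')/P'')|$ for \emph{every} pair $(M'',P'')$ elementarily equivalent to $(\mathfrak{D},M)$ in the language $\mathcal{L}\cup\{P\}$. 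Your transport argument quantifies instead over $(M'',\bar{\sigma}'')\equiv_{\mathcal{L}^G}(M,\bar{\sigma})$, and an arbitrary $\mathcal{L}\cup\{P\}$-elementarily equivalent pair need not carry (or even be compatible with) a $G$-action extending the picture; nothing forces $P''$ to be the underlying set of a model of $T_G^{\mc}$. So the argument does not reach the class of pairs over which the definition quantifies, and "mostly formal" understates what is missing.

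The paper closes exactly this gap by a different route. It does not merely observe that $\mathcal{H}$ sits inside the finitely generated $\mathcal{G}$; it shows that $\mathcal{H}$ \emph{itself} is finitely generated as a profinite group: $\mathcal{H}$ is closed (Fact~\ref{krull.topology}) and has finite index in $\mathcal{G}$ (since $\aut_{\mathcal{L}}(M/M^G)$ is finite by finiteness of $G$ and Proposition~\ref{prop.generators}), hence $\mathcal{H}$ is open, and open subgroups of finitely generated profinite groups are finitely generated by \cite[Proposition 2.5.5]{ribzal}. Note that a merely \emph{closed} subgroup of a finitely generated profinite group need not be finitely generated, so your step 2 does not yield this. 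Once $\mathcal{H}$ is finitely generated, one writes $M = (\acl_{\mathcal{L}}^{\mathfrak{D}}(M))^{\{h_1,\ldots,h_m\}}$ for finitely many topological generators $h_i$ and invokes \cite[Lemma 2.4]{PilPol} (this is the "as in Lemma~\ref{MG.bounded}" step). That lemma is precisely the device that converts "fixed set of finitely many automorphisms" into boundedness uniformly across all $\mathcal{L}\cup\{P\}$-elementarily equivalent pairs, because that lemma's proof encodes the relevant Galois-theoretic data first-orderly in the pair language. Without it, or some substitute showing smallness of the Galois group is first-order in $\mathcal{L}\cup\{P\}$, your bound on the fixed model does not propagate.
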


\begin{proof}
Let $(M,\bar{\sigma})\preceq(\mathfrak{C},\bar{\sigma})$ (not necessarily small) and $\mathcal{H}:=\aut_{\mathcal{L}}(\acl_{\mathcal{L}}^{\mathfrak{D}}(M^G)/M)$.
By Lemma \ref{N_Galois}, $M\subseteq\acl_{\mathcal{L}}^{\mathfrak{D}}(M^G)$ and $M^G\subseteq M$ is Galois. We have the following short exact sequence
$$\xymatrix{1\ar[r] &\mathcal{H}
\ar[r] & \aut_{\mathcal{L}}(\acl_{\mathcal{L}}^{\mathfrak{D}}(M^G)/M^G)\ar[r]& \aut_{\mathcal{L}}(M/M^G)\ar[r] &1}.$$
Because $G$ is finite, by Proposition \ref{prop.generators}, it follows
$$\aut_{\mathcal{L}}(M/M^G)=\cl\big((\sigma_g)_{g\in G}\big)=(\sigma_g)_{g\in G},$$ so $\aut_{\mathcal{L}}(M/M^G)$ is finite.
By Proposition \ref{inv.bounded}, the profinite group $\aut_{\mathcal{L}}(\acl_{\mathcal{L}}^{\mathfrak{D}}(M^G)/M^G)$ is finitely generated. Therefore, $\mathcal{H}$ 
is a finite index subgroup of a finitely generated profinite group.
Since $\mathcal{H}$ is closed (by Fact \ref{krull.topology}), its complement is also closed as the union of finitely many copies of $\mathcal{H}$. Therefore the subgroup $\mathcal{H}$ is open and by \cite[Proposition 2.5.5]{ribzal}, it is also
finitely generated.
In a similar manner as in the proof of Lemma \ref{MG.bounded}, we can show that $M$ is bounded.
\end{proof}

\begin{prop}\label{bounded.split}
If the structure $\mathfrak{C}$ is bounded,
then algebraic closures in $\mathfrak{D}$ split over $\mathfrak{C}$. 
\end{prop}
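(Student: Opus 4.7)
The plan is to reformulate the splitting equality as an injectivity statement for a restriction map between Galois groups, and then derive this injectivity from the boundedness assumption.

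Write $F := \acl_{\mathcal{L}}^{\mathfrak{D}}(A) \cap \mathfrak{C}$ and $K := \acl_{\mathcal{L}}^{\mathfrak{D}}(M)$. The inclusion $\dcl_{\mathcal{L}}^{\mathfrak{D}}(F,K) \subseteq \acl_{\mathcal{L}}^{\mathfrak{D}}(A)$ is immediate. For the reverse, I would first record a few preparatory facts: $\acl_{\mathcal{L}}^{\mathfrak{D}}(F) = \acl_{\mathcal{L}}^{\mathfrak{D}}(A)$; the set $F$ is definably closed in $\mathfrak{D}$ (by the same argument as in Remark \ref{dcl_M}, using that $\mathfrak{C}$ is); and the extension $F \subseteq \mathfrak{C}$ is $\mathcal{L}$-regular in the sense of Definition \ref{regular.def}, since $\dcl_{\mathcal{L}}^{\mathfrak{D}}(\mathfrak{C}) \cap \acl_{\mathcal{L}}^{\mathfrak{D}}(F) = \mathfrak{C} \cap \acl_{\mathcal{L}}^{\mathfrak{D}}(F) = F$ by the very definition of $F$. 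Therefore both $F \subseteq \acl_{\mathcal{L}}^{\mathfrak{D}}(F)$ and $M \subseteq K$ are Galois extensions (Definition \ref{galois.ext.def}).

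By the Galois correspondence (Fact \ref{galois.correspondence}), the desired equality $\acl_{\mathcal{L}}^{\mathfrak{D}}(F) = \dcl_{\mathcal{L}}^{\mathfrak{D}}(F,K)$ is equivalent to the injectivity of the restriction map
$$r : \aut_{\mathcal{L}}(\acl_{\mathcal{L}}^{\mathfrak{D}}(F)/F) \longrightarrow \aut_{\mathcal{L}}(K/M), \qquad \theta \mapsto \theta|_{K},$$
which is well-defined since every such $\theta$ fixes $M \subseteq F$ and so preserves $K$. Exploiting the $\mathcal{L}$-regularity of $F \subseteq \mathfrak{C}$, Fact \ref{regular} applied with $E = F$, $A = \mathfrak{C}$, $f_1 = \id_{\mathfrak{D}}$, and $f_2$ any extension of $\theta$ to $\aut_{\mathcal{L}}(\mathfrak{D})$ (by Lemma \ref{fhat_lemma}), produces an $h \in \aut_{\mathcal{L}}(\mathfrak{D}/\mathfrak{C})$ that restricts to $\theta$ on $\acl_{\mathcal{L}}^{\mathfrak{D}}(F)$. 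Consequently the natural restriction map
$$\rho : \aut_{\mathcal{L}}(\acl_{\mathcal{L}}^{\mathfrak{D}}(\mathfrak{C})/\mathfrak{C}) \longrightarrow \aut_{\mathcal{L}}(\acl_{\mathcal{L}}^{\mathfrak{D}}(F)/F)$$
is surjective, and so the injectivity of $r$ reduces to the injectivity of the composite $r \circ \rho : \aut_{\mathcal{L}}(\acl_{\mathcal{L}}^{\mathfrak{D}}(\mathfrak{C})/\mathfrak{C}) \to \aut_{\mathcal{L}}(K/M)$.

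The main obstacle is this last injectivity, which is exactly where the boundedness hypothesis is used. I would follow the Pillay--Polkowska strategy of Proposition~2.5(ii) in \cite{PilPol} and Lemma~3.8 in \cite{Polkowska}: by the structural characterization of bounded definably closed substructures in \cite[Lemma~2.4]{PilPol}, boundedness of $\mathfrak{C}$ provides a finite tuple in $\acl_{\mathcal{L}}^{\mathfrak{D}}(\mathfrak{C})$ whose pointwise stabilizer in $\aut_{\mathcal{L}}(\acl_{\mathcal{L}}^{\mathfrak{D}}(\mathfrak{C})/\mathfrak{C})$ is trivial; standard homogeneity and stability arguments, using the quantifier elimination and elimination of imaginaries for $T'$ together with the PAC-ness of $\mathfrak{C}$ established in Proposition \ref{M.is.PAC}, then allow one to choose such a tuple inside $K = \acl_{\mathcal{L}}^{\mathfrak{D}}(M)$. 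Any $\eta$ in the kernel of $r \circ \rho$ fixes $K$ pointwise, hence fixes this tuple, and is therefore trivial. The hard part will be verifying that the cited Pillay--Polkowska characterization genuinely transfers to the present asymmetric setting, in which $\mathfrak{C}$ is only an $\mathcal{L}$-substructure (and not an elementary $\mathcal{L}$-substructure) of the stable $T'$-monster $\mathfrak{D}$; stability, the two eliminations, and the PAC property of $\mathfrak{C}$ together ensure that the argument goes through.
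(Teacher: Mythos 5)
Your reformulation of the splitting equality as injectivity of the composed restriction map $r\circ\rho$ is correct, as is the surjectivity of $\rho$ via Fact~\ref{regular} and the $\mathcal{L}$-regularity of $F\subseteq\mathfrak{C}$. Where the argument breaks down is precisely at the step you flag yourself as ``the hard part.'' Boundedness of $\mathfrak{C}$ together with \cite[Lemma~2.4]{PilPol} gives a finite tuple $\bar a\subseteq\acl_{\mathcal{L}}^{\mathfrak{D}}(\mathfrak{C})$ with $\acl_{\mathcal{L}}^{\mathfrak{D}}(\mathfrak{C})=\dcl_{\mathcal{L}}^{\mathfrak{D}}(\mathfrak{C},\bar a)$, but there is no ``standard homogeneity and stability argument'' that lets you conclude this tuple can be taken inside $K=\acl_{\mathcal{L}}^{\mathfrak{D}}(M)$. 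A priori $\bar a$ is $\mathcal{L}$-algebraic over some finite subtuple of $\mathfrak{C}$ that has nothing to do with the chosen $M$; QE, EI, and PAC-ness of $\mathfrak{C}$ do not by themselves transport it down to the algebraic closure of the small elementary submodel. Citing Proposition~2.5(ii) of \cite{PilPol} and Lemma~3.8 of \cite{Polkowska} is not enough unless you import the tool they actually use.

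That tool is an elementary pair: the paper's proof begins by constructing, via a union-of-towers argument driven by forking independence ($D_{n+1}\ind^{\mathfrak{D}}_{D_n}\mathfrak{C}$), an $\mathcal{L}$-substructure $D\subseteq\mathfrak{D}$ with $(D,M)\preceq(\mathfrak{D},\mathfrak{C})$ in the language $\mathcal{L}\cup\{\mathfrak{C}\}$. Once that pair is in hand, boundedness of $\mathfrak{C}$ reflects to the pair $(D,M)$ via \cite[Proposition~2.5(iv)]{Polkowska}, which yields the inclusion $\acl_{\mathcal{L}}^{\mathfrak{D}}(\mathfrak{C})\subseteq\dcl_{\mathcal{L}}^{\mathfrak{D}}(\mathfrak{C},\acl_{\mathcal{L}}^{\mathfrak{D}}(M))$---exactly the statement that a bounding tuple lives in $K$---and then the rest of Polkowska's Lemma~3.8 with $A=\acl_{\mathcal{L}}^{\mathfrak{D}}(B)$ delivers the full splitting equality. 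So the Galois-theoretic repackaging you propose is fine as a framing device, but it does not remove the need for the elementary pair construction; it only relocates it. You should either build $(D,M)\preceq(\mathfrak{D},\mathfrak{C})$ explicitly and then show how injectivity of $r\circ\rho$ follows from the reflected boundedness, or abandon the Galois reformulation and follow the Polkowska route directly as the paper does.
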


\begin{proof}
Assume that $(M,\bar{\sigma})\preceq(\mathfrak{C},\bar{\sigma})$ is small.
\
\\
\\
\textbf{Claim} There exists an $\mathcal{L}$-substructure $D$ of $\mathfrak{D}$ such that
$(D,M)\preceq(\mathfrak{D},\mathfrak{C})$.
\
\\
Proof of the claim. 
We will construct $D$ 
as the union of a tower of $\mathcal{L}$-substructures $(D_n)_{n<\omega}$.
We choose one realization (in $\mathfrak{D}$) of each $\mathcal{L}\cup\lbrace\mathfrak{C}\rbrace$-formula over $M$ (here ``$\mathfrak{C}$" plays the role of a predicate added to the language) and let $D_1'$ be the $\mathcal{L}$-definable closure of 
the set consisting of all
these realizations. We find $D_1\equiv_M D_1'$ such that $D_1\ind_M^{\mathfrak{D}}\mathfrak{C}$.
We take $n\geqslant 1$ and assume
that we have chosen $D_i$ for each $i\leqslant n$. Now,	 let us choose one realization
of each $\mathcal{L}\cup\lbrace\mathfrak{C}\rbrace$-formula over $D_n$. By $D_{n+1}'$ we denote the $\mathcal{L}$-definable closure of the set of these realizations and by $D_{n+1}$ an $\mathcal{L}$-substructure of $\mathfrak{D}$ such that $D_{n+1}\equiv_{D_n}D_{n+1}'$ and $D_{n+1}\ind_{D_n}^{\mathfrak{D}}\mathfrak{C}$.

Let $D:=\bigcup\limits_{i<\omega}D_i$, which is an $\mathcal{L}$-substructure of $\mathfrak{D}$. We see that $D\ind_M^{\mathfrak{D}}\mathfrak{C}$, so
$D\cap\mathfrak{C}\subseteq\acl_{\mathcal{L}}^{\mathfrak{D}}(M)$, and it follows that
$$D\cap\mathfrak{C}\subseteq\acl_{\mathcal{L}}^{\mathfrak{D}}(M)\cap\mathfrak{C}=M,$$
hence $D\cap\mathfrak{C}=M$ and $(D,M)$ is an $\mathcal{L}\cup\lbrace\mathfrak{C}\rbrace$-substructure of $(\mathfrak{D},\mathfrak{C})$. By the construction, Tarski-Vaught test is satisfied and $(D,M)\preceq(\mathfrak{D},\mathfrak{C})$, which finishes the proof of the claim.

Let $M\subseteq B\subseteq\mathfrak{C}$, we want to show
$$\acl_{\mathcal{L}}^{\mathfrak{D}}(B)=\dcl_{\mathcal{L}}^{\mathfrak{D}}(\acl_{\mathcal{L}}^{\mathfrak{D}}(B)\cap\mathfrak{C},\acl_{\mathcal{L}}^{\mathfrak{D}}(M)).$$

Because $\mathfrak{C}$ is bounded (and $\dcl_{\mathcal{L}}^{\mathfrak{D}}(\mathfrak{C})=\mathfrak{C}$), by \cite[Proposition 2.5.(iv)]{Polkowska},
where for $(M,P)=(M_1,P_1)$ we substitute
$(\mathfrak{D},\mathfrak{C})$ and for $(M_0,P_0)$ we substitute $(D,M)$, we obtain
the last line on the page 178. in the proof of \cite[Lemma 3.8]{Polkowska}, i.e.
$$\acl_{\mathcal{L}}^{\mathfrak{D}}(\mathfrak{C})\subseteq\dcl_{\mathcal{L}}^{\mathfrak{D}}(\mathfrak{C}\acl_{\mathcal{L}}^{\mathfrak{D}}(M)).$$

Now we use the rest of the proof of \cite[Lemma 3.8]{Polkowska} for 
$(\bar{M},\bar{P})=(\mathfrak{D},\mathfrak{C})$, $(M_0,P_0)=(D,M)$ (size if $M_0$ or $P_0$ does not matter in the proof of \cite[Lemma 3.8]{Polkowska}) and 
$$A=\acl_{\mathcal{L}}^{\mathfrak{D}}(B)=\acl_{\mathcal{L}}^{\mathfrak{D}}(\acl_{\mathcal{L}}^{\mathfrak{D}}(B)\cap\mathfrak{C})$$
 and obtain
$$\acl_{\mathcal{L}}^{\mathfrak{D}}(B)\subseteq\dcl_{\mathcal{L}}^{\mathfrak{D}}(\acl_{\mathcal{L}}^{\mathfrak{D}}(B)\cap\mathfrak{C},\acl_{\mathcal{L}}^{\mathfrak{D}}(M)).$$
\end{proof}

\begin{cor}\label{G_finite_alg.cls}
If $G$ is finite, then algebraic closures in $\mathfrak{D}$ split over $\mathfrak{C}$.
\end{cor}

\begin{proof}
By Proposition \ref{finite.bounded} for $(M,\bar{\sigma})=(\mathfrak{C},\bar{\sigma})$ and by Proposition \ref{bounded.split}.
\end{proof}

\begin{cor}\label{simple.final}
\begin{enumerate}
\item
If $\mathfrak{C}$ is bounded (in particular if $G$ is finite), then $T_G^{\mc}$ is simple and the ternary relation $\ind^{\circ}$ (defined before Remark \ref{ind_acl_rem}) is the forking independence.

\item
If $T^{\mc}$ is superstable and $G$ is finite, then $T_G^{\mc}$ is supersimple.
\end{enumerate}
\end{cor}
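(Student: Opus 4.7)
The plan is to derive both parts by chaining together the implications already established in the preceding subsection, using that $G$ finite forces $\mathfrak{C}$ to be bounded, which in turn forces algebraic closures to split, which in turn gives simplicity via the Independence Theorem.

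For part (1), I would first observe that if $G$ is finite then Proposition \ref{finite.bounded} applied to $(M,\bar{\sigma}) = (\mathfrak{C},\bar{\sigma})$ shows that $\mathfrak{C}$ is bounded in the sense of Definition \ref{PP.bounded}. Once boundedness of $\mathfrak{C}$ is in hand (by assumption, or by the previous sentence when $G$ is finite), Proposition \ref{bounded.split} supplies the splitting of algebraic closures in $\mathfrak{D}$ over $\mathfrak{C}$, and then Theorem \ref{TG.simple} directly yields that $T_G^{\mc}$ is simple with $\ind^\circ$ as the forking independence.

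For part (2), since $G$ is finite, part (1) already shows that $T_G^{\mc}$ is simple and that forking independence in $T_G^{\mc}$ is given by $\ind^\circ$; so it suffices to verify the superstable analogue of local character, namely that for every finite tuple $a \subseteq \mathfrak{C}$ and every small $B \subseteq \mathfrak{C}$ there is a \emph{finite} $B_0 \subseteq B$ with $a \ind^\circ_{B_0} B$. I would mimic the proof of item (ii) in Proposition \ref{6properties}: superstability of $T^{\mc}$ (hence of $T' \supseteq T^{\mc}$) gives a finite $E \subseteq G \cdot B$ with $G \cdot a \ind^{\mathfrak{D}}_{E} G \cdot B$, and since $G$ is finite, for each $e \in E$ one can pick $b_e \in B$ with $e \in G \cdot b_e$, so the finite set $B_0 := \{b_e : e \in E\}$ satisfies $E \subseteq G \cdot B_0 \subseteq G \cdot B$; monotonicity of $\ind^{\mathfrak{D}}$ then gives $G \cdot a \ind^{\mathfrak{D}}_{G \cdot B_0} G \cdot B$, i.e.\ $a \ind^\circ_{B_0} B$.

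No genuine obstacle is expected: the first part is purely a concatenation of Proposition \ref{finite.bounded}, Proposition \ref{bounded.split} and Theorem \ref{TG.simple}, and the second part is a routine transfer of superstability from $T^{\mc}$ to $T_G^{\mc}$ using the definition of $\ind^\circ$ and the fact that the orbit $G \cdot B_0$ of a finite set under a finite group is again finite. The only point where one should be mildly attentive is ensuring that the $E$ obtained from superstability of $T^{\mc}$ in $\mathfrak{D}$ can indeed be assumed to lie inside $G \cdot B$ (which is immediate, since superstability provides the local-character subset inside any prescribed parameter set, here $G \cdot B$).
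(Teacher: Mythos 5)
Your proof is correct and follows essentially the same route as the paper: part (1) is a direct concatenation of Proposition \ref{finite.bounded}, Proposition \ref{bounded.split} and Theorem \ref{TG.simple}, and part (2) repeats the local-character argument from Proposition \ref{6properties}(ii) with superstability of $T'$ supplying a finite base. One small slip in exposition: you invoke finiteness of $G$ to choose $b_e \in B$ with $e \in G\cdot b_e$, but this follows already from $E\subseteq G\cdot B$ regardless of $|G|$ (which is exactly the paper's remark that the size of $E$ does not depend on the size of $G$); finiteness of $G$ is only needed so that part (1) applies.
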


\begin{proof}
We only need to prove the second item.
By the last sentence of the proof of the item (ii) in Proposition \ref{6properties}, we can choose $E$ to be finite.
\end{proof}

\begin{remark}
It was proved in \cite{nacfa} that the theory $G-\tcf$ is simple (for a finite group $G$). The proof proceeded by counting the SU-rank of invariants and showing that the structure of a field $K$ equipped with an action of a finite group $G$ can be interpreted in the (pure field) structure of the invariants. The method from \cite{nacfa} is more or less an incarnation of Corollary \ref{interpretable.inv}.
However, it
does not provide any description of the forking independence.
\end{remark}

\begin{example}\label{forking.formula}
The description of forking independence in $G-\tcf$ (Example \ref{gtcf1}) was not made in \cite{nacfa}, but now, after the main results, we can provide it.
Assume that $(\mathfrak{C},(\sigma_g)_{g\in G})$ is a monster model of the theory $G-\tcf$. Let $B,C\subseteq\mathfrak{C}$ be small subsets and let $a$ be a finite tuple from $\mathfrak{C}$.
By Corollary \ref{simple.final} it follows 
$$a\ind^{G-\tcf}_C B\;\;\iff\;\;a\ind^{\circ}_C B\;\;\iff\;\;Ga\ind^{\acf}_{GC}GB\;\;\iff$$
$$\iff\;\;\td(Ga/\langle GB,GC\rangle)=\td(Ga/\langle GC\rangle),$$
where $\langle GB,GC\rangle$ and $\langle GC\rangle$ denote subfields generated by $GB\cup GC$ and $GC$ respectively.
\end{example}

The following corollary was pointed out to us by Piotr Kowalski and summarizes the known results from the model theory of fields with group actions. In this corollary, $G$ is not necessarily finite. However we did not define $G-\tcf$ for infinite groups, it should be clear that if $G$ is not finite and the model companion of fields with actions of $G$ exists, then it is denoted by $G-\tcf$. To avoid confusion, we state the following fact using
the general ``$T_G^{\mc}$"-notation.

\begin{cor}\label{fields.cor}
Let $T$ be the theory of fields, $\mathcal{L}$ be the language of rings
and let $G$ be a group (not necessarily finite).
Assume that $T_G^{\mc}$ (= $G-\tcf$) exists and let $(K,(\sigma_g)_{g\in G})\models T_G^{\mc}$. The following are equivalent.
\begin{enumerate}
\item
The theory of $K$ in the language $\mathcal{L}$ is simple.

\item
The field $K$ is a bounded field.

\item
The theory of $(K,(\sigma_g)_{g\in G})$ in the language $\mathcal{L}^G$ is simple.
\end{enumerate}
\end{cor}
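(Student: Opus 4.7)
The plan is to close the cycle $(3)\Rightarrow(1)\Rightarrow(2)\Rightarrow(3)$. In this field setting, $T^{\mc}=\acf$ exists, is stable, and admits elimination of quantifiers and imaginaries, so all ambient hypotheses of Sections~\ref{sec:galois} and~\ref{sec:forking} are met: I can fix a monster $\mathfrak{D}\models\acf$ containing a sufficiently saturated and strongly homogeneous $(\mathfrak{C},(\sigma_g)_{g\in G})\succeq(K,(\sigma_g)_{g\in G})$ of $T_G^{\mc}$.

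The implication $(3)\Rightarrow(1)$ is immediate, since simplicity passes to reducts and the pure field $\mathcal{L}$-theory of $K$ is the $\mathcal{L}$-reduct of the $\mathcal{L}^G$-theory of $(K,(\sigma_g)_{g\in G})$. For $(1)\Leftrightarrow(2)$, I will invoke Proposition~\ref{M.is.PAC}, which yields that $K$ is a PAC $\mathcal{L}$-substructure of $\mathfrak{D}$ in the sense of Definition~\ref{regular.def}; the hypothesis that $(K,(\sigma_g)_{g\in G})$ is existentially closed among models of $(T'_\forall)_G$ reduces, by passing to fields of fractions, to its being existentially closed among fields with a $G$-action, which holds by $(K,(\sigma_g)_{g\in G})\models T_G^{\mc}$. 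Since $\acf$ is strongly minimal, Proposition~\ref{PAC.Hru} identifies this notion of PAC with Hrushovski's, which in turn specialises to the classical notion of a PAC field. The classical dichotomy for simple PAC fields---a PAC field has simple theory if and only if it is bounded (Fact~2.6.7 in~\cite{kim1})---then produces $(1)\Leftrightarrow(2)$.

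For $(2)\Rightarrow(3)$, boundedness of the pure field $K$ (equivalently, smallness of its absolute Galois group) is captured by a first-order $\mathcal{L}$-schema, namely, for each $n$, the assertion that $K$ possesses only finitely many Galois extensions of degree $n$; it therefore transfers from $K$ to the $\mathcal{L}$-elementarily equivalent monster $\mathfrak{C}$. Proposition~\ref{bounded.split} then gives that algebraic closures in $\mathfrak{D}$ split over $\mathfrak{C}$, and Theorem~\ref{TG.simple} concludes that $T_G^{\mc}$ is simple. The only non-trivial external ingredient is the PAC-fields dichotomy of~\cite{kim1}; every other step is a direct invocation of a result established earlier in this paper, and the main pitfall is merely to keep careful track of the various equivalent formulations of ``PAC'' and ``bounded'' in play.
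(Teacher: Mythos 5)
Your proposal is correct and follows essentially the same route as the paper's own proof: the cycle $(3)\Rightarrow(1)\Rightarrow(2)\Rightarrow(3)$ via the PAC-ness of $K$ (Proposition~\ref{M.is.PAC}), the classical PAC-field dichotomy (Fact~2.6.7 in~\cite{kim1}), and Corollary~\ref{simple.final} — which you merely unwind into Proposition~\ref{bounded.split} plus Theorem~\ref{TG.simple}. The only cosmetic difference is that you justify the transfer of boundedness to the monster via first-orderness of ``small absolute Galois group'' and invoke Proposition~\ref{PAC.Hru} to bridge to classical PAC fields, where the paper instead cites~\cite[Proposition~2.5.(v)]{PilPol} and~\cite[Theorem~3]{sjogren} respectively; neither changes the substance of the argument.
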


\begin{proof}
Since $(K,(\sigma_g)_{g\in G})\models T_G^{\mc}$, it follows that $K$ is a PAC field (see \cite[Theorem 3.]{sjogren} or our Proposition \ref{M.is.PAC}).
Implication from (1) to (2) is a well known fact for PAC fields, i.e.\cite[Fact 2.6.7]{kim1}. The passage from (2) to (3) is an instance of Corollary \ref{simple.final} (we just need to note that a monster model of $\theo(K,(\sigma_g)_{g\in G})$ is also bounded, e.g. \cite[Proposition 2.5.(v)]{PilPol}). The theory of $K$ in the language $\mathcal{L}$ is interpretable in $\theo(K,(\sigma_g)_{g\in G})$, hence (3) implies (1).
\end{proof}

\subsection{About elimination of imaginaries}
The following proof of the geometric elimination of imaginaries (i.e. every imaginary element is interalgebraic with a finite real tuple) is based on similar proofs from \cite[Paragraph 1.10]{acfa1}, \cite[Paragraph 2.9]{ChaPil} and an observation from \cite{hils0}. However, there are some tricky steps in our adaptation, therefore we include a whole proof.
We need to evoke one more fact (\cite[Lemma 1.4]{evahru}).

\begin{fact}[von Neuman's Lemma]\label{vNlemma}
Let $M$ be a large saturated structure. Let $X$ be $\emptyset$-definable, $e\in M$, $E=:\acl^M(e)\cap X$ and $\bar{a}\in X$. Then there is $\bar{b}$ such that $\tp(\bar{b}/Ee)=\tp(\bar{a}/Ee)$ and
$$\acl^M(E\bar{a})\cap\acl^M(E\bar{b})\cap X=	E.$$
\end{fact}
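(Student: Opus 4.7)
The plan is to prove this by constructing $\bar b$ as a ``non-forking conjugate'' of $\bar a$, using the simple theory structure of $T_G^{\mc}$ that we have just established. The key observation is that if $\bar b$ is forking-independent of $\bar a$ over $Ee$, then the algebraic closures $\acl(E\bar a)$ and $\acl(E\bar b)$ intersect only in the algebraic closure of the base, which after intersecting with $X$ reduces to $E$.

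First I would dispose of the trivial case where $p := \tp(\bar a/Ee)$ is algebraic. In that case $\bar a \in \acl^M(Ee)^{|\bar a|}$, and because $E \subseteq \acl^M(e)$ we have $\acl^M(Ee) = \acl^M(e)$; combined with the hypothesis $\bar a \in X$, this forces the coordinates of $\bar a$ to lie in $\acl^M(e) \cap X = E$. Then $\acl^M(E\bar a) = \acl^M(E) \subseteq \acl^M(e)$, whence $\acl^M(E\bar a) \cap X \subseteq \acl^M(e) \cap X = E$, and one simply takes $\bar b := \bar a$.

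In the non-algebraic case, I would proceed as follows. By the extension property for the independence relation $\ind^\circ$ from Proposition \ref{6properties} and Corollary \ref{simple.final}, choose $\bar b \models \tp(\bar a/Ee)$ with $\bar b \ind^\circ_{Ee} \bar a$, and claim that this $\bar b$ witnesses the conclusion. Take $c \in \acl^M(E\bar a) \cap \acl^M(E\bar b) \cap X$; a fortiori $c \in \acl^M(Ee\bar a) \cap \acl^M(Ee\bar b)$. Setting $A := \acl^M(Ee) = \acl^M(e)$ (an algebraically closed set), the independence lifts to $\bar a \ind^\circ_A \bar b$. Now invoke the standard fact for simple theories: whenever $A = \acl(A)$ and $\bar a \ind_A \bar b$, one has $\acl(A\bar a) \cap \acl(A\bar b) = A$. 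This forces $c \in A = \acl^M(e)$, and combined with $c \in X$ yields $c \in \acl^M(e) \cap X = E$.

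The main obstacle I anticipate is the justification of the ``standard fact'' that independent algebraic closures meet exactly in the algebraically closed base. In a stable theory this is a quick consequence of stationarity of algebraic types over $\acl$-closed sets. In a simple theory the argument is more delicate and goes through the Independence Theorem over algebraically closed sets: were $c \in \acl(A\bar a) \cap \acl(A\bar b) \setminus A$, one could produce via the Independence Theorem two realizations of $\tp(c/A)$ that agree respectively with $\tp(c/A\bar a)$ and $\tp(c/A\bar b)$ yet differ in $\acl(A)$, contradicting that $c$ is pinned down by its algebraic type over each side. An alternative, more combinatorial route---closer to the Evans--Hrushovski original---would avoid forking altogether: take a long sequence of $Ee$-conjugates $(\bar a_\alpha)_{\alpha<\kappa}$ of $\bar a$ with $\kappa > |\acl^M(E\bar a) \cap X|$; assuming the conclusion fails, a pigeonhole on the bounded sets $\acl^M(E\bar a_\alpha) \cap X$ produces a single $c \in X \setminus E$ lying in $\acl^M(E\bar a_\beta)$ for all $\beta$ in a cofinal index set, and then an indiscernibility/orbit-counting argument shows that such a $c$ is forced into $\acl^M(e)$, contradicting $c \notin E$.
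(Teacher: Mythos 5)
The paper does not prove this Fact at all; it points the reader to \cite[Lemma~1.4]{evahru} and to \cite[Lemma~4.1]{markernotes}, which give a general counting/pigeonhole argument valid for \emph{any} large saturated $M$, with no stability or simplicity hypothesis. Your proof takes a genuinely different route through the forking calculus of $T_G^{\mc}$, and that deserves comment.

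First, the scope of your argument is narrower than the statement: you need a well-behaved independence relation with extension and base monotonicity, so your proof only covers the case where $\theo(M)$ is simple (or stable). For the paper's actual use of the fact --- inside Theorem~\ref{imaginaries}, where $\ind^\circ$ is assumed to be forking independence and hence $T_G^{\mc}$ is simple --- this restriction is harmless, but it is not the lemma as stated, and you should say so explicitly.

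Second, the obstacle you flag as the ``main obstacle'' is in fact not one, and your proposed repair via the Independence Theorem would actually create a genuine gap. The Independence Theorem in this paper (Theorem~\ref{ind_thm_model}) is proved only \emph{over models}, not over arbitrary algebraically closed sets such as $A = \acl^{M}(e)$; in a general simple theory, 3-amalgamation over $\acl^{\eq}$-closed bases is not automatic. Fortunately, the ``standard fact'' you want is elementary and bypasses amalgamation entirely. From $\bar a \ind_A \bar b$ and $A = \acl^{\eq}(A)$, closure of forking under algebraic closure on both sides gives $\acl^{\eq}(A\bar a) \ind_A \acl^{\eq}(A\bar b)$. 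Any $c$ lying in both sets then satisfies $c \ind_A c$ by monotonicity, and $c \ind_A c$ forces $c \in \acl^{\eq}(A) = A$ (if $c \notin \acl^{\eq}(A)$, an $A$-indiscernible sequence of distinct $A$-conjugates of $c$ shows $x = c$ divides over $A$). This uses only the basic axioms of forking in a simple theory, no Independence Theorem, no stationarity, and no assumption on hyperimaginaries. Replacing your sketch with this short argument closes the only real gap; your case split into algebraic and non-algebraic types is then also unnecessary, since $\bar a = \bar b$ with $\bar a \ind_A \bar b$ already forces $\bar a \in A$ and the conclusion is trivial.

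Your ``alternative combinatorial route'' is indeed the shape of the Evans--Hrushovski argument the paper actually cites (and it has the virtue of working for arbitrary $M$), but as written it is too vague to check: the pigeonhole step produces a single $c \in X \setminus E$ algebraic over unboundedly many $Ee$-conjugates of $\bar a$, and the remaining step (showing such a $c$ is forced into $\acl^M(e)$) is exactly where the real content lies and is missing from your sketch.
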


\noindent
For the proof of the above formulation of von Neuman's Lemma, the reader may consult Lemma 4.1 in notes to a seminar which were made by David Marker (\cite{markernotes}).

In the proof of the following theorem, we will use the notion of the ``fundamental order", which was described in \cite{lascarstab} and which will be denoted by $\leqslant_{\fo}$. 
In a nutshell: the fundamental order is an ordering on types which has maximal elements (\cite[Proposition 2.7]{lascarstab}).
To ensure the reader that the fundamental order can be applied in our situation we note a few facts and then describe how our situation corresponds to the content of \cite{lascarstab}.

\begin{remark}
Assume that $A\subseteq B$ are small
subsets of $\mathfrak{D}$.
Let $p\in S(A)$ and let $q\in S(B)$ be an extension of $p$. It follows $p\geqslant_{\fo} q$.
\end{remark}

\begin{fact}\label{fo_fact1}
Assume that $A\subseteq B$ are small, algebraically closed, subsets of $\mathfrak{D}$.
Let $p\in S(A)$ and let $q\in S(B)$ be an extension of $p$. The following are equivalent.
\begin{enumerate}
\item
Type $q$ is the non-forking extension of $p$.

\item
For every small $M\preceq N\preceq\mathfrak{D}$, such that $A\subseteq M$ and $B\subseteq N$, there exists $p_1\in S(M)$ such that the heir of $p_1$ on $N$ extends $q$.
\end{enumerate}
\end{fact}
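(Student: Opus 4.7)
The plan is to exploit the standard stable-theory dictionary between non-forking extensions and heir extensions over models. Since $T'$ is stable and $M, N$ are elementary substructures of $\mathfrak{D}$, types over $M$ are stationary, and the unique non-forking extension of a type over $M$ to any larger set coincides with the heir (this is the global fact underlying Lemma \ref{dnf_heir}).

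For the direction $(1) \Rightarrow (2)$, I would argue as follows. Fix $M \preceq N \preceq \mathfrak{D}$ with $A \subseteq M$ and $B \subseteq N$. Let $r \in S(N)$ be the non-forking extension of $q$ over $N$, which exists and is unique since $B$ is algebraically closed and $q$ is stationary (both $p$ and $q$ are stationary because $A, B$ are algebraically closed). By transitivity of non-forking and the assumption that $q$ does not fork over $A \subseteq M$, the type $r$ does not fork over $M$. Since $M$ is a model and $T'$ is stable, $r$ is the unique heir of $r|_M$ on $N$. Setting $p_1 := r|_M \in S(M)$, the heir of $p_1$ on $N$ is precisely $r$, which extends $q$, as required.

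For the direction $(2) \Rightarrow (1)$, the key is to choose $M$ so that $M$ is independent from $B$ over $A$. Using the existence of non-forking extensions, pick a small elementary substructure $M \preceq \mathfrak{D}$ with $A \subseteq M$ such that $M \ind_A B$, and then take a small $N \preceq \mathfrak{D}$ containing $M \cup B$. Apply the hypothesis to this pair: there exists $p_1 \in S(M)$ whose heir $r$ on $N$ extends $q$. Since the heir of a type over a model is always non-forking over that model, $r$ does not fork over $M$; in particular, $q = r|_B$ does not fork over $M$. Now pick a realization $a \models q$. Then $a \ind_M B$, and by our choice of $M$ we have $B \ind_A M$. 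Applying left transitivity of forking (via symmetry), we obtain $B \ind_A Ma$, hence $a \ind_A B$, so $q$ does not fork over $A$.

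The main potential obstacle is ensuring all the standard facts are available in this exact form: the existence of an $M$ realizing a non-forking extension over $A$ with $M \ind_A B$ (using that small elementary substructures of $\mathfrak{D}$ can be built realizing any prescribed non-forking type of a model over $A$), and the ``heir equals non-forking extension'' equivalence over models in stable theories. Both are classical and in fact implicit in results already cited or proved in the paper, so the argument reduces to a careful bookkeeping of these well-known stable-theory facts rather than any genuinely new technical difficulty.
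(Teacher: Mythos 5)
Your proof is correct. For the direction $(1)\Rightarrow(2)$ your argument is essentially the same as the paper's: both identify $p_1$ with the non-forking restriction of $q$ (equivalently of $p$) to $M$, and both use that in a stable theory the heir of a type over a model is its unique non-forking extension; the paper then verifies that this heir extends $q$ via Corollary~\ref{regular.PAPA} (stationarity over the algebraically closed $A$), while you deduce it from the uniqueness of the non-forking extension of $q$ to $N$ --- same mechanism, slightly different packaging.

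For $(2)\Rightarrow(1)$ you take a genuinely different route. The paper argues formula by formula: it fixes an arbitrary $\varphi(x,b)\in q$, invokes the characterization from \cite[Corollary~8.4]{casasimpl} relating ``forking over $A$'' to ``forking over a model containing $A$'', and then applies (2) to conclude that $\varphi(x,b)$ does not fork over $M$. You instead pick a single carefully chosen small model $M\preceq\mathfrak{D}$ with $A\subseteq M$ and $M\ind_A B$, apply (2) to obtain a heir $r$ over $N\supseteq MB$ that does not fork over $M$, and then pull non-forking back to $A$ by transitivity ($B\ind_A M$ together with $B\ind_M a$ yields $B\ind_A Ma$). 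This avoids the external citation entirely and is more self-contained. One small wrinkle to tidy up: after defining $r$ you write ``pick a realization $a\models q$'' --- you should pick $a\models r$ (any realization of $q$ can be moved over $B$ to one realizing $r$), since it is $r$, not $q$ alone, that witnesses $a\ind_M B$. With that fixed the argument is airtight.
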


\begin{proof}
We start with proving implication from (1) to (2). Assume that $q=\tp_{\mathcal{L}}^{\mathfrak{D}}(a/B)$ for some small $a\subset\mathfrak{D}$. Let $M\preceq N\preceq\mathfrak{D}$ be such that $A\subseteq M$ and $B\subseteq N$.
For $p_1$ we set $\tp_{\mathcal{L}}^{\mathfrak{D}}(a'/M)$ such that $a'\equiv_A a$ and $a'\ind_A^{\mathfrak{D}}M$ (the non-forking extension of $p$).
Heir of $p_1$ on $N$ is its non-forking extension, say $p_1|_N=\tp_{\mathcal{L}}^{\mathfrak{D}}(a^{\prime\prime}/N)$ 
satisfying $a^{\prime\prime}\equiv_M a'$ and $a^{\prime\prime}\ind_M^{\mathfrak{D}} N$. Our goal is to show that $a^{\prime\prime}\equiv_B a$.

Because $A\subseteq M$, $a^{\prime\prime}\equiv_M a'$ and $a'\ind_A^{\mathfrak{D}} M$, it follows that $a^{\prime\prime}\ind_A^{\mathfrak{D}}M$. By transitivity of $\ind$ we obtain that $a^{\prime\prime}\ind_A^{\mathfrak{D}} N$ and so $a^{\prime\prime}\ind_A^{\mathfrak{D}} B$. We know that $a^{\prime\prime}\equiv_M a'\equiv_A a$ and $a\ind_A^{\mathfrak{D}} B$. Therefore we can apply Corollary \ref{regular.PAPA} ($A\subseteq B$ is obviously regular).

Now we will prove implication from (2) to (1). Assume that for some $\varphi(x,y)\in\mathcal{L}(A)$ and some $b\subseteq B$ it is $\varphi(x,b)\in q$. We need show that $\varphi(x,b)$ does not divide over $A$. To see this we use equivalent condition from \cite[Corollary 8.4]{casasimpl}, i.e. $\varphi(x,b)\in q$ does not fork over any model $M$ containing $A$. Let $A\subseteq M$ be arbitrary and let $M\preceq N$ be such that $B\subseteq N$. By condition (2) there exists type $p_1\in S(M)$ such that its heir on $N$ contains $\varphi(x,b)\in q$. Therefore $\varphi(x,b)\in q$ does not fork over $M$.
\end{proof}

\begin{cor}
Assume that $A\subseteq B$ are small, algebraically closed, subsets of $\mathfrak{D}$.
Let $p\in S(A)$ and let $q\in S(B)$ be an extension of $p$. The following are equivalent.
\begin{enumerate}
\item
Type $q$ is the non-forking extension of $p$.

\item
It follows $p\sim_{\fo} q$ (i.e. $p\leqslant_{\fo} q$ and $q\leqslant_{\fo} p$).
\end{enumerate}
\end{cor}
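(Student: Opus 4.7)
The plan is to reduce both implications to Fact \ref{fo_fact1} together with the standard Lascar--Poizat characterization of the fundamental order: the heir of a type over a model has the same representable formulas as the original type, hence lies in the same $\sim_{\fo}$-class, and types in the same fundamental-order class are exactly the non-forking extensions of a common restriction.

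For $(1) \Rightarrow (2)$: Assume $q$ is the non-forking extension of $p$. Since $q$ extends $p$, one direction of the fundamental-order comparison between $p$ and $q$ is immediate from the definition (every formula represented in $p$ is, by inclusion, represented in $q$). For the reverse direction, apply Fact \ref{fo_fact1}: for any small $M \preceq N \preceq \mathfrak{D}$ with $A \subseteq M$ and $B \subseteq N$, one obtains $p_1 \in S(M)$ whose heir $p_1|_N$ on $N$ extends $q$. By the very definition of heir, a formula $\varphi(x,y)$ is representable in $p_1|_N$ if and only if it is representable in $p_1$, so $p_1 \sim_{\fo} p_1|_N$. Now $p_1$ restricts to $p$ on $A$, and by choosing $M$ so that $p_1$ is a non-forking extension of $p$ over the model $M$, the standard fact that non-forking extensions to models are maximal in the fundamental order yields $p_1 \sim_{\fo} p$. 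Transitivity of $\sim_{\fo}$ together with the chain $q \subseteq p_1|_N \sim_{\fo} p_1$ and $p \subseteq p_1$ then forces $q \sim_{\fo} p$.

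For $(2) \Rightarrow (1)$: Since $T'$ is stable and $A = \acl_{\mathcal{L}}^{\mathfrak{D}}(A)$, the type $p$ is stationary, so it has a \emph{unique} non-forking extension $q' \in S(B)$. By the implication $(1) \Rightarrow (2)$ already established, $q' \sim_{\fo} p$. Combined with the assumption $q \sim_{\fo} p$, this gives $q \sim_{\fo} q'$. Both $q$ and $q'$ are complete types over the same algebraically closed set $B$ extending $p$, and the standard Lascar--Poizat correspondence (every $\sim_{\fo}$-class of extensions of $p$ over a fixed base containing $A$ contains a unique element, namely the non-forking extension) forces $q = q'$, so $q$ is the non-forking extension of $p$.

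The main obstacle will be purely bookkeeping: pinning down the convention used in \cite{lascarstab} for the fundamental order (so that ``extension'' corresponds to the correct inequality) and verifying that the fundamental order behaves well when the base sets are merely algebraically closed rather than models, which is precisely the content of Fact \ref{fo_fact1} that lets us pass to heirs over $M$ and $N$. Once these conventions are fixed, the argument above is routine.
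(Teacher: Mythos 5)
Your direction $(1)\Rightarrow(2)$ is essentially on the right track (it mirrors how Fact \ref{fo_fact1} combines with Lascar--Poizat), but the justification "by choosing $M$ so that $p_1$ is a non-forking extension" is backwards: Fact \ref{fo_fact1} asserts the \emph{existence} of some $p_1$, and choosing $M$ does not let you pick which $p_1$ you get. The argument works because, unwinding the \emph{proof} of Fact \ref{fo_fact1}, the witness $p_1$ constructed there is the non-forking extension of $p$ to $M$ (and then the transitivity chain $q\geqslant_{\fo}p_1|_N\sim_{\fo}p_1\sim_{\fo}p\geqslant_{\fo}q$ collapses to $q\sim_{\fo}p$). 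You should either cite the proof rather than the statement, or bypass Fact \ref{fo_fact1} and directly take $p_1$ to be the unique non-forking extension of $p$ to $M$, checking that its heir on $N$ restricts to $q$ on $B$ by stationarity.

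The direction $(2)\Rightarrow(1)$ has a genuine gap. The parenthetical claim "every $\sim_{\fo}$-class of extensions of $p$ over a fixed base containing $A$ contains a unique element" is false in general: distinct forking extensions of $p$ to $B$ may well lie in the same fundamental-order class, so $q\sim_{\fo}q'$ alone does not force $q=q'$. What \emph{is} true (and is precisely Proposition 3.8 in \cite{lascarstab}, lifted through Fact \ref{fo_fact1}) is that an extension $q$ of $p$ lies in the same class as $p$ if and only if it is non-forking; uniqueness then applies only in that maximal class, by stationarity. But "same class $\Rightarrow$ non-forking" is exactly what you are trying to establish, so invoking it via the disguised uniqueness claim is circular. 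The correct route is to argue directly that $p\sim_{\fo}q$ yields condition (2) of Fact \ref{fo_fact1} (via the fundamental-order characterization of heirs from \cite{lascarstab}), and then apply the $(2)\Rightarrow(1)$ direction of Fact \ref{fo_fact1}. This is exactly what the paper's one-line proof does: it cites Fact \ref{fo_fact1} and Proposition 3.8 of \cite{lascarstab} and leaves the bookkeeping implicit, whereas your write-up makes the bookkeeping explicit in $(1)\Rightarrow(2)$ but substitutes an incorrect uniqueness principle for the needed direction in $(2)\Rightarrow(1)$.
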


\begin{proof}
By Fact \ref{fo_fact1} and Proposition 3.8 in \cite{lascarstab}.
\end{proof}

\begin{theorem}\label{imaginaries}
Let us assume that the ternary relation $\ind^{\circ}$ coincides with the forking independence in the theory $T_G^{\mc}$ (which implies that $T_G^{\mc}$ is simple).
Then the theory $T_G^{\mc}$ allows geometric elimination of imaginary elements.
\end{theorem}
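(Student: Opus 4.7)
The plan is to mimic the proof of geometric elimination of imaginaries for $\mathrm{ACFA}$ in \cite[\S 1.10]{acfa1} and for $T_A$ (a stable $T$ with one automorphism) in \cite[\S 2.9]{ChaPil}, replacing the role of $\acl$ in the stable reduct by the identity $\acl_{\mathcal{L}^{G}}^{\mathfrak{C}}(A)=\acl_{\mathcal{L}}^{\mathfrak{D}}(G\cdot A)\cap\mathfrak{C}$ supplied by Corollary~\ref{acl_all}. Let $e\in\mathfrak{C}^{\eq}$ and put $E:=\acl_{\mathcal{L}^{G}}^{\mathfrak{C}^{\eq}}(e)\cap\mathfrak{C}$. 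By a standard compactness reduction it is enough to show $e\in\acl_{\mathcal{L}^{G}}^{\mathfrak{C}^{\eq}}(E)$; any finite $E_0\subseteq E$ witnessing this algebraicity automatically lies in $\acl_{\mathcal{L}^{G}}^{\mathfrak{C}^{\eq}}(e)$, yielding interalgebraicity of $e$ with the finite real tuple $E_0$.

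Fix a finite real tuple $a\subseteq\mathfrak{C}$ and a $\emptyset$-definable function $f$ with $e=f(a)$, and apply Fact~\ref{vNlemma} (von Neumann) inside $(\mathfrak{C},\bar\sigma)\models T_{G}^{\mc}$ with $X$ the (trivially $\emptyset$-definable) real sort. One obtains a real tuple $b\subseteq\mathfrak{C}$ with $\tp_{\mathcal{L}^{G}}^{\mathfrak{C}}(b/Ee)=\tp_{\mathcal{L}^{G}}^{\mathfrak{C}}(a/Ee)$ (so in particular $f(b)=e$) and
$$\acl_{\mathcal{L}^{G}}^{\mathfrak{C}}(Ea)\cap\acl_{\mathcal{L}^{G}}^{\mathfrak{C}}(Eb)=E.$$
Corollary~\ref{acl_all} rewrites this as
$$\acl_{\mathcal{L}}^{\mathfrak{D}}(G\cdot Ea)\cap\acl_{\mathcal{L}}^{\mathfrak{D}}(G\cdot Eb)\cap\mathfrak{C}=\acl_{\mathcal{L}}^{\mathfrak{D}}(G\cdot E)\cap\mathfrak{C}.$$

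The central step is to promote this identity to the forking-independence statement $a\ind^{\circ}_{E}b$, equivalently $G\cdot a\ind^{\mathfrak{D}}_{G\cdot E}G\cdot b$ in the stable theory $T'$. The two types $\tp_{\mathcal{L}}^{\mathfrak{D}}(G\cdot a/G\cdot E)$ and $\tp_{\mathcal{L}}^{\mathfrak{D}}(G\cdot b/G\cdot E)$ coincide, and since $G\cdot E\subseteq\mathfrak{C}$ is $\mathcal{L}$-regular they are stationary by Corollary~\ref{reg.stationary}. The fundamental-order characterisation of the unique non-forking extension (Fact~\ref{fo_fact1} and its corollary), combined with the displayed intersection identity—which rules out any shared $\mathcal{L}$-algebraic witness strictly above $G\cdot E$—then forces $G\cdot a\ind^{\mathfrak{D}}_{G\cdot E}G\cdot b$.

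To finish, pick a small $(M,\bar\sigma)\preceq(\mathfrak{C},\bar\sigma)$ containing $E$ with $ab\ind^{\circ}_{E}M$ (the extension property of $\ind^{\circ}$, Proposition~\ref{6properties}); transitivity yields $a\ind^{\circ}_{M}b$. Iterating the Independence Theorem over the model $M$ (Theorem~\ref{ind_thm_model}) starting from $a,b$ as seed produces arbitrarily long $M$-independent sequences of realisations of $\tp_{\mathcal{L}^{G}}(a/M)$ all mapping to $e$ under $f$; the standard consequence of the Independence Theorem in a simple theory---if $a\ind^{\circ}_{M}b$ with $M$ a model and $c\in\dcl_{\mathcal{L}^{G}}^{\eq}(a)\cap\dcl_{\mathcal{L}^{G}}^{\eq}(b)$, then $c\in\acl_{\mathcal{L}^{G}}^{\mathfrak{C}^{\eq}}(M)$---then gives $e\in\acl_{\mathcal{L}^{G}}^{\mathfrak{C}^{\eq}}(M)$. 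Since $e\in\dcl^{\eq}(a)$ and $ab\ind^{\circ}_{E}M$, we also have $e\ind^{\circ}_{E}M$, and combined with $e\in\acl_{\mathcal{L}^{G}}^{\mathfrak{C}^{\eq}}(M)$ and the simplicity of $T_{G}^{\mc}$ (Theorem~\ref{TG.simple}) this yields $e\in\acl_{\mathcal{L}^{G}}^{\mathfrak{C}^{\eq}}(E)$, as required. The hard step is the third paragraph: turning the purely set-theoretic intersection produced by von Neumann into genuine $\mathcal{L}$-forking-independence in $\mathfrak{D}$, which is precisely where the stability and elimination of imaginaries of $T'$, Corollary~\ref{acl_all}, and the fundamental-order machinery must all be used in concert.
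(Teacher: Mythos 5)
Your proposal skips over precisely the hard part of the argument, and the step it glosses over is in fact a genuine gap.

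After applying Fact~\ref{vNlemma} to obtain $b$ with $\tp_{\mathcal{L}^G}^{\mathfrak{C}}(b/Ee)=\tp_{\mathcal{L}^G}^{\mathfrak{C}}(a/Ee)$ and $\acl_{\mathcal{L}^G}^{\mathfrak{C}}(Ea)\cap\acl_{\mathcal{L}^G}^{\mathfrak{C}}(Eb)=E$, you assert that this intersection identity, together with stationarity of $\tp_{\mathcal{L}}^{\mathfrak{D}}(G\cdot a/G\cdot E)$ and ``the fundamental-order characterisation of the unique non-forking extension,'' \emph{forces} $G\cdot a\ind^{\mathfrak{D}}_{G\cdot E}G\cdot b$. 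This implication is simply not true and is not what Fact~\ref{fo_fact1} or its corollary give you. The disjointness of algebraic closures over $E$ is \emph{necessary} for independence but nowhere near sufficient: two realisations of a stationary type over $E$ can have disjoint algebraic closures over $E$ and still fork. Stationarity says the non-forking extension is unique; it does not say that what you get from von Neumann \emph{is} non-forking. The fundamental-order corollary only characterises when a given extension is non-forking — it is a criterion, not a production mechanism — so nothing in the cited facts lets you ``rule out any shared $\mathcal{L}$-algebraic witness'' and conclude independence.

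This is exactly where the paper's proof invests almost all of its effort. Instead of using the von Neumann output $b_0$ directly, it re-selects a $b$ that is \emph{maximal in the fundamental order} among elements with the von Neumann disjointness property (invoking \cite[Prop.\ 2.7]{lascarstab} for existence of a maximum). It then introduces two auxiliary elements $c\models\tp(b/\acl_{\mathcal{L}^G}^{\mathfrak{C}}(Ca))$ with $c\ind^{\circ}_{Ca}b$ and $d\models\tp(c/\acl_{\mathcal{L}^G}^{\mathfrak{C}}(Cb))$ with $d\ind^{\circ}_{Cb}a$, proves a chain of $\sim_{\fo}$ equivalences which \emph{only} closes because $b$ was chosen $\fo$-maximal, deduces $Gc\ind^{\mathfrak{D}}_{G(Cb)}G(Cab)$, and finally computes the canonical base of $\tp_{\mathcal{L}}^{\mathfrak{D}}(Gc/\acl_{\mathcal{L}}^{\mathfrak{D}}(G(Cab))\cap\mathfrak{C})$ to pull the independence down to the base $C$. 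Note that the independence is established for the new element $c$, not for the $b_0$ coming out of von Neumann — your proposal has no analogue of this. Your concluding paragraph (passing through a model $M$ via the extension property and then using the independence-theorem consequence ``$e\in\dcl^{\eq}(a)\cap\dcl^{\eq}(c)$ and $a\ind^{\circ}_M c$ gives $e\in\acl^{\eq}(M)$'') is a reasonable alternative to the paper's direct computation over $C$, and the descent from $\acl^{\eq}(M)$ to $\acl^{\eq}(E)$ via $e\ind^{\circ}_E M$ is sound. But it is downstream of the independence you never actually established, so the proof as written does not go through.
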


\begin{proof}
Let $e$ be an element of the structure $(\mathfrak{C},\bar{\sigma})^{\eq}$ given by a $\emptyset$-definable function $f$ and a finite tuple $a\subseteq\mathfrak{C}$, i.e. $f(a)=e$. Let $C:=\acl_{\mathcal{L}^G}^{\mathfrak{C}^{\eq}}(e)\cap\mathfrak{C}$ and
$Q:=\lbrace a'\subseteq\mathfrak{C}\;|\;a'\models\tp_{\mathcal{L}^G}^{\mathfrak{C}}(a/C)\rbrace$.
\
\\
\textbf{Claim}
There is $c\in Q$ such that $a\ind_C^{\circ} c$ and $f(c)=e$.
\
\\
Proof of Claim. By Fact \ref{vNlemma} there exists $b_0\models\tp_{\mathcal{L}^G}^{\mathfrak{C}^{\eq}}(a/Ce)$ such that
$$C\subseteq\acl_{\mathcal{L}^G}^{\mathfrak{C}}(Ca)\cap\acl_{\mathcal{L}^G}^{\mathfrak{C}}(Cb_0)\subseteq
\acl_{\mathcal{L}^G}^{\mathfrak{C}^{\eq}}(Ca)\cap\acl_{\mathcal{L}^G}^{\mathfrak{C}^{\eq}}(Cb_0)\cap\mathfrak{C}\subseteq C.$$
We see that $f(b_0)=e$ and $b_0\in Q$. Choose, by \cite[Proposition 2.7]{lascarstab}, $b\in Q$ such that
\begin{itemize}
\item
$\acl_{\mathcal{L}^G}^{\mathfrak{C}}(Ca)\cap\acl_{\mathcal{L}^G}^{\mathfrak{C}}(Cb)=C$,

\item
$f(b)=e$,

\item
there is no $b'\in Q$ satisfying $f(b')=e$ and $\acl_{\mathcal{L}^G}^{\mathfrak{C}}(Ca)\cap\acl_{\mathcal{L}^G}^{\mathfrak{C}}(Cb')=C$ such that
$$\tp_{\mathcal{L}}^{\mathfrak{D}}\Big(Gb'/\acl_{\mathcal{L}}^{\mathfrak{D}}\big(G\cdot(Ca)\big)\Big)>_{\fo}\tp_{\mathcal{L}}^{\mathfrak{D}}\Big(Gb/\acl_{\mathcal{L}}^{\mathfrak{D}}\big(G\cdot(Ca)\big)\Big).$$

\end{itemize}
Let $c\models\tp_{\mathcal{L}^G}^{\mathfrak{C}}(b/\acl_{\mathcal{L}^G}^{\mathfrak{C}}(Ca))$ be such that $c\ind_{Ca}^{\circ} b$. It follows that $f(c)=e$ and
$$\acl_{\mathcal{L}^G}^{\mathfrak{C}}(Cc)\cap\acl_{\mathcal{L}^G}^{\mathfrak{C}}(Cab)\subseteq\acl_{\mathcal{L}^G}^{\mathfrak{C}}(Cc)\cap\acl_{\mathcal{L}^G}^{\mathfrak{C}}(Ca)=C.$$
Let $d\models\tp_{\mathcal{L}^G}^{\mathfrak{C}}(c/\acl_{\mathcal{L}^G}^{\mathfrak{C}}(Cb))$ be such that $d\ind_{Cb}^{\circ} a$. It follows that $f(d)=e$ and
$$\acl_{\mathcal{L}^G}^{\mathfrak{C}}(Cd)\cap\acl_{\mathcal{L}^G}^{\mathfrak{C}}(Cab)\subseteq\acl_{\mathcal{L}^G}^{\mathfrak{C}}(Cd)\cap\acl_{\mathcal{L}^G}^{\mathfrak{C}}(Cb)=C.$$
Because $\acl_{\mathcal{L}}^{\mathfrak{D}}(G(Cab))\supseteq\acl_{\mathcal{L}}^{\mathfrak{D}}(G(Cb))$, we have
$$\tp_{\mathcal{L}}^{\mathfrak{D}}\Big(Gc/\acl_{\mathcal{L}}^{\mathfrak{D}}\big(G(Cab)\big)\Big)\leqslant_{\fo}\tp_{\mathcal{L}}^{\mathfrak{D}}\Big(Gc/\acl_{\mathcal{L}}^{\mathfrak{D}}\big(G(Cb)\big)\Big).$$
There is $h_1\in\aut_{\mathcal{L}^G}(\mathfrak{C}/\acl_{\mathcal{L}^G}^{\mathfrak{C}}(Cb))$ sending $c$ to $d$, and hence sending $Gc$ to $Gd$. Note that $\acl_{\mathcal{L}^G}^{\mathfrak{C}}(Cb)\subseteq\mathfrak{C}$ is regular. By Fact \ref{regular} we can extend $h_1$ and $\id:\acl_{\mathcal{L}}^{\mathfrak{D}}(G(Cb))\to \acl_{\mathcal{L}}^{\mathfrak{D}}(G(Cb))$ 
to $\hat{h_1}\in\aut_{\mathcal{L}}(\mathfrak{D}/\acl_{\mathcal{L}}^{\mathfrak{D}}(G(Cb)))$ sending $Gc$ to $Gd$, which implies
$$\tp_{\mathcal{L}}^{\mathfrak{D}}\Big(Gc/\acl_{\mathcal{L}}^{\mathfrak{D}}\big(G(Cb)\big)\Big)=\tp_{\mathcal{L}}^{\mathfrak{D}}\Big(Gd/\acl_{\mathcal{L}}^{\mathfrak{D}}\big(G(Cb)\big)\Big).$$
Because $d\ind_{Cb}^{\circ}a$ and so $Gd\ind_{\acl_{\mathcal{L}}^{\mathfrak{D}}(G(Cb))}^{\mathfrak{D}}\acl_{\mathcal{L}}^{\mathfrak{D}}(G(Cab))$, it follows
$$\tp_{\mathcal{L}}^{\mathfrak{D}}\Big(Gd/\acl_{\mathcal{L}}^{\mathfrak{D}}\big(G(Cb)\big)\Big)\sim_{\fo}\tp_{\mathcal{L}}^{\mathfrak{D}}\Big(Gd/\acl_{\mathcal{L}}^{\mathfrak{D}}\big(G(Cab)\big)\Big).$$
From the last tree exposed equations, we see that
$$\tp_{\mathcal{L}}^{\mathfrak{D}}\Big(Gc/\acl_{\mathcal{L}}^{\mathfrak{D}}\big(G(Cab)\big)\Big)\leqslant_{\fo}\tp_{\mathcal{L}}^{\mathfrak{D}}\Big(Gd/\acl_{\mathcal{L}}^{\mathfrak{D}}\big(G(Cab)\big)\Big).$$
Because $c\ind_{Ca}^{\circ}b$, it follows that
$$\tp_{\mathcal{L}}^{\mathfrak{D}}\Big(Gc/\acl_{\mathcal{L}}^{\mathfrak{D}}\big(G(Cab)\big)\Big)\sim_{\fo}\tp_{\mathcal{L}}^{\mathfrak{D}}\Big(Gc/\acl_{\mathcal{L}}^{\mathfrak{D}}\big(G(Ca)\big)\Big).$$
Again, there is $h_2\in\aut_{\mathcal{L}^G}(\mathfrak{C}/\acl_{\mathcal{L}^G}^{\mathfrak{C}}(Ca))$ sending $Gb$ to $Gc$, hence we can extend it by Fact \ref{regular} and obtain
$$\tp_{\mathcal{L}}^{\mathfrak{D}}\Big(Gc/\acl_{\mathcal{L}}^{\mathfrak{D}}\big(G(Ca)\big)\Big)=\tp_{\mathcal{L}}^{\mathfrak{D}}\Big(Gb/\acl_{\mathcal{L}}^{\mathfrak{D}}\big(G(Ca)\big)\Big).$$
Because $\acl_{\mathcal{L}}^{\mathfrak{D}}(G(Cab))\supseteq\acl_{\mathcal{L}}^{\mathfrak{D}}(G(Ca))$, we have
$$\tp_{\mathcal{L}}^{\mathfrak{D}}\Big(Gd/\acl_{\mathcal{L}}^{\mathfrak{D}}\big(G(Cab)\big)\Big)\leqslant_{\fo} \tp_{\mathcal{L}}^{\mathfrak{D}}\Big(Gd/\acl_{\mathcal{L}}^{\mathfrak{D}}\big(G(Ca)\big)\Big).$$
We have all ingredients, we use a simplified notation to describe the situation:
$$b/Ca=c/Ca\sim_{\fo}c/Cab\leqslant_{\fo}d/Cab\leqslant_{\fo}d/Ca,$$
hence, by the choice of $b$, inequalities in the above line are in fact ``$\sim_{\fo}$". Therefore
$$\tp_{\mathcal{L}}^{\mathfrak{D}}\Big(Gc/\acl_{\mathcal{L}}^{\mathfrak{D}}\big(G(Cab)\big)\Big)\sim_{\fo}\tp_{\mathcal{L}}^{\mathfrak{D}}\Big(Gd/\acl_{\mathcal{L}}^{\mathfrak{D}}\big(G(Cab)\big)\Big),$$
which implies
$$\tp_{\mathcal{L}}^{\mathfrak{D}}\Big(Gc/\acl_{\mathcal{L}}^{\mathfrak{D}}\big(G(Cab)\big)\Big)\sim_{\fo}\tp_{\mathcal{L}}^{\mathfrak{D}}\Big(Gc/\acl_{\mathcal{L}}^{\mathfrak{D}}\big(G(Cb)\big)\Big).$$
The last thing occurs only if $Gc\ind_{G(Cb)}^{\mathfrak{D}}G(Cab)$.

Introduce $p:=\tp_{\mathcal{L}}^{\mathfrak{D}}(Gc/\acl_{\mathcal{L}}^{\mathfrak{D}}(G(Cab))\cap\mathfrak{C})$, which is, by Corollary \ref{reg.stationary}, a stationary type. It follows $\cb(p)\subseteq\dcl_{\mathcal{L}}^{\mathfrak{D}}(\acl_{\mathcal{L}}^{\mathfrak{D}}(G(Cab))\cap\mathfrak{C})\subseteq\mathfrak{C}$. By \cite[Remark 2.26]{anandgeometric}, because $p$ does not fork over $G(Cb)$, we have $\cb(p)\subseteq\acl_{\mathcal{L}}^{\mathfrak{D}}(G(Cb))$ and because $p$ does not fork over $G(Ca)$, we have $\cb(p)\subseteq\acl_{\mathcal{L}}^{\mathfrak{D}}(G(Ca))$. Therefore
$$\cb(p)\subseteq\acl_{\mathcal{L}}^{\mathfrak{D}}(G(Ca))\cap\acl_{\mathcal{L}}^{\mathfrak{D}}(G(Cb))\cap\mathfrak{C}=\acl_{\mathcal{L}^G}^{\mathfrak{C}}(Ca)\cap\acl_{\mathcal{L}^G}^{\mathfrak{C}}(Cb)=C.$$
Hence $p$ does not fork over $C$, $Gc\ind_C^{\mathfrak{D}}G(Cab)$ and therefore $c\ind_C^{\circ}ab$. Finally, we obtained $c\in Q$ such that $a\ind_C^{\circ} c$ and $f(c)=e$. This is the end of the proof of Claim.

Note that $e\in\dcl_{\mathcal{L}^G}^{\mathfrak{C}^{\eq}}(a)\cap\dcl_{\mathcal{L}^G}^{\mathfrak{C}^{\eq}}(c)$, but $a\ind_C^{\circ} c$ implies
that
$\acl_{\mathcal{L}^G}^{\mathfrak{C}^{eq}}(a)\cap\acl_{\mathcal{L}^G}^{\mathfrak{C}^{eq}}(c)\subseteq\acl_{\mathcal{L}^G}^{\mathfrak{C}^{\eq}}(C)$. Therefore $e\in\acl_{\mathcal{L}^G}^{\mathfrak{C}^{\eq}}(C)=\acl_{\mathcal{L}^G}^{\mathfrak{C}^{\eq}}(\acl_{\mathcal{L}^G}^{\mathfrak{C}^{\eq}}(e)\cap\mathfrak{C})$.
\end{proof}

For the final remark of this subsection, we need the following easy observation.

\begin{lemma}\label{finite.codes}
Theory $T_G^{\mc}$ codes finite sets.
\end{lemma}

\begin{proof}
Let $A\subseteq\mathfrak{C}$ be finite. As a finite subset of $\mathfrak{D}$ it has a code $b_A\subseteq\mathfrak{D}$. Because $A$ is $A$-definable, it follows that $b_A\subseteq\dcl_{\mathcal{L}}^{\mathfrak{D}}(A)\subseteq\dcl_{\mathcal{L}}^{\mathfrak{D}}(\mathfrak{C})=\mathfrak{C}$. Assume that $f\in\aut_{\mathcal{L}^G}(\mathfrak{C})$. We extend $f$ to $\hat{f}\in\aut_{\mathcal{L}}(\mathfrak{D})$ (by Lemma \ref{fhat_lemma})
and obtain
$$f(A)=A\;\;\iff\;\;\hat{f}(A)=A\;\;\iff\;\;\hat{f}(b_A)=b_A\;\;\iff\;\;f(b_A)=b_A,$$
where the middle part follows from definition of being a code in $\mathfrak{D}$.
\end{proof}

\begin{remark}\label{no.full.EI}
\begin{enumerate}
\item
One could ask about the weak elimination of imaginaries in $T_G^{\mc}$. The theory CCMA from \cite{hils0} is an example of $T_G^{\mc}$ which fits into our assumptions, but does not eliminate imaginaries (Corollary 3.6 in \cite{hils0}), hence does not allow the weak elimination of imaginaries - otherwise by \cite[Proposition 1.7]{casfar} and Lemma \ref{finite.codes}, we would obtain a contradiction.

\item
By Theorem \ref{imaginaries} and Lemma \ref{finite.codes}, we know that $G-\tcf$ has the geometric elimination of imaginaries and codes finite tuples, but we do not know whether is has the weak elimination of imaginaries. It turns out that is enough to add a finite tuple to the language to obtain the full elimination of imaginaries (see Theorem 4.10 in \cite{nacfa}).
\end{enumerate}
\end{remark}

\subsection{Simplicty of invariants}
In the first versions of this paper the following part was placed at the end of Subsection \ref{subsec:PAC}, because previously we assumed existence of $T_G^{\mc}$ in the whole paper and not only in Section \ref{sec:forking}. It seems that, unlike other results of Subsection \ref{subsec:PAC}, simplicity of invariants needs existence of some model complete theory with a $G$-action. Therefore we moved statements about simplicity of invariants here, at the end of Section \ref{sec:forking}, where existence of $T_G^{\mc}$ is assumed.

In the following subsection we assume also that $G$ is finitely generated, thus  invariants are a definable subset of $\mathfrak{C}$. We will give two different arguments for simplicity of invariants: one using simplicity of $T_G^{\mc}$ (if algebraic closures split), the other one using results about invariants obtained in Subsection \ref{subsec:PAC}.

Now, we briefly describe how to pass between two worlds: $\mathcal{L}$-structure $M^G$ and $\mathcal{L}^G$-structure $(M,(\sigma_g)_{g\in G})$, where 
$(M,(\sigma_g)_{g\in G})\preceq (\mathfrak{C},(\sigma_g)_{g\in G})$ (not necessarily small).
Let $\varphi(x)$ be a $\mathcal{L}$-formula, $c\in M^G$.
We claim that there is an $\mathcal{L}^G$-formula $\varphi^G(x)$ such that
\begin{equation}\label{translation1}
M^G\models\varphi(c)\;\iff\; M\models\varphi^G(c).
\end{equation}
Definition of $\varphi^G(x)$ is given recursively.
If $\varphi(x)$ is atomic, set $\varphi^G(x)$ to be
$$\varphi(x)\,\wedge\,\bigwedge\limits_{g\text{ is generator of }G}\sigma_g(x)=x.$$
We deal with adding quantifiers to a $\mathcal{L}$-formula $\psi(x,y)$ in the following manner
$$(\forall y)\,(\,\psi(x,y)\,) \text{ turns into } (\forall y)\,\big(\bigwedge\limits_{g\text{ is generator of }G}\sigma_g(y)=y\rightarrow \psi^G(x,y)\big),$$
$$(\exists y)\,(\,\psi(x,y)\,) \text{ turns into } (\exists y)\,\big(\bigwedge\limits_{g\text{ is generator of }G}\sigma_g(y)=y\,\wedge\, \psi^G(x,y)\big).$$
Now, if $\varphi(x)$ and $\psi(y)$ are $\mathcal{L}$-formulas, we set
\begin{IEEEeqnarray*}{rCl}
(\varphi\,\wedge\,\psi)^G(x,y) & \text{ to be } & \varphi^G(x)\,\wedge\,\psi^G(y), \\
(\varphi\,\vee\,\psi)^G(x,y) & \text{ to be } & \varphi^G(x)\,\vee\,\psi^G(y),\\
(\neg \varphi)^G(x) & \text{ to be } & \neg\varphi^G(x).
\end{IEEEeqnarray*}
In other words we restrict domain of every variable (free or not) to the definable subset  of invariants of the $G$-action, which is also an $\mathcal{L}$-substructure (see Remark \ref{inv.perfect}).

\begin{cor}\label{cor.MG.simpe}
If algebraic closures in $\mathfrak{D}$ split over $\mathfrak{C}$,
then the theory $\theo_{\mathcal{L}}(M^G)$ is simple.
\end{cor}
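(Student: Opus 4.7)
The plan is to exploit the interpretation of the $\mathcal{L}$-structure $M^G$ inside the $\mathcal{L}^G$-structure $(M,\bar{\sigma})$, together with Theorem \ref{TG.simple}, which under the splitting hypothesis on algebraic closures guarantees that $T_G^{\mc}$ is simple. Since $G$ is finitely generated, the subset $M^G\subseteq M$ is $\emptyset$-definable in the language $\mathcal{L}^G$ by the formula $\bigwedge_{g\text{ generator of }G}\sigma_g(x)=x$, and by Remark \ref{inv.perfect} it is closed under the $\mathcal{L}$-operations, so it carries a definable $\mathcal{L}$-structure.

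First I would use the translation $\varphi\mapsto\varphi^G$ described just before the statement to see that for every $\mathcal{L}$-formula $\varphi(x)$ and every tuple $c\subseteq M^G$,
\[
M^G\models\varphi(c)\quad\iff\quad M\models\varphi^G(c).
\]
In other words, the full $\mathcal{L}$-elementary diagram of $M^G$ is interpretable in the $\mathcal{L}^G$-structure $(M,\bar{\sigma})$, so $\theo_{\mathcal{L}}(M^G)$ is interpretable (without parameters, and on a single $\emptyset$-definable set, with equality interpreted as equality) in $\theo_{\mathcal{L}^G}(M,\bar{\sigma})=T_G^{\mc}$.

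Next I would invoke Theorem \ref{TG.simple}: under the assumption that algebraic closures in $\mathfrak{D}$ split over $\mathfrak{C}$, the theory $T_G^{\mc}$ is simple. Since simplicity is preserved under interpretation (a theory interpretable in a simple theory is itself simple, as the characterization of simplicity via the tree property transfers through interpretations), the theory $\theo_{\mathcal{L}}(M^G)$ is simple.

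There is really no obstacle here beyond checking that the translation $\varphi\mapsto\varphi^G$ behaves well on all connectives and quantifiers, which is done by the recursive definition given in the text; in particular the restriction of quantifiers to the $\mathcal{L}^G$-definable set $\{x:\bigwedge_g\sigma_g(x)=x\}$ is what makes the interpretation faithful. Hence no further work is needed: the corollary follows immediately from Theorem \ref{TG.simple} combined with the interpretability of $M^G$ in $(M,\bar{\sigma})$.
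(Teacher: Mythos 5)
Your proof is correct and takes essentially the same route as the paper: both rely on Theorem \ref{TG.simple} plus the fact that $\theo_{\mathcal{L}}(M^G)$ is interpretable in the $\mathcal{L}^G$-theory of a model of $T_G^{\mc}$ via the translation $\varphi\mapsto\varphi^G$ (the paper routes through $\mathfrak{C}^G$ after first observing $M^G\preceq\mathfrak{C}^G$, but since $(M,\bar{\sigma})\preceq(\mathfrak{C},\bar{\sigma})$ this is only a cosmetic difference). One small imprecision: you write $\theo_{\mathcal{L}^G}(M,\bar{\sigma})=T_G^{\mc}$, but $T_G^{\mc}$ need not be complete; what you mean, and what suffices, is that $\theo_{\mathcal{L}^G}(M,\bar{\sigma})$ is a completion of $T_G^{\mc}$, which is simple by Theorem \ref{TG.simple}.
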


\begin{proof}
Using translation from \ref{translation1}, we see that
$(M,(\sigma_g)_{g\in G})\preceq (\mathfrak{C},(\sigma_g)_{g\in G})$ implies
$M^G\preceq\mathfrak{C}^G$. Therefore it is enough to show simplicity of $\theo_{\mathcal{L}}(\mathfrak{C}^G)$, which is straightforward, since $\mathfrak{C}^G$ is interpretable in $(\mathfrak{C},(\sigma_g)_{g\in G})$, which is simple by Theorem \ref{TG.simple}.
\end{proof}

Someone could ask whether, in the above corollary, it is necessary to assume that algebraic closures split. The following theorem, which uses results of Subsection \ref{subsec:PAC}, does not require that algebraic closures split and still states that theory of invariants is simple.

\begin{theorem}\label{MG.simple}
The theory of $M^G$ in the language $\mathcal{L}$ is simple. If moreover $T'$ is superstable, then $\theo(M^G)$ is supersimple.
\end{theorem}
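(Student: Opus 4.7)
The simplicity of $\theo_{\mathcal{L}}(M^G)$ will follow from the fact that $M^G$ is a bounded PAC substructure of the stable ambient theory $T'$, via an adaptation of the Pillay--Polkowska treatment of such substructures. The key inputs --- not used by the alternative argument via Corollary~\ref{cor.MG.simpe}, which demands that algebraic closures split --- are Proposition~\ref{inv.pac}, giving that $M^G$ is PAC in the sense of Definition~\ref{regular.def}, and Lemma~\ref{MG.bounded}, giving that $M^G$ is bounded in the sense of Definition~\ref{PP.bounded}. The finite generation hypothesis on $G$ is essential for both inputs.

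The proof proceeds by defining a ternary relation $\ind^{\circ}$ on small subsets of $M^G$ as the restriction of the stable forking relation $\ind^{\mathfrak{D}}$ of $T'$, and verifying the Kim--Pillay axioms from Definition~\ref{ind_relat} plus the Independence Theorem over a model. Invariance (for $\mathcal{L}$-automorphisms of $M^G$ lifted to $\mathfrak{D}$ via Lemma~\ref{fhat_lemma}), finite character, symmetry, transitivity, and local character are all inherited directly from the corresponding properties of $\ind^{\mathfrak{D}}$ in the stable theory $T'$. The existence axiom and the Independence Theorem, however, force one to descend witnesses from $\mathfrak{D}$ back into $M^G$. This is where PAC enters, through Lemma~\ref{PACclaim}: a complete type over a set $E \subseteq M^G$ is stationary if and only if $E \subseteq E \cup \{\text{realization}\}$ is $\mathcal{L}$-regular, so PAC guarantees that $M^G$ realizes every stationary type over its own parameters.

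The main obstacle will be verifying the Independence Theorem over a model. Given an $\mathcal{L}$-elementary substructure $M_0 \preceq M^G$ and pairwise independent complete types over $M_0$, one first applies the (easy) stable version of the Independence Theorem in $T'$ over the algebraically closed base $\acl_{\mathcal{L}}^{\mathfrak{D}}(M_0)$ to obtain a realization in $\mathfrak{D}$. This realization need not lie in $M^G$, but the resulting extension of $M^G$ can be arranged to be $\mathcal{L}$-regular (via Corollary~\ref{regular.PAPA} and Lemma~\ref{lang413}), and boundedness controls the Galois group $\aut_{\mathcal{L}}(\acl_{\mathcal{L}}^{\mathfrak{D}}(M^G)/M^G)$ so that a PAC-style gluing argument --- analogous to the proof of Proposition~\ref{M.is.PAC} --- produces an honest witness inside $M^G$.

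Once the axioms are verified, Theorem~\ref{simple.thm} yields simplicity of $\theo_{\mathcal{L}}(M^G)$ and identifies $\ind^{\circ}$ with forking independence. For the supersimple refinement, if $T'$ is superstable, then $\ind^{\mathfrak{D}}$ has finite local character (every finite tuple has a finite forking base over any parameter set), and this strengthening descends verbatim to $\ind^{\circ}$ on $M^G$, so $\theo_{\mathcal{L}}(M^G)$ is supersimple.
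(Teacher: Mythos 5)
Your proposal takes a genuinely different route from the paper and, more importantly, has a gap at exactly the point the paper is careful about. The paper does \emph{not} verify the Kim--Pillay axioms directly: after establishing via Proposition~\ref{inv.pac} and Lemma~\ref{MG.bounded} that $M^G$ is PAC and bounded, it constructs a suitable monster model $\bar{P}\succeq M^G$, shows $\bar{P}$ is bounded PAC, converts to the Pillay--Polkowska notion via Proposition~\ref{PACvsPACPP}.(1), and then simply cites \cite[Corollary 3.22]{Polkowska}. Your plan to re-derive the Kim--Pillay axioms from scratch is in principle a legitimate alternative, but as written it is incomplete.

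The gap: Theorem~\ref{simple.thm} is a statement about a \emph{monster model}, so the ternary relation $\ind^{\circ}$ must be defined and the axioms verified on small subsets of a monster model $\bar{P}$ of $\theo_{\mathcal{L}}(M^G)$, not on $M^G$ itself, which may be an arbitrary (possibly tiny) model. You cannot simply substitute $M^G$ for $\bar{P}$. Moreover, once you pass to $\bar{P}$, you face the problem that the paper's notion of PAC (Definition~\ref{regular.def}, existential closedness in regular extensions) is a priori not preserved under elementary $\mathcal{L}$-extensions --- it is not a first-order property of the pair $(\mathfrak{D},M^G)$. Knowing that $M^G$ is PAC tells you nothing directly about whether $\bar{P}$ is PAC, and without this your ``PAC-style gluing argument'' for the Independence Theorem has no foundation. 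The paper handles this by (i) embedding $\bar{P}$ into $\mathfrak{C}^G$ using the saturation of $\mathfrak{C}^G$, (ii) noting $\bar{P}\leqslant_1\mathfrak{C}^G$ so $\bar{P}\subseteq\mathfrak{C}^G$ is regular, (iii) using $\mathfrak{C}^G$ PAC plus Lemma~\ref{lang413} to conclude $\bar{P}$ is PAC (Claim~2), and (iv) a parallel argument with the restriction map on Galois groups for boundedness (Claim~1). These steps are the content of the proof and are entirely missing from your sketch; they are also why the remark in the proof text emphasizes that one must ``construct a proper monster model'' rather than assume that being PAC is first order.

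A secondary inaccuracy: Proposition~\ref{PACvsPACPP}.(1), which bridges the paper's PAC to the $\kappa$-PAC$_{\text{PP}}$ notion, requires the substructure to be $\kappa$-saturated. This is exactly why the detour through $\bar{P}$ is unavoidable --- $M^G$ itself need not be saturated, so even with a direct Kim--Pillay verification in mind you cannot invoke the realization of stationary types over small parameter sets in $M^G$. You would have to run all of your verification on $\bar{P}$, after first proving $\bar{P}$ is bounded PAC, which brings you back to the missing steps.

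Your remark about supersimplicity is also not quite right: the paper again cites the superstable case of \cite[Corollary 3.22]{Polkowska} (obtaining supersimplicity of the PAC substructure from superstability of the ambient theory), rather than descending a finite local character of $\ind^{\mathfrak{D}}$ verbatim; but this is a minor point compared to the monster-model issue.
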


\begin{proof}
By Lemma \ref{MG.bounded}, the $\mathcal{L}$-structure $M^G$ is bounded. By Proposition \ref{inv.pac}, $M^G$ is a PAC structure. We need to fix some monster model $\bar{P}$ of $\theo(M^G)$, which will be a bounded PAC $\mathcal{L}$-substructure of $\mathfrak{D}$.
After doing this, Proposition \ref{PACvsPACPP}.(1) will assure us that we are dealing with bounded PAC substructure in the sense of \cite{Polkowska}.
Having a monster model $\bar{P}$, which is a bounded PAC substructure of $\mathfrak{D}$, it is enough to use results of Section 3. in \cite{Polkowska}, page 177., which lead to simplicity/supersimplicity (we do not assume here that ``PAC property is first order" - as in \cite{Polkowska} - but instead of it we construct a proper monster model, consult the beginning of Section 3. in \cite{Polkowska}).

If we consider $(\bar{M},\bar{P}):=(\mathfrak{D},\bar{P})$ then we match the requirements made at the beginning of Section 3. in \cite{Polkowska}, page 177.
Therefore, we can use \cite[Corollary 3.22]{Polkowska} and obtain the thesis of the theorem.
We only need to obtain $\bar{P}$.

Take $\bar{P}\succeq M^G$ which is $\kappa$-saturated and $\kappa$-strongly homogeneous. Without loss of generality we can assume that $|\bar{P}|<\kappa_{\mathfrak{C}}$. By the translation from \ref{translation1}, we see that $M^G\preceq\mathfrak{C}^G$.
Moreover, it can be shown, using the translation \ref{translation1}, that $\mathfrak{C}^G$ is $\kappa_{\mathfrak{C}}$-saturated. If $\mathfrak{C}^G$ is $\kappa_{\mathfrak{C}}$-saturated, then it is also $\kappa_{\mathfrak{C}}$-universal, hence $\bar{P}$ can be elementarily embedded in $\mathfrak{C}^G$.
By $\bar{P}\preceq\mathfrak{C}^G$ and $\dcl_{\mathcal{L}}^{\mathfrak{D}}(\mathfrak{C}^G)=\mathfrak{C}^G$, it follows that $\dcl_{\mathcal{L}}^{\mathfrak{D}}(\bar{P})=\bar{P}$.
Our goal is to show that $\bar{P}$ is a bounded PAC $\mathcal{L}$-substructure of $\mathfrak{D}$.
\
\\
\\
\textbf{Claim 1:} $\bar{P}$ is bounded.
\\
Proof of the claim: 
Note that $\bar{P}\subseteq\mathfrak{C}^G$ is regular (Lemma \ref{lang410}).
By Proposition \ref{inv.bounded}, $\aut_{\mathcal{L}}(\acl_{\mathcal{L}}^{\mathfrak{D}}(\mathfrak{C}^G)/\mathfrak{C}^G)$ is finitely generated as a profinite group. Consider the following restriction map
$$\xymatrixcolsep{3.5pc}\xymatrix{\aut_{\mathcal{L}}(\acl_{\mathcal{L}}^{\mathfrak{D}}(\mathfrak{C}^G)/\mathfrak{C}^G) \ar[r]^(.55){|_{\acl_{\mathcal{L}}^{\mathfrak{D}}(\bar{P})}} & \aut_{\mathcal{L}}(\acl_{\mathcal{L}}^{\mathfrak{D}}(\bar{P})/\bar{P}) }.$$
If $f\in\aut_{\mathcal{L}}(\acl_{\mathcal{L}}^{\mathfrak{D}}(\bar{P})/\bar{P})$, then Fact \ref{regular} allows us to construct $h\in\aut_{\mathcal{L}}(\mathfrak{D})$ such that $h|_{\acl_{\mathcal{L}}^{\mathfrak{D}}(\bar{P})}=f$ and $h|_{\mathfrak{C}^G}=\id_{\mathfrak{C}^G}$. Therefore, $f$ is the restriction of $h|_{\acl_{\mathcal{L}}^{\mathfrak{D}}(\mathfrak{C}^G)}\in \aut_{\mathcal{L}}(\acl_{\mathcal{L}}^{\mathfrak{D}}(\mathfrak{C}^G)/\mathfrak{C}^G)$ and the aforementioned restriction map is an epimorphism of profinite groups. We see that also $\aut_{\mathcal{L}}(\acl_{\mathcal{L}}^{\mathfrak{D}}(\bar{P})/\bar{P})$ is finitely generated as a profinite group.
Similarly as in the proof of Lemma \ref{MG.bounded}, we show that $\bar{P}$ is bounded.
\
\\
\\
\textbf{Claim 2:} $\bar{P}$ is PAC.
\\
Proof of the claim: 
This fact with proof was pointed to us by Martin Hils.
Assume that for some small $N\subseteq\mathfrak{D}$, the extension $\bar{P}\subseteq N$ is regular. Let $\varphi(x,y)$ be a quantifier free $\mathcal{L}$-formula, $a\in\bar{P}^{|x|}$, and let $N\models\varphi(a,n)$ for some $n\in N^{|y|}$. There is $f\in\aut_{\mathcal{L}}(\mathfrak{D}/\bar{P})$
such that for  $N':=f(N)$ it follows
$$N'\ind_{\bar{P}}^{\mathfrak{D}} \mathfrak{C}^G.$$
The extension $\bar{P}\subseteq N'$ is regular, hence by Lemma \ref{lang413}, also $\mathfrak{C}^G\subseteq \mathfrak{C}^G N'$ is regular.  Thus 
$\mathfrak{C}^G\subseteq \dcl_{\mathcal{L}}^{\mathfrak{D}}(N',\mathfrak{C}^G)$ is a regular extension. Because $\mathfrak{C}^G$ is a PAC substructure of $\mathfrak{D}$ (Proposition \ref{inv.pac}), it follows $\mathfrak{C}^G\leqslant_1 \dcl_{\mathcal{L}}^{\mathfrak{D}}(N',\mathfrak{C}^G)$.
Since $\bar{P}\preceq\mathfrak{C}^G$, we have that $\bar{P}\leqslant_1 \dcl_{\mathcal{L}}^{\mathfrak{D}}(N',\mathfrak{C}^G)$.
Note that $\dcl_{\mathcal{L}}^{\mathfrak{D}}(N',\mathfrak{C}^G)\models \varphi(a,f(n))$, therefore $\bar{P}\models \exists x\;\varphi(a,x)$.
\end{proof}

\begin{cor}\label{interpretable.inv}
If $G$ is finitely generated and the theory $\theo(M,(\sigma_g)_{g\in G})$ is interpretable in $\theo(M^G)$, then the theory $\theo(M,(\sigma_g)_{g\in G})$ is simple. If moreover $T^{\mc}$ is superstable, then $\theo(M,(\sigma_g)_{g\in G})$ is supersimple.
\end{cor}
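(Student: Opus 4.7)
The plan is to derive this corollary almost immediately from Theorem \ref{MG.simple} together with the standard fact that simplicity (and supersimplicity) are preserved under interpretation.

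First I would apply Theorem \ref{MG.simple} directly: since $G$ is finitely generated (which is needed for Proposition \ref{inv.pac}, Corollary \ref{inv.bounded2} and Lemma \ref{MG.bounded} used in the proof of Theorem \ref{MG.simple}), the $\mathcal{L}$-theory $\theo(M^G)$ is simple, and supersimple whenever $T'$ is superstable. Recall that under the set-up of Section \ref{sec:forking}, $T^{\mc} \subseteq T'$ and $T'$ is a completion of $T^{\mc}$, so superstability of $T^{\mc}$ passes to $T'$.

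Next, I would invoke the well-known preservation result: if an $\mathcal{L}'$-theory $T_1$ is interpretable in an $\mathcal{L}$-theory $T_2$, and $T_2$ is simple (resp. supersimple), then $T_1$ is simple (resp. supersimple). This is because any model $N_1 \models T_1$ arises, up to bi-interpretation, as a definable quotient of a definable set inside some $N_2 \models T_2$; a forking calculus on $N_2^{\eq}$ restricts to a forking calculus on $N_1^{\eq}$ satisfying the axioms of an independence relation with the Independence Theorem over a model, so Theorem \ref{simple.thm} (applied in the interpreted sort, exactly as for reducts) yields simplicity of $T_1$. For the supersimple case, the same argument works since local character with finitely many parameters is inherited by restriction to a definable sort.

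Combining these two observations: since $\theo(M,(\sigma_g)_{g \in G})$ is by hypothesis interpretable in $\theo(M^G)$, and $\theo(M^G)$ is simple (resp.\ supersimple if $T^{\mc}$ is superstable) by Theorem \ref{MG.simple}, we conclude that $\theo(M,(\sigma_g)_{g \in G})$ is simple (resp.\ supersimple). I do not expect any genuine obstacle here; the only minor point to check is that the hypothesis ``$G$ finitely generated'' is indeed the assumption under which Theorem \ref{MG.simple} was proved, and that the superstability hypothesis transfers correctly from $T^{\mc}$ to $T'$ (which it does, because $T'$ is a completion of $T^{\mc}$ together with the atomic diagram of $\mathfrak{C}$, and a completion by parameters of a superstable theory remains superstable).
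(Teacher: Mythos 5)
Your proposal is correct and is essentially the argument the paper has in mind (the paper offers no explicit proof, treating the corollary as immediate from Theorem \ref{MG.simple} together with preservation of (super)simplicity under interpretation). The only cosmetic inaccuracy is your parenthetical description of $T'$ as a ``completion by parameters'' of $T^{\mc}$: in the set-up $T'=\theo_{\mathcal{L}}(\mathfrak{D})$ is just the complete $\mathcal{L}$-theory of $\mathfrak{D}$, hence a plain completion of $T^{\mc}$, which suffices for the superstability transfer you need.
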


\section{Postlude}\label{postlude}
We recall Question \ref{q220} from the beginning of this paper.

\begin{question}
What assumptions about $T$ and $G$ assert existence of $T_G^{\mc}$?
\end{question} 

After all we can formulate a conjecture.

\begin{conj}
If $T^{\mc}$ exists and $G$ is finite, then $T_G^{\mc}$ exists.
\end{conj}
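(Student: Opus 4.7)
The goal is to axiomatize $T_G^{\mc}$ in a way that generalizes the explicit constructions of Examples \ref{rand.graph}, \ref{ring_exp}, \ref{gtcf1}. Write $e = |G|$ and fix an enumeration $g_1 = 1, g_2, \ldots, g_e$ of $G$. By Remark \ref{tmcgmc.tgmc}, I may and shall assume $T = T^{\mc}$, so $T$ is model-complete and every $M \models T_G$ is, as an $\mathcal{L}$-structure, contained in some $\mathcal{L}$-elementary $T$-extension.

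\textbf{Step 1 (translation).} Every quantifier-free $\mathcal{L}^G$-formula $\varphi(\bar{x}, \bar{y})$ rewrites uniquely as a quantifier-free $\mathcal{L}$-formula $\tilde\varphi(\bar{x}_1, \ldots, \bar{x}_e; \bar{y}_1, \ldots, \bar{y}_e)$ by introducing a fresh block for each $\sigma_{g_i}$ applied to the original variables. The $G$-action corresponds to the induced permutation $\pi_g \in S_e$ given by $gg_i = g_{\pi_g(i)}$, and the requirement that $\bar{\sigma}$ be a group action becomes the requirement that $\tilde\varphi$ be invariant under the simultaneous permutation of all $\bar{x}_i$- and $\bar{y}_i$-blocks. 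Thus $\mathcal{L}^G$-formulas correspond bijectively to $G$-invariant (in this permutation sense) quantifier-free $\mathcal{L}$-formulas on ``unfolded'' tuples.

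\textbf{Step 2 (axiom scheme).} For every $G$-invariant quantifier-free $\mathcal{L}$-formula $\theta(\bar{x}_1, \ldots, \bar{x}_e; \bar{y}_1, \ldots, \bar{y}_e)$ I propose the axiom
\begin{equation*}
\forall \bar{x}\,\left[\, \chi_\theta\bigl(\sigma_{g_1}(\bar{x}), \ldots, \sigma_{g_e}(\bar{x})\bigr) \,\longrightarrow\, \exists \bar{y}\,\theta\bigl(\sigma_{g_1}(\bar{x}), \ldots, \sigma_{g_e}(\bar{x});\,\sigma_{g_1}(\bar{y}), \ldots, \sigma_{g_e}(\bar{y})\bigr)\,\right],
\end{equation*}
where $\chi_\theta$ is a (scheme of) $\mathcal{L}$-formula(s) expressing that ``$\theta$ is $G$-equivariantly realisable'' over parameters $\bar{x}_1, \ldots, \bar{x}_e$: there exist $\bar{y}_1, \ldots, \bar{y}_e$ in some $\mathcal{L}$-extension such that $\theta$ holds and the prescribed permutation $\bar{y}_i \mapsto \bar{y}_{\pi_g(i)}$ combined with $\sigma_g$ on the parameters extends to an $\mathcal{L}$-automorphism. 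Since $T = T^{\mc}$ is model-complete, this existence condition is an $\mathcal{L}$-statement about any $T$-model containing the parameters, and hence (as the parameters lie in $M$) first-order in $M$. One then shows that models of $T_G$ can be iteratively extended to satisfy this scheme (the key point: when we realise $\theta$ by adding $(\bar{n}_1, \ldots, \bar{n}_e)$, the $G$-orbit we must adjoin has size at most $e$, hence is harmless to extend the $\mathcal{L}$-automorphisms across), and that the resulting theory is model-complete by a back-and-forth using Step 1. Together with soundness, this identifies it as $T_G^{\mc}$.

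\textbf{Main obstacle.} The entire proof hinges on expressing ``$\theta$ is $G$-equivariantly realisable'' as a single first-order $\chi_\theta$. In the concrete cases (algebraically-closed fields, Boolean rings, random graphs) this is tractable because the quantifier-free $G$-type over a finite tuple admits a canonical finite encoding (prime ideals invariant under $G$, Boolean configurations, edge patterns); each such encoding yields one axiom. For a general $T$, there is no evident finite combinatorial parametrisation of $G$-equivariant quantifier-free $\mathcal{L}$-types, and the ``existential-closure with prescribed automorphism'' condition might a priori require quantifying over arbitrary $\mathcal{L}$-substructures. Overcoming this is the crux: one plausible route is to leverage a strong amalgamation property of $T^{\mc}$ (one-point amalgamation over arbitrary substructures combined with symmetric amalgamation over $G \cdot \bar{m}$) to reduce $\chi_\theta$ to an $\mathcal{L}$-sentence in finitely many existentially closed extensions. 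If $T^{\mc}$ itself admits sufficient amalgamation — which is natural but not automatic from mere existence of $T^{\mc}$ — then finiteness of $G$ lets one form the ``$G$-tensor'' $\bigotimes_{g\in G} N_g$ of $|G|$ copies of a witnessing extension and extract from it a first-order criterion $\chi_\theta$. Formulating the minimal amalgamation hypothesis on $T^{\mc}$ under which this goes through (and verifying that it is automatic, or else reformulating the conjecture with that hypothesis) is where the main work lies.
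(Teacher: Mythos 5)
This statement is labelled \emph{Conjecture} in the paper, and the paper gives no proof of it. What follows the conjecture is only a one-paragraph heuristic: the author suggests that, since by Kaiser's theorem the model companion (when it exists) equals the Kaiser hull, one should write down the Kaiser hull of $T_G$ explicitly (via \cite[Satz 12.]{kaiser1}) and try to verify model completeness directly, the intuition being that finiteness of $G$ bounds the number of ``extra'' variables one has to carry around. So there is no proof in the paper to compare yours against; the statement remains open there.

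Your sketch takes a genuinely different route from the paper's suggested one. The paper's heuristic is abstract and top-down: start from the canonical candidate (the Kaiser hull) and try to show it is model complete. Yours is constructive and bottom-up: generalize the axiom schemes from Examples \ref{rand.graph}, \ref{ring_exp}, \ref{gtcf1} by (a) unfolding $\mathcal{L}^G$-formulas into $\mathcal{L}$-formulas on $|G|$ blocks and (b) postulating one $\forall\exists$-axiom per quantifier-free configuration, gated by a ``$G$-equivariant realisability'' predicate $\chi_\theta$. In spirit the two are related (the Kaiser hull is exactly the set of $\forall\exists$-consequences of the e.c.\ models, and your scheme produces $\forall\exists$-sentences), but the work required differs: Kaiser's approach centralizes the difficulty in a single model-completeness check, whereas yours front-loads it into the definability of $\chi_\theta$.

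There is a genuine gap, and you have located it honestly, but let me be precise about why it is not a technicality. First, the reduction to $T = T^{\mc}$ you invoke from Remark \ref{tmcgmc.tgmc} is circular for existence purposes: that remark already assumes $T_G^{\mc}$ exists, so it tells you how $T_G^{\mc}$ and $(T^{\mc})_G^{\mc}$ relate when both exist, not that you may replace $T$ by $T^{\mc}$ before proving existence. Second, and more fundamentally, the predicate $\chi_\theta$ (``the $G$-equivariant $\theta$-configuration amalgamates over the $G$-orbit of the parameters inside some $T$-model'') is an assertion about the category of $T$-models, not a first-order assertion about the model $M$ containing the parameters, and nothing in the hypothesis ``$T^{\mc}$ exists'' makes it one. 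In the three worked examples the definability comes from very special structure: prime $G$-invariant ideals of finitely generated $G$-rings ($\clubsuit$ in Example \ref{gtcf1}), finite $G$-invariant Boolean configurations ($\diamondsuit$ in Example \ref{ring_exp}), and finite atomic diagrams (Example \ref{rand.graph}); in each case quantifier elimination for $T^{\mc}$ together with a finitary combinatorial parametrisation of $G$-equivariant extension data is what makes $\chi_\theta$ first-order. The conjecture as stated does not even assume quantifier elimination, stability, or elimination of imaginaries for $T^{\mc}$ (those are hypotheses the paper only adds in Section \ref{sec:forking}), so the amalgamation hypothesis you correctly flag as ``not automatic'' is precisely where the conjecture could fail, and you should expect to either add it as a hypothesis or find a mechanism internal to $T^{\mc}$ that supplies it. The paper itself does not claim to have such a mechanism.
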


\noindent
The strategy to proof the above conjecture could involve the notion of the Kaiser hull. Mainly by \cite[Satz 12.]{kaiser1} we have an explicit description of the Kaiser hull for inductive theories. By \cite{kaiser1}, we know that if a model companion exists, then it is equal to Kaiser hull, hence problem reduces to check whether the Kaiser hull, given in \cite[Satz 12.]{kaiser1}, is model complete. Our intuition says that it should be, because for finite $G$ we need to consider only finitely many additional variables, hence situation does not change a lot.

Other interesting question is whether $T_G^{\mc}$ is a ``generic" simple theory. We see that the most studied simple theory is ACFA, which is of the shape $T_G^{\mc}$ for $T=$``theory of fields" and $G=\mathbb{Z}$.

\begin{question}
How ``many" simple theories appear as $T_G^{\mc}$ for some stable theory $T$ and some group $G$?
\end{question}

Next aim which should be considered as priority is answer to the following question.

\begin{question}
What properties of $T_G^{\mc}$ say about $T$ (or $T^{\mc}$)? Are there some invariants or canonical reductions of the logical structure of models of $T$ which can be observed in the structure of the theory $T_G^{\mc}$?
\end{question}

Now the reader may wish to choose some theory $T$, which has a stable model companion $T^{\mc}$, which allows to eliminate quantifiers and has elimination of imaginaries. Prove existence of $T_G^{\mc}$ and use developed by us tool to study his favourite stable theory with added dynamics.
We end this paper with the final sentence: Have fun!

\bibliographystyle{plain}
\bibliography{1nacfa2}

\end{document}